\documentclass{article}
\usepackage[T1]{fontenc}
\usepackage[utf8]{inputenc}
\usepackage[english]{babel}
\usepackage{lmodern}
\usepackage[margin=3cm]{geometry}
\usepackage[backend=biber,style=alphabetic,sorting=nty,doi=false,isbn=false,url=false,minalphanames=2,maxbibnames=4,maxcitenames=4]{biblatex}
\AtEveryBibitem{\clearlist{language}}

\usepackage{empheq}

\usepackage{authblk}
\usepackage[hidelinks]{hyperref}
\usepackage{enumitem}
\usepackage{tikz-cd} 	
\usepackage{mathtools}
\usepackage{alltt}
\usepackage{amsfonts}
\usepackage{amsmath}
\usepackage{amssymb}
\usepackage{amsthm}
\usepackage{tikz-cd} 	
\usepackage[nottoc]{tocbibind}
\usepackage{graphicx}
\usepackage[font={small,it}]{caption}
\usepackage{subcaption}
\usepackage{aliascnt}
\usepackage{cases}
\usepackage{comment}	
\usepackage[capitalize,nameinlink]{cleveref}
\usepackage{tikz}
\usetikzlibrary{tikzmark}
\usepackage{slashed}

\crefname{figure}{Fig.}{Fig.}
\Crefname{figure}{Fig.}{Fig.}
\crefname{subfigure}{Fig.}{Fig.}
\Crefname{subfigure}{Fig.}{Fig.}

\usepackage{xargs}   
\usepackage[colorinlistoftodos,prependcaption,textsize=tiny]{todonotes}
\newcommandx{\typo}[2][1=]{\todo[linecolor=red,backgroundcolor=red!25,bordercolor=red,#1]{#2}}
\newcommandx{\change}[2][1=]{\todo[linecolor=blue,backgroundcolor=blue!25,bordercolor=blue,#1]{#2}}
\newcommandx{\answer}[1]{\todo[linecolor=pink,backgroundcolor=pink!25,bordercolor=pink]{#1}}
\newcommandx{\unsure}[2][1=]{\todo[linecolor=green,backgroundcolor=green!25,bordercolor=green,#1]{#2}}
\newcommandx{\improve}[2][1=]{\todo[linecolor=violet,backgroundcolor=violet!25,bordercolor=violet,#1]{#2}}
\newcommandx{\thiswillnotshow}[2][1=]{\todo[disable,#1]{#2}}

\usepackage{tocloft}
\crefformat{equation}{(#2#1#3)}
\numberwithin{equation}{section}

\theoremstyle{definition}

\newtheorem*{notation}{Notation}

\newtheorem*{assumption}{Assumption}

\newtheorem*{theorem*}{Theorem}
\newtheorem*{conjecture*}{Conjecture}

\theoremstyle{plain}
\newtheorem{theorem}{Theorem}[section]

\newtheorem{lemma}{Lemma}[section]
\newtheorem{prop}{Proposition}[section]
\newtheorem{conjecture}{Conjecture}[section]
\newtheorem{cor}{Corollary}[section]
\newtheorem*{claim}{Claim}
\newtheorem{definition}{Definition}[section]

\theoremstyle{remark}
\newtheorem{remark}{Remark}[section]

\crefname{lemma}{Lemma}{Lemmas}
\crefname{prop}{Proposition}{Proposition}
\crefname{conjecture}{Conjecture}{Conjecture}
\crefname{cor}{Corollary}{Corollary}
\crefname{remark}{Remark}{Remark}
\crefname{defi}{Definition}{Definition}
\crefname{equation}{}{}
\crefname{enumi}{}{}
\crefname{appendix}{}{}

\newcommand{\dd}{\mathop{}\!\mathrm{d}}

\newenvironment{nalign}{
	\begin{equation}
		\begin{aligned}
		}{
		\end{aligned}
	\end{equation}
	\ignorespacesafterend
}

\newcommand{\C}{\mathcal{C}}

\newcommand{\N}{\mathbb{N}}

\newcommand{\R}{\mathbb{R}}

\newcommand{\T}{\mathbb{T}}

\newcommand{\scri}{\mathcal{I}}

\newcommand{\abs}[1]{\left\lvert #1\right\rvert}

\newcommand{\jpns}[1]{\langle #1 \rangle}

\newcommand{\norm}[1]{\left\lVert #1\right\rVert}

\newcommand{\floor}[1]{\lfloor #1 \rfloor}
\newcommand{\ceil}[1]{\lceil #1 \rceil}

\newcommand{\E}{\mathcal{E}}

\newcommand{\B}{\mathcal{B}}

\newcommand{\D}{\mathcal{D}}

\newcommand{\F}{\mathbb{F}}

\newcommand{\Diff}{\mathrm{Diff}}

\DeclareMathOperator{\supp}{supp}



\title{Matching conditions for scattering solutions of scalar wave equations on extremal Reissner–Nordstr\"{o}m spacetimes}
\author[1]{Yannis Angelopoulos\thanks{yannis@bimsa.cn}}
\author[2]{Istvan Kadar\thanks{istvan.kadar@math.ethz.ch}}
\affil[1]{\small Beijing Institute of Mathematical Sciences and Applications,
	
	No.~544, Hefangkou Village, Huairou District, 101408 Beijing, China} 
\affil[2]{\small \textit{Department of Mathematics, ETH Zurich, R\"amistrasse 101, 8006 Zurich, Switzerland}}

\usepackage[makeroom]{cancel}
\usepackage{csquotes}

\newcommand{\ern}{g_{\mathrm{ERN}}}
\renewcommand{\F}{\mathcal{F}}
\newcommand{\K}{\mathcal{K}}
\newcommand{\m}{\mathrm{m}}
\newcommand{\s}{\mathrm{s}}
\newcommand{\Div}{\mathrm{div}}
\newcommand{\M}{\mathcal{M}}
\newcommand{\Mcomp}{\overline{\M} }
\newcommand{\McompIn}{\overline{\M}_{\mathrm{in}}}
\newcommand{\MMink}{\overline{\M_{\mathrm{M}}}}
\newcommand{\MMinkempty}{\overline{\M_{\mathrm{M},\emptyset}}}
\newcommand{\multiComp}{\overline{\M}_\m}
\newcommand{\multiCompin}{\overline{\M}_\m^{\mathrm{in},z}}

\newcommand{\Hb}{H_{\mathrm{b}}}
\newcommand{\hor}{\mathcal{H}}
\newcommand{\Vb}{\mathcal{V}_{\mathrm{b}} }

\newcommand{\Vc}{\mathcal{V}_{\mathrm{c}} }
\newcommand{\Ve}{\mathcal{V}_{\mathrm{e}} }
\renewcommand{\b}{\mathrm{b}}
\newcommand{\Diffb}{\Diff_{\b}}
\newcommand{\Diffe}{\Diff_{\mathrm{e}}}
\newcommand{\A}[1]{\mathcal{A}_{\mathrm{#1}}}
\newcommand{\Aext}[1]{\bar{\mathcal{A}}_{\mathrm{#1}}}
\newcommand{\Rcomp}{\overline{\R^3_{\times}}}
\newcommand{\Bcomp}{\overline{B}_{\times}}
\newcommand{\mindex}[1]{\overline{(#1,0)}}
\newcommand{\cupdex}{\overline{\cup}}
\renewcommand{\O}{\mathcal{O}}

\newcommand{\Normal}[1]{A(\Box_{\ern},#1)}
\newcommand{\sphere}{\mathbb{S}^2}

\newcommand{\ip}{i^+}
\graphicspath{{figures}}

\begin{document}
	\maketitle
	\begin{abstract}
		We study scattering solutions $\phi$ of the linear wave equation on extremal Reissner-Nordstr\"{o}m spacetimes, satisfying the following properties: i) $\phi$ attains a prescribed radiation field $\psi_\scri$ through future null infinity, which decays at an inverse polynomial rate; ii) $\phi$ is regular in the exterior region up to and including the future event horizon, i.e.~$\phi\in C^N$, where $N\gg1$ is independent of the decay rate of $\psi_\scri$.
		We prove that such solutions exist for arbitrary $N$, and that they are not unique.
		The proof consists of: 1) finding an approximate solution $\phi_{\mathrm{app}}$ with fast decaying error; 2) using backwards energy estimates in order to correct $\phi_{\mathrm{app}}$ to an exact solution.
		Extremality is used only in the second step.
		The methods of the linear case described above are then used to  show the same result for i) a semilinear equations where the nonlinearity satisfies the null condition ii) geometries describing the hyperbolic orbit of multiple extremal Reissner-Nordstr\"{o}m black holes. 
	\end{abstract}
	
	\setcounter{tocdepth}{1}
	\tableofcontents
	\section{Introduction}
	
	In this paper, we study the scattering problem for scalar wave equations on the exterior $r\geq M$ up to and including the event horizon of extremal Reissner--Nordstr\"{o}m black hole spacetimes
	\begin{equation}\label{intro:eq:scattering}
		\Box_{\ern}\phi=\mathcal{N} [\phi ] ,\quad r\phi|_{\scri}\doteq \psi_{\scri},\quad \phi|_{\hor}\doteq \phi_{\hor},
	\end{equation}
	where $\Box_{\ern}$ is the induced d'Alembertian, and $\psi_{\scri}$ and $\phi_{\hor}$ are data posed at null infinity and the event horizon respectively.
	We also study scattering in the extended region $(r-M)\geq -v^{-1}$ with $\mathcal{B}=\{M-r=v^{-1}\}$ a spacelike hypersurface
	\begin{equation}\label{intro:eq:scatteringB}
		\Box_{\ern}\phi=\mathcal{N} [\phi ] ,\quad r\phi|_{\scri}\doteq \psi_{\scri},\quad (\phi,(r-M)T\phi)|_{\mathcal{B}}\doteq (\phi_{0},\phi_1).
	\end{equation}
	
	The classical problem of constructing the unique scattering (backwards) solutions for \eqref{intro:eq:scattering} from prescribed $\psi_\scri,\phi_\hor$ has already been investigated in the linear case (i.e. for $\mathcal{N}[\phi] = 0$) in great detail in \cite{angelopoulos_non-degenerate_2020} (see below \cref{intro:thm:aag}). 
	
	In the present work, we study a variation of the classical scattering problem, as a toy model for the dynamical construction of multi-black hole spacetimes. 
	For given scattering data at infinity, we aim to find ``compatible'' data at the event horizon from which a smooth solution of \eqref{intro:eq:scattering} can be constructed. 
	Before presenting the preliminary version of the main theorem, we introduce a notation: for $\psi\in C^\infty(\{\R_{t_*}\times \sphere\})$ we write $\psi\in\O(t_*^{-q})$ to mean that $\abs{\Gamma^\alpha\psi_\scri}\leq t_*^{-q}$ for any multi index $\alpha$ and vectorfields $\Gamma=\{t_*\partial_{t_*},x_i\partial_{x_j}-x_j\partial_{x_i}\}$.

	\begin{theorem}[Rough version]\label{intro:thm:rough}
		Let $t_*$ be an appropriate hyperboloidal ``time'' coordinate function on the exterior of an extremal Reissner–Nordstrom spacetime $(\M , g_{\ern})$.
		Let $q>1/2$ and $\psi_{\scri}\in\mathcal{O}(t_*^{-q})$ be partial scattering data at null infinity for the scalar wave equation \cref{intro:eq:scattering}  where
		$$ \mathcal{N} [ \phi ] = \phi^3 + g_{\ern}^{-1} ( d \phi , d\phi ).$$
		
		Then, there exists additional smooth scattering data on the horizon  
		$\phi_{\hor}\in \mathcal{O}(t_*^{-q-1}), $
		such that there exists a scattering solution $\phi$ to \cref{intro:eq:scattering} in a neighbourhood of timelike infinity with scattering data $\psi_{\scri}$ and $\phi_{\hor}$, moreover $\phi$ satisfies
		\begin{equation}
			\phi|_\K\in C^\infty(\K) \mbox{  for all compact $K\subset \M$.}     
		\end{equation}
		The above result can be extended to the interior so that $\phi$ solve \cref{intro:eq:scatteringB}.
	\end{theorem}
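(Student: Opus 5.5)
The plan follows the two-step strategy of the abstract: first an approximate solution with rapidly decaying error, then a correction to an exact solution by backwards energy estimates.

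\textbf{Step 1: approximate solution.} I work in a compactification of a neighbourhood of timelike infinity $i^+$ in which $\scri^+$, the horizon $\hor^+$ and the front face at $i^+$ are boundary hypersurfaces. The equation $\Box_{\ern}\phi=\mathcal{N}[\phi]$ is then treated as a b-type (Fuchsian) problem. Near $\scri$ I start from $\phi_0 = \chi\,\psi_\scri/r$. I then solve iteratively the model problems obtained by freezing coefficients at each boundary face:
\begin{enumerate}[label=(\roman*)]
	\item the transport equations along $\scri$ in powers of $r^{-1}$;
	\item near the horizon, the Couch--Torrence-conjugated problem, which by the extremal Couch--Torrence symmetry is again a transport problem in powers of $(r-M)$;
	\item at the front face, an elliptic/ODE problem in $r$ with data given by the leading coefficients in $t_*$, solved using the indicial roots of the normal operator $\Normal{\cdot}$.
\end{enumerate}
Each step improves the error by a factor $t_*^{-1}$ (or $r^{-1}$, $(r-M)$). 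A Borel summation over the iterates then yields $\phi_{\mathrm{app}}$ in a polyhomogeneous class, with $r\phi_{\mathrm{app}}|_\scri=\psi_\scri$ and
\[
\Box_{\ern}\phi_{\mathrm{app}}-\mathcal{N}[\phi_{\mathrm{app}}]\in \O(t_*^{-K})
\]
for arbitrary $K$, with all the $\Gamma$-derivatives. The nonlinearity costs nothing at this stage. The cubic term and the null form $g^{-1}(d\phi,d\phi)$ each gain at least $t_*^{-q}$ relative to the linear term, and since $q>1/2$ the iteration decays; the null condition ensures that no $\log$ or $r$-growth arises at $\scri$ from $\partial_u\phi\,\partial_u\phi$ terms. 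The freedom in choosing the front-face solutions, i.e.\ adding homogeneous solutions of the indicial problem whose leading term sits at the horizon, is the source of non-uniqueness. This freedom produces $\phi_\hor=\phi_{\mathrm{app}}|_\hor\in\O(t_*^{-q-1})$.

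\textbf{Step 2: correction.} I write $\phi=\phi_{\mathrm{app}}+\varphi$ and solve
\[
\Box\varphi = F+\mathcal{N}'[\phi_{\mathrm{app}}]\varphi+Q(\varphi)
\]
backwards from trivial data on a sequence of hypersurfaces $\Sigma_{\tau_n}$, $\tau_n\to\infty$. I use the $T$-energy together with $r^p$-weighted and horizon $(r-M)^{-p}$-weighted (Couch--Torrence dual) energies, run backwards in time, as in the scattering theory of \cref{intro:thm:aag}. Extremality is essential here because the surface gravity vanishes, so the backwards red-shift becomes only a degenerate blue-shift. Consequently commuting with the Aretakis-type vector field $Y$ loses only polynomially many powers of $t_*$. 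I then prove
\[
E_N[\varphi](\tau)\lesssim \tau^{-K+C(N)}
\]
by Grönwall in $\tau$, treating the potential and nonlinear terms perturbatively, which is possible because $\phi_{\mathrm{app}}$ decays with rate $q>1/2$ and the $t_*^{-2q}$ terms are integrable. Choosing $K\gg C(N)$, the limit $n\to\infty$ exists by compactness, giving a solution that is $C^N$ up to $\hor$. Since $N$ is arbitrary and the approximate solutions are compatible across different $K$, I obtain $C^\infty$ on compact sets, as claimed in \cref{intro:thm:rough}. Extending to the region beyond $\mathcal{B}$ uses the same estimates, since $\mathcal{B}$ is spacelike and lies in the region where $(r-M)$-weights are controlled.

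\textbf{Main obstacle.} I expect the hardest step to be the backwards higher-order horizon estimate in Step 2. Backwards in time, transversal derivatives $\partial_r^k\varphi$ at the extremal horizon may grow polynomially (the reversed Aretakis instability). One must show that the loss $C(N)$ is finite and independent of $q$. I would first attempt to obtain this through the Couch--Torrence map, converting horizon regularity into $r^p$-hierarchy estimates near $\scri$ with a fixed loss per commutation.
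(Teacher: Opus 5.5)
There is a genuine gap in Step 1, and it sits exactly where the matching condition is decided. Your compactification has only $\scri$, $\hor$ and a single front face on which $r$ is a coordinate; your model there is the zero-energy operator $\Normal{\K}$. This squeezes the two self-similar regions $\{r\sim t_*\}$ and $\{(r-M)t_*\sim1\}$ into corners, and in those regions none of your three model problems gains decay.

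\begin{itemize}
\item Near $\scri$ the transport iteration produces terms $r^{-1-k}u^{k-q}$. Every gain of $r^{-1}$ is paid for by a loss of $u$, so all of these terms are of size $t_*^{-q-1}$ on $\{r\sim t_*\}$.
\item Symmetrically, the Taylor expansion at $\hor$ produces terms $(r-M)^k v^{k-q-1}$, all comparable on $\{(r-M)v\sim1\}$.
\item The front-face expansion in $t_*^{-1}$ has coefficients growing like powers of $r$ (resp.\ $(r-M)^{-1}$). It is therefore an expansion in $r/t_*$ (resp.\ $((r-M)t_*)^{-1}$).
\end{itemize}

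So the error is never improved in these regions, and Borel summation does not give $\O(t_*^{-K})$. The paper resolves these corners into the faces $\ip$ and $\F$ and solves genuinely global hyperbolic model problems there. At $\ip$ it solves the backward Minkowski scattering problem with given radiation field, using weighted $T$ and Morawetz currents (\cref{an:prop:i+_conormal}). At $\F$ it solves the near-horizon problem $\Box_{g_{\mathrm{near}}}\phi=f$ (\cref{an:prop:F_interior}). Under Couch--Torrence this is the mirror of the $\ip$ problem with a conformal-smoothness requirement at $\scri$. It is not the mirror of the transport problem along $\scri$, which is what your item (ii) uses.

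This is not a technicality. The transport equations along $\hor$ for $\partial_r^k(r\phi)|_\hor$ can be solved for \emph{any} $\phi_\hor$, so nothing in your Step 1 actually selects the horizon data. The obstruction behind \cref{intro:thm:aag} lives on $\{(r-M)t_*\sim1\}$.

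The paper builds solutions that are smooth across $\hor$ and decay at $\F$ in three moves:
\begin{itemize}
\item It commutes with the near-horizon conformal Killing field $\mathcal{T}=v^2\partial_v-2(1+v(r-M))\partial_r$ enough times that the source grows.
\item It solves a \emph{forward} initial--boundary value problem, with zero data at $t_*=1$ and a Dirichlet condition at $r=cM$. The solution of this problem is automatically smooth across $\hor$.
\item It integrates back along the integral curves of $\mathcal{T}$ (\cref{map:lemma:T-1}) to recover the decay.
\end{itemize}
The horizon data $\phi_\hor$ is then read off from the resulting ansatz.

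Likewise, the non-uniqueness comes from $\F$, not from your front face. The relevant freedom is the homogeneous solutions $v^{-p}\bar\phi_p((r-M)/v)$ of $\Box_{g_{\mathrm{near}}}$, which are smooth at $\hor$ only for $p\in\N$; an example is $\phi_1$, which carries a nonzero Aretakis charge. On your front face, $\Normal{\K}$ is uniquely invertible on the relevant weights (\cref{an:prop:K}), so it offers no freedom.

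Your Step 2 is broadly compatible with the paper's correction argument: backward weighted $T$-energy with $r^p$-type currents near $\scri$ and $\hor$, and a loss of horizon regularity governed by the decay rate. The paper commutes with $T$, scaling, rotations and $\mathcal{T}$ rather than $Y$. Either way, Step 2 only yields the theorem once the ansatz is built on the finer compactification, with the $\ip$ and $\F$ problems solved globally.
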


	\begin{figure}[htbp]
		\centering
		\includegraphics[width=0.4\textwidth]{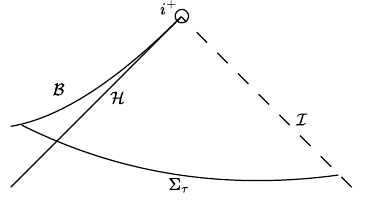}
		\caption{Depicted is the confromal compactification (Penrose diagram) of $\M_{\mathfrak{t}}$ together with the hypersurfaces: (1) the horizon $\hor=\{r=M\}$, (2) partial Cauchy hypersurface $\Sigma_{\tau}=\{t_*=\tau\}$, (3) spacelike hypersurfaces $\mathcal{B}=(M-r)v=\mathfrak{t}$, (4) fictive asymptotic boundary called null infinity $\scri$.}
		\label{fig:penrose}
	\end{figure}
	
	\paragraph{Overview:} The rest of the introduction is structured as follows:
	
	i) In \cref{intro:sec:motivation}, we review previous results regarding scattering theory on black hole spacetimes and put \cref{intro:thm:rough} into a wider context. We also expand on its possible extensions in \cref{intro:sec:mult_bh}.
	
	ii) Next, in \cref{intro:sec:results}, we present a precise form of the main theorems  together with the appropriate bookkeeping tools (function spaces).
	
	iii) Finally, in \cref{intro:sec:proof}, we explain the main ideas of the proof.
	
	\subsection{Motivation}\label{intro:sec:motivation}
	
	It has been proven (see \cite{angelopoulos_late-time_2018,angelopoulos_late-time_2023,hintz_linear_2023} and the heuristic work \cite{price_nonspherical_1972}) that generic forward solutions of the scalar wave equation that emanate from smooth compactly supported initial data on the exterior (up to and including the event horizon) of \textit{subextremal} black hole backgrounds have polynomial tails, i.e. that

	\begin{equation}
		\Box_{g}\phi=0,\quad \phi, \quad \partial_{t_*}\phi\in C^\infty(\Sigma_0) \quad \underbrace{\implies}_{\text{generic}} \quad r\phi|_{\scri}\sim t_*^{-2}, \quad \phi|_{\hor}\sim t_*^{-3}.
	\end{equation}
	It is also well understood, that generic polynomially decaying scattering data at $\scri$ and $\hor$ yield non smooth solutions in the neibhourhood of $\hor$, \cite{dafermos_scattering_2018,dafermos_time-translation_2017}: for all $p>1$  there exists smooth $\psi_\scri \in \mathcal{O}(t_*^{-p})$, such that the unique scattering solution of \cref{intro:eq:scattering} with $\phi_\hor=0$ satisfies $\phi\notin H^1_{\mathrm{loc}}(\M)$.
	These two imply that the \emph{smoothness} of $\phi$ implies a certain \emph{matching} condition between $\phi_\hor$ and $\psi_\scri$.
	
	In what follows, we present in more detail the results described above in order to motivate the study of \cref{intro:thm:rough} and discuss its possible extensions.
	
	\subsubsection{Scattering on black hole backgrounds}\label{intro:sec:scat}
	
	In the important work \cite{dafermos_scattering_2024}  Dafermos-Holzegel-Rodnianski construct \textit{scattering} solutions to the Einstein vacuum equations that settle down to subextremal black hole solutions. The starting point of their analysis was the prescription of exponentially decaying scattering data on $\scri$ and $\hor$. This exponential decay is necessary in light of the blue-shift effect that takes place in a neighbourhood of $\hor$ for scattering problems. Indeed, the more regular the required spacetime is, the faster exponential decay is needed for the scattering data in \cite{dafermos_scattering_2024}. This result is unsatisfying from the perspective of the forward evolution where we expect precisely polynomially decay for the geometric components of the Einstein equations at $\scri$, see \cite{dafermos_lectures_2008,angelopoulos_late-time_2018,hintz_sharp_2022,luk_late_2024,ma_sharp_2023}.
	
	The interplay between decay and regularity is already present in the case of scalar wave equations on black holes spacetimes. In \cite{dafermos_time-translation_2017}, it was shown that on all subextremal Kerr black holes there exist polynomially decaying scattering data on $\scri$, such that if trivial data ($\phi_\hor=0$) is prescribed on $\hor$, the resulting solution will not be in $H^1_{\mathrm{loc}}$ near $\hor$.\footnote{These results can be improved to allow for slow exponential decay along $\scri$, depending on the surface gravity of the black hole.} In this case, we know that the generic decay rate of forward solution is polynomial for the scalar field, see the references above.
	
	All of these results point to the fact that on subextremal black holes, in order to obtain smooth solutions from scattering data, one needs to find a \textit{matching} between $\psi_\scri$ and $\phi_\hor$ up to super exponentially decaying terms. More precisely, $H^N_{\mathrm{loc}}$ regularity requires that $\psi_\scri$ and $\phi_\hor$ are matched up to $e^{-N\kappa t_*}$ decay rate, where $\kappa$ is the surface gravity of the black hole (and we note that $\kappa > 0$ on a subextremal black holes).\footnote{This is not to say that $\psi_\scri$ and $\phi_\hor$ have no \emph{free} exponentially decaying part that is not influenced by the other, see \cref{an:sec:sharp} for the analogous free part in the extremal setting.} The lack of this matching for generic polynomially decaying data leads to the following conjecture:
	\begin{conjecture}[\cite{dafermos_scattering_2024}]\label{intro:conj:dhr}
		For the Einstein vacuum equations, given smooth scattering data settling down to a (possibly extremal) Kerr solution on $\hor$ and $\scri$ at a fast enough inverse polynomial decay rate, there exists a vacuum spacetime $(M, g)$ ``bounded by” $\hor$ and $\scri$, attaining the data, and that is regular away from $\hor$. However, for generic such data converging to a subextremal Kerr solution, the Christoffel symbols of the resulting metric fail to be locally square integrable near the horizon.
	\end{conjecture}
	
	Focusing on the case of extremal Reissner--Nordstr\"{o}m black hole spacetimes, where $\kappa=0$, in \cite{angelopoulos_non-degenerate_2020} it was proven that the scattering data for $\phi_\hor,\psi_\scri$ can be freely prescribed at a super polynomial decay rate and one still obtains smooth solutions.
	More precisely, in \cite{angelopoulos_non-degenerate_2020} it was shown that $H^N_{\mathrm{loc}}$ regularity requires $t_*^{-N}$ decay of the respective data:
	
	\begin{theorem}[Theorem 4.4 of \cite{angelopoulos_non-degenerate_2020}]\label{intro:thm:aag}
		Let $\psi_\scri \in \O(t_*^{-N})$, $N>1$ and $\phi_{\hor} \in \O(t_*^{-N-1})$. Then the unique solution of \cref{lin:eq:main}, $\phi$, satisfies $\phi\in H_{\mathrm{loc}}^N(\M)$ all the way up to and including the event horizon.
	\end{theorem}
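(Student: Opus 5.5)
We plan to follow the standard backwards-scattering scheme: solve on finite regions, derive uniform weighted energy bounds, and pass to a limit. Fix a large $\tau_\infty$. We solve the linear wave equation backwards from truncated data on the null boundary $\hor\cup\scri$ restricted to $t_*\le\tau_\infty$, with trivial data on $\Sigma_{\tau_\infty}$. Concretely, we use cut-offs $\chi(t_*-\tau_\infty)$ multiplying $\phi_\hor$ and $\psi_\scri$, so that finite-speed/characteristic local well-posedness produces a smooth solution $\phi_{\tau_\infty}$ on $\{t_*\le \tau_\infty\}$. The theorem then reduces to bounds on $\phi_{\tau_\infty}$, in $H^N$ of each compact $K$, that are uniform in $\tau_\infty$. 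Given these, a weak-limit (or Cauchy-sequence) argument yields the scattering solution $\phi\in H^N_{\mathrm{loc}}$ attaining the data.

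The uniform bounds come from backwards energy estimates on the slab $\{\tau\le t_*\le\tau_\infty\}$.

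1. At the zeroth order, we use the $T$-energy. Because $T$ is Killing and causal up to and including $\hor$ in the extremal case, the identity gives
\[
E_T[\phi](\Sigma_\tau)\le \int_{\hor\cap\{t_*\ge\tau\}}|T\phi_\hor|^2+\int_{\scri\cap\{t_*\ge\tau\}}|T\psi_\scri|^2 .
\]
This is finite since $T\psi_\scri=\O(t_*^{-N-1})$. The $T$-energy degenerates at $\hor$, so it does not control transversal derivatives.

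2. To recover control of $Y\phi$ and higher transversal derivatives $Y^k\phi$ ($Y$ transversal to $\hor$), we commute with $Y$. In extremal RN there is no redshift, and commuting with $Y$ produces terms that do not carry a good sign. We therefore use the Aretakis-type hierarchy together with $r^p$/$(r-M)^{-p}$ weighted estimates. The key structural fact is the Couch–Torrence conformal inversion symmetry of extremal RN, which exchanges $\hor$ and $\scri$. Under it, $(r-M)^{-p}$-weighted estimates near $\hor$ correspond to the Dafermos–Rodnianski $r^p$ estimates near $\scri$. Each transversal derivative near $\hor$ corresponds to one power of $r^2\partial_r$ near $\scri$.

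3. Backwards, each step in the hierarchy costs one power of $t_*$. Integrating in $\tau$ (Hardy/pigeonhole in the backwards direction), the $k$-th order energy is bounded by data norms weighted with $t_*^{2k}$ roughly. The hypotheses $\psi_\scri\in\O(t_*^{-N})$ and $\phi_\hor\in\O(t_*^{-N-1})$ supply exactly enough decay for $k\le N$. The control of $\Gamma$-derivatives of the data provided by $\O(\cdot)$ lets us commute with $T$ and the angular Killing fields freely.

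We expect step 3 to be the main obstacle: closing the backward transversal hierarchy without a redshift. Commuting with $Y$ introduces lower-order terms whose signs are wrong in the backwards direction. We would first treat the fixed spherical modes separately, where the Aretakis conserved quantities appear as boundary terms. Then we would use the Couch–Torrence symmetry to transfer the near-$\scri$ $r^p$ hierarchy to $\hor$, paying $t_*$ weights in each commutation.

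Finally, Sobolev embedding on $K$ converts the energy bounds into the $H^N_{\mathrm{loc}}$ conclusion. Uniqueness follows from the zeroth-order $T$-energy identity applied to the difference of two solutions with the same data.
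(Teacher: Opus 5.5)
The paper does not prove this statement. It is quoted from \cite{angelopoulos_non-degenerate_2020} only as a point of comparison for \cref{intro:thm:rough}, so there is no in-paper argument to check yours against. As a sketch, your outline is consistent with the strategy of that reference: truncated backward problems, exact conservation of the $T$-flux through $\hor\cup\scri$, a backward time-weighted $r^p$ hierarchy at $\scri$ transferred to $(r-M)^{-p}$ estimates at $\hor$ by the Couch--Torrence map, one power of $t_*$ lost per transversal derivative, and separate treatment of low modes.

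The point your Step~3 still has to make precise is where that loss comes from. The horizon values $\partial_r^j(r\phi)|_{\hor}$, and their mirror images $(r^2\partial_r)^j(r\phi)|_{\scri}$, are not part of the data, yet they enter the boundary terms of the commuted estimates. They are fixed by transport equations along $\hor$; for $\Box_{\ern}\phi=0$ the first is $2\partial_v\partial_r(r\phi)|_{\hor}=-M^{-1}\slashed{\Delta}\phi_\hor$. These must be integrated backwards from $t_*=\infty$. The truncation forces the conserved Aretakis charges of the limit to vanish, and each such integration costs one power of $t_*$ and needs integrable decay of the data.

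For contrast, the backward estimate the paper does prove, \cref{lin:prop:main} (inhomogeneous problem with trivial data), reaches the same one-power-per-derivative count by a different mechanism:
\begin{itemize}
\item The time weight is built into the multiplier $J^T_{N,\delta}=\frac{1}{N}t_*^{N-\delta}t^{\delta}T\cdot\T$, so the backward divergence identity gives a positive spacetime bulk directly (\cref{curr:lem:ext}).
\item Only the $p=1$ currents built from $r\partial_r$ and $(r-M)Y$ are used, and zeroth-order terms come from the Poincar\'e inequality \cref{curr:lemma:poincare}.
\item Transversal regularity comes from commuting not with $Y$ but with the near-horizon conformal field $\mathcal{T}=v^2\partial_v-2(1+v\zeta)\partial_\zeta$, $\zeta=r-M$. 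This is an exact symmetry of $v^2(v+2\zeta^{-1})^2\Box_{g_{\mathrm{near}}}$ (\cref{lin:lemma:near_horizon}) and loses one order of decay per application.
\end{itemize}
This avoids the enhanced log-blueshift that commuting with $v^{-1}Y$ produces (\cref{an:rem:log_redshift}), and needs neither a mode decomposition nor the Aretakis conservation laws.

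Your route, like that of \cite{angelopoulos_non-degenerate_2020}, is sharper in weights and regularity and treats $\hor$ and $\scri$ symmetrically. It relies on exact structure: the global Couch--Torrence symmetry and mode-wise conservation laws. The paper's route is cruder, with a loss at $\K$ from trapping and regularity limited by the decay rate. In exchange it survives forcing, the nonlinearity, and the multi-black-hole metrics of \cref{sec:multi}.
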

	
	Let us summarize the main differences between \cref{intro:thm:rough,intro:thm:aag}:
	\begin{itemize}
		\item While the result of \cite{angelopoulos_non-degenerate_2020} requires fast decay of the scattering data to obtain that \emph{the unique} solution is of higher regularity, we obtain high regularity even when the data has a slow polynomial decay.
		\item \cref{intro:thm:aag} studies a well posed scattering problem, and the regularity of its unique resulting solution, whereas in \cref{intro:thm:rough} we prescribe the \emph{data at $\scri$} and \emph{smoothness} in order to obtain a (non-unique) $\phi_\hor$.
		\item The methods of \cite{angelopoulos_non-degenerate_2020} are more precise in terms of the finite regularity assumptions made on $\phi_\hor,\psi_\scri$ and the solution $\phi$.  Restricting to finite $N$ regularity in \cref{intro:thm:rough} for $\phi_\hor,\psi_\scri$, the corresponding solution $\phi$ is only $H_{\mathrm{loc}}^{N'}$ regular for $N'\sim N$ with an implicit constant we do not keep track of.\footnote{This loss of regularity is not a problem even for nonlinear problems, as one can simply peel the leading order part of the solution as done for instance in \cite[Theorem 6.3]{kadar_scattering_2025}.}
		\item The main results of \cite{angelopoulos_non-degenerate_2020} construct solutions of finite regularity as mentioned above. They can also give smooth solutions, but only if the assumed decay of the scattering data, both at the horizon and at infinity, is \emph{superpolynomial}, see in particular Theorem 4.5 of \cite{angelopoulos_non-degenerate_2020}.
		\item The results of \cite{angelopoulos_non-degenerate_2020} apply only to linear waves on extremal Reissner--Nordstr\"{o}m backgrounds, while we extend \cref{intro:thm:rough} to both nonlinear equations, and to equations on multi-black hole  backgrounds, see \cref{intro:sec:results}.
	\end{itemize}
	
	\subsubsection{Construction of multi-black hole solutions}\label{intro:sec:mult_bh}
	The ``final state conjecture'' in general relativity\footnote{It can be considered to be the general relativistic analogue of the ``soliton resolution conjecture" for dispersive equations, that is, a soliton resolution conjecture for the Einstein equations.} roughly states, that electro-vacuum solutions, that is, solutions of the Einstein--Maxwell equations
	\begin{equation}\label{eq:einstein-maxwell}
		\begin{split}
			& \mathrm{Ein}[g]=R_{\mu \nu} - \frac{1}{2} g_{\mu \nu} R = T^{EM}_{\mu \nu} , \\ &  dF=0,\quad d\star F=0 \mbox{   where   } T^{EM}_{\mu \nu} \doteq F_{\mu\alpha}F^\alpha{}_\nu-\tfrac 14g_{\mu\nu}F_{\alpha\beta}F^{\alpha\beta} ,
		\end{split}
	\end{equation}
	emanating from ``generic'' asymptotically flat Cauchy data, eventually settle down to a finite collection of Kerr–Newman black holes that move apart from one another. 
	
	Before addressing the ``final state conjecture'', one may attempt to construct multi-black hole solutions, that are also asymptotically flat spacetimes, with black holes following approximately Keplerian orbits. 
	In the vacuum case, i.e. $F=0$, a naive approach to construct such solutions proceeds by i) take a metric $g_\m$, that is isometric to black holes in a neighbourhood of timelike geodesics $x=zt$ for some finite collection of $z\in\{\abs{x}<1\}$; ii) solve $\mathrm{Ein}[g_\m+h]=0$ with trivial scattering data on the horizons and $\scri$ for $h$.
	Since $\mathrm{Ein}[g_\m]\sim t_*^{-p}$ decays only polynomially, \cref{intro:conj:dhr} is a clear obstruction to obtain a smooth solution $h$ in the subextremal case.
	We believe that this obstruction still exists in the extremal case:
	\begin{conjecture}\label{intro:conj:extremal}
		For smooth scattering data for \eqref{eq:einstein-maxwell} settling down to an extremal Kerr--Newman solution on $\hor$ and $\scri$ at a fast enough inverse polynomial rate, that is $t_*^{-N}$ for $N$ large enough, there exists a solution of \eqref{eq:einstein-maxwell} $(M, g)$ ``bounded by'' $\hor$ and $\scri$, attaining the data, that is regular away from $\hor$.
		However, for generic data as above, the Christoffel symbols of the resulting metric will not be $C^N$ regular near the horizon.
	\end{conjecture}
	
	Clearly \cref{intro:conj:extremal} could be an obstruction to obtain high regularity scattering solutions of \eqref{eq:einstein-maxwell}.
	In this sense, \cref{intro:thm:rough} is an attempt to solve a matching problem in the scalar setting, and to develop techniques that could be applied to construct solutions that settle down to smooth extremal black holes via scattering for the fully nonlinear problem in the context of the Einstein--Maxwell equations.
	
	\subsection{Main results and proof ideas}\label{intro:sec:results}
	
	In this section, we present some analytic tools and then state the main results of the paper.
	We begin by introducing a compact manifold $\Mcomp$, such that the exterior of an extremal Reissner-Nordstorm spacetime is $\M\cong\mathring{\Mcomp}$, and the regularity-decay statements on $\Mcomp$ are easily captured by conormal regularity on $\Mcomp$.
	We recall that conormal regularity on a manifold with boundary corresponds to regularity with respect to vector fields that are tangential to the boundary.
	For instance on $X=[0,1)_x\times \sphere_y$, we can take the $C^\infty(X)$ span of  $\Vb(X)=\{x\partial_x,\partial_y\}$.
	
	We define (see \cref{not:def:Mcomp}) $\Mcomp$ so that $\partial\Mcomp$ is composed of the following parts: null infinity ($\scri$), the endpoint of (escaping) null geodesics; timelike infinity ($i^+$), the endpoint of (hyperbolic) massive geodesics; spatially compact infinity ($\K$), the endpoint of curves at a fixed non-zero distance form $\hor$; the near horizon region ($\F$), the endpoint of curves that satisfy that $(r-M)t_*$ remains bounded. For a pictorial representation see \cref{fig:compactification}.
	The reason for the introduction of $\Mcomp$, with such a detailed boundary description, is that there are nontrivial "dynamics" happening at each scale corresponding to the different boundaries.\footnote{These dynamics, are present due to the fact that $\ern$ remains a non-degenerate Lorentzian metric all the way up to the boundary on $\Mcomp$, provided that it is expressed relative to an appropriate base. See \cite{hintz_microlocal_2023-1} for more on this point.}
	
	The function spaces that we use to describe the solution $\phi$ on $\Mcomp$ is $\Aext{b}^{q_\scri,q_+,q_\K,q_\F}(\Mcomp)$, which includes functions that decay at the rate $\rho^{-q_\bullet}$ towards the boundary $\bullet\in\{\scri,i^+,\K,\F\}$ (for $\rho$ an appropriate variable that captures such decay), together with all conormal derivatives (adapted to each boundary), and which are also smooth near $\hor$.
	For instance $r-M\in\Aext{b}^{-1,-1,0,1}(\Mcomp)$.
	See \cref{not:sec:function_spaces} for more detail.
	
	We write $\Vb$ for a finite collection of vector fields that span all conormal vector fields over $C^\infty(\Mcomp)$.
	For instance, in a neighbourhood of $i^+\cap \K$, these would be given by $\{t\partial_t,r\partial_r,\Omega_{i}\}$, where $\Omega_{i}, i\in \{1,2,3\},$ are the standard rotation vectorfields with respect to the unit round sphere.

	\begin{figure}[htbp]
		\centering
		\includegraphics[width=0.4\textwidth]{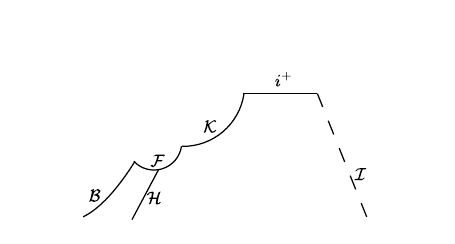}
		\caption{Above we have drawn the analytic compactification of an extremal Reissner--Nordstr\"{o}m spacetime denoting by $\mathcal{B}$ a spacelike hypersurface in the interior of the black hole region, by $\mathcal{H}$ the event horizon, by $\mathcal{I}^+$ future null infinity, and by $i^+$ timelike infinity, while $\mathcal{F}$ and $\mathcal{K}$ are defined above.
			Note that the local coordinates around $\F,\K,i^+$ are $\{(r-M)t_*,x/\abs{x},t_*^{-1}\}$, $\{x,t_*^{-1}\}$,and $\{\frac{x}{t},t_*^{-1}\}$ respectively.}
		\label{fig:compactification}
	\end{figure}
	
	\subsubsection{Statements of the results}
	
	We next state the main theorems in the case of a single background of an extremal Reissner--Nordstr\"{o}m black hole spacetime that we denote by $(\mathcal{M} , \ern )$.
	\begin{theorem}[Matching conditions for semilinear waves on $(\mathcal{M} , \ern)$]\label{intro:thm:sem}
		Let 
		$$\mathcal{N}[\phi]=\phi^3+\ern(\dd\phi,\dd\phi), $$
		and let $q>3/2$ and $\psi_\scri\in \A{b}^{q-1}(\scri)$.
		Then, there exists \emph{non-unique} scattering data $\phi_{\hor}\in\A{b}^{q}(\hor)$ on the horizon, such that there exists a unique scattering solution $\phi$ to \cref{intro:eq:scattering} existing in $\{t_*\in(\tau_0,\infty)\}$ for $\tau_0\gg1$ that depends on $\phi_{\hor}$. Furthermore
		$$ 		\phi\in\Aext{b}^{1,(q,q,q)-}(\Mcomp) , $$
		and in particular it is smooth in a neighbourhood of the horizon $\hor$.
		
		The same result holds for \cref{intro:eq:scatteringB}.
		
		The same result holds for linear waves, i.e. for $ \mathcal{N}[\phi ] = 0 $, but in this case $\tau_0$ can be arbitrary.
	\end{theorem}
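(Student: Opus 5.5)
We follow the two-step scheme announced in the abstract: first an approximate solution with rapidly decaying error, then a correction by backwards energy estimates. Extremality enters only in the second step.

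\textbf{Step 1: approximate solution.} Working on $\Mcomp$, we build $\phi_{\mathrm{app}}\in\Aext{b}^{1,(q,q,q)-}(\Mcomp)$ with $r\phi_{\mathrm{app}}|_\scri=\psi_\scri$ and
\[
\Box_{\ern}\phi_{\mathrm{app}}-\mathcal{N}[\phi_{\mathrm{app}}]\in\Aext{b}^{\infty,(N',N',N')}
\]
for some large $N'$. We iterate over the boundary faces, inverting at each the model operator $\Normal{\bullet}$ obtained by restricting $\Box_{\ern}$ to that face.
\begin{itemize}
\item At $\scri$ we solve the transport equations for the $r^{-k}$ coefficients of $r\phi$, starting from $\psi_\scri$.
\item At $i^+$ the normal operator is a Minkowski-type operator on the hyperboloid, i.e.\ in $x/t$. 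It is solved with the prescribed asymptotics at $\scri\cap i^+$ and matched towards $\K$.
\item At $\K$ the model is the stationary operator $\Box_{\ern}$ acting on $t_*^{-s}$-homogeneous terms. Since we only need polynomial gains, it suffices to solve elliptic/ODE problems in $r$ at the indicial roots.
\item At $\F$ the model is the near-horizon (Bertotti--Robinson-type) operator in the variable $(r-M)t_*$. We solve it so that the result extends smoothly across $\hor$. This is where the non-uniqueness enters: at each step we may add any homogeneous solution of the $\F$-model that is smooth at $\hor$.
\end{itemize}
Each iteration gains a power of decay at every face. The cubic and null-form nonlinearities are treated perturbatively, since for $q>3/2$ they map $\Aext{b}^{1,(q,q,q)-}$ into better-decaying spaces; for the null form we use that $\ern(\dd\phi,\dd\phi)$ decays faster at $\scri$. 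We then Borel-sum the iterates. A finite number of steps suffices, since only $N'$ large is needed.

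\textbf{Step 2: correction.} Write $\phi=\phi_{\mathrm{app}}+\varphi$, with $\varphi$ solving
\[
\Box\varphi=\mathcal{N}[\phi_{\mathrm{app}}+\varphi]-\mathcal{N}[\phi_{\mathrm{app}}]-E,
\]
where $E$ is the error from Step 1, and with trivial data at $\scri$ (and at $\hor$, or on $\mathcal{B}$ for \eqref{intro:eq:scatteringB}). We solve it backwards from $t_*=\infty$: take finite-time problems with zero data on $\Sigma_{\tau_1}$ and let $\tau_1\to\infty$. The uniform bounds come from backwards energy estimates for the $T$-energy together with commuted estimates.
\begin{itemize}
\item In extremal RN the horizon has zero surface gravity, so there is no blue-shift. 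Backwards, the non-degenerate energy grows at most polynomially: commuting with $\partial_r$ costs a power of $t_*$, as in \cite{angelopoulos_non-degenerate_2020}.
\item Consequently $C^N$ bounds on $\varphi$ follow if $N'\gg N$. This is the loss of derivatives/decay relative to the data that we accept.
\item To recover conormal regularity we commute with $\Vb$: $t_*\partial_{t_*}$, the scaling field, rotations, and $T$.
\end{itemize}
In the semilinear case we run a bootstrap on $[\tau_0,\tau_1]$. The null condition and $q>3/2$ make the nonlinear terms integrable, but only for $\tau_0$ large depending on the size of $\phi_{\mathrm{app}}$, hence on $\phi_\hor$. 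In the linear case no smallness is needed, so $\tau_0$ is arbitrary, using global backwards existence as in \cref{intro:thm:aag}.

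Finally, set $\phi_\hor:=\phi|_\hor\in\A{b}^{q}(\hor)$. Uniqueness of $\phi$ given $(\psi_\scri,\phi_\hor)$ follows from the same energy estimates applied to the difference of two solutions. The extension to \eqref{intro:eq:scatteringB} is identical: the region beyond $\hor$ up to $\mathcal{B}$ has length $\sim v^{-1}$, and the same estimates apply there.

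\textbf{Main obstacle.} We expect the hardest part to be the quantitative backwards non-degenerate estimate near $\hor$ in Step 2. One must show that the polynomial growth of higher $\partial_r$-commuted energies is bounded by a fixed power $t_*^{CN}$ that can be absorbed by $N'$. The vanishing-surface-gravity structure (the Aretakis-type hierarchy) is used essentially here. A secondary difficulty is the solvability of the $\F$-model operator at the required indicial roots, so that each iterate remains smooth across $\hor$.
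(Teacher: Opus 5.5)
Your overall architecture is the paper's. You build an ansatz by inverting the model operators at $i^+$, $\K$, $\F$ and Borel summing, then correct it with a backwards weighted $T$-energy estimate and a bootstrap that forces $\tau_0\gg1$ in the semilinear case. Your Step~2 is essentially \cref{lin:prop:main} combined with \cref{sec:scat}.

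The gap is in Step~1, at exactly the point that carries the matching content of the theorem. You assert that the $\F$-model can be inverted with output smooth across $\hor$, but you give no mechanism. The one you hint at ($t_*^{-s}$-homogeneous terms, ODEs in $(r-M)t_*$, indicial roots) does not apply under the hypotheses:
\begin{itemize}
\item Since $\psi_\scri\in\A{b}^{q-1}(\scri)$ is merely conormal, the errors produced by the iteration are conormal in $t_*$ and have no decomposition into homogeneous pieces.
\item This is harmless at $\K$, where $\Normal{\K}$ has no time derivatives and is inverted parametrically in $t_*$. But the models at $i^+$ and $\F$ are dilation invariant, so reducing them to problems on the faces requires a Mellin transform with estimates uniform in the spectral parameter.
\item At $\F$ this is genuinely delicate. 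The reduced operator, continued across the horizon, is elliptic outside and hyperbolic just inside. For non-integer $p$, smoothness at $\hor$ means selecting the $\mathfrak{r}^0$ rather than the $\mathfrak{r}^p$ indicial branch, where $\mathfrak{r}=(r-M)v$. At integer $p$ the Frobenius recursion degenerates, which is exactly where the resonant solutions $\bar\phi_p$ of \cref{intro:cor:non-unique} live.
\end{itemize}
None of this is addressed, so Step~1 as written does not cover the conormal statement.

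The missing idea (\cref{an:prop:F_interior}) is to avoid spectral analysis altogether and solve the model problems as evolution problems.
\begin{itemize}
\item By the Couch--Torrence symmetry (\cref{map:rem:isometry}), $\Normal{\F}$ is conjugate, up to a weight, to $\Box_\eta$ with $\hor$ in the role of $\scri$. Horizon smoothness therefore becomes conformal smoothness at $\scri$.
\item The counterpart of the Minkowski conformal Killing field $u^2\partial_u+v^2\partial_v$ is $\mathcal{T}_{\mathrm{near}}=v^2\partial_v-2(1+v(r-M))\partial_r$, which commutes with $v^2(v+2(r-M)^{-1})^2\Box_{g_{\mathrm{near}}}$.
\item Commuting enough times makes the source \emph{grow}. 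One then solves a \emph{forward} problem with trivial Cauchy data and a Dirichlet condition at $r=cM$. This problem is regular across $\hor$ and into the interior, and forward weighted energy estimates close for it.
\item Finally, a decaying solution is recovered by integrating back along the integral curves of $\mathcal{T}$ (\cref{map:lemma:T-1}).
\end{itemize}
Likewise at $i^+$, the paper solves the full Minkowski scattering problem with zero radiation field by energy estimates (\cref{an:prop:i+_conormal}).

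Smaller points:
\begin{itemize}
\item No iteration at $\scri$ is needed. An error of order $\rho_\scri^3$ already lies in the space on the right of \cref{lin:eq:main}, and the solutions are only conormal at $\scri$ anyway.
\item Non-uniqueness within $\A{b}^{q}(\hor)$ requires adding homogeneous solutions that decay at rate at least $q$ and are nonzero on $\hor$, such as $\bar\phi_p$ with integer $p\geq q$.
\item The backwards estimate you single out as the main obstacle is the easy part. With an $O(\rho^\infty)$ ansatz error at $i^+,\K,\F$, one may take the decay as large as needed for any number of $\Vc$-commutations, and uniqueness identifies the resulting solutions.
\end{itemize}
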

	
	\begin{remark}[Interior solution]
		Since, for quasilinear problems, the final horizon is not a-priori fixed, it is desirable to extend the scattering solution into the black hole regions.
		Indeed \cref{intro:thm:sem} is shown in a spacetime region $\{r-M>-t_*\}$ penetrating the black hole.
	\end{remark}
	
	\begin{remark}[Conformal, conormal regularity]\label{intro:rem:conf_vs_con}
		Let us already note that the solution constructed in \cref{intro:thm:sem} are \emph{not} conformally smooth at $\scri$, i.e.~are not regular with respect to the $r^2\partial_r |_u$ commutators.
		This is in contrast with the solutions generated by \cref{intro:thm:aag} in the superpolynomially decaying data case.
		Nonetheless, as discussed in \cite{gajic_relation_2022,kehrberger_case_2024-1,kadar_scattering_2025}, conformal smoothness is not a physically justified assumption, whereas regularity across $\hor$ certainly is.
		This is the motivation to work in a setting that is not symmetric with respect to $\hor$ and $\scri$, as for instance is the case for \cref{intro:thm:aag}.
	\end{remark}
	
	\begin{remark}[Nonlinearity]\label{intro:rem:nonlinearity}
		Regarding the term $\mathcal{N}$ and the corresponding decay rate $q$, we note that \cref{intro:thm:sem} is not optimal or borderline in any sense and it was chosen rather arbitrarily.
		Nonetheless, the proof yields a clear way to classify all $(\mathcal{N},q)$ pairs that are admissible for nonlinear problems, see \cref{in:rem:admissible,an:rem:optimality} for more details.
	\end{remark}

	\begin{remark}[Finite regularity]\label{intro:rem:regularity}
		From the proof it also follows that if we only assume that $\abs{\Vb^N\psi_\scri}\leq t_*^{-q+1}$, then correspondingly $\phi$ will only be $H^{N'}_{loc}$ regular for $\frac{1}{C} N \leq N'\leq C N$, where $C$ is an implicit constant that we do not keep track of, but which is independent of $N$ and $q$.
		The main reason for being rather ignorant about the regularity is that for the construction of many-body solutions, the scattering data in the physically important setting is 0, see \cite{kehrberger_case_2024-1}, and only the velocity and masses of the scattering bodies are given. 
		The latter is a discrete set of numbers and puts no restriction on the regularity of the solutions that are constructable.
	\end{remark}
	
	\begin{remark}[Polyhomogeneity]
		In \cref{sec:poly}, we prove an analogous result to \cref{intro:thm:sem}, by assuming that the scattering data at null infinity admit a polyhomogeneous expansion.
		In particular, if we assume that $\psi_\scri$ has a polyhomogeneous expansion, that is
		\begin{equation}\label{intro:eq:main_data_assumption_poly}
			\psi_\scri=\sum_{p=2}^q u^{-p} \psi_{p,k}+\psi^{\mathfrak{Err}}_q,\quad \psi^{\mathfrak{Err}}_q\in\A{b}^{q+1}(\scri),\quad \psi_{p,k}\in C^\infty(\sphere),
		\end{equation}
		for some $q\in\N_{>1}$, then there exists $\phi_{\hor}\in \A{b}^{3}(\hor)$ that admits a polyhomogeneous expansion, such that for the unique scattering solution $\phi$ to \cref{intro:eq:scattering} we also have a polyhomogeneous expansion towards $\partial\Mcomp$.
	\end{remark}

	Although, the above results already imply the existence of \emph{some} solutions to \cref{intro:eq:scattering}, the proof also yields a large number of such solutions parametrized by residual freedom, connected to the so called Aretakis and Newman--Penrose charges.
	\begin{cor}\label{intro:cor:non-unique}
		Let $\bar{\phi}_p=\phi_p\big((r-M)t_*\big)t_*^{-p}\in\Aext{b}^{\infty,\infty,p+1,p}(\Mcomp)$, for some $\phi_p$ smooth in $\mathring{F}$ and $p\geq1$ satisfying $\Box_{\ern}\bar{\phi}_p\in \A{ext}^{\infty,\infty,p+1,p+1}(\Mcomp)$.
		There exists a solution $\Box_{\ern}\phi_p=0$ with $r\phi_p|_\scri=0$ and $\phi_p-\bar{\phi}_p\in\Aext{b}^{1,p+1,p+1,p+1}(\Mcomp)$.
	\end{cor}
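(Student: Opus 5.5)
The plan is to prove \cref{intro:cor:non-unique} by the same two-step scheme used for \cref{intro:thm:sem}: first an approximate solution, then a correction obtained by backwards energy estimates.

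\textbf{Step 1: approximate solution.} Start from $\bar{\phi}_p$, which is supported (up to $\O(\rho^\infty)$ terms) near $\F$, since it lies in $\Aext{b}^{\infty,\infty,p+1,p}(\Mcomp)$. By hypothesis the error $f_p\doteq\Box_{\ern}\bar{\phi}_p$ lies in $\A{ext}^{\infty,\infty,p+1,p+1}(\Mcomp)$. It therefore gains one order at $\F$ relative to $\bar\phi_p$, and is already one order better than $\bar\phi_p$ at $\K$ (more precisely, it is no worse than $t_*^{-p-1}$ there). Also $r\bar{\phi}_p|_\scri=0$, because $\bar{\phi}_p$ vanishes to infinite order at $\scri$ and $i^+$.

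\textbf{Step 2: improve the error.} The goal is to iteratively remove $f_p$ modulo terms decaying as fast as we like at every boundary face, without destroying the leading term at $\F$. We solve the model problems at each face in order.
\begin{itemize}
\item At $\F$: solve the near-horizon normal operator equation (an ODE in $(r-M)t_*$ after decomposing in spherical harmonics) for a correction in $\Aext{b}^{\infty,\infty,p+2,p+1}$. The indicial roots of this ODE produce the Aretakis-type freedom; we pick any solution that is smooth across $\hor$.
\item At $\K$: solve the stationary equation $\Box_{\ern}|_{t_*\text{-indep.}}$ for the $t_*^{-p-1}$ coefficient. This introduces terms in $\A{b}^{\cdot,p+1,\cdot}$ at $i^+$.
\item At $i^+$: correct using the hyperbolic-space model operator (the Minkowski normal operator at $i^+$), keeping the correction $\O(\rho_\scri^{1})$ at $\scri$ so that the radiation field stays trivial.
\end{itemize}
Iterating and Borel summing gives $\phi_{\mathrm{app}}$ with $\phi_{\mathrm{app}}-\bar\phi_p\in\Aext{b}^{1,p+1,p+1,p+1}$, $r\phi_{\mathrm{app}}|_\scri=0$, and $\Box_{\ern}\phi_{\mathrm{app}}\in\A{ext}^{\infty,N,N,N}$ for $N$ arbitrarily large. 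This is exactly the construction of the approximate-solution step of the main theorem with $\psi_\scri=0$. I expect to reuse the paper's lemmas on solvability of each normal operator equation.

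\textbf{Step 3: exact correction.} Solve $\Box_{\ern}\tilde\phi=-\Box_{\ern}\phi_{\mathrm{app}}$ backwards from $t_*=\infty$ with trivial data on $\scri$ and $\hor$ (or on $\mathcal{B}$). Use the backwards energy estimates with degenerate/non-degenerate $(r-M)$-weighted multipliers available in the extremal case, where the lack of blue shift gives only polynomial loss. The result is $\tilde\phi\in\Aext{b}^{1,N',N',N'}$ with $N'\to\infty$ as $N\to\infty$, and commuting with $\Vb$ gives conormal regularity. Then $\phi_p=\phi_{\mathrm{app}}+\tilde\phi$ satisfies all the claims, provided $N'\ge p+1$.

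\textbf{Main obstacle.} The main obstacle is Step 2 at $\F$ and at the corner $\F\cap\K$: we must check that the normal operator at $\F$ is invertible on the relevant weights, modulo the finite-dimensional obstruction given by the Aretakis charges. We also need the orders to match at the corner. I would try first to treat this via Mellin transform in $t_*$, reading off the indicial roots, and to choose $p$ avoiding those roots, or else to absorb logarithms into the $-$ loss.
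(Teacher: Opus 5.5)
Your proposal is correct and follows the paper's argument. The paper likewise takes $\Box_{\ern}\bar\phi_p$ as the initial error and runs the model-operator iteration of \cref{an:thm:ansatz} with $\psi_\scri=0$ to reach an error in $\Aext{b}^{3,\infty,\infty,\infty}(\Mcomp)$ (weight $3$ at $\scri$, not $\infty$, which is harmless for the energy estimates), then removes that error with the backward scattering construction of \cref{sec:scat}.

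One caution on your ``main obstacle'': since $p\in\N$ and every subsequent $\F$-weight is an integer, you cannot choose weights to avoid the resonant indicial values. The log-absorbing route is therefore the one you need, and it is exactly what \cref{an:prop:F_interior} supplies, via the $\mathcal{T}$-commutation and the $-$ losses, without any Mellin analysis.
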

	In particular, \cref{intro:cor:non-unique} yields a way to construct solutions that are regular in $\M$ and have a non-vanishing Aretakis charge 
	$$H_0 [ \phi ] \doteq \left. \int_{\sphere}\partial_r(r\phi) \, d\omega \right|_\hor ,$$ 
	but zero outgoing radiation $r\phi|_{\scri}=0$.
	
	Finally, to show the robustness of the methods and their relevance to \cref{intro:sec:mult_bh}, we also construct scattering solutions on multi-black hole spacetimes.
	In particular, we focus on asymptotically flat spacetimes $(\M_\m,g_\m)$, which have two or more disjoint subsets $U_z\subset \M_\m$, such that each $U_z$ is isometric to a subset of an exact extremal Reissner--Nordst\"{o}rm black hole with $r/t<\delta$ for Boyer–Lindquist coordinates $(r,t)$.
	We emphasise, that $(\M_\m,g_\m)$ is \emph{not} a solution to the Einstein--Maxwell equations and only serves as a toy problem.
	
	\begin{theorem}[Scattering solutions on extremal multi-black hole spacetimes]\label{intro:thm:multi}
		Let $(\M_\m,g_\m)$ be an ``extremal multi-black hole spacetime'' as described in \cref{sec:multi}, $\psi_{\mathcal{I}} \in \mathcal{A}_b^{q-1} (\mathcal{I} )$, $q > 3/2$, and let $$\mathcal{N}_\m[\phi]= \phi^3 +g_\m(\dd\phi,\dd\phi) . $$
		Then, there exists a smooth solution $\phi$ of the partial scattering problem 
		\begin{equation}
			\Box_{g_\m}\phi=\mathcal{N}_\m[\phi],\qquad r\phi|_\scri=\psi_\scri.
		\end{equation}
	\end{theorem}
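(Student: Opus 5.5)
The proof follows the two-step scheme used for \cref{intro:thm:sem}: first build an approximate solution $\phi_{\mathrm{app}}$ on the multi-black hole spacetime whose error decays fast, then correct it to an exact solution using backwards energy estimates. Throughout we use that $g_\m$ is, by construction, exactly isometric to extremal Reissner--Nordstr\"om in each neighbourhood $U_z$ of the boosted black-hole trajectories $x=zt$, and is a perturbation of Minkowski away from them.

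\textbf{Step 1: approximate solution.} Work on the compactification $\multiComp$ of \cref{sec:multi}. Its boundary consists of $\scri$, a timelike-infinity face $i^+$, and, for each black hole, a copy of the faces $\K$ and $\F$ (suitably boosted). The construction then proceeds in three stages.
\begin{enumerate}
\item Near $\scri$ and $i^+$, solve the model problems iteratively, as in the single black hole case. At $\scri$ we integrate the transport equation along outgoing null directions with data $\psi_\scri\in\A{b}^{q-1}(\scri)$. At $i^+$ we invert the Minkowski model operator (the hyperboloidal/Mellin-transformed operator) on the face away from the points $z$. This produces a function with $r\phi|_\scri=\psi_\scri$ and error gaining decay at $\scri$ and $i^+$.
\item Near each black hole, transfer the single-black-hole construction to $U_z$ by the isometry, in the boosted frame. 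Solve the $\K$ model problem, the stationary operator on ERN, with decaying right-hand side. Then solve the $\F$ model problem, the near-horizon scaling limit in $(r-M)t_*$. The $\F$ solution is chosen smooth across $\hor$, using the freedom of \cref{intro:cor:non-unique}, i.e.\ the Aretakis-type homogeneous solutions.
\item Glue the local solutions with cutoffs. The commutator errors lie in the overlap regions and are again of improved decay, because the expansions match at the corners $\K\cap i^+$ and $\F\cap\K$.
\end{enumerate}
Iterating stages 1--3 finitely many times yields $\phi_{\mathrm{app}}$ with $\Box_{g_\m}\phi_{\mathrm{app}}-\mathcal{N}_\m[\phi_{\mathrm{app}}]\in\A{b}^{Q}$ for any prescribed large $Q$, smooth across all horizons.

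\textbf{Step 2: correction.} Write $\phi=\phi_{\mathrm{app}}+\varphi$ and solve backwards for $\varphi$ with trivial data at $\scri$ and at the horizons (or at the hypersurfaces $\mathcal B$ inside each hole). For the linear problem we use the backwards energy estimates from the ERN analysis localized to each $U_z$, combined with standard Minkowski-type estimates in the far region, with $r^p$-weighted multipliers near $\scri$. These are patched via a partition of unity. The resulting error terms are lower order and decaying, so they can be absorbed once $t_*\ge\tau_0$ is large. The nonlinearity $\varphi^3+g_\m(\dd\varphi,\dd\varphi)$, together with the cross terms involving $\phi_{\mathrm{app}}$, is handled by a contraction argument. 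This uses that the energy estimates lose only polynomially many powers of $t_*$ while the error decays like $t_*^{-Q}$ with $Q$ large. The null structure, together with $q>3/2$, controls the cross terms. Higher regularity comes from commuting with $\Vb$, losing finitely many derivatives as in \cref{intro:rem:regularity}. Smoothness of $\phi$ then follows from the conormal bounds on $\varphi$ together with the smoothness of $\phi_{\mathrm{app}}$ near the horizons.

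\textbf{Main obstacle.} The hard step is the global backwards energy estimate on $g_\m$. Near each extremal horizon the estimate is degenerate, as in ERN, and requires the extremal (non-redshift) multipliers of \cite{angelopoulos_non-degenerate_2020}. These only lose polynomial weights, which is exactly where extremality is used. The problem is to ensure that the cutoff transition regions between $U_z$ and the Minkowskian part do not introduce errors which grow faster than the decay of the error of $\phi_{\mathrm{app}}$ can compensate. The first attempt would be a time-weighted energy $t_*^{-2k}E[\varphi]$ with $k$ fixed. Error terms from the non-stationary gluing are bounded by $t_*^{-1}E$, using that $g_\m$ is stationary in the boosted frame up to $\A{b}^{1}$ terms. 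A Gr\"onwall argument backwards from $t_*=\infty$ then closes, provided $Q\gg k$.
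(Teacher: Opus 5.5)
Your architecture matches the paper's. The ansatz is built modularly by solving the $i^+$, $\K_z$ and $\F_z$ model problems; near each hole the one-hole solvers \cref{an:cor:K,an:prop:F_interior} apply verbatim, because $g_\m=\Phi_z^*\ern$ there. The remainder is then removed by backward weighted energy estimates, in which the $O(t_*^{-1})$ errors from the non-stationary transition zones are beaten by a large polynomial weight.

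The implementation differs in two places.

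At $i^+$, the paper does not invert an indicial operator on $i^+$ with the points $z$ removed, which would force you to handle the conormal behaviour at every corner $i^+\cap\K_z$ at once. Instead it splits $f=f_++\sum_z\chi(x_z/t_z)f$. It applies the single-corner Minkowski result \cref{an:prop:i+_conormal} to each piece, after the boost $\Phi_z$. It then cuts the solutions off to $\bigcap_j\{\abs{x_{z_j}}>10M\}$.

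For the linear estimate, you patch the ERN and Minkowski estimates with a partition of unity and run Gr\"onwall on slice energies. The paper instead constructs global time functions $\tau,\tau_*$ (\cref{multi:lemma:tau}); in the transition zone $\tau=s-\chi z\cdot x$, with $\sup\rho\chi'$ small so that $\dd\tau$ stays timelike. It uses the single multiplier $\tilde T=g_\m^{-1}(\dd\tau,\cdot)$, which equals $T$ far out and $\gamma_z^{-1}T_z$ near each hole. The current $\frac1N\tau_*^{N-\delta}\tau^\delta\tilde T\cdot\mathbb{T}$ is coercive once $N$, depending on $A$, dominates the deformation tensor. Each commutation with $\tilde T$ forces a larger $N$, whence $N\sim_A n$ (\cref{multi:lemma:T_multiplier,multi:lemma:commutators}).

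Your Gr\"onwall exponent plays exactly the role of $N$, so the mechanisms are equivalent. The paper's version avoids commutator terms $[\Box_{g_\m},\chi_z]$ and the need for a common foliation. Yours reuses the one-hole estimates more directly, but it needs a global spacelike foliation (e.g.\ level sets of $\tau_*$). It must also be run at the level of currents for $\Box_{g_\m}$ itself: in the overlap, $\Box_{g_\m}-\Box_{\ern}$ is a second-order perturbation, and treating it as forcing loses a derivative.

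Three statements should be corrected, though none is fatal.

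(i) Smoothness of the $\F_z$-solutions across $\hor_z$ does not come from the freedom in \cref{intro:cor:non-unique}. Those homogeneous solutions are themselves smooth (integer $p$), so they cannot cancel a non-smooth part, and that corollary is derived from \cref{an:thm:ansatz}. Smoothness comes from \cref{an:prop:F_interior}: one solves the near-horizon problem forward across $\hor$ after commuting with $\mathcal{T}_{\mathrm{near}}$, then integrates back along $\mathcal{T}$.

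(ii) The gluing errors are not lower order. $[\Box_{g_\m},\chi_z]\varphi$ carries an $O(t_*^{-1})$ coefficient in front of $\partial\varphi$, the same scale as the bulk term produced by differentiating the weight. So enlarging $\tau_0$ does not absorb them; your Gr\"onwall paragraph is the correct fix. Large $\tau_0$ is needed only for the nonlinear terms and the $\phi_{\mathrm{app}}$-cross terms, which are genuinely smaller (the paper gains $t_*^{-1/2}$ in \cref{lin:lemma:nonlin}).

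(iii) $\Vb$-conormal bounds on the correction give no regularity transversal to $\hor_z$. That requires the $\Vc$ (or $\mathcal{T}$) commutations of \cref{lin:prop:main} and \cref{scat:lemma:interior}; the region between $\mathcal{B}_z$ and $\hor_z$ is isometric to $\McompIn$. The number of such commutations is capped by the decay of the forcing. This costs nothing only because the ansatz error can be made to decay to all orders.
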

	
	\subsection{Ideas of the proof}\label{intro:sec:proof}
	The proof for each theorem consists of two steps, both of which have precursors for instance in \cite{angelopoulos_non-degenerate_2020,hintz_gluing_2023,hintz_lectures_2023}.
	For concreteness, let us discuss the proof of \cref{intro:thm:sem}.
	First, one constructs an approximate solution $\phi_{\mathrm{app}}\in\Aext{b}^{1,q,q,q}(\Mcomp)$ satisfying
	\begin{equation}\label{intro:eq:approx}
		\Box_{\ern}\phi_{\mathrm{app}}-\mathcal{N}[\phi_{\mathrm{app}}]\in \Aext{b}^{3,\infty,\infty,\infty}(\Mcomp), \quad r\phi_{\mathrm{app}}|_{\scri}=\psi_\scri.
	\end{equation}
	Next, we prove $T$ energy estimates for the linear problem $\Box_{\ern}\phi=f$ on function spaces with decaying weight.\footnote{Following \cite{dafermos_scattering_2018}, we refer to the estimate from the current $\partial_t\cdot\T$, for $\T$ the energy momentum tensor, as $T=\partial_t$ energy.}
	Using these inhomogeneous estimates, one obtains the solution via a limiting argument, as all nonlinear terms are perturbative.
	For both approaches the following mapping property, see \cref{map:lemma:normal_ops}, is important to keep in mind with respects to the various weights at infinity:
	\begin{equation}\label{intro:eq:mapping}
		\Box_{\ern}:\Aext{b}^{q_\scri,q_+,q_\K,q_\F}(\Mcomp)\to \Aext{b}^{q_\scri+1,q_++2,q_\K,q_\F}(\Mcomp).
	\end{equation}
	Let us expand on both points.
	
	\paragraph{Construction of an approximate solution}
	For this part of the proof, extremality is not an important assumption.
	Indeed, it follows from the methods of the paper that \cref{intro:eq:approx} can be proved on all subextremal Reissner--Nordstr\"{o}m spacetimes,\footnote{For the approximate solution the only important point is the invertibility of the zero frequency operator $\hat{\Box}_g$, which holds for all Kerr-Newman geometries, and the invertibility of a near horizon model operator defined in \cref{map:eq:model_op} for $\ern$.}
	see \cref{an:sec:subextremal} for more details (however, a polynomial approximation as given by \cref{intro:eq:approx} is not a useful starting point in the subextremal setting, one would need to obtain super exponentially decaying errors).

	The framework and the ideas presented here largely follow the general theme of geometric singular analysis as discussed for instance in \cite{hintz_lectures_2023,hintz_gluing_2023,hintz_gluing_2024-2}.
	
	The solution $\phi_{\mathrm{app}}$ will be constructed inductively. For the construction of the ansatz, the boundary hypersurfaces $i^+,\mathcal{K},\mathcal{F}$ play a crucial role.
	Indeed, around each of these hypersurfaces, one can approximate the linear operator $\Box_{\ern}$ with a simpler model.
	These are in order
	\begin{itemize}
		\item $\Box_{\eta}$, where $\eta$ is the Minkowski metric;
		\item $\hat{\Box}_{\ern}(0)$ is the zero frequency operator corresponding to $\Box_{\ern}$;
		\item $\Box_{g_{\mathrm{near}}}$, where $g_{\mathrm{near}}$ is the near horizon metric, see \cref{map:lemma:normal_ops}.
	\end{itemize}
	Importantly, each one of these capture the behaviour of $\Box_{\ern}$ to leading order in terms of \emph{decay} towards the respective boundaries with respect to \cref{intro:eq:mapping}.
	For instance
	\begin{equation}
		\Box_{\ern}-\Box_\eta:\Aext{b}^{q_\scri,q_+,q_\K,q_\F}(\Mcomp)\to \Aext{b}^{q_\scri+1,q_++3,q_\K,q_\F}(\Mcomp).
	\end{equation}
	Hence, even though, at each step of the construction, we lose some regularity, we obtain a solution that has a faster polynomially decaying error term.
	We obtain $\phi_{\mathrm{app}}$ by repeatedly solving one of the model operators in each turn.
	\begin{remark}[Admissible nonlinearities and weights]\label{in:rem:admissible}
		A nonlinearity $\mathcal{N}$ and decay rate $q\in\R$ can be treated perturbatively with the current iteration scheme if for $\phi\in\Aext{b}^{1,q,q,q}(\Mcomp)$ it holds that $\mathcal{N}[\phi]\in\Aext{b}^{(q+1,q+2,q,q)+\epsilon}(\Mcomp)$.
		This in itself is \emph{not} sufficient with our methods to to prove \cref{intro:thm:sem}, due to other top order regularity behaviour as explained near \cref{intro:eq:energy_top} and \cref{lin:rem:Morawetz}.
		In particular, we need that for $\phi\in\Aext{b}^{1,q,q,q}$ and $q_\bullet$ chosen appropriately the linearisation satisfies 
		\[
		D_\phi \mathcal{N} \mathring{\phi}\in \rho_\scri^{q_\scri+1}\rho_+^{q_++2}\rho_{\K}^{q_\K+1}\rho_{\F}^{q_\F}L^2(\Mcomp)\qquad \text{whenever }\Ve\mathring{\phi} \in\rho_\scri^{q_\scri}\rho_+^{q_+}\rho_{\K}^{q_\K}\rho_{\F}^{q_\F}L^2(\Mcomp),
		\]
		where $\Ve$ is as in \cref{lin:eq:Ve}.
	\end{remark}
	
	Let us explain a step of the iteration, with the base case being simpler.
	We induction on $N$ below.
	Assume there exists some $\phi_N$ such that
	\begin{equation}\label{intro:eq:ansatzN}
		f_N=\Box_{\ern}\phi_N-\mathcal{N}[\phi_N]\in \Aext{b}^{3,N+2,N,N-1}(\Mcomp).
	\end{equation}
	We write $f_N^+=f\chi_{>}(r)$, $f_N^\F=f\chi_{<}(r)$ for cutoff functions $\chi_>,\chi_<$ localising to a neighbourhood of $i^+,\F$ respectively.
	We find some functions $\phi_{N}^+\in\Aext{b}^{1,N,N,\infty}(\Mcomp)$, given in \cref{an:sec:i+-inverse,an:sec:across_horizon}, and (non-unique) $\phi_N^\F\in\Aext{b}^{\infty,\infty,N,N-1}(\Mcomp)$ solving
	\begin{equation}
		\Box_{\eta}\phi_N^+=f_N^+,\quad r\phi_N^+|_{\scri}=0;\qquad \Box_{g_{\mathrm{near}}}\phi_N^\F=f_N^\F
	\end{equation}
	within the support of $\chi_<,\chi_>$ respectively.
	We emphasise, that while $\phi^+_N$ is unique, $\phi^\F_N$ can be changed by an arbitrary smooth function that has the necessarily decay rate and produces no error term.
	Truncating $\phi_N^+,\phi^\F$, we obtain an improved ansatz $\phi_N'=\phi_N	+\phi_N^++\phi^\F$, satisfying 
	\begin{equation}
		f_N'=\Box_{\ern}\phi_N'-\mathcal{N}[\phi_N']\in \Aext{b}^{3,N+3,N,N}(\Mcomp).
	\end{equation}
	Next, we look for a function $\phi_N^\K\in\Aext{b}^{N+1,N,N}(\Mcomp)$, in \cref{an:sec:K-inverse}, satisfying
	\begin{equation}
		\hat{\Box}_{\ern}(0)\phi_N^\K=f_N'\chi((r-M)t_*)\chi(r/t_*),
	\end{equation}
	where the two cutoff functions localise to an open neighbourhood of $\K$.
	Truncating $\phi_N^\K$, we obtain $\phi_{N+1}=\phi_N'+\phi_N^\K$, that satisfies \cref{intro:eq:ansatzN} with $N$ replaced by $N+1$, i.e. $f_N\in\Aext{b}^{3,N+3,N+1,N}(\Mcomp)$.
	This improvement suffices to close the iteration scheme.
	
	\paragraph{Weighted $T$-energy estimate}
	This part of the proof essentially uses that $\ern$ describes an extremal black hole.
	The proof is also made easier by the existence of a causal Killing field $T$, which is present in the extremal Reissner-Nordstrom case.
	An important feature of $\Box_{\ern}$, respectively $\ern$, next to \cref{intro:eq:mapping} is the associated regularity gain at top order.
	Let us explain.
	We focus the discussion around the corner $\K\cap i^+$, but the same holds in all of $\Mcomp$, with appropriate modification.
	Let us denote by $\omega\in T^*(\sphere)$ one forms on the unit  sphere $\sphere$ .
	We can write 
	\begin{equation}
		\ern=r^2\Big(-D\frac{\dd t^2}{r^2}+D^{-1}\frac{\dd r^2}{r^2}+\slashed{g}\Big),
	\end{equation}
	so up to a factor $r^2$, we see that $\ern$ is a nondegenerate quadratic form in $\dd t/r,\dd r/r,\omega$ all the way to $\partial\Mcomp$.
	Following the geometric ideas of \cite{hintz_microlocal_2023-1}, we expect to be able to obtain a gain of regularity corresponding to the dual space $r\partial_t,r\partial_r,\Omega_{i}$.
	Let us denote the similarly obtained collection of finite number of vectorfield on $\Mcomp$ by $\Ve$, given explicitly in \cref{lin:eq:Ve}.
	
	Via only a $T=\partial_t$ energy estimate combined with some $r^p\partial_v$ estimates à la \cite{dafermos_new_2010}, we prove (see \cref{lin:prop:main}) for some $q_\scri,q_+,q_\K,q_\F$ 
	\begin{equation}\label{intro:eq:energy_top}
		\norm{\Ve\phi}_{\rho_\scri^{q_\scri}\rho_+^{q_+}\rho_\K^{q_\K}\rho_\F^{q_\F}L^2(\M)}\lesssim\norm{\Box_{\ern}\phi}_{\rho_\scri^{q_\scri}\rho_+^{q_++2}\rho_\K^{q_\K+1}\rho_\F^{q_\F}L^2(\M)},
	\end{equation}
	provided that $\Box_{\ern}\phi$ is compactly supported in $\M$.
	The simple proof of this result is due to $g_{\ern}(T,T)\leq0$, and crucially uses extremality.
	We can commute with (approximate) symmetries of $\Box_{\ern}$ to obtain \cref{intro:eq:energy_top} with $\Vb^k$ derivatives on both sides, as long as $k< q-10$.
	The requirement on $(k,N)$ is a manifestation, that higher commuted quantities experience an enhanced redshift/blueshift, and a remnant of this effect still exists in the extremal case.
	See \cref{lin:rem:blueshift} for more.

	Noting \cref{intro:eq:mapping}, we see that the estimate \cref{intro:eq:energy_top} is optimal, up to the $\rho_\K^{+1}$ factor, in terms of decay.
	The presence of this loss is due to trapping, or equivalently, due to the non-existence of an integrated local energy estimate without degeneration, see \cref{lin:rem:Morawetz} for more discussion.
	Connected to this, we already emphasise that we \emph{do not} use a Morawetz estimate for this part of the proof!
	
	\paragraph{Acknowledgement:}
	The authors would like to thank Mihalis Dafermos for useful sugesstions regarding the project. The second author would like to thank the Princeton Gravity Initiative, where much of this work was completed.
	The second author acknowledges the support of the SNSF starting grant TMSGI2 226018. 
	
	\paragraph{Overview:}
	The rest of the paper is structured as follows.
	We first introduce the geometric and analytic prerequisites in \cref{sec:setup}.
	Then, in \cref{sec:map} we record some important mapping properties of the operators associated to \cref{intro:eq:scattering}.
	\cref{sec:currents} contains standard divergence computations for energy currents.
	We use the energy currents in \cref{sec:an} to construct the approximate solution, and in \cref{sec:lin} to prove a linear estimate on polynomially weighted function spaces in $\M$.
	In turn, these linear estimates are used in \cref{sec:scat} to construction the exact solution of \cref{intro:eq:scattering} settling down to the approximate solution.
	We introduce the multi-black hole spacetimes in \cref{sec:multi}, and extend the previous result to this new class of manifolds.
	Finally, in \cref{sec:poly}, we prove the polyhomogeneous ansatz creation and in \cref{an:sec:subextremal} we show how to construct an ansatz in the subextremal case.

	\section{Geometric and analytic prerequisites}\label{sec:setup}
	In this section, we introduce the extremal Reissner Nordstrom ($\M$) family of spacetimes following closely \cite{angelopoulos_late-time_2020}.
	Then, we introduce a novel perspective on $\M$, by introducing a compactification $\Mcomp$ motivated by those appearing in \cite{hintz_sharp_2022,hintz_gluing_2023}.
	We also connect this to the conformal symmetry of $\M$ as studied by Couch-Torrence \cite{couch_conformal_1984}.
	
	\subsection{Spacetimes}\label{not:sec:spacetime}
	Let us first introduce the spacetime manifold in which \cref{intro:thm:rough} takes place, along with its associated Lorentzian metric.
	Let $\M_\mathfrak{t}'\subset\R_{v>0}\times\R_r\times \sphere$, for $\mathfrak{t} \in \mathbb{R}$, be a manifold defined for $v > 0$ and $ r\in (M-\mathfrak{t} v^{-1},\infty)$.
	We introduce
	\begin{equation}\label{not:eq:ERN_in_rv}
		\ern:=-D\dd v^2+2\dd v\dd r+r^2\slashed{g},\qquad \ern^{-1}=D\partial_r^2+2\partial_r\partial_v +r^{-2}\slashed{g}.
	\end{equation}
	where $D:=(1-\frac{M}{r})^2$ for some $M\in\R_{>0}$ and $\slashed{g} = d\theta^2 + \sin^2 \theta d \varphi^2$ is the round metric on $\sphere$.
	We  write the spatial coordinate $x=r\omega\subset \R^3_x$ where $(r,\omega)\in \R_r\times \sphere_\omega$. 
	We  introduce the standard functions 
	\begin{equation*}
		t:=v-r_*,\quad u:=v-2r_*,
	\end{equation*}
	corresponding to asymptotically flat time, outgoing null coordinate and the tortoise function $r_*$ satisfying $\frac{ \partial r_*}{\partial r} =D^{-1}$, defined separately in $r>M$ and $r<M$ as
	\begin{equation*}
		r_*(r):=r-M+\frac{M^2}{M-r}+2M\log\Big(\frac{\abs{r-M}}{M}\Big) .
	\end{equation*}
	
	We denote the volume form by $\mu=\sqrt{-\det(\ern )} dvdrd\theta d\varphi$.
	Let us introduce the smooth time function $t_*=v+h(r)$ where $h\in C^\infty(\R)$ satisfies
	\begin{equation}\label{not:eq:t*}
		h(r)=\begin{cases}
			-(r-M)& r<10M\\
			-2r_* +\frac{1}{\log r}& r>20M
		\end{cases}
		\implies \dd t_*=\begin{cases}
			\dd v-\dd r& r<10M\\
			\dd u-\frac{\dd r}{r\log^2r }& r>20M
		\end{cases}
	\end{equation}
	and extended smoothly in between so that $\dd t_*$ is timelike for $\ern$.

	In the $(v,r,\theta, \varphi)$ coordinate system, we define $T=\partial_v$  which is a future causal Killing field, the rotation vectorfields
	\begin{equation*}
		\begin{split}
			& \Omega_1 = \sin \varphi \partial_{\theta} + \cot \theta \cos \varphi \partial_{\varphi} , \\ & 
			\Omega_2 = - \cos \varphi \partial_{\theta} + \cot \theta \sin \varphi \partial_{\varphi} , \\ & 
			\Omega_3 = \partial_{\varphi} ,
		\end{split}
	\end{equation*}
	and $Y=-\partial_r$ which is future directed null vectorfield.
	
	Clearly \cref{not:eq:ERN_in_rv} shows, that the spacetime $(\M'_{\mathfrak{t}},\ern)$ can be extended to  $v\in\R$ and $r>0$, however in this paper we will mostly be concerned with $\M_\mathfrak{t}:=\M_\mathfrak{t}'\cap\{t_*>1\}$. 
	The standard depiction of the spacetime $\M_\mathfrak{t}$ is via a conformal map shown on the Penrose diagram of \cref{fig:penrose}.
	Moreover, it will be useful to split the spacetime into an interior, $\M_{\mathfrak{t},\mathrm{in}}:=\M_{\mathfrak{t}}\cap\{r\leq M\}$ and an exterior $\M=\M_0$. 
	
	For the sake of the reader, let us record in different coordinate systems the explicit form of $\ern$:
	\begin{subequations}\label{not:eq:ern_differents_reps}
		\begin{align}
			\ern&=-D\dd t^2+D^{-1}\dd r^2+r^2\slashed{g},& \ern^{-1}&=-D^{-1}\partial_t^2+D^{-1}\partial_r^2+r^{-2}\slashed{g}^{-1},\\
			\ern&=-D\dd u^2-2\dd u\dd r+r^2\slashed{g},& \ern^{-1}&=D\partial_r^2-2\partial_r\partial_u+r^{-2}\slashed{g}^{-1}\label{not:eq:ern_ru},
		\end{align}
		\begin{align}
			& \ern=-D\dd t_*^2+2(h'D+1)\dd t_*\dd r-h'(Dh'+2)\dd r^2+r^2\slashed{g}, \label{g_t*}\\ & 
			\ern^{-1}=-D\partial_{r}^2 +2(h'D+1)\partial_r \partial_{t_*}-h'(Dh'+2)\partial_{t_*}^2 +r^{-2}\slashed{g}^{-1} \label{ginv_t*}.
		\end{align}
	\end{subequations}
	and the corresponding d'Alembertian $\Box_{\ern}$
	\begin{nalign}\label{not:eq:wave_explicit}
		r\Box_{\ern}\phi=&2\partial_v\partial_r(r\phi)+\partial_r D\partial_r(r\phi)-r^{-1}D' r\phi+r^{-2}\slashed{\Delta}r\phi\\
		=&-2\partial_u\partial_r(r\phi)+\partial_r D\partial_r(r\phi)-r^{-1}D' r\phi+r^{-2}\slashed{\Delta}r\phi.
	\end{nalign}
	For the rest of the paper, it will be useful to introduce the following regions of spacetime
	\begin{equation}
		\mathcal{D}_{s_1}^{s_2}:=\{t_*\in(s_1,s_2)\},\quad \mathcal{D}_{s_1}:=\mathcal{D}_{s_1}^\infty,\quad \mathcal{D}^{s_2}:=\mathcal{D}^{s_2}_{1},\quad \Sigma_\tau=\{t_*=\tau\},\quad \B_{\mathfrak{t}}=\{r=M-t_*^{-1}\mathfrak{t}\}.
	\end{equation}

	\emph{Conformal symmetry.}
	A well-known property of the exterior of $\ern$ is that it possesses a conformal symmetry mapping the horizon $\mathcal{H}$ and null infinity $\mathcal{I}$ to one another. It was introduced by Couch-Torrence \cite{couch_conformal_1984} and is given by
	\begin{equation}
		\Phi_*(\ern)=\Omega^2\ern,\quad \Omega=\frac{M}{r-M},\quad \Phi:(t,r,\omega)\mapsto \left(t,M+\frac{M^2}{r-M},\omega\right).
	\end{equation}
	Note that $\Phi^*(r_*)=-r_*$, and $\Phi^2=\mathrm{Id}$.
	Using that $\ern$ has vanishing Ricci scalar, we get that for any $\phi$ solving the covariant wave equation, $\Box_{\ern}\phi=0$, $\tilde\phi=\Omega \Phi^*(\phi)$ also solves $\Box_{\ern}\tilde\phi=0$.
	More generally
	\begin{nalign}\label{not:eq:conformal_wave}
		\Box_{\ern}\phi=f&\implies \Box_{\ern} \Omega\Phi^*(\phi)=\Omega^3\Phi^*(f)\\
		\Box_{\ern}\phi&=\Phi^*\big(\Omega^{-3}\Box_{\ern}\Omega\Phi^*(\phi)\big).
	\end{nalign}
	
	\paragraph{Minkowski space.}
	
	We also introduce Minkowski space $(\R^{1+3}_{t,x},\eta)$, where $\eta=-\dd t^2+\dd x^2$.
	It will be convenient to introduce the analogue of \cref{not:eq:t*}, and set
	$t_*=t+h(r)$, where $h\in C^\infty(\R)$ satisfies
	\begin{equation}
		h(r)=\begin{cases}
			-r & r>20\\
			0 & r<10,
		\end{cases}
	\end{equation}
	and $\eta(\dd t_*,\dd t_*)\leq0$.
	The two different $t_*$ should not be confused, as they are defined on different manifolds. 
	
	\subsection{Analytic compactification}\label{not:sec:compactification}
	
	The extensive literature on solutions to the Einstein equations tells us that the confomal diagrammatic representation \cref{fig:penrose} of $\M_\mathfrak{t}$ is useful  when studying solutions to wave equations, as it contains the causal structure of $\M_\mathfrak{t}$.
	However, for some analytical purposes, it will be useful for us to use an alternative visualisation and corresponding compactification of $\M_{\mathfrak{t}}$.
	To this end, let us introduce the functions
	\begin{equation}\label{not:eq:boundary_defining}
		\rho_{\hor}=t_*(r-M),\quad \rho_\F=\frac{1+(r-M)t_*}{rt_*},\quad \rho_\K=\frac{r}{t_*\rho_\F},\quad\rho_{+}=\frac{r+t_*}{r t_*},\quad \rho_{\scri}=t_*/r.
	\end{equation}
	
	These functions are going to be the smooth boundary defining functions, i.e. $\bullet=\{\rho_\bullet=0\}$ where $\bullet\in\{\hor,\F,\K,\ip,\scri\}$, for the following boundaries:
	\begin{itemize}
		\item $\scri$, future null infinity, the endpoint of outgoing null geodesics;
		\item $\ip$, timelike infinity, the endpoint of outgoing hyperbolic massive geodesics; 
		\item $\K$, spacially compact timelike infinity, the endpoint of curves that are at a constant distance away from the black hole;
		\item $\F$, near horizon geometry and image of $\ip$ under the map $\Phi$;
		\item $\hor$, the future event horizon of the black hole and the image of $\scri$ under the map $\Phi$.
	\end{itemize}
	
	\begin{remark}[Artificial boundaries]
		Note that the surfaces $\B_{\mathfrak{t}} =\{r-M=t_*^{-1}\mathfrak{t}\}$ and $\Sigma_1=\{t_*=1\}$ are not part of $\M_{\mathfrak{t}}$ by definition.
		When partially compactifying $\M_{\mathfrak{t}}$, it will still not be closed near these regions.
		Alternatively, one can regard $\B_{\mathfrak{k}},\Sigma_1$ as part of $\M_{\mathfrak{t}}$, but remember that the b-differential structure is oblivious to these boundaries.
		Therefore, for a manifold with boundaries we may label some of the boundary components as \emph{artificial}, which means that they are not limiting hypersurfaces (see \cite[Definition 2.4]{hintz_stability_2020}).
	\end{remark}

	\begin{definition}\label{not:def:Mcomp}
		For $\mathfrak{t}=0$, we define the manifold with coners $\Mcomp$ as a compactification of $\M$ such that $\rho_\bullet$ with $\bullet\in\{\scri,+,\K,\F,\hor\}$ all extend to smooth boundary defining functions for their corresponding zero sets.\footnote{This construction can be performed via blow-ups of defining functions given that one start with a different comaptified manifold. See \cite{hintz_sharp_2022} for a similar construction.}
		(We call $\Sigma_1$ an artifical boundary.)
		
		For $\mathfrak{t}>0$, we define $\McompIn$ the manifold with corners as a compactification of $\M_{\mathfrak{t}}\setminus\M_0$ where $\rho_\F,\rho_\hor$ are extended smoothly to 0.
		(We call $\Sigma_1,\B_{\mathfrak{t}}$ artificial boundaries.)
		
		Finally we define $\Mcomp_\mathfrak{t}$ as the compactification of $\M_{\mathfrak{t}}$ with $\rho_\F,\rho_\K,\rho_+,\rho_\scri$ as in \cref{not:eq:boundary_defining} and $\rho_\F=(r-M)/r$.
		(We call $\Sigma_1,\B_{\mathfrak{t}}$ artificial boundaries.)
	\end{definition}
	
	$\Mcomp$ is depicted in \cref{fig:compactification}.
	The reason for introducing the above compactification is to free ourselves of the explicit choice made in \cref{not:eq:boundary_defining}.
	In particular, note that the manifold $\Mcomp$ does not depend on what smooth boundary defining functions we use, and we will make use of this freedom many times in computations, see already \cref{not:rem:examples}.

	\subsection{Function spaces}\label{not:sec:function_spaces}
	In this section, we introduce conormal spaces  for $\Mcomp$ following the general setup of \cite{grieser_basics_2001}.
	
	The solutions to \cref{intro:eq:scattering} that we are interested in will turn out to be regular with respect to vector fields that are tangent to the boundary of $\Mcomp$.
	In $(t_* , r , \theta , \varphi )$ coordinates, these are spanned by
	\begin{equation}\label{not:eq:Vb}
		\Vb=\Vb(\Mcomp):=\{(r-M)\partial_{r},t_* \partial_{t_*}, \Omega_i, i \in \{1,2,3\}\},
	\end{equation}
	over $\C^{\infty}(\Mcomp)$.
	
	Moreover, we require that the solutions considered to satisfy the regularity requirement of \cref{intro:thm:rough}, that is, we want the solutions to be regular with respect to $\partial_r$ near $\hor$.
	Via the conformal map $\Phi$, this corresponds to regularity with respect to $r^2\partial_r$ near $\scri$, also commonly called conformal regularity.
	The construction more generally is as follows:
	\begin{definition}[$\b$-operators]\label{not:def:b-ops}
		Given a compact manifold with corners $X$ defined by charts onto $\R^{n,2}_+:=[0,\infty)^2_{\rho_1,\rho_2}\times \R^n_y$, we write $\Diffb^1(X)$ for the space of vector fields that are tangent to the boundary of $X$ and let $\Vb(X)$ be a finite collection of these spanning $\Diffb^1(X)$ over $C^\infty(X)$. 
		Let $\rho_i$ for $i\in\{1,...,m\}$ be defining functions for each of the boundaries of $X$.
		We write $\Diffb^k(X)$ for a $k$-fold composition of $\Diffb^1(X)$. Note that if $X$ has artificial boundaries, $\Diffb^1(X)$ need \emph{not} be tangential towards these. 
	\end{definition}
	
	\begin{remark}[Examples]\label{not:rem:examples}
		We provide a collection of computations for the vectorfield $\partial_v|_r$ to show how to deduce its weight as a member of $\Diffb$. 
		Throughout, we use the liberty to pick different boundary defining functions when working in a neighbourhood of each of the corner of $\Mcomp$.
		\begin{nalign}
			\partial_v|_r&=\rho_{\F}(\rho_\hor\partial_{\rho_\hor}-\rho_\F\partial_{\rho_\F}) ,&&\text{near }\hor\cap\F,&&\text{with }\rho_\F=v^{-1}, \rho_\hor=(r-M)v; \\
			\partial_v|_r&= -\rho_\K\rho_\F(\rho_\K\partial_{\rho_\K}),&&\text{near }\F\cap\K,&&\text{with }\rho_\K=\rho_\F^{-1}v^{-1}, \rho_\F=(r-M); \\
			\partial_v|_r&= -\rho_\K\rho_+(\rho_\K\partial_{\rho_\K}) ,&&\text{near }\K\cap i^+,&&\text{with }\rho_+=r^{-1}, \rho_\K=r/t;\\
			\partial_v|_r&=\rho_+\left((1-\rho_\scri)\rho_{\scri}\partial_{\rho_\scri}-\rho_+\partial_{\rho_+}\right) ,&&\text{near }i^+\cap \scri,&&\text{with }\rho_\scri=u/t, \rho_+=1/u;\\
			&\implies\partial_v|_r\in\rho_\F\rho_\K\rho_+\Diffb(\Mcomp).
		\end{nalign}
		Similarly, we also note the following weights
		\begin{equation}
			\partial_r|_{t_*}\in \rho_\hor^{-1}\rho_\F^{-1}\rho_+\rho_\scri\Diffb(\Mcomp),\,\Omega_i\in \Diffb(\Mcomp).
		\end{equation}
	\end{remark}
	
	Next, we introduce the function spaces that describe the expected behavior in $\Mcomp$.
	
	\begin{definition}[Conormal functions]\label{not:def:conormal}
		For $\vec{a}\in\R^5$, write $\rho^{\vec{a}}=\prod_\bullet \rho_\bullet^{a_\bullet}$ with $\bullet\in\{\scri,\ip,\K,\F,\hor\}$.
		We define the \emph{conormal space} of functions for $s\in\N$ and $\vec{a}\in\R^5$ as
		\begin{nalign}
			\A{b}^{s;(a_\scri,a_{\ip},a_\K,a_\F,a_\hor)}(\Mcomp):=\{f\in \rho^{\vec{a}}L^\infty(\Mcomp): \Vb^\alpha f\in\rho^{\vec{a}}L^\infty(\Mcomp) \, \forall \abs{\alpha}\leq s \},
		\end{nalign}
		where we used the notation $\Vb^{\alpha}f$ to stand for vectorfields from $\Vb$ acting on $f$ according to the multiindex $\alpha$.
		We also write $\A{b}^{\vec{a}}(\Mcomp):=\A{b}^{\infty;\vec{a}}(\Mcomp)$.
		
		We define $\Aext{b}^{s;(a_\scri,a_{\ip},a_\K,a_\F)}(\Mcomp)=\A{b}^{s;(a_\scri,a_{\ip},a_\K,a_\F)}(\Mcomp\setminus\hor)$ to be the function space, with $\hor$ an artificial boundary. 
		
		More generally, given a manifold with boundary $X$ defined by charts onto $\R^{n,2}_+:=[0,\infty)^2_{\rho_1,\rho_2}\times \R^N_y$, boundary defining functions $\rho_i$, regularity index $s\in\N$, weights $a_i$, and a function $f:X\to\R$, we say that $f\in\A{b}^{s;\vec{a}}(X)$ is conormal with regularity $s$ and weight $\vec{a}$ if $\rho^{\vec{a}}\Diffb^s(X) f\in L^\infty(X)$.
		
		Finally, we define $\A{b}^{s;\vec{a}-}(X)=\bigcup_{\epsilon>0}\A{b}^{s;\vec{a}-\epsilon}$ and $\A{b}^{s;\vec{a}+}(X)=\bigcap_{\epsilon>0}\A{b}^{s;\vec{a}+\epsilon}$.
	\end{definition}
	Note that we naturally have $\scri,\hor\cong \sphere\times\overline{\R^+}$; $\K\cong\Rcomp$, and we write $\A{b}^q (\mathcal{I}),\A{b}^q (\mathcal{H})$, with $\scri,\mathcal{H}$ compactified with respect to $\rho_+|_{\scri}\sim1/u$ and $\rho_\F|_{\hor}\sim1/v$ respectively.
	
	\begin{notation}\label{not:not:indices}
		We will always use the convention that the indices are written from the far region towards the interior in order.
		This applies to $\Mcomp$, but also to the boundary components of $\Mcomp$, for instance, we write $\A{b}^{a_\scri,a_\K}(\ip)$ for the space of functions on $i^+\subset\partial \Mcomp$ that have $(1-\abs{x}/t)^{a_\scri}$ decay towards $\scri\cap \ip$ and $(\abs{x}/t)^{a_\K}$ decay towards $K\cap\ip$.
	\end{notation}

	We introduce some more manifolds on which part of the analysis takes place.
	\begin{definition}
		We write
		\begin{itemize}
			\item  $\Rcomp$ for the compactification of Euclidean space with the origin removed, $\R^3_\times:=\R^3\setminus\{0\}$, where we use $r$ and $r^{-1}$ as boundary defining function for the two boundary components at $\{r = 0 \}$ and infinity respectively;
			\item  $\MMink$ for the compactification of Minkowski space, $\R^{3+1}$, with boundary defining functions $\rho_+=\frac{t}{(t-\abs{x})\jpns{x}}$ for $\ip$, $\rho_ \K=\jpns{x}/t$ for spatially compact infinity, and $\rho_\scri=1-\rho_\K$ for null infinity.
			\item $\MMinkempty$ for the compactification of Minkowski space, $\R^{3+1}$, with boundary defining functions $\rho_+=(t-\jpns{x})^{-1}$ for $\ip$ and $\rho_\scri=1-\jpns{x}/t$ for null infinity.
		\end{itemize}
	\end{definition}

	\subsubsection{Energy spaces}
	
	We also introduce some $L^2$ based analogues of the $\A{b}$ spaces for ease of notation, though sometimes we will write out the corresponding definitions more explicitly wherever we deem helpful for understanding.
	
	It will be convenient to introduce a compact notation for the norm of commuted quantities.
	\begin{notation}(Multi-index)
		Let $X$ be a manifold with corners.
		For $\mathcal{V}=\{\Gamma_1,\Gamma_2,...,\Gamma_n\}$ a finite collection of operators and $Y$ a norm on $C^\infty(X)$,  we write
		\begin{equation}
			\norm{\mathcal{V}^k\phi}_{Y}:=\sum_{\abs{\alpha}\leq k}\norm{\Gamma^\alpha\phi}_Y,
		\end{equation}
		where $\alpha=(\alpha_1,...,\alpha_n)$ is a multi index and $\Gamma\in\mathcal{V}$.
	\end{notation}
	
	Note that the volume form $\mu$ can also be written as $\mu=\rho_{\scri}^{-3}\rho_{\ip}^{-4}\rho_\K^{-1}\rho_\F^{0}\rho_{\hor}^{1} \mu_\b $ where $\mu_\b$ is a non-degenerate b-density, i.e. near each boundary $\mu_b$ is of the form $\frac{\dd \rho_\bullet }{\rho_\bullet} f(\zeta,\rho_\bullet)$ where $f\in C^\infty(\Mcomp)$  and $f(\zeta,0)$ is a smooth non vanishing function on the boundary $\bullet\in\{\scri,\ip,\K,\F,\hor\}$.
	We highlight the weights between $\mu,\mu_\b$, as these are responsible for the weights appearing in the weighted Sobolev embeddings listed in \cref{not:eq:sobolev}.
	We define the norms for $f\in C^\infty_c(\M)$ and $s\in\N$, $\vec{a}\in\R^5$
	\begin{nalign}
		\norm{f}_{L^2(\Mcomp)}^2 :=\int_{\M} f^2 \, \mu ;\quad 
		\norm{f}_{\Hb^{s;(a_\scri,a_{\ip},a_\K,a_\F,a_\hor)}(\Mcomp)}=\norm{\rho^{-\vec{a}} \Vb^sf }_{L^2(\Mcomp)},
	\end{nalign}
	where $\rho^{\vec{a}}=\prod_\bullet \rho_\bullet^{a_\bullet}$. 
	We define the corresponding function spaces $\Hb^{s;\vec{a}}(\Mcomp)$ as completion of  $C^\infty_c(\M)$ with respect to the above norms.
	We also define $\Hb^{s;\vec{a}+}:=\cap_{\epsilon>0} \Hb^{s;\vec{a}+\epsilon}$, $\Hb^{s;\vec{a}-}:=\cup_{\epsilon>0} \Hb^{s;\vec{a}-\epsilon}$.
	We similarly define on $\K$ the Sobolev spaces with respect to the measure $\mu_\K$ given from $\mu=\dd t\mu_\K$.
	
	More generally
	\begin{definition}
		
		Given a density $\mu_X$ on a manifold with corners $X$, we write $\Hb^{s;\vec{a}}(X)$ for the completion of $C_c^\infty(\mathring{X})$ under the norm $\norm{f}_{\Hb^{s;\vec{a}}(X)}:=\norm{\prod_{i}\rho_i^{-a_i} \Vb^s(X) f}_{L^2(X,\mu_X)}$.
		
		For $U\subset X$ such that $U\cap \{\rho_i=0\}=\emptyset$ for $i\in A\subset\{1,...,m\}$, we will omit the indices $i\in A$ of $a_i$ in $\Hb^{s;\vec{a}}(U)$.
	\end{definition}
	
	Using Sobolev embedding, we already get that for $\vec{a}_{\mathrm{ER}}=(3/2,2,1/2,0,-1/2)$
	\begin{nalign}\label{not:eq:sobolev}
		\Hb^{3;-\vec{a}_{\mathrm{ER}}}(\Mcomp)\subset L^\infty(\Mcomp)\subset \Hb^{0;-\vec{a}_{\mathrm{ER}}-}(\Mcomp),\\
		\implies \Hb^{s+3;\vec{a}-\vec{a}_{\mathrm{ER}}}(\Mcomp)\subset \A{b}^{s;\vec{a}}(\Mcomp)\subset \Hb^{s;\vec{a}-\vec{a}_{\mathrm{ER}}-}(\Mcomp).
	\end{nalign}
	In the interior we will also need the following improvement for $a>-1/2$
	\begin{equation}\label{not:eq:sobolev2}
		\{f\in \Hb^{3;(a_{\F},a+1/2)}(\McompIn):t_*^{-1}Yf\in \Hb^{3;(a_{\F},a+1/2)}(\McompIn)\}\subset \rho_{\F}^{a_{\F}}L^\infty((\McompIn)).
	\end{equation}

	\begin{notation}[Symbols]
		We include a list of symbols used in the paper, some of which are only introduced later:
		
		\begin{itemize}[noitemsep]
			\item We use $\chi$ for cutoff function, with lower index denoting to what regions it localises, for instance writing $\chi(r)_{>1}$ for a smooth cutoff function satisfying $\supp\chi'(r)_{>1}\supset\{r>1/2\}$ and $\chi(r)_{>1}|_{r>1}=1-\chi(r)_{>1}|_{r<1/2}=1$;
			\item $\T$  denotes the energy momentum tensor;
			\item $\Mcomp,\Mcomp_\mathfrak{t}$ are compactified manifolds with a metric with the interior being isometric to subsets of extremal Reissner-Nordstrom.
			$\scri,\ip,\K,\F,\hor$ are boundary components.
			\item $\Vb,\Ve$ are finite collection of vectorfields which generate $\Diffb,\Diffe$ algebra of vectorfields over $C^\infty(\Mcomp)$.
			\item $\A{},\Aext{}$  ($\Hb$) are $L^\infty$ ($L^2$) based conormal spaces.
		\end{itemize}
	\end{notation}
	
	\section{Mapping properties of the wave operator}\label{sec:map}
	
	In this section, we record some of the mapping properties of $\Box_{\ern}$ on $\A{b}(\Mcomp)$ and the corresponding model operators.
	
	\begin{definition}[Model operators]\label{map:def:model}
		We define the following operators:
		\begin{subequations}\label{map:eq:model_op}
			\begin{align}
				\Normal{\K}&=\hat{\Box}_{\ern}(0):=\lim_{\sigma\to0}e^{\sigma t}\Box_{\ern} e^{-\sigma t}=r^{-2}\partial_r(Dr^2\partial_r\phi)+r^{-2}\slashed{\Delta}\phi,\\
				\Normal{\ip}&:=\Box_{\eta}=-2\partial_{u}\partial_r-2r^{-1}\partial_{u}+\Delta_r \\
				\Normal{\F}&:=\Box_{g_{\mathrm{near}}} =2\partial_{v}\partial_r+\partial_r (r-M)^2\partial_r+M^{2}\slashed{\Delta}
			\end{align}
		\end{subequations}
		where $\Delta_r$ denotes the Euclidean Laplacian  with respect to the $r$ radial coordinate and
		\begin{equation}
			g_{\mathrm{near}}:=-(r-M)^2\dd v^2+2\dd v\dd r+M^2\slashed{g},\quad g_{\mathrm{near}}^{-1}=(r-M)^2\partial_r^2+2\partial_v\partial_r+M^2\slashed{g}^{-1}.
		\end{equation}
	\end{definition}
	
	\begin{remark}[Conformal symmetry]\label{map:rem:isometry}
		A consequence of \cref{not:eq:conformal_wave} is that we can also write
		\begin{equation}\label{map:eq:isometry}
			\Normal{\F}=
			(r-M)^{-3}( -2\partial_{t_*}\partial_{\abs{y}}-2r^{-1}\partial_{t_*}+\Delta_{y} )(r-M)
			=(r-M)^{-3}(\Box_{\eta,y} )(r-M)
		\end{equation}
		where $y=(r-M)^{-1}\frac{x}{\abs{x}}$ and $\Box_{\eta,y}$ is the wave operator in $y,t_*$ coordinates in Minkowski space. 
	\end{remark}

	We next show, that these operators indeed capture the leading order behaviour of $\Box_{\ern}$.
	
	\begin{lemma}\label{map:lemma:normal_ops}
		For the wave operator on extremal Reissner--Nordstr\"{o}m it holds that
		\begin{equation}\label{map:eq:Box}
			\Box_{\ern}:\A{b}^{p_{\scri},p_+,p_\K,p_\F,p_\hor}(\Mcomp)\to\A{b}^{p_{\scri}+1,p_++2,p_\K,p_\F,p_\hor-1}(\Mcomp)
		\end{equation}
		and for $\bullet\in\{\K,\ip,\F\}$ the model operators defined in \cref{map:def:model} it holds that
		\begin{equation}\label{map:eq:Box-normal}
			\Box_{\ern}-\Normal{\bullet}:\A{b}^{p_{\scri},p_+,p_\K,p_\F,p_\hor}(\Mcomp)\to \rho_\bullet \A{b}^{p_{\scri}+1,p_++2,p_\K,p_\F,p_\hor-1}(\Mcomp).
		\end{equation}
		The same holds replacing everywhere $\A{b}$ for $\Aext{b}$ and removing the $\rho_\hor$ weights.
	\end{lemma}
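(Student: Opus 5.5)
The plan is to prove both mapping properties locally near each boundary face and each corner of $\Mcomp$. Since the conormal spaces are defined by $L^\infty$ bounds after applying $\Vb$-derivatives, it suffices to write $\Box_{\ern}$ in each local chart as $\rho^{\vec{w}}P$. Here $P$ is a $b$-differential operator of order two, i.e.\ a polynomial in $\Vb$ with $C^\infty(\Mcomp)$ coefficients, and $\vec{w}$ records the gained weights. The key fact is that $P$ preserves $\A{b}^{\vec p}$, since $\Vb$ preserves conormal spaces by definition and $C^\infty(\Mcomp)$ multiplication does too.

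The main tool is \cref{not:rem:examples}, which already computes
\[
\partial_v|_r\in\rho_\F\rho_\K\rho_+\Diffb(\Mcomp),\qquad \partial_r|_{t_*}\in \rho_\hor^{-1}\rho_\F^{-1}\rho_+\rho_\scri\Diffb(\Mcomp),\qquad \Omega_i\in\Diffb(\Mcomp).
\]
I will compute in the charts that best fit each region, using \cref{not:eq:wave_explicit}. Near $\K\cap\ip$ and $\ip$ I use the $(u,r)$ form; near $\F$ and $\hor$ I use the $(v,r)$ form. In each case I track the weights of $\partial_r$, $\partial_u$ and $\partial_v$, together with the powers of $r$ and of $D=(1-M/r)^2$.

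\emph{Far region.} Near $\K\cap\ip$, with $\rho_+=r^{-1}$ and $\rho_\K=r/t$, the static part $r^{-2}\partial_r(Dr^2\partial_r)+r^{-2}\slashed\Delta$ equals $r^{-2}\cdot(\text{b-operator in }r\partial_r,\Omega)$, which gives weight $\rho_+^2$ and no $\rho_\K$. The time-derivative terms carry extra $\rho_\K\rho_+$ factors per $\partial_t$, consistent with $\Box-\hat\Box(0)\in\rho_\K(\cdots)$. Near $\ip\cap\scri$ the operator $-2\partial_u\partial_r$ gives $\rho_+^2\rho_\scri$, since $\partial_r|_u$ gains $\rho_+\rho_\scri$ and $\partial_u$ gains $\rho_+$. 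Comparing with Minkowski, the differences $D-1$ and $D'$ are $O(M/r)$. They add an extra $r^{-1}$, which near $\ip$ (where $r\sim t$) is a gain of $\rho_+$, giving \eqref{map:eq:Box-normal} for $\bullet=\ip$.

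\emph{Near-horizon region.} Here I avoid a direct computation by using \cref{not:eq:conformal_wave} and \cref{map:rem:isometry}: $\Box_{\ern}=\Phi^*\Omega^{-3}\Box_{\ern}\Omega\Phi^*$. The map $\Phi$ exchanges $\scri\leftrightarrow\hor$ and $\ip\leftrightarrow\F$, and fixes $\K$ up to relabeling; $\Omega=M/(r-M)$ is a product of boundary defining functions. The far-region computation then transfers to $\F,\hor$ with shifted weights. The factor $\Omega^{-3}$ against $\Omega$ converts the $\rho_+^2$ gain at $\ip$ into no gain at $\F$, and the $\rho_\scri^{+1}$ gain into $\rho_\hor^{-1}$. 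The near-horizon model $\Box_{g_{\mathrm{near}}}$ is exactly the pullback of $\Box_\eta$, by \eqref{map:eq:isometry}. I will double-check the $\hor$ weight directly. In $(v,r)$, the operator $2\partial_v\partial_r+\partial_rD\partial_r$ with $\partial_r$ smooth across $\hor$ but $\partial_r\in\rho_\hor^{-1}\rho_\F^{-1}\Diffb$ yields the loss $\rho_\hor^{-1}$ with no $\rho_\F$ change, matching the statement. For $\bullet=\F$, the error terms $D-(r-M)^2/M^2$ and $r^2-M^2$ are $O((r-M)^3)$ and $O(r-M)$ respectively. Both vanish at $\F$, where $r-M\sim\rho_\F\rho_\K$, giving an extra $\rho_\F$.

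The $\Aext{b}$ version follows by the same computation, treating $\hor$ as artificial: $\partial_r$ is simply a smooth vector field there. The main obstacle I expect is bookkeeping at the corners $\K\cap\F$ and $\K\cap\ip$. There one must check that the static operator genuinely produces no $\rho_\K$ loss or gain, and that the model differences gain exactly the stated face weight and not merely one at a neighboring face. I would handle this by writing each term explicitly in the corner coordinates of \cref{not:rem:examples}.
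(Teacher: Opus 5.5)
Your proof is correct. In the far region it is the same corner-by-corner computation as the paper's: near $\scri\cap\ip$ and $\ip\cap\K$ you read off the weights of $\partial_u$, $\partial_r|_u$, $\partial_t$ and of $D-1=O(r^{-1})$, as the paper does.

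The genuine difference is near $\F$ and $\hor$. The paper simply expands $\Box_{\ern}-\Box_{g_{\mathrm{near}}}$ in $(v,r)$ coordinates. You instead obtain \eqref{map:eq:Box} there by conjugating the far-region result with the Couch--Torrence map. The same conjugation implicitly gives the $\K$-model statement near $\K\cap\F$, since $\Phi$ and $\Omega$ are $t$-independent. Here $\Omega\sim r^{-1}$, i.e.\ $\Omega\sim\rho_+\rho_\scri$ near $\scri\cap\ip$ and $\Omega\sim\rho_+$ near $\ip\cap\K$, so $\Omega^{-3}\bigl(\rho_+^{2}\rho_\scri\Diffb^2\bigr)\Omega\subset\rho_\scri^{-1}\Diffb^2$. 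This explains in one line why $\F$ carries no gain while $\hor$ loses one order.

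The cost is that it rests on a fact the paper only asserts: $\Phi$ maps conormal spaces on $\Mcomp$ to conormal spaces, exchanging $\scri\leftrightarrow\hor$ and $\ip\leftrightarrow\F$ and preserving $\K$. This holds at the conormal level, which is all the lemma needs, but not smoothly. Ratios such as $\Phi^*\rho_\K/\rho_\K$ are bounded and conormal, yet contain $\rho\log\rho$ terms coming from the logarithm in $r_*$. You should state and check this. Since you re-verify $\hor$ and the $\F$-model by hand anyway, the saving over the direct computation is modest.

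There are two small corrections to your $\F$ step, plus a remark on normalization.
\begin{itemize}
\item The claim $r-M\sim\rho_\F\rho_\K$ is wrong. In fact $r-M\sim\rho_\F$ near $\K\cap\F$ and $r-M=\rho_\F\rho_\hor$ near $\F\cap\hor$, and $r-M$ does not vanish on $\K$. Only the factor $\rho_\F$ is used, so nothing breaks.
\item Besides the coefficient differences you list, $\Box_{\ern}-\Box_{g_{\mathrm{near}}}$ contains the first-order terms $\frac{2}{r}(\partial_v+D\partial_r)$, coming from $\sqrt{|g|}\propto r^2$ versus $M^2$. Their coefficient does not vanish at $r=M$. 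They gain $\rho_\F$ only because $\partial_v|_r\in\rho_\F\Diffb$ and $D\,\partial_r|_v\in\rho_\F\Diffb$ near $\F$, and this should be said explicitly.
\item Your normalization $(r-M)^2/M^2$ in $g_{\mathrm{near}}$ is the right one. With the literal $(r-M)^2$ of the paper's definition, $D-(r-M)^2$ is only $O((r-M)^2)$ unless $M=1$, and the $\partial_r(\cdot)\partial_r$ term would then gain nothing at $\F$.
\end{itemize}
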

	\begin{proof}
		These are direct computations.
		We give some details for the sake of completeness.
		Near the corner $\scri\cap \ip$, using \cref{not:eq:wave_explicit} can write
		\begin{equation}
			\Box_{\ern}=\Box_\eta+\rho_{\ip}^3\rho_\scri^2\Diffb^2(\Mcomp).
		\end{equation} 
		This proves the result in a neighbourhood of the corner.
		
		Near the corner $\ip\cap K$, we have 
		\begin{equation}
			\Box_{\ern}=-\partial_t|_{r}^2+\hat{\Box}_{\ern}(0)+\rho_\K^1\rho_{\ip}^3\Diffb^2(\Mcomp).
		\end{equation}
		Thus proving the result in a neighbourhood of  $\ip\cap K$.
		
		Near the corner $\K\cap\F$, we use \cref{not:eq:wave_explicit} to see that 
		\begin{equation}
			\Box_{\ern}=\Box_{g_{\mathrm{near}}}+\rho_\hor\rho_\F\Diffb^2(\Mcomp).
		\end{equation}
		This already yields the result.
	\end{proof}
	
	Let us also note that we may express the near horizon model operator with respect to $\mathfrak{r}=(r-M)/v$ and $\log v$ coordinates as
	\begin{equation}\label{map:eq:g_near_similarity_coord}
		\Box_{g_{\mathrm{near}}}=2\partial_{\mathfrak{r}}\partial_{\log v}+\partial_{\mathfrak{r}}(\mathfrak{r}^2+2\mathfrak{r})\partial_{\mathfrak{r}}+M^{-2}\slashed{\Delta}
	\end{equation}
	
	In \cref{sec:an}, we study invertibility properties of the model operators and use them to construct an approximate solution required to prove \cref{intro:thm:rough}.	
	\section{Current computations}\label{sec:currents}
	In this section, we present certain vector field computations used for energy estimates. 
	These results are well-known, and we include them here to align with our notation and provide clarity.
	The estimates are then derived by using the diverge theorem for the currents in this section.
	
	The energy--momentum tensor related to the wave operator is given by 
	$$\T_{\mu\nu}[\phi]:=\nabla_\mu\phi\nabla_\nu\phi-\frac{g_{\mu\nu}}{2}(\nabla_\sigma\phi\nabla^\sigma\phi). $$ 
	The two important properties of $\T$ that we use are: i) the explicit computation $(\Div\T)_\mu=\Box\phi \cdot \nabla_\mu$; ii) $\T$ is non-negative when contracted with future causal vectors, $\T[X,Y]\geq0$ for all $X,Y$ such that $g(X,X),g(Y,Y),g(X,Y)<0$.
	
	We begin by computing the contractions, with respect to the energy-momentum tensor, of the vector fields $\dd t^\#=\ern^{-1}(\dd t,\cdot)$ and $\dd t_*^\#$ with $T$.
	\begin{lemma}[Fluxes]
		We have the following coercivity of the energy on fixed $t_*$ and $t$ hypersurfaces in $\ern$:
		\begin{nalign}\label{curr:eq:fluxT}
			-\T[\dd t_*^{\#},T]&\sim\frac{(T\phi)^2}{r\log^2r}+D(\partial_r|_{t_*}\phi)^2+r^{-2}\abs{\slashed{\nabla}\phi} , \\
			-\T[\dd t^\#,T]&\sim\frac{(T\phi)^2}{D}+D(\partial_r|_{t_*}\phi)^2+r^{-2}\abs{\slashed{\nabla}\phi} ,
		\end{nalign}
		We also have for $r<2M$ that
		\begin{nalign}\label{curr:eq:fluxY}
			&-\T[\dd t^\#,Y]\sim (Y\phi)^2+D^{-1}\abs{\slashed{\nabla}\phi}^2, \\
			&-\T[\dd t_*^\#,Y]\sim (Y\phi)^2+\abs{\slashed{\nabla}\phi}^2.
		\end{nalign}
		
	\end{lemma}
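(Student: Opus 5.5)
The plan is to compute each flux directly from the definition of $\T$ in coordinates adapted to the hypersurface. In each case I reduce to a quadratic form in the derivatives of $\phi$, and then read off uniform two-sided bounds on the relevant range of $r$.

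For a one-form $\dd f$ and a vector field $V$ with $\dd f(V)=1$ (true for $f\in\{t,t_*\}$, $V=T$), I use
\[
\T[\dd f^\#,V]=(\dd f^\#\phi)(V\phi)-\tfrac12\, g^{-1}(\dd\phi,\dd\phi).
\]
For the $t_*$-flux I work in $(t_*,r,\omega)$ coordinates, where $T=\partial_{t_*}$, and use \cref{ginv_t*}. Write $\dd f^\#\phi=g^{t_*t_*}T\phi+g^{t_*r}\partial_r\phi$ and expand $g^{-1}(\dd\phi,\dd\phi)$. The cross terms $g^{t_*r}\,T\phi\,\partial_r\phi$ then cancel, leaving
\[
-\T[\dd t_*^\#,T]=\tfrac12\bigl(-g^{t_*t_*}\bigr)(T\phi)^2+\tfrac12 D(\partial_r|_{t_*}\phi)^2+\tfrac12 r^{-2}\abs{\slashed\nabla\phi}^2 .
\]
The only nontrivial point is the size of $-g^{t_*t_*}=-g^{-1}(\dd t_*,\dd t_*)=\bigl(1-(1+Dh')^2\bigr)/D$, computed from \cref{not:eq:t*}:
\begin{itemize}
\item For $r<10M$, $h'=-1$, so $-g^{t_*t_*}=2-D\sim1$.
\item For $r>20M$, $h'=-2D^{-1}-(r\log^2r)^{-1}$, so $1+Dh'=-1-D(r\log^2 r)^{-1}$ and $-g^{t_*t_*}\sim (r\log^2 r)^{-1}$.
\item On the compact region $10M\le r\le 20M$, the quantity is continuous and positive because $\dd t_*$ is timelike, hence bounded above and below.
\end{itemize}
Together these give the first line of \cref{curr:eq:fluxT}.

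For the $t$-flux I instead use $(t,r,\omega)$ coordinates and the diagonal inverse metric. This gives
\[
-\T[\dd t^\#,T]=\tfrac12\bigl(D^{-1}(T\phi)^2+D(\partial_r|_t\phi)^2+r^{-2}\abs{\slashed\nabla\phi}^2\bigr).
\]
Here I must convert to $\partial_r|_{t_*}$. Since $t_*=t+r_*+h$, we have $\partial_r|_t=\partial_r|_{t_*}+cT$ with $c=D^{-1}+h'$. Set $a'=D^{-1/2}T\phi$ and $b'=D^{1/2}\partial_r|_{t_*}\phi$. The $(T\phi,\partial_r\phi)$ part then becomes $a'^2+(b'+Dc\,a')^2$. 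The key point is that $Dc=1+Dh'$ is uniformly bounded: it equals $1-D$ near the horizon and is close to $-1$ at large $r$. A form $a'^2+(b'+\beta a')^2$ with $\abs{\beta}\le C$ is uniformly equivalent to $a'^2+b'^2$, because it has determinant $1$ and bounded entries. This gives the second line of \cref{curr:eq:fluxT}.

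For \cref{curr:eq:fluxY}, with $r<2M$, I use $(v,r,\omega)$ coordinates and $Y=-\partial_r$, which is null with $g(Y,\partial_v)=-1$. I compute $\T[X,Y]=(X\phi)(Y\phi)-\tfrac12 g(X,Y)\,g^{-1}(\dd\phi,\dd\phi)$ for $X=\dd t^\#$ and $X=\dd t_*^\#$. Decompose $X$ along the null pair $Y$, $L:=\partial_v+\tfrac D2\partial_r$, where $g(L,Y)=-1$, plus its vanishing angular part. Using $\T[Y,Y]=(Y\phi)^2$ and $\T[L,Y]=\tfrac12\abs{\slashed\nabla\phi}^2_{g}$, this yields
\[
-\T[X,Y]=\alpha\,(Y\phi)^2+\beta\, r^{-2}\abs{\slashed\nabla\phi}^2 ,
\]
where $\alpha,\beta$ are the $L$- and $Y$-components of $-X$ with the appropriate sign. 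These components are obtained from $\dd t(Y)$, $\dd t(L)$, $\dd t_*(Y)$, $\dd t_*(L)$:
\begin{itemize}
\item For $t_*=v-(r-M)$: $\dd t_*(Y)=1$ and $\dd t_*(L)=1-D/2$, both $\sim1$ on $r<2M$. This gives the stated equivalence $(Y\phi)^2+\abs{\slashed\nabla\phi}^2$.
\item For $t=v-r_*$: $\dd t(Y)=D^{-1}$ and $\dd t(L)=1/2$. This produces the $D^{-1}$ weight in front of the angular term and an $O(1)$ weight in front of $(Y\phi)^2$.
\end{itemize}

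I expect the main obstacle to be bookkeeping rather than analysis. Specifically, I must get the signs of the inverse-metric components right (the printed $-D\partial_r^2$ in \cref{ginv_t*} should read $+D\partial_r^2$). I must also check that the $\dd t$-flux really produces the claimed weights, i.e.~that the $D$-dependent factors combine into the stated powers near $r=M$. This needs uniform control of the bounded coefficient $1+Dh'$ across the horizon region and at infinity, together with the compactness argument for the interpolating region in $h$.
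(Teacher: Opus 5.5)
Your route is the same as the paper's: direct contraction of $\T$, with two-sided bounds read off region by region. The $t$-flux conversion through the determinant-one form $a'^2+(b'+\beta a')^2$ is correct, and so is the null-frame computation of the $Y$-fluxes. One small point: in your $Y$ display the labels are swapped. For $-X=aL+bY$ one gets $-\T[X,Y]=b\,(Y\phi)^2+\tfrac a2 r^{-2}\abs{\slashed\nabla\phi}^2$ with $a=\dd f(Y)$ and $b=\dd f(L)$. Your bullets already use the correct assignment.

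The step that fails as written is the far region of the $t_*$-flux, which is exactly the step you call the only nontrivial point. Put $\epsilon=(r\log^2 r)^{-1}$. Your value $1+Dh'=-1-D\epsilon$ is correct for $h$ as printed in \cref{not:eq:t*}. It gives $(1+Dh')^2=1+2D\epsilon+D^2\epsilon^2>1$, hence $-g^{t_*t_*}=\bigl(1-(1+Dh')^2\bigr)/D=-(2\epsilon+D\epsilon^2)<0$, not $\sim\epsilon$. Equivalently, using $g^{uu}=0$, $g^{ur}=-1$, $g^{rr}=D$, one finds $\ern^{-1}(\dd u-\epsilon\,\dd r,\dd u-\epsilon\,\dd r)=2\epsilon+D\epsilon^2>0$. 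So with $h$ exactly as printed, $\dd t_*$ is spacelike for $r>20M$, and $(T\phi)^2$ enters $-\T[\dd t_*^\#,T]$ with a negative coefficient. The claimed equivalence is therefore false there, and your computation hides this.

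The cause is a sign typo in \cref{not:eq:t*}. It contradicts the paper's own requirement that $\dd t_*$ be timelike, and also its proof, which writes $-\dd t_*^\#$ with a positive $T$-component at large $r$. You need to flag this and use $h=-2r_*-1/\log r$, i.e.\ $\dd t_*=\dd u+\epsilon\,\dd r$. Then $u=t_*+1/\log r$ decreases along each leaf, as for a hyperboloid. With this choice $1+Dh'=-1+D\epsilon$ and $-g^{t_*t_*}=2\epsilon-D\epsilon^2\sim\epsilon$, and the rest of your argument goes through unchanged. The bound $\abs{1+Dh'}\lesssim1$ that you use for the $t$-flux holds for either sign. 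The $\partial_{t_*}^2$ coefficient in \cref{ginv_t*} is also sign-flipped and should be $+h'(Dh'+2)$; your formula for $-g^{t_*t_*}$ already uses the correct value.
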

	
	\begin{proof}
		We know that all terms are non-negative due to the causal nature of the vector fields.
		The rest is a simple computation:
		
		a)
		We focus on $\dd t_*$ first.
		In the region $r\in(2M,10M)$, \cref{curr:eq:fluxT} follows trivially, noting that both $T$ and $\dd t_*$ are strictly timelike.
		For $r>10M$ we have from \cref{not:eq:t*} that $-\dd t_*^\#=(1+D/(r\log^2r))\partial_r|_u+1/(r\log^2r)T$.
		The far regions follows by contracting $\T$.
		Similarly, for $r<2M$, we have $-\dd t_*^\#=(1+D)T-\partial_r|_v$.
		The near region again follows.
		
		We compute $-\dd t^\#=D^{-1}T$. 
		The result follows by contracting with $\T$.
		
		b) Follows again by contraction.
	\end{proof}
	
	Next, we study the bulk terms of $t$ weighted currents.
	These will be applied for backwards estimates.
	\begin{lemma}[Exterior]\label{curr:lem:ext}
		Let $N-\delta>2$ and $\delta>0$.
		We have the following divergence computation on $\ern$ for $\Box_{\ern}\phi=f$ in $\M$. 
		\begin{enumerate}
			\item Let $J^T_{N,\delta}=\frac{1}{N}t_*^{N-\delta}t^{\delta}  T\cdot\T$.
			Let $\mathfrak{F}=f^2\delta^{-1}t_*^{N-\delta}t^{\delta+1}D$. 
			Then
			\begin{equation}\label{curr:eq:T}
				-\Div J_{N,\delta}^T+\mathfrak{F}\gtrsim t_*^{N-1-\delta}t^\delta\Big(\frac{\delta t_*}{NDt} (T\phi)^2 +D(\partial_r|_{t_*}\phi)^2+r^{-2}\abs{\slashed{\nabla}\phi}^2\Big).
			\end{equation}
			\item Let  $\chi(r)$ be a localiser onto $\{r>10M\}$ and $J_{>,N}^1=\frac{1}{N}t_*^{N-1-\delta}t^\delta\chi r\partial_r|_u\cdot \T$.
			
			There exists $C(p,\delta)$ sufficiently large such that the following holds:
			for $\mathfrak{F}=f^2t_*^{N-\delta}t^\delta(C\delta^{-1}tD+N^{-1}r\chi)$
			\begin{equation}\label{curr:eq:p_far}
				-\Div( J_{>,N}^1+CJ^T_{N,\delta})+\mathfrak{F}\gtrsim t_*^{N-2-\delta} rt^\delta (\partial_r|_{t_*}\phi)^2\chi
			\end{equation}
			\item Let $\chi(r)$ be a localiser onto $\{r<2M\}$ and $J_{<,N}^1=\frac{1}{N}t_*^{N-1-\delta}t^{\delta}\chi (r-M)Y\cdot \T$.
			There exists $C(\delta)$ sufficiently large such that the following holds:
			for $\mathfrak{F}=f^2t_*^{N-\delta}t^\delta(\delta^{-1}t^{1}D+N^{-1}(r-M))$
			\begin{equation}\label{curr:eq:p_near1}
				-\Div( J^1_{<,N}+C J_{N,\delta}^T)+\mathfrak{F}\gtrsim t_*^{N-2-\delta}t^\delta\chi (r-M)(\partial_r\phi)^2.
			\end{equation}
		\end{enumerate}
		Moreover, all the currents above are future directed causal.
	\end{lemma}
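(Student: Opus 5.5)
The plan is a direct computation of $\Div J$ for each current, using $(\Div\T)_\mu=\Box\phi\,\nabla_\mu\phi$ together with the deformation tensor identity
\[
\Div(w X\cdot\T)=w\,X(\phi)\,f+\T[\nabla w,X]+\tfrac{w}{2}\,\T^{\mu\nu}\,({}^{(X)}\pi)_{\mu\nu}.
\]
Each term on the right is then estimated with Cauchy--Schwarz, absorbing the $f$ contributions into $\mathfrak{F}$.

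\textbf{Item 1.} Since $T$ is Killing, ${}^{(T)}\pi=0$. The weight is $w=N^{-1}t_*^{N-\delta}t^\delta$, so
\[
\nabla w=N^{-1}w\Big((N-\delta)t_*^{-1}\dd t_*^\#+\delta t^{-1}\dd t^\#\Big).
\]
Hence
\[
-\Div J^T_{N,\delta}=-wTf+\frac{N-\delta}{N}\,t_*^{N-1-\delta}t^\delta\,(-\T[\dd t_*^\#,T])+\frac{\delta}{N}\,t_*^{N-\delta}t^{\delta-1}\,(-\T[\dd t^\#,T]).
\]
By \cref{curr:eq:fluxT}, the second term controls $t_*^{N-1-\delta}t^\delta\big(D(\partial_r\phi)^2+r^{-2}\abs{\slashed\nabla\phi}^2\big)$, with $(N-\delta)/N\gtrsim1$ because $N-\delta>2$. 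The third term controls $\frac{\delta}{N}t_*^{N-\delta}t^{\delta-1}D^{-1}(T\phi)^2$, which is exactly the claimed $\frac{\delta t_*}{NDt}(T\phi)^2$ term. For the source term I would write
\[
\abs{wTf}\le \frac{\delta}{2N}\,t_*^{N-\delta}t^{\delta-1}D^{-1}(T\phi)^2+C\,\frac{N}{\delta}\,w^2\,t_*^{-(N-\delta)}t^{1-\delta}Df^2 .
\]
The last term equals $C\delta^{-1}N^{-1}t_*^{N-\delta}t^{\delta+1}Df^2\lesssim\mathfrak{F}$, which gives \cref{curr:eq:T}.

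\textbf{Item 2.} Here $X=\chi r\partial_r|_u$ with $r>10M$, so $t_*\sim u$ up to logarithms. The weight derivatives give $\T[\nabla w,X]$, which involves $(\partial_r|_u\phi)^2$ and $\partial_u\phi\,\partial_r\phi$ with coefficient $t_*^{-1}w$. The deformation tensor of $r\partial_r|_u$ is computed as in the $r^p$ method with $p=1$: it produces $+(\partial_r|_u(r\phi))^2$-type positivity, plus error terms in $\partial_u\phi$, in the angular derivatives and lower order, all bounded by $w\big((T\phi)^2+r^{-2}\abs{\slashed\nabla\phi}^2\big)$.

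The main issue is the sign coming from differentiating the time weight along $\partial_r|_u$ and in the $u$ direction. In the backward direction the term $-\partial_u w\,(\partial_r|_u\phi)^2$ has a favourable sign. This yields $t_*^{N-2-\delta}rt^\delta(\partial_r|_{t_*}\phi)^2$, after converting $\partial_r|_u$ to $\partial_r|_{t_*}$ modulo $T\phi/(r\log^2 r)$. The remaining bad terms are controlled by $t_*^{N-1-\delta}t^\delta\big((T\phi)^2t_*/(Dt)+r^{-2}\abs{\slashed\nabla\phi}^2\big)$, which is the good bulk of item 1. They are absorbed by choosing $C$ large; note that $t_*/t\gtrsim r^{-1}$ is acceptable since the bad $T\phi$ terms carry a factor $t_*^{-1}$. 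The source term $wXf\phi$ is handled by Cauchy--Schwarz, giving $N^{-1}r\chi f^2$.

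\textbf{Item 3.} The argument is analogous near the horizon with $X=\chi(r-M)Y$. The coefficient $(r-M)$ is chosen so that the deformation tensor of $(r-M)Y$ produces a positive multiple of $(Y\phi)^2$. In this region $D=(r-M)^2$, so the angular and $T\phi$ errors are $O(1)$ or weighted by $(r-M)$. These are bounded by $CJ^T$ bulk terms, using $t_*\sim t$ up to $(r-M)^{-1}$ corrections; this is where extremality makes the $D$ weights match.

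\textbf{Causality and main obstacle.} Causality of all currents is immediate: $T$, $\partial_r|_u$ and $Y$ are future causal, or past causal in the backward sense used here, and the weights are nonnegative. I expect the main obstacle to be the bookkeeping in items 2 and 3. There one must verify that every indefinite term from the deformation tensor and from the derivative of the weight is dominated either by the $\delta$-weighted $T$-bulk or by the new positive term, uniformly in $N$. This requires the precise relation between $t,t_*$ and $D$ in each region.
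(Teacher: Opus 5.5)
Your item 1 is correct and matches the paper's argument. The gap is in item 2. There you misdescribe the terms coming from $r\partial_r|_u$, and the one genuinely delicate absorption is missing.

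\textbf{The cross term in item 2.} With the untwisted $\T$ (no $r\phi$, no zeroth-order correction in $J^1_{>,N}$) one has $\T\cdot r^2\slashed g=-D(\partial_r|_u\phi)^2+2\,\partial_u\phi\,\partial_r|_u\phi$, and hence
\[
-\tfrac12\,\T\cdot\pi^{r\partial_r|_u}=\tfrac12(D+rD')(\partial_r|_u\phi)^2+\tfrac12 r^{-2}\abs{\slashed\nabla\phi}^2-2\,\partial_u\phi\,\partial_r|_u\phi .
\]
So $-\Div J^1_{>,N}$ contains the indefinite cross term $\tfrac2N t_*^{N-1-\delta}t^\delta\,\partial_u\phi\,\partial_r|_u\phi$. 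It sits at the same $t_*$-power as the $J^T$ bulk. The weight-derivative term, by contrast, is $-\T(\nabla w,r\partial_r|_u)\geq0$ by causality and contains no cross term.

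This cross term is not controlled by $CJ^T$ for any fixed $C$, contrary to your claim that the remaining errors are bounded by the bulk of item 1. Near $\scri$ that bulk controls $(T\phi)^2=(\partial_u\phi)^2$ only with the weights $\frac{\delta t_*}{Nt}$ and $\frac{1}{r\log^2 r}$. Both degenerate as $r\sim t\to\infty$ at fixed $t_*$. Your remark about $t_*/t\gtrsim r^{-1}$ does not address a term at this weight.

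\textbf{The missing step.} What is needed, and what the paper does, is to play the new positive term against the $\delta$-weighted $T$-bulk. In $\{r>t/2\}$ split
\[
\abs{\partial_r|_u\phi\,\partial_u\phi}\le\epsilon\,\frac{r}{t_*}(\partial_r|_u\phi)^2+\epsilon^{-1}\frac{t_*}{r}(\partial_u\phi)^2 .
\]
The first piece is absorbed by $\frac{N-1-\delta}{N}t_*^{N-2-\delta}rt^\delta(\partial_r|_u\phi)^2$ from the weight derivative. The second is absorbed by $C\frac{\delta t_*}{Nt}(T\phi)^2$, using $t\lesssim r$ there, once $C\gtrsim(\delta\epsilon)^{-1}$. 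In $\{r<t/2\}$ one has $t_*\sim t$, so $J^T$ controls $T\phi$ nondegenerately and the cross term is absorbed directly. Without this interplay the estimate does not close.

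\textbf{Item 3.} The mechanism you describe is wrong there too, though the conclusion survives. Since there is no redshift, a direct computation gives
\[
-\tfrac12\,\T\cdot\pi^{(r-M)Y}=-\tfrac D2(\partial_r|_v\phi)^2-\tfrac{2(r-M)}{r}\,\partial_r|_v\phi\,T\phi-\tfrac12 r^{-2}\abs{\slashed\nabla\phi}^2 .
\]
So the deformation tensor of $(r-M)Y$ contributes only errors. In any case, an $O(D)$ term at weight $t_*^{N-1-\delta}$ could never produce $t_*^{N-2-\delta}(r-M)(Y\phi)^2$ in $\{(r-M)t_*\lesssim1\}$. As in item 2, the claimed bulk comes entirely from the $t_*$-derivative of the weight, via
\[
\T(\dd t_*^\#,\dd v^\#)=(1-\tfrac D2)(\partial_r\phi)^2+\tfrac12r^{-2}\abs{\slashed\nabla\phi}^2\geq0 .
\]
Here the errors are absorbed by $CJ^T$ without any splitting. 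Near $\hor$ one has $t_*/(Dt)\gtrsim1$, so the $T$-bulk controls $T\phi$ nondegenerately and $(r-M)\abs{\partial_r\phi\,T\phi}\lesssim D(\partial_r\phi)^2+(T\phi)^2$.
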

	\begin{remark}[Forward estimate]
		The same currents can also be used for forward estimate by requiring that $N-\delta<0,\delta<0$.
		In this case, the currents become past directed causal, due to the explicit $N$ factor, and the sign in the boundary terms flips.
	\end{remark}
	\begin{remark}[$r^p$ hierarchy]
		For the reader familiar with the $r^p$ hierarchy framework of Dafermos-Rodnianski \cite{dafermos_new_2010} examplified in \cite{dafermos_quasilinear_2022} can note that \cref{curr:eq:p_far,curr:eq:p_near1} are an analogue of the $p=1$ estimates.
		Unlike in those works, we do not work with a twisted energy momentum tensor --alternatively with $r\phi$-- and recover the 0-th order control via Poincare type lemma in \cref{curr:lemma:poincare}.
	\end{remark}
	
	\begin{remark}[Fluxes vs weights]\label{curr:rem:flux_vs_weights}
		Let us already note that the reason we include $t,t_*$ weights in the estimates is purely for presentation.
		In particular, we will rarely make us of flux terms that the energy estimates control, in stark contrast to for instance \cite{dafermos_quasilinear_2022}.
		Let us give an example: not including a $t$ weight, and using the flux contribution to $J^T$ over incoming null hypersurfaces in a neighbourhood of $\scri$ yields the same bulk control over $T\phi$ as the divergence term \cref{curr:eq:T}.
		Similarly, the $t_*$ weight may be repackaged as an appropriate dyadic assumption in the terminology of \cite{dafermos_quasilinear_2022}.
	\end{remark}

	\begin{proof}
		a)
		We compute the deformation tensor
		\begin{equation}\label{curr:eq:T_def}
			\pi^{t_*^{N-\delta}t^\delta T}=Dt_*^{N-\delta}t^{\delta}(\delta t^{-1} \dd t\dd t+Nt_*^{-1}\dd t\dd t_*)
		\end{equation}
		We simply use $\Div J^T_{N,\delta}=t_*^{N-1-\delta}t^\delta(\dd t_*^{\#}+\frac{\delta t_*}{Nt}\dd t)\times T \cdot \T$, so that \cref{curr:eq:T} follows from \cref{curr:eq:fluxT}.
		The terms from the inhomogeneity are of the form
		$\abs{f N^{-1}t_*^{N-\delta}t^\delta T\phi}$, and can be bounded by the Young's inequality.
		
		b) For this part, it will be useful to work with $(r,u)$ coordinates and recall $\ern$ from \cref{not:eq:ern_ru}.
		
		We compute  the deformation tensors
		\begin{nalign}
			\pi^{\partial_r|_u}=\mathcal{L}_{\partial_r|_u}\ern=-D'\dd u^2+2r\slashed{g}\implies \pi^{r\partial_r|_u}=2r^2\slashed{g}-rD'\dd u^2-\dd r\dd u\\
			\pi^{t_*^{N-1}r\partial_r|_v}=t_*^N\pi^{r\partial_r|_v}+(N-1)t_*^{N-1}r\Big(\dd u^2-\frac{\dd u\dd r}{r\log^2r}\Big).
		\end{nalign}
		We note that for all but the $\slashed{g}$ term we may use \cref{curr:eq:T_def} to make them timelike:
		\begin{multline}
			2C \dd t\dd t_*-rD'\dd u^2-\dd r\dd u=C\dd t\dd u+C\dd t(\dd u-2\frac{\dd r}{r\log^2 r})-rD'\dd u^2-\dd r\dd u\\
			=\dd u \left(C\dd t-rD'\dd u-\dd r\right)+C\dd t(\dd u-2\frac{\dd r}{r\log^2 r}).
		\end{multline}
		This is clearly a sum of terms $\omega_1\otimes\omega_2$ for  future causal 1-forms $\omega_1,\omega_2$ and hence gives positive contribution when contracted with $\T$.
		Therefore, we obtain for $C$ sufficiently large in $r>10M$
		\begin{equation}\label{curr:eq:proof1}
			-\T\cdot\pi ^{t_*^{N-\delta}t^\delta CT+t_*^{N-1-\delta}t^\delta r\partial_r|_v}+t_*^{N-\delta-1}t^\delta\abs{\T\cdot r^2\slashed{g}}\gtrsim \frac{C}{2}\textrm{RHS} \cref{curr:eq:T}+RHS \cref{curr:eq:p_far}.
		\end{equation}
		We proceed to bound the term on the left using
		\begin{equation}
			\abs{\T\cdot r^2\slashed{g}}\lesssim r^{-2}\abs{\slashed{\nabla}\phi}^2+D\abs{\partial_r|_u\phi}^2+\abs{\partial_r\phi\partial_u\phi}.
		\end{equation}
		The first two terms on the right are bounded by $RHS\cref{curr:eq:T}$. For the last, we need to use in the region $r>t/2$
		\begin{equation}
			\abs{\partial_r\phi\partial_u\phi}\leq \frac{r}{t_*}\epsilon\abs{\partial_r\phi}^2+\frac{t_*}{r}\epsilon^{-1}\abs{\partial_u\phi}^2,
		\end{equation}
		and thus absorb it on the right hand side of \cref{curr:eq:proof1}.
		
		There is an additional error term from the inhomogeneity of the form $N^{-1}\abs{f}t_*^{N-\delta}t^\delta(\abs{T\phi}+\abs{t_*^{-1}\chi r^1\partial_r\phi}+\abs{t_*^{-1}\chi \phi})$.
		These terms are bounded by the Young's inequality to give \cref{curr:eq:p_far}.
		
		c) For this part, it is useful to work with $(v,r)$ coordinates and recall $\ern$ from \cref{not:eq:ERN_in_rv}.
		We note that $g(\partial_r|_v,\cdot )=\dd v$ and $\dd t_*=\dd v-\dd r$. 
		Let $\zeta=r-M$.
		We compute the deformation tensor
		\begin{multline}\label{curr:eq:proof3}
			\pi^{\partial_r|_v}=\mathcal{L}_{\partial_r|v}\ern=-D'\dd v^2+2r\slashed{g} \\
			\implies \pi^{\zeta t_*^N\partial_r|_v}=\zeta t_*^N(-D'\dd v^2 +2r\slashed{g})+t_*^N\dd v\dd r+\zeta Nt_*^{N-1} \dd v(\dd v-\dd r).
		\end{multline}
		To bound the terms other than $\slashed{g}$, we use \cref{curr:eq:T_def} and write for $C$ sufficiently large
		\begin{multline}
			CD\dd t\dd t_*-\zeta D'\dd v^2 +\dd v\dd r=C(D\dd v-\dd r)(\dd v-\dd r)+(D-\zeta D')\dd v^2+\dd v(\dd r-D\dd v)\\
			=C(D\dd v-\dd r)((1-C^{-1})\dd v-\dd r)+(D-\zeta D')\dd v^2\\
			=\omega +\frac{C}{2}\left(\frac{3D}{4}\dd v-\dd r+\frac{D}{4}\dd v\right)\left(\frac{\dd v}{2}-\dd r+\frac{\dd v}{2}\right)+(D-\zeta D')\dd v^2\\
			=\omega+ C\frac{D}{16}\dd v^2+(D-\zeta D')\dd v^2=\omega 
		\end{multline}
		where $\omega$ denotes a sum of product of future causal 1-forms changing from line to line.
		Therefore we obtain
		\begin{equation}
			\pi^{t_*^{N-\delta}t^\delta(CT-\zeta t^{-1} Y) }\cdot\T+\zeta t_*^{N-1-\delta}t^\delta\abs{\slashed{g}\cdot \T}\gtrsim CRHS\cref{curr:eq:T}+RHS\cref{curr:eq:p_near1}.
		\end{equation}
		We bound the second term on the left as
		\begin{equation}
			\zeta \T\cdot\slashed{g} \lesssim \zeta \abs{\slashed{\nabla}\phi}^2+\zeta D\abs{\partial_r\phi}^2+\zeta \abs{\partial_r\phi\partial_v\phi}.
		\end{equation}
		All the terms are bounded by RHS\cref{curr:eq:T}.
		Including the inhomogeneities again follows by Young's inequality.
	\end{proof}
	We note that by integrating the right hand side of \cref{curr:eq:T,curr:eq:p_far,curr:eq:p_near1} and ignoring $\delta$ and $N$ factors we obtain
	\begin{equation}
		\int\mu t_*^{N-\delta-1}t^{\delta} \Big(\frac{t_*}{tD} (T\phi)^2+\frac{D^{1/2}r}{t_*}(\partial_r|_{t_*}\phi)^2+\abs{\slashed{\nabla}\phi}^2r^{-2}\Big)\sim \norm{\mathring{\Ve}\phi}_{\Hb^{0;(\delta-1)/2,(N-3)/2,(N-1)/2,(N-1)/2,\delta/2}(\Mcomp)}
	\end{equation}
	with regularity $\mathring{\Ve}=\{(r-M)\partial_r|_{t_*},\rho_+^{-1}\rho_\F^{-1}T,\rho_\scri^{1/2}\rho_\hor^{1/2}\slashed{\nabla}\}$.
	Collecting the vectorfield in $\mathring{\Ve}$ as above also yields that we can immediately recover the 0-th order term from Poincare type estimate:
	
	\begin{lemma}[Poincare]\label{curr:lemma:poincare}
		Fix $a_+,a_\K,a_\F$ satisfying $a_\K>\max(a_+-3/2,a_\F+1/2)$ and $a_\F>0$, $a_+>-2$.
		For $\phi\in\Hb^{0;a_\scri,a_+,a_\K,a_\F,a_\hor}(\Mcomp)$, it holds that
		\begin{equation}
			\norm{\phi}_{\Hb^{0;a_\scri,a_+,a_\K,a_\F,a_\hor}(\Mcomp)}\lesssim\norm{\mathring{\Ve}\phi}_{\Hb^{0;a_\scri,a_+,a_\K,a_\F,a_\hor}(\Mcomp)}.
		\end{equation}
	\end{lemma}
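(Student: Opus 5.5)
We reduce the weighted $L^2$ bound to one-dimensional weighted Hardy inequalities along suitable integral curves, one near each boundary face, and glue the local estimates with a partition of unity subordinate to neighbourhoods of the corners $\scri\cap\ip$, $\ip\cap\K$, $\K\cap\F$, $\F\cap\hor$. The derivatives we may use are those in $\mathring{\Ve}$: the radial derivative $(r-M)\partial_r|_{t_*}$ and the time derivative $\rho_+^{-1}\rho_\F^{-1}T$. A rescaled $T$ is a b-vector field transversal to $\ip$ and $\F$, and $(r-M)\partial_r$ is transversal to $\K$ and, in suitable combinations, to $\scri$ and $\hor$.

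The basic tool is the following. For $g$ vanishing near $s=0$ (compact support in $\M$, which suffices by density) and $\gamma\neq0$,
\[
\int_0^\infty s^{2\gamma}\abs{g}^2\frac{\dd s}{s}\leq \gamma^{-2}\int_0^\infty s^{2\gamma}\abs{s\partial_s g}^2\frac{\dd s}{s}.
\]
This follows from integrating $s\partial_s(s^{2\gamma}g^2)$ by parts and applying Cauchy--Schwarz. The sign of $\gamma$ fixes the end at which one integrates from, and this is why each weight condition has to be strict.

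We apply this face by face, starting with the pieces where the time derivative is used.
\begin{itemize}
\item Near $\ip$ (away from $\scri$ and $\K$), take $s=t_*$ along $T$. We have $t_*\partial_{t_*}=t_*T$, and $\rho_+^{-1}T\sim t_*T$ there, so the Hardy inequality in $t_*$ controls $\phi$ by $\rho_+^{-1}T\phi$. The effective exponent, after including the volume weight $\rho_+^{-4}$ from $\mu=\rho_\scri^{-3}\rho_+^{-4}\cdots\mu_\b$, is a shift of $a_+$ that is nonzero when $a_+>-2$.
\item Near $\F$ (and at $\F\cap\hor$), use $\rho_\F^{-1}T\sim vT$ with $s=v$ at fixed $\mathfrak{r}=(r-M)v$. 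The measure contributes $\rho_\F^0$, so the condition is $a_\F>0$.
\item Near $\scri$, integrate along $T$ at fixed $r$ (transversal to $\scri$ in the compactification), continuing the $\ip$ argument; the weight $a_\scri$ is inert in this direction. Alternatively, use the same $t_*$-Hardy in $(u,r)$ coordinates.
\end{itemize}

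Near $\K$ the time derivative only provides $\rho_\K\rho_+$-weighted control, so the Hardy inequality is done in $r$ (respectively in $r-M$) at fixed $t_*$. In region $\K\cap\ip$ we use $s=r$ with $r\partial_r$. The exponents of $\rho_\K=r/t$ and $\rho_+=1/r$ combine to an $r$-power $a_\K-a_+$ plus measure shifts, and nondegeneracy requires $a_\K>a_+-3/2$; integration proceeds from $r\sim t_*$, where the $\ip$ estimate already controls $\phi$. Similarly, in region $\K\cap\F$ we use $s=r-M$ with $(r-M)\partial_r$. The power is $a_\K-a_\F$ shifted by $-1/2$ from $\rho_\K^{-1}$ in $\mu$, giving $a_\K>a_\F+1/2$.

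The gluing errors from cutoff derivatives are supported in the overlaps, where the previous face estimate controls $\phi$. Put differently, we only need the one-sided Hardy inequality with a boundary term at the interior cutoff. That boundary term has a favorable sign or is bounded by the estimate on the adjacent face.

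The main obstacle is the bookkeeping of the weights, both from $\mu$ versus $\mu_\b$ and from the conversion of $\mathring{\Ve}$ elements to b-vector fields at each corner. These determine the exact thresholds, and they have to match the stated inequalities with the right sign, so that every Hardy step integrates from a boundary where compact support gives vanishing. We would check the exponent arithmetic corner by corner, in the coordinates used in Remark~\ref{not:rem:examples}.
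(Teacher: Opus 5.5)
Your overall architecture matches the paper's: Hardy inequalities in directions transversal to $\ip$ and $\F$, then a radial Hardy inequality across $\K$ anchored at the adjacent faces. Two steps, however, do not work as written.

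\emph{The direction at $\ip$ and near $\scri$.} There you integrate along $T$ at fixed $r$. Near the interior of $\ip$, in coordinates $\rho_+=t_*^{-1}$, $y=r/t_*$, one has $t_*T=-\rho_+\partial_{\rho_+}-y\partial_y$. So a fixed-$r$ curve crosses $\{\delta<y<\delta^{-1}\}$ in a $\log t_*$-interval of length $2\log\delta^{-1}$ and leaves through $\K$ without ever reaching $\ip$. There is no face to integrate from, and continuing into $\K$ does not help, because there $\rho_+^{-1}T\sim\rho_\K\,t_*T$ degenerates. Near $\scri$ the same curves have $\rho_\scri=t_*/r$ varying, and the weight $\rho_\scri^{-2a_\scri-3}\rho_+^{-2a_+-4}$ becomes $r^{2a_\scri+3}t_*^{2(a_+-a_\scri)+1}$. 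So $a_\scri$ is not inert, although the lemma imposes no condition on $a_\scri$.

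The paper instead uses $\rho_+\partial_{\rho_+}$ at fixed $\rho_\scri$, i.e.\ the scaling field $t_*\partial_{t_*}+r\partial_r$. On $\{r/t_*>\delta\}$ this lies in the span of $\rho_+^{-1}T$ and $(r-M)\partial_r$, its integral curves end on $\ip$, and it keeps $\rho_\scri$ fixed. Then $a_+>-2$ is exactly the condition that the boundary term at $t_*\sim 1$ has the good sign. Mere $\gamma\neq0$ is not enough, since $\phi$ need not vanish at $\Sigma_1$. You already do the right thing at $\F$ (fixed $(r-M)v$, i.e.\ the field $v\partial_v-(r-M)\partial_r$); do the same at $\ip$.

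\emph{The $\K$ thresholds.} You assert these rather than derive them, and they come out the other way. With $\|\phi\|_{\Hb^{0;\vec a}}=\|\rho^{-\vec a}\phi\|_{L^2(\mu)}$, at fixed $t$ near $\K\cap\ip$ (where $\rho_+=1/r$, $\rho_\K=r/t$) the integrand is $t^{2a_\K}r^{2(a_+-a_\K)}\phi^2\,r^2\dd r$. Anchoring at $r\sim\delta t_*$, with the good sign at small $r$, therefore needs $a_\K<a_++3/2$. Near $\K\cap\F$ (where $\rho_\F=r-M$, $\rho_\K=((r-M)t)^{-1}$) the integrand is $t^{2a_\K}(r-M)^{2(a_\K-a_\F)}\phi^2\dd r$, and anchoring at the $\F$ end needs $a_\K<a_\F-1/2$. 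Anchoring all of $\K$ at one end is also possible, so the mechanism needs an \emph{upper} bound, essentially $a_\K<\max(a_++3/2,a_\F-1/2)$.

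A lower bound on $a_\K$ cannot suffice. Take $\vec a=(0,0,10,1,0)$, which satisfies the stated hypotheses, and $\phi=\alpha(t_*/T_0)\beta(r)$ with $\beta$ a logarithmic cutoff to $(M+\epsilon,R)$. At fixed $t_*\gg R$ the weighted density is $\approx t_*^{20}(r-M)^{18}r^{-36}$, concentrated near $r=2M$. Meanwhile $\mathring{\Ve}\phi$ is supported where $\beta'\neq0$ or has size $O((R+\epsilon^{-1})/T_0)$, so $\|\mathring{\Ve}\phi\|/\|\phi\|\to0$.

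The paper's own $\K$ step contains the same sign slip. It writes $a_\K-a_+>-3/2$ and $a_\F-a_\K<-1/2$, where its inequalities (with $a>-3$, resp.\ $a<-1$) give $a_\K-a_+<3/2$ and $a_\K-a_\F<-1/2$. By copying those thresholds, your sketch inherits a false statement. The weights actually used later, with $a_\K=a_\F=a_++1$, are fine: they are covered by anchoring all of $\K$ at the $\ip$ end.
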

	\begin{proof}
		We prove the result first in the far exterior ($i^+$) and near horizon ($\F$) regions, then propagate this control to the spatially compact part ($\K$).
		
		\emph{Far exterior:}
		In an open neighbourhood $U$ of the  corner $\scri\cap i^+$, we use coordinates $\rho_+,\rho_\scri,\omega$ for $\omega$ a function on the sphere.
		We observe that $\mathring{\Ve}$ yields control of $\rho_+\partial_{\rho_+}$.
		We also note the following inequality for $a<-1$
		\begin{equation}
			\int_{[0,1)_{\rho_+}}\dd \rho_+ \phi^2\rho_+^a\lesssim _a	\int_{[0,1)_{\rho_+}} \dd \rho_+ (\rho_+\partial_{\rho_+}\phi)^2\rho_+^a,
		\end{equation}
		where the constant only degenerates as $a\to -1$.
		Hence, $\norm{\phi}_{\Hb^{0;a_\scri,a_+}(U)}\lesssim_{a_+} \norm{\mathring{\Ve}\phi}_{\Hb^{0;a_\scri,a_+}(U)}$ for $a_+>-2$.
		This estimate can be extended to the region $\{\abs{x}/t>\delta\}$.
		
		\emph{Near horizon:}
		Using coordinates $\rho_{\F},\rho_{\K}$ and $x/\abs{x}$, we note that $\mathring{\Ve}$ controls the $\rho_{F}\partial_{\rho_{F}}$ derivative, and the previous argument goes through.
		
		\emph{Spatially compact:}
		We use the estimate for $M<r_1<r_2$ and $a>-3$
		\begin{equation}\label{curr:eq:proof_poincare}
			\int_{K\cap\{r\in(r_1,r_2)\}}r^a\phi^2\mu_\K\lesssim_{r_1} \int_{K\cap\{r\in(r_1,r_2)\}}r^a(r\partial_r\phi)^2\mu_\K+\int_{K\cap\{r\in(r_2/2,r_2)\}}r^a\phi^2\mu_\K
		\end{equation}
		Importantly, the implicit constant does not depend on $r_2$.
		Hence, we can use the estimate parametrically in $t$ in the region $U=\{\abs{x}\in(2M,t\delta)\}$ to obtain
		\begin{equation}
			\norm{\phi}_{\Hb^{0;a_+,a_\K}(U)}\lesssim \norm{r\partial_r\phi}_{\Hb^{0;a_+,a_\K}(U)}+\norm{\phi}_{\Hb^{0;a_+}(\{\abs{x}\in(t\delta/2,t\delta)\})}
		\end{equation}
		where $a_\K-a_+>-3/2$.
		
		The analogous estimate of \cref{curr:eq:proof_poincare} in the near horizon region takes the form 
		\begin{multline}
			\int_{K\cap\{r\in(M+r_1,r_2)\}}(r-M)^a\phi^2\mu_\K\lesssim_{r_2} \int_{K\cap\{r\in(M+r_1,r_2)\}}(r-M)^a\big((r-M)\partial_r\phi\big)^2\mu_\K\\
			+\int_{K\cap\{r\in(M+r_1/4,M+r_1/2)\}}(r-M)^a\phi^2\mu_\K
		\end{multline}
		for $r_1>0$, $r_2>2M+r_1$ and $a<-1$.
		The implicit constant does not depend on $r_1$.
		Hence, in the region $U=\{\abs{x}\in (M+\delta s^{-1},2M)\}$ for $a_\F-a_\K<-1/2$ it holds that
		\begin{equation}
			\norm{\phi}_{\Hb^{0;a_\K,a_\F}(U)}\lesssim\norm{(r-M)\partial_r\phi}_{\Hb^{0;a_\K,a_\F}(U)}+\norm{\phi}_{\Hb^{0;a_\F}(\{t(r-M)\in(\delta/2,\delta)\})}.
		\end{equation}
		
	\end{proof}

	We will also need to use energy estimates in the interior of the black hole regions.
	\begin{lemma}[Interior]\label{curr:lem:int}
		Consider $\Box_{\ern}\phi=f$ in the region $M-r\in(0,  t_*)$.
		Let  $J^T_{N,\delta}=\frac{1}{N}t_*^{N+\delta}\abs{t}^{-\delta}T\cdot\T$ and $J_<^1=\frac{1}{N}t_*^{N-1+\delta}\abs{t}^{-\delta}(r-M) Y\cdot \T$.
		Let  $\mathfrak{F}=f^2t_*^{N-\delta}\abs{t}^\delta(\delta^{-1}\abs{t}^{1}D+N^{-1}(r-M))$.
		There exist $C$, such that the following  holds 
		\begin{equation}\label{curr:eq:interior_div}
			-\Div (J^1_<+CJ_{N,\delta}^T)\gtrsim t_*^{N-1+\delta}\abs{t}^{-\delta}\big(D^{1/2}t_*^{-1}(\partial_r\phi)^2+\frac{\delta t_*}{NDt}(T\phi)^2+ \abs{\slashed{\nabla}\phi}^2\big).
		\end{equation}
	\end{lemma}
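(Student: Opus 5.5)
The plan is to mirror the proof of part c) of \cref{curr:lem:ext}, now in the interior region $r<M$, working in $(v,r)$ coordinates with $\ern$ as in \cref{not:eq:ERN_in_rv}. There $D=(1-M/r)^2\ge 0$ still holds, $T=\partial_v$ remains a future causal Killing field (null on $\hor$ and timelike for $r\neq M$), and $Y=-\partial_r$ is future null. Since $\zeta:=r-M<0$ in the interior, the vector field $-\zeta Y$ is a nonnegative multiple of $Y$, which is what keeps $J^1_<$ causal. Throughout we set $w:=t_*^{N+\delta}\abs{t}^{-\delta}$. Note that $t=v-r_*$ with $r_*\to+\infty$ as $r\to M^-$, so $\abs{t}$ is large and comparable to $t_*$ away from a small set.

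\emph{Step 1: the $T$ current.} Compute $\pi^{wT}=\dd w\otimes T^\flat$, symmetrized, using $\mathcal{L}_T\ern=0$. We have $\dd w=w\big((N+\delta)t_*^{-1}\dd t_*-\delta t^{-1}\dd t\big)$. Because of the sign reversal in the $\abs{t}^{-\delta}$ weight relative to the exterior, both pieces should be past-directed multiples of future causal one-forms when paired with $T$. Then $-\Div J^T_{N,\delta}=-N^{-1}\T\cdot\pi$ is a sum of contractions $\T[\omega_1^\#,T]$ with $\omega_1\in\{\dd t_*,\dd t\}$. Using the flux coercivity \cref{curr:eq:fluxT} and \cref{curr:eq:fluxY}, adapted to $r<M$, this gives the $(T\phi)^2$ term with factor $\tfrac{\delta t_*}{NDt}$ and the angular term. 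The $(\partial_r\phi)^2$ control degenerates like $D$ and so is not yet sufficient.

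\emph{Step 2: the $Y$ current.} Compute
\[
\pi^{\zeta w t_*^{-1}\partial_r|_v}=\zeta w t_*^{-1}\big(-D'\dd v^2+2r\slashed{g}\big)+wt_*^{-1}\dd v\dd r+\zeta\,\dd(wt_*^{-1})\,\dd v,
\]
exactly as in \cref{curr:eq:proof3}. The term $wt_*^{-1}\dd v\dd r$ is the one producing $(\partial_r\phi)^2$ control. To see this, repeat the algebraic decomposition from part c): write $C D\dd t\dd t_*-\zeta D'\dd v^2+\dd v\dd r$ as a sum of products of future causal one-forms plus $(D-\zeta D')\dd v^2$. Since $D'=2M\zeta/r^3$, we get $D-\zeta D'=\zeta^2(r^{-2}-2Mr^{-3})$, which is negative for $r<M$. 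However it is $O(\zeta^2)$, so it can be absorbed by $C\tfrac{D}{16}\dd v^2$ once $C$ is large. The remaining terms are handled as follows. The term $\zeta\abs{\slashed g\cdot\T}$ is bounded by $\abs{\zeta}\big(\abs{\slashed\nabla\phi}^2+D\abs{\partial_r\phi}^2+\abs{\partial_r\phi\,\partial_v\phi}\big)$. For the cross term use Young's inequality: $\abs{\zeta}\abs{\partial_r\phi}\abs{T\phi}\le \epsilon D^{1/2}t_*^{-1}(\partial_r\phi)^2+\epsilon^{-1}t_*\abs{\zeta}^2D^{-1/2}(T\phi)^2$. Since $D^{1/2}\sim\abs{\zeta}$, the second piece is $\lesssim t_*\abs{\zeta}(T\phi)^2$, and this is controlled by $\tfrac{\delta t_*}{NDt}(T\phi)^2$ because $\abs{\zeta}\lesssim 1\lesssim t_*/(D\abs{t})$ in the region $M-r<t_*$. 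The derivative-of-weight term $\zeta\,\dd(wt_*^{-1})\dd v$ is lower order by a factor $t_*^{-1}$ and is absorbed similarly. The inhomogeneity terms $\abs{f}\,w(\abs{T\phi}+t_*^{-1}\abs{\zeta}\abs{Y\phi})$ go into $\mathfrak F$ by Young's inequality. I note that the statement as written omits $+\mathfrak F$ on the left, and I would include it.

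\emph{Main obstacle.} The delicate step is the sign and weight bookkeeping in the interior: checking that $\dd w$ has the right time orientation so that the $T$-bulk is coercive with the $\abs{t}^{-\delta}$ weight, given that $t\to-\infty$ toward $\hor$ from inside. This also requires confirming that $\tfrac{t_*}{D\abs{t}}$ dominates $\abs{\zeta}t_*$ uniformly, including near $\hor$, where $\abs{t}\sim M^2/\abs{\zeta}$ so that $D\abs{t}\sim\abs{\zeta}$. I would first verify these pointwise inequalities in the two subregions $\abs{\zeta}t_*\lesssim1$ and $\abs{\zeta}t_*\gtrsim1$ separately, and only then fix $C$.
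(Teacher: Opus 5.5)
Your Step 1 is fine, but Step 2 has a genuine gap. You misidentify where the $(\partial_r\phi)^2$ control comes from, and you discard the term that actually produces it.

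Pairing the $\dd v\,\dd r$ piece with $\T$ gives $\T(\partial_r,(\dd r)^\#)=\tfrac{D}{2}(\partial_r\phi)^2-\tfrac{1}{2r^2}\abs{\slashed{\nabla}\phi}^2$. That is only $D(\partial_r\phi)^2\sim\zeta^2(\partial_r\phi)^2$, which $J^T$ already supplies and which you yourself call insufficient. The target $D^{1/2}t_*^{-1}(\partial_r\phi)^2\sim\abs{\zeta}t_*^{-1}(\partial_r\phi)^2$ is much larger near $\hor$, where $\abs{\zeta}t_*\to0$.

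The target term comes from the weight-derivative piece $\zeta\,\dd(wt_*^{-1})\cdot\dd v$. Its $t_*$-part is $\zeta(N-1+\delta)t_*^{-2}w\,\dd t_*\cdot\dd v$, and it pairs non-degenerately: $\T((\dd t_*)^\#,\partial_r)=(1-\tfrac D2)(\partial_r\phi)^2+\tfrac{1}{2r^2}\abs{\slashed{\nabla}\phi}^2$. So it is not ``lower order by a factor $t_*^{-1}$''. It carries $\abs{\zeta}t_*^{-1}$ relative to $\dd v\,\dd r$, but that pairing degenerates like $\zeta^2$. This is also how part c) of \cref{curr:lem:ext} works: the good bulk there comes from the $\zeta Nt_*^{N-1}\dd v(\dd v-\dd r)$ term in \cref{curr:eq:proof3}.

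This term also carries the real interior difficulty. Its contribution to $-\Div J^1_<$ is $\tfrac1N\zeta\,\T\big((\dd(wt_*^{-1}))^\#,\partial_r\big)$. For $r<M$, both $(\dd(wt_*^{-1}))^\#$ and $\partial_r$ are still past directed, so the $\T$-factor is nonnegative and the sign is that of $\zeta$.

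If you literally use $(r-M)Y$ with $\zeta<0$, this is a negative term of size $\sim wt_*^{-2}\abs{\zeta}(\partial_r\phi)^2$. $CJ^T$ cannot absorb it, since it only offers $C\,wt_*^{-1}D\big(1+\tfrac{\delta t_*}{N\abs{t}}\big)(\partial_r\phi)^2$, smaller by a factor $\sim\abs{\zeta}t_*$. So the estimate fails as $\abs{\zeta}t_*\to0$.

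Your remark that $-\zeta Y$ is future causal is the right observation, but your computation never acts on it: in the interior you must use the future-directed multiplier $(M-r)Y=\zeta\partial_r$. Then the $D'$-term $\tfrac12 wt_*^{-1}\zeta D'(\partial_r\phi)^2$ and the weight-derivative term both enter with a favourable sign. This is what the paper means by ``both $\dd v^2$ terms have the same sign''. The weight-derivative term yields the claimed $D^{1/2}t_*^{-1}(\partial_r\phi)^2$.

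$CJ^T$ is then needed only for three unfavourable pieces:
\begin{itemize}
\item the $-\tfrac{D}{2}wt_*^{-1}(\partial_r\phi)^2$ from $\dd v\,\dd r$, whose angular part is now favourable;
\item the $O(\abs{\zeta}D)$ term from $\slashed{g}$;
\item the cross term $\abs{\zeta}\,\partial_r\phi\,T\phi$ from $\slashed{g}$, where your Young's inequality with $\abs{\zeta}t_*\lesssim 1$ does work.
\end{itemize}
As written, your argument never produces the $(\partial_r\phi)^2$ term in the conclusion.
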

	\begin{proof}
		In the region of interest, $t<0$.
		We note that $-\dd t_*,T$ are future directed causal and so is $-\dd \abs{t}^{-1}$.
		\cref{curr:eq:interior_div} follows from the same computation as for \cref{curr:lem:ext}.
		In particular, not that in \cref{curr:eq:proof1} both $\dd v^2$ terms have the same sign, thus we only need to use $J^T$ current to absorb the other terms.
		The resulting inhomogeneity is 
		$t_*^{N+\delta}\abs{t}^{-\delta}\abs{f}(\abs{T\phi}+\abs{D^{1/2}t_*^{-1}\partial_r\phi})$, which can be bounded by the Young's inequality using the control provided by the divergence terms.
	\end{proof}

	In the interior, we can recover 0-th order terms similarly:
	\begin{lemma}[Hardy-type inequality]\label{curr:lemma:hardy}
		For $\phi\in\Hb^{0;q,a_\hor}(\McompIn)$ with $q>0$ it holds that
		\begin{equation}
			\norm{\phi}_{\Hb^{0;q,a_\hor}(\McompIn)}\lesssim_q\norm{\mathring{\Ve}\phi}_{\Hb^{0;q,a_\hor}(\McompIn)}.
		\end{equation}
	\end{lemma}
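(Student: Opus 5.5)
The plan is to reduce the statement to a one-dimensional Hardy inequality in the variable transverse to the horizon, applied parametrically in the remaining variables, in the same spirit as the near-horizon part of \cref{curr:lemma:poincare}. In $\McompIn$ we are in the region $M-r\in(0,t_\ast^{-1}\mathfrak{t})$, which sits near the corner $\F\cap\hor$. There the two relevant boundary defining functions are $\rho_\F\sim t_\ast^{-1}$ and $\rho_\hor\sim t_\ast(M-r)$. The boundary $\B_{\mathfrak t}$, where $\rho_\hor=\mathfrak{t}$, is artificial and carries no weight. The norm $\Hb^{0;q,a_\hor}$ therefore involves the weight $\rho_\F^{-q}\rho_\hor^{-a_\hor}$ against the volume form $\mu$. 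Since $\mu$ is smooth and non-degenerate in $(v,r,\omega)$, this amounts to integrating $v^{2q}(v(M-r))^{-2a_\hor}\phi^2$ against $\dd v\,\dd r\,\dd\omega$.

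The first step is to write $\mathring{\Ve}$ in the coordinates $(v,\mathfrak{r}=(M-r)v,\omega)$. The vector field $T=\partial_v|_r$ equals $v^{-1}(v\partial_v|_{\mathfrak r}+\mathfrak{r}\partial_{\mathfrak r})$. Up to bounded factors, $\rho_+^{-1}\rho_\F^{-1}T$ is then $v\partial_v|_{\mathfrak r}+\mathfrak{r}\partial_{\mathfrak r}$, and $(r-M)\partial_r=\mathfrak{r}\partial_{\mathfrak r}$. So $\mathring{\Ve}$ controls $v\partial_v|_{\mathfrak r}\phi$, which is the b-derivative $\rho_\F\partial_{\rho_\F}$ at fixed $\rho_\hor$.

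The second step is the one-dimensional Hardy inequality in $v$ at fixed $(\mathfrak{r},\omega)$:
\begin{equation}
\int_{v_0}^\infty v^{2q}\phi^2\,\frac{\dd v}{v}\lesssim_q \int_{v_0}^\infty v^{2q}(v\partial_v\phi)^2\,\frac{\dd v}{v},\qquad q>0.
\end{equation}
This holds for $\phi\in C_c^\infty$ in the interior; the proof integrates $\partial_v(v^{2q}\phi^2)$ and applies Cauchy–Schwarz. The boundary term at $v_0$ has a favorable sign, or vanishes, because the artificial boundary $\Sigma_1$ is at finite $v$ and $\phi$ is compactly supported in the interior. The boundary term at $v=\infty$ vanishes by the compact support, and the constant blows up only as $q\to0$.

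I also need to account for the change of measure. In the new coordinates $\dd r=v^{-1}\dd\mathfrak{r}$ (up to sign) at fixed $v$, so the density $\dd v\,\dd r$ becomes $v^{-1}\dd v\,\dd\mathfrak{r}$. This extra factor of $v^{-1}$ is precisely the b-measure in $v$. The $\rho_\hor$ weight is a function of $\mathfrak{r}$ alone, so it passes through the $v$-integration unchanged. Integrating the inequality over $\mathfrak{r}\in(0,\mathfrak{t})$ and $\omega$ gives the claim for $C_c^\infty$ functions, and density extends it to all of $\Hb^{0;q,a_\hor}$.

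The main obstacle is bookkeeping for the positivity requirement. One must check that the weight $\rho_\F^{-q}$, combined with the $v^{-1}$ coming from $\mu$, really produces $v^{2q-1}$ with a positive exponent relative to the b-measure, so that $q>0$ is the right condition, and that the sign of the boundary term at $\Sigma_1$ is harmless. Away from the corner, near $\B_{\mathfrak t}$ and at bounded $v$, there is nothing to prove beyond the same estimate, since only $\rho_\F$ degenerates there.
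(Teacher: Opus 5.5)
Your proposal is correct and is essentially the paper's argument. The paper's proof simply refers back to the near-horizon step of \cref{curr:lemma:poincare}: $\mathring{\Ve}$ controls the b-derivative $\rho_\F\partial_{\rho_\F}$, and the one-dimensional weighted Hardy inequality in $\rho_\F$, taken against the b-measure and with a constant degenerating as $q\to0$, recovers the zeroth-order term. You carry out the same reduction explicitly in the coordinates $(v,\mathfrak{r},\omega)$ near $\F\cap\hor$, including the check that the volume form contributes exactly $\dd v/v$, so that $q>0$ is the right condition.
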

	\begin{proof}
		The proof is already contained in \cref{curr:lemma:poincare}.
	\end{proof}
	
	Finally, we record a Morawetz estimate, which will only be used in the construction of the approximate solution in \cref{sec:an}.
	In fact, as explained in \cref{an:rem:Morawetz}, even for this step it is not necessary. 
	\begin{lemma}[Morawetz current in Minkowski space]\label{curr:lem:morawetz}
		Let $N-\delta<0$ and $\delta<0$.
		In $(\MMink,\eta)$, define $J^T_{N,\delta}=\frac{1}{N}t_*^{N-\delta}t^{\delta}  T\cdot\T$  together with $J^X_N=\frac{1}{N}t_*^N\big(f(r)\partial_r\cdot\T-\phi^2\dd \chi/2+\chi\dd\phi^2/2\big)$ where $f=1-\jpns{r}^{-1}$ and $\chi=r^{-1}f$.
		Then, there exists $C$ sufficiently large such that $CJ^T+J^X$ is causal up to 0-th order terms and
		\begin{multline}\label{curr:eq:mor_minkowski}
			\Div(CJ_{N,\delta}^T+J_N^X)+t_*^{N-1-\delta}t_*^\delta\jpns{r}^{-2} \phi^2\\
			\gtrsim  \min(Ct_*^{-1},r^{-2})t_*^{N-\delta}t_*^\delta\Big(\delta t_*t_*^{-1} (T\phi)^2 +(\partial_r|_{t_*}\phi)^2+r^{-2}\abs{\slashed{\nabla}\phi}+\phi^2/\jpns{r}^2\Big).
		\end{multline}
		
	\end{lemma}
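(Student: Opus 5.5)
We propose to prove \cref{curr:lem:morawetz} by a direct divergence computation in Minkowski space. We treat the two currents separately and then combine them with a large constant $C$, in the same way as in the proof of \cref{curr:lem:ext}.

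For the time-weighted energy current $J^T_{N,\delta}$ we reuse the deformation tensor formula \cref{curr:eq:T_def} with $D=1$. This gives
\[
\Div J^T_{N,\delta}=t_*^{N-1-\delta}t^\delta\Big(\dd t_*^\#+\tfrac{\delta t_*}{Nt}\dd t^\#\Big)\cdot T\cdot\T .
\]
Since $N-\delta<0$ and $\delta<0$, the prefactor $1/N$ flips signs, as in the forward-estimate remark. The contraction of $\T$ with $T$ and the future causal covectors $-\dd t_*^\#,-\dd t^\#$ is then coercive. By the analogue of \cref{curr:eq:fluxT} in Minkowski space, it controls
\[
t_*^{N-1-\delta}t^\delta\Big(\delta\tfrac{t_*}{t}(T\phi)^2+\min(1,r^{-1})(\partial_r\phi)^2+r^{-2}\abs{\slashed\nabla\phi}^2\Big),
\]
up to harmless constants. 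Near $\scri$ the $\partial_r|_u$ derivative is weighted correctly by the $\dd t_*$ flux. This part supplies the $Ct_*^{-1}$ branch of the minimum.

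For $J^X_N$, we first carry out the classical unweighted Morawetz computation with multiplier $f(r)\partial_r$ and Lagrangian correction $\chi=f/r$. Here $f=1-\jpns r^{-1}$ is bounded and increasing with $f'=r\jpns r^{-3}>0$. The bulk is
\[
f'(\partial_r\phi)^2+\Big(\tfrac fr-\tfrac{f'}2\Big)\abs{\slashed\nabla\phi}^2\cdot(\ldots)-\tfrac12\Box\chi\,\phi^2,
\]
with the $(\partial_t\phi)^2$ contributions cancelling because $\chi=f/r$. One checks $f'\sim\jpns r^{-2}$ for $r\gtrsim1$, and $f/r-f'/2\gtrsim\jpns r^{-3}r$ is positive, giving angular control. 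The term $-\tfrac12\Delta\chi$ is positive: with $\chi=r^{-1}-r^{-1}\jpns r^{-1}$, the $1/r$ part is harmonic away from $0$, the delta at the origin is cancelled since $\chi$ is smooth there, and the remainder yields $\phi^2\jpns r^{-4}$-type control. The missing $\phi^2\jpns r^{-2}$ portion is what the stated zero-order term $t_*^{N-1}\jpns r^{-2}\phi^2$ on the left absorbs, so we need not obtain it sharply.

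The weight $t_*^N$ then produces extra terms $N t_*^{N-1}\dd t_*\cdot(f\partial_r\cdot\T+\ldots)$. These are bounded by $t_*^{-1}\times$ quadratic first derivatives (plus $\phi^2$ terms from $\chi\dd t_*\cdot\dd\phi^2$ and $\phi^2\dd t_*\cdot\dd\chi$). We absorb them into $C\,\Div J^T$ by choosing $C$ large, using Cauchy--Schwarz on cross terms such as $\chi\phi\,T\phi$. This is exactly why the conclusion has the form $\min(Ct_*^{-1},r^{-2})$.

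Causality of $CJ^T+J^X$ up to zeroth order terms follows because $J^X$'s first-order part is bounded by the $T$-energy flux density, which $C J^T$ dominates. The remaining $\phi\,\partial\phi$ and $\phi^2$ pieces are the stated exceptions.

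The main obstacle is bookkeeping near the boundaries. At $r\to0$ we need the smoothness of $\chi$ and $f$ to avoid the singular Hardy term. Near $\scri$, only $\partial_r|_u$ rather than $\partial_r|_t$ is controlled by the $J^T$ flux at the weight $t_*^{-1}$, so the $(\partial_r|_{t_*}\phi)^2$ coefficient must be checked to be $\min(Ct_*^{-1},r^{-2})$ there.
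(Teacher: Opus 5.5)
Your outline follows the paper's own route: compute the unweighted Morawetz identity for $f\partial_r$ with $\chi=f/r$, then absorb the terms produced by differentiating the weights into $CJ^T$. However, the step you use for the zeroth-order term is false. The paper's proof does not supply it either; it only records $-f''\phi^2/(2r)$ and claims coercivity for $\partial\phi$.

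For radial $\chi$ one has $\Delta\chi=r^{-1}(r\chi)''=f''/r$, and $f''=\jpns{r}^{-5}(1-2r^2)$, so
\[
-\tfrac12\phi^2\Delta\chi=\frac{2r^2-1}{2r\jpns{r}^{5}}\,\phi^2 .
\]
This is negative on $\{r<1/\sqrt2\}$ and behaves like $-\phi^2/(2r)$ at the origin. Near the origin $\chi\approx\abs{x}/2$ is only Lipschitz and $\Delta\abs{x}=2/\abs{x}$: the delta disappears, but positivity fails.

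The term carries the weight $\abs{N}^{-1}t_*^N$. Everything that could compensate it near the origin (the $CJ^T$ bulk and the added $t_*^{N-1}\jpns{r}^{-2}\phi^2$) carries only $t_*^{N-1}$ and has no $r^{-1}$ singularity. Concretely, take the sign for which the derivative terms are coercive ($-\Div$ when $N<0$) and the solution $\phi\equiv1$. The left-hand side is $t_*^{N-1}\big(\jpns{r}^{-2}-\frac{t_*(1-2r^2)}{2\abs{N}r\jpns{r}^{5}}\big)$, which is negative for fixed $r<1/\sqrt2$ and large $t_*$, while the right-hand side is positive.

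The missing idea is to twist the radial derivative. One has
\[
\frac{f'}{2}(\partial_r\phi)^2-\frac{f''}{2r}\phi^2=\frac{f'}{2r^2}\big(\partial_r(r\phi)\big)^2-\Div\Big(\frac{f'}{2r}\phi^2\,\partial_r\Big).
\]
So add the zeroth-order piece $\frac1N t_*^N\frac{f'}{2r}\phi^2\partial_r$ to $J^X_N$. The unweighted bulk becomes $\frac{f'}{2}(T\phi)^2+\frac{f'}{2r^2}(\partial_r(r\phi))^2+(\frac fr-\frac{f'}{2})\abs{\slashed{\nabla}\phi}^2\geq0$ pointwise. The new weight-derivative term $t_*^{N-1}(\partial_r t_*)\frac{f'}{2r}\phi^2$ vanishes for $r<10$ and is $O(t_*^{N-1}r^{-3}\phi^2)$ beyond, so the added zeroth-order term absorbs it. Near the origin the rest of your argument then applies. (Equivalently, on slices where the weight is constant, $\int(f'\phi'^2-\frac{f''}{r}\phi^2)r^2\dd r=\int f'(\partial_r(r\phi))^2\dd r$.)

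There is a second issue. Your absorption of $t_*^{N-1}J^X_0(\dd t_*)$ into $CJ^T$, and your causality claim, tacitly assume the $J^T$ weight $t_*^{N-\delta}t^\delta$ dominates $t_*^N$. For $\delta<0$ it is smaller by the factor $(t_*/t)^{\abs{\delta}}$, which tends to $0$ towards $\scri$. There $\dd t_*=\dd u$, and $-t_*^{N-1}J^X_0(\dd u)$ contains $-\frac f2 t_*^{N-1}\abs{\slashed{\nabla}\phi}^2$. Once $r\gg t_*$, neither $CJ^T$ nor the Morawetz bulk ($\approx\abs{N}^{-1}t_*^N r^{-1}\abs{\slashed{\nabla}\phi}^2$) controls this term; test with $\phi=x_2$ on the $x_1$-axis. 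Giving $J^X$ the same weight $t_*^{N-\delta}t^\delta$ as $J^T$ removes the problem. The weight-derivative terms then become $t_*^{N-\delta}t^\delta$ times $\frac{N-\delta}{Nt_*}J^X_0(\dd t_*)+\frac{\delta}{Nt}J^X_0(\dd t)$, which match the $\dd t_*$- and $\dd t$-pieces of the $J^T$ bulk exactly.

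Two slips are harmless. The $(T\phi)^2$ terms do not cancel: a positive $\frac{f'}{2}(T\phi)^2$ survives. Also $\frac fr-\frac{f'}{2}\approx\frac38 r^3$ near $0$, so it is positive but not $\gtrsim r$.
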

	
	\begin{proof}
		For $N=0$, define $J^T_{0,\delta}=t_*^{0-\delta}t^{\delta}  T\cdot\T$  together with $J^X_0=t_*^0\big(f(r)\partial_r\cdot\T-\phi^2\dd \chi/2+\chi\dd\phi^2/2\big)$ without the $1/N$ prefactors.
		
		We compute the deformation tensor in $(t,r)$ coordinates
		\begin{equation}
			\pi^{f\partial_r}=f\pi^{\partial_r}+\dd f\times \dd r=fr\slashed{g}+f'\dd r^2
		\end{equation}
		and the corresponding divergence is thus
		\begin{multline}
			\Div J^X_0=\pi^{f\partial_r}\cdot \T-\frac{1}{2}\phi^2\Box_\eta\chi+\chi\partial\phi\cdot\partial\phi+\big(\chi\phi+f\partial_r(\phi)\big)\Box_\eta\phi=\frac{f'}{2}\Big((T\phi)^2+(\partial_r\phi)^2\Big)\\
			-\Big(\frac{f}{2r^2}\Big)'\abs{\slashed{\nabla}\phi}^2-\frac{f''\phi^2}{2r}+\big(\chi\phi+f\partial_r(\phi)\big)\Box_\eta\phi.
		\end{multline}
		With $f=1-\jpns{r}^{-1}$ we obtain coercive estimates for $\partial\phi$.
		
		The causal property follows from the fact that $J^T$ controls all derivatives uniformly.
		
		Finally, the result for $J^X_N+J^T_{N,\delta}$ follows from the $N=0$ case together with the causal property and bounding the extra 0-th order term.
	\end{proof}
	
	\section{Ansatz}\label{sec:an}
	In this section, we prove the first part of \cref{intro:thm:sem}.
	Recall that $\mathcal{N}[\phi]:=\ern(\dd\phi,\dd \phi)+\phi^3$.
	We construct an approximate solution, that attains the required data at null infinity \emph{and} remains smooth up to the horizon:
	
	\begin{theorem}\label{an:thm:ansatz}
		Given $q\in\R_{>3/2}$ and $\psi_\scri\in\A{b}^{q-1}(\scri)$ there exists a function $\phi\in\A{b}^{1,(q,q,q)-}(\Mcomp_\mathfrak{t})$, that we will call the \emph{ansatz}, satisfying
		\begin{equation}\label{an:eq:ansatz}
			\Box_{\ern}\phi-\mathcal{N}[\phi]\in\A{b}^{3,\infty,\infty,\infty}(\Mcomp_\mathfrak{t}),\qquad r\phi|_{\scri}=\psi_\scri,\qquad \phi|_{\hor}=\phi_\hor\in\A{b}^{q}(\hor).
		\end{equation}
	\end{theorem}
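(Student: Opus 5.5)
We build the ansatz inductively, following the scheme sketched in \cref{intro:sec:proof}. The bookkeeping rests on the mapping properties of \cref{map:lemma:normal_ops}: $\Box_{\ern}$ maps $\Aext{b}^{q_\scri,q_+,q_\K,q_\F}$ into $\Aext{b}^{q_\scri+1,q_++2,q_\K,q_\F}$, and replacing $\Box_{\ern}$ by a model operator near $\bullet\in\{\ip,\K,\F\}$ gains a factor $\rho_\bullet$.

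\emph{Base step: matching the data at $\scri$.} Extend $\psi_\scri$ off $\scri$ by setting $\phi_0=\chi_{>}(r)\,r^{-1}\psi_\scri(u,\omega)$. Since $\psi_\scri\in\A{b}^{q-1}(\scri)$, this lies in $\Aext{b}^{1,q,\infty,\infty}$. Using \cref{not:eq:wave_explicit}, the radiation-field term $\partial_u\partial_r(r\phi_0)$ vanishes, so the error is $\Box_{\ern}\phi_0\in\Aext{b}^{3,q+2,\infty,\infty}$. The nonlinearity satisfies $\mathcal{N}[\phi_0]\in\Aext{b}^{(2,q+2,\infty,\infty)+}$. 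The key point is that $\ern(\dd\phi_0,\dd\phi_0)$ obeys the null condition, so $(\partial_u\phi_0)^2$ does not appear and the terms that remain carry extra $\rho_\scri$ decay; this uses $q>3/2$. We then correct once with the $\Box_\eta$ inverse with zero radiation field so that the $\scri$-weight of the error is $3$. At this stage the error $f_0$ lies in $\Aext{b}^{3,q+2,q,q-1}$, possibly after a loss of $\epsilon$ in each weight.

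\emph{Inductive step.} Assume the error is $f_N\in\Aext{b}^{3,q_N+2,q_N,q_N-1}$. We remove it boundary by boundary.
\begin{enumerate}
\item Near $\ip$, solve $\Box_\eta\phi^+=\chi_>f_N$ with $r\phi^+|_\scri=0$. This uses backward weighted energy estimates on $\MMink$ (\cref{curr:lem:morawetz}) together with commuting by the scaling and Lorentz fields, which yields conormality. We then read off $\phi^+\in\Aext{b}^{1,q_N,q_N,\infty}$.
\item Near $\F$, solve $\Box_{g_{\mathrm{near}}}\phi^\F=\chi_<f_N$. By \cref{map:rem:isometry} this is conjugate to a Minkowski problem in the $y=(r-M)^{-1}\omega$ variables, so the previous step applies through the conformal symmetry $\Phi$. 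Here we pick any solution smooth across $\hor$; this choice is where the non-uniqueness enters. An alternative is to use the self-similar form \cref{map:eq:g_near_similarity_coord} and solve an ODE in $\mathfrak{r}$ for each spherical harmonic and each power of $v$.
\item Near $\K$, solve the stationary problem $\hat\Box_{\ern}(0)\phi^\K=\tilde f$ on $\Rcomp$ parametrically in $t_*$, where $\tilde f$ is the remaining error cut off near $\K$.
\end{enumerate}
Truncating each correction and using \cref{map:eq:Box-normal}, every step improves the error weight at its own boundary by $1$ while not spoiling the others. Iterating then gives errors in $\Aext{b}^{3,\infty,\infty,\infty}$ after Borel summation of the corrections in the asymptotic sense. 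The nonlinear terms only improve along the way, since $\mathcal{N}$ is at least quadratic and satisfies the admissibility property of \cref{in:rem:admissible}.

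\emph{Main obstacle.} The hardest step is inverting the zero-frequency operator $\hat\Box_{\ern}(0)$ with the required weights at both ends of $\K\cong\Rcomp$: decay $r^{-q_N}$-type at large $r$ and at $r\to M$ compatible with $\F$. I would decompose into spherical harmonics, where the ODE $\partial_r(Dr^2\partial_r u)-\ell(\ell+1)u=r^2\tilde f$ has explicit homogeneous solutions. One checks that the weights $q_N$ avoid the indicial roots $\{\ell,-\ell-1\}$ at infinity and the corresponding roots at $r=M$; if they do not, we perturb $q$ by $\epsilon$, which accounts for the ``$-$'' in the weights. The solution is then written by variation of parameters and summed over $\ell$ using elliptic b-estimates for conormal control. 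A second delicate point is showing that the $\F$-correction is smooth across $\hor$ and extends to $\Mcomp_\mathfrak{t}$. For this I would solve the near-horizon equation as a transport equation in $Y$ into $r<M$, with the regularity across $\hor$ coming from $\Box_{g_{\mathrm{near}}}$ being a regular operator in $(v,r)$. Finally, setting $\phi_\hor=\phi|_\hor$ gives $\phi_\hor\in\A{b}^{q}(\hor)$, from the $\F$-weight $q$ restricted to $\hor$.
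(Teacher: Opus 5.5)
Your overall scheme is the paper's, and your $\ip$ and $\K$ steps are reasonable: cut off the radiation field, invert the model operators at $\ip$, $\F$, $\K$ in turn, then Borel sum. The gap is the near-horizon step, and that step carries the whole content of the theorem.

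\textbf{Why your $\F$-step fails.} Transporting the $\ip$-step through the Couch--Torrence map does not give what you need. That step \emph{prescribes} $r\phi^+|_\scri=0$, and as remarked right after \cref{an:prop:i+_conormal}, its solution is conormal but not conformally smooth at $\scri$. Since $\Phi$ exchanges $\scri\leftrightarrow\hor$ and $r^2\partial_r|_u\leftrightarrow\partial_r$, you obtain a backward solution with prescribed $\phi|_\hor$. That solution is generically only conormal in $r-M$ at $\hor$, which is exactly the loss of regularity discussed around \cref{intro:thm:aag}. It also gives nothing in $r<M$, which corresponds to the region beyond $\scri$. The Minkowski result that transports correctly is \cref{an:prop:i+_smooth_conormal}, where the radiation field is an \emph{output}; this is how \cref{an:cor:i_+_smoth_conormal} is obtained in $r\geq M$.

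``Pick any solution smooth across $\hor$'' assumes what has to be proved. Regularity of $\Box_{g_{\mathrm{near}}}$ in $(v,r)$ does not help a backward problem: commuting with $\tilde Y=v^{-1}\partial_r$ produces the term $2\tilde Y$ in \cref{an:eq:across_proof3}. That term is a log-redshift forwards in time and a log-blueshift backwards (\cref{lin:rem:blueshift}). Transport equations for $\partial_r^k\phi|_\hor$ only give a formal expansion at $\hor$. They do not solve the global problem on $\F$, with $(r-M)v\in[0,\infty]$ and the correct weight at $\F\cap\K$.

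Your ODE alternative needs $f_N=\sum_p v^{-p}f_p(\mathfrak{r},\omega)$, i.e.\ polyhomogeneity, which a conormal $f_N$ does not have. A Mellin-transform version would require three things you do not address:
\begin{itemize}
\item choosing the branch regular at $\mathfrak{r}=0$ (the other branch behaves like $\mathfrak{r}^{\sigma}$);
\item handling the resonances at $\sigma\in\N$, which are the $\bar\phi_p$ of \cref{an:sec:sharp};
\item bounds uniform in $\mathrm{Im}\,\sigma$ and in $\ell$.
\end{itemize}

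\textbf{The missing idea.} \cref{an:prop:F_interior} trades the backward problem for a forward one. Forward estimates reproduce the source's rate when that rate is a \emph{growth} rate, and forward evolution preserves smoothness across $\hor$.
\begin{itemize}
\item The field $\mathcal{T}_{\mathrm{near}}=v^2\partial_v-2(1+v\zeta)\partial_\zeta$ commutes with $v^2(v+2\zeta^{-1})^2\Box_{g_{\mathrm{near}}}$ (\cref{an:eq:conformal_near_commutation}). Each application costs one power of $\rho_\F$, so after $\ceil{q}+4$ applications the commuted source $F$ grows.
\item One solves $\Box_{g_{\mathrm{near}}}\Phi=F$ forward, with zero data at $t_*=1$ and a Dirichlet condition on $\{r=cM\}$.
\item The tools are forward weighted $T$- and $\partial_r$-currents with $N,\delta<0$, and commutation with $(r-M)\partial_r$, whose commutator is a damping term. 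One also commutes with scaling, rotations and $\mathcal{T}_{\mathrm{near}}$, and continues into $r<M$ with the interior currents.
\item Then $\phi$ is defined by integrating $\Phi$ the same number of times along the integral curves of $\mathcal{T}_{\mathrm{near}}$, a smooth field crossing $\hor$ (\cref{map:lemma:T-1}). The integration starts in a region beyond $\B_\mathfrak{t}$, into which $\Phi$ and $F$ were smoothly extended and cut off. Each integration regains one power of $\rho_\F$.
\item The horizon datum $\phi_\hor=\phi|_\hor$ is an output of the construction.
\end{itemize}

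\textbf{Two smaller slips in your base step.} First, with a cutoff in $r$ alone, $\phi_0=\chi_>(r)r^{-1}\psi_\scri\sim r^{-1}t^{1-q}$ all the way to $\K$. So $\phi_0$ and $\Box_{\ern}\phi_0$ have $\K$-weight only $q-1$, below the target $q$; cut off in $u/r$ instead, as the paper does. Second, $\mathcal{N}[\phi_0]$ already has $\scri$-weight $3$ by \cref{an:lemma:nonlin}, so no extra correction at $\scri$ is needed.
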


	\begin{cor}[Non-zero Aretakis constant]\label{an:cor:Aretakis}
		Let $\psi_\scri,q$ be as in \cref{an:thm:ansatz}, and set
		\begin{equation}\label{an:eq:phi_p=1}
			\phi_1=\chi_{<2M}(r)t_*^{-1}-\chi_{<2M}(r)\frac{r-M}{(r-M)t_*+2}\in\A{b}^{\infty,\infty,2,1}(\Mcomp_{\mathfrak{t}}).
		\end{equation}
		For any $c\in\R$ there exists $\phi\in c\phi_1+\A{b}^{1,(q,q,q)-}(\Mcomp_\mathfrak{t})$ and $\phi_\hor\in ct_*^{-1}+\A{b}^{q-}(\hor)$ solving \cref{an:eq:ansatz}.
	\end{cor}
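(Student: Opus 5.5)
The plan is to treat $c\phi_1$ as an additional leading-order piece of the ansatz, and then run the iteration of \cref{an:thm:ansatz} on what remains. First I will verify the claimed membership $\phi_1\in\A{b}^{\infty,\infty,2,1}(\Mcomp_{\mathfrak{t}})$ and compute its error. Write $\zeta=r-M$. Then
\[
\phi_1=\chi_{<2M}\Big(\frac{1}{t_*}-\frac{\zeta}{\zeta t_*+2}\Big)=\chi_{<2M}\frac{2}{t_*(\zeta t_*+2)}.
\]
This is $t_*^{-1}$ times a smooth function of $\rho_\hor=\zeta t_*$. That function is bounded near $\F$ and decays like $(\zeta t_*)^{-1}$ towards $\K$, which gives decay $\rho_\F^1$ and $\rho_\K^2$. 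The function is also smooth across $\hor$ (and into $\zeta<0$ for $\zeta t_*>-2$, i.e. in $\Mcomp_{\mathfrak t}$ for $\mathfrak{t}<2$). Its horizon value is $\phi_1|_\hor=t_*^{-1}$.

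The key computation is that $\phi_1$ solves the near-horizon model equation exactly. By \cref{map:rem:isometry}, $\Normal{\F}\phi=\zeta^{-3}\Box_{\eta,y}(\zeta\phi)$. Here $\zeta\phi_1=2\zeta/(t_*(\zeta t_*+2))$, which in the Minkowski variables $|y|=\zeta^{-1}$ reads $2/(t_*(t_*+2|y|))$. Up to the exact form of $t_*$ in these coordinates, this is the image of the spherically symmetric Minkowski solution $(t-|y|)^{-1}(t+|y|)^{-1}=(t^2-|y|^2)^{-1}$. The latter solves $\Box_\eta$ away from the light cone, since $r^{-1}[(t-r)^{-1}-(t+r)^{-1}]$ is built from $r^{-1}F(t\pm r)$.

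Equivalently, I can check directly in the similarity coordinates \cref{map:eq:g_near_similarity_coord}. With $\mathfrak r=\zeta/v$, the candidate is $v^{-1}g(\mathfrak r)$, and $\Box_{g_{\mathrm{near}}}$ reduces to an ODE in $\mathfrak r$ that $g=2/(\mathfrak r+2)$ (for $t_*\approx v$ near $\hor$) satisfies. Any discrepancy between $t_*$ and $v$, or in the $D$ versus $\zeta^2$ and $r^2$ versus $M^2$ coefficients, is an error of relative order $\rho_\F$ or $\rho_\hor\rho_\F$ by \cref{map:lemma:normal_ops}. Combining this with the cutoff error, which is supported in $r\sim 2M$ and is $\O(t_*^{-2})$ there, I aim to show
\[
\Box_{\ern}(c\phi_1)\in\A{b}^{\infty,\infty,3,2}(\Mcomp_{\mathfrak t}).
\]
The main obstacle I expect is exactly this: confirming that $\phi_1$ annihilates the near-horizon operator to leading order, so that the error gains a full power $\rho_\F$. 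If the gain fails at $\F$, the explicit constant $2$ in the denominator would have to be adjusted.

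Next I handle the nonlinearity. $\mathcal N[c\phi_1+\varphi]-\mathcal N[\varphi]$ contains $c^3\phi_1^3\in\A{b}^{\infty,\infty,6,3}$ and $c^2\phi_1^2\varphi$, together with the null-form terms $\ern(\dd\phi_1,\dd\phi_1)$ and $2\ern(\dd\phi_1,\dd\varphi)$. Using the weights $\partial_v\in\rho_\F\rho_\K\rho_+\Diffb$ and $\partial_r\in\rho_\hor^{-1}\rho_\F^{-1}\Diffb$ from \cref{not:rem:examples}, together with $\ern^{-1}$ near $\hor$, I will check that all of these lie in $\Aext{b}^{(\cdot,\cdot,q,q)+\epsilon}$-type spaces. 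This puts them into the admissible class of \cref{in:rem:admissible}, since $\phi_1$ decays at least like the level $p=1$ iteration and $q>3/2$. The crude bound $\partial_r\phi_1\sim t_*^{-1}\cdot t_*$ is harmless because it is paired with $\partial_v$ or $\zeta^2\partial_r$.

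Finally I run the inductive scheme from the proof of \cref{an:thm:ansatz}, started from $\phi_0=c\phi_1+\varphi_0$ instead of $\varphi_0$, where $\varphi_0$ is the initial term attaining $\psi_\scri$. The data term $r\phi_1|_\scri=0$ vanishes because of the cutoff. The initial error $\Box\phi_0-\mathcal N[\phi_0]$ has the form required in \cref{intro:eq:ansatzN} at the starting level. Each correction step (solving with $\Normal{\ip}$ with vanishing data at $\scri$, then $\Normal{\F}$, then $\Normal{\K}$) produces terms in $\A{b}^{1,(q,q,q)-}$ that are smooth across $\hor$. Hence the final $\phi$ lies in $c\phi_1+\A{b}^{1,(q,q,q)-}$, and its horizon trace is $ct_*^{-1}+\A{b}^{q-}(\hor)$.
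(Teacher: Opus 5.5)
Your route is the paper's. Its whole proof is the remark that at the first inversion at $\F$ one may add $c\phi_1$, because $\Box_{\ern}\phi_1\in\A{b}^{\infty,\infty,2,2}(\Mcomp_{\mathfrak t})$, after which the iteration of \cref{an:thm:ansatz} runs unchanged. Your check that $2/(v((r-M)v+2))$ is the $p=1$ self-similar solution of $\Box_{g_{\mathrm{near}}}$ (equivalently the Couch--Torrence image of $(t^2-|y|^2)^{-1}$) is correct, and more explicit than the paper. So is your remark that $\phi_1$ blows up at $(r-M)t_*=-2$, which forces $\mathfrak t<2$. Two points fail, though.

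First, the weight of the error at $\K$ is $2$, not $3$: the error lies in $\A{b}^{\infty,\infty,2,2}$, as the paper records. At fixed $r$ one has $\phi_1=2\chi_{<2M}(r)(r-M)^{-1}t^{-2}+\O(t^{-3})$. The operator $\hat{\Box}_{\ern}(0)$ annihilates $(r-M)^{-1}$ but not the cut-off profile:
\[
\hat{\Box}_{\ern}(0)\big(2\chi_{<2M}/(r-M)\big)=2r^{-2}(r-M)\chi_{<2M}''\neq0 .
\]
So the cutoff term is exactly of size $t^{-2}$, i.e.\ $\rho_\K^2$. Your own bound $\O(t_*^{-2})$ already says this.

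Second, and more seriously, your last paragraph only holds for $q\le 2$. An error in $\rho_\K^2\rho_\F^2$ is removed using \cref{an:cor:K,an:prop:F_interior}, which gain nothing at the face being inverted. The corrections therefore have $\K$- and $\F$-weight $2$, with an $i^+$ tail of weight about $3$. Likewise $c^2\ern(\dd\phi_1,\dd\phi_1)=c^2v^{-2}+\O(v^{-4})$ on $\hor$, so the nonlinearity is not in $(\cdot,\cdot,q,q)+\epsilon$ once $q\ge2$.

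For $q>2$ and $c\neq0$ this cannot be repaired, because the stated conclusion is false. Suppose $\phi=c\phi_1+R$ with $R$ of $\K$-weight $q'>2$. At fixed $r$ one has $\Box_{\ern}R=\O(t^{-q'})$ and $\mathcal N[\phi]=\O(t^{-4})$, hence
\[
\Box_{\ern}\phi-\mathcal N[\phi]=2c\,t^{-2}r^{-2}(r-M)\chi_{<2M}''(r)+\O(t^{-\min(3,q')}) ,
\]
which is not in $\A{b}^{3,\infty,\infty,\infty}$. The paper's one-line argument has exactly the same limitation. What your argument, like the paper's, actually yields is the corollary with remainder in $\A{b}^{1,(\min(q,3),\min(q,2),\min(q,2))-}$ and $\phi_\hor\in ct_*^{-1}+\A{b}^{\min(q,2)-}(\hor)$. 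In particular, the statement as written is proved only for $q\in(3/2,2]$.
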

	
	We prove the theorem in \cref{an:sec:iteration} after having studied the invertibility of each model operator from \cref{map:lemma:normal_ops}.
	
	\subsection{Solvability on the spatially compact face $\K$}\label{an:sec:K-inverse}
	
	In this first section we  study the solvability of the model problem at $\K\cong\Rcomp$.
	That is, for fixed $f\in\A{b}^{3,0}(\K)$, i.e. (see \cref{not:not:indices}) $f\sim r^{-3}$ as $r\to\infty$ and $f\sim 1$ as $r\to M$, we are looking for solutions to 
	\begin{equation}\label{an:eq:K_eq}
		\Normal{\K}\phi=f.
	\end{equation}
	Note that this has already been studied in \cite[Section 10]{angelopoulos_non-degenerate_2020},  but we include a proof for completeness and adjustment to the notation here.

	The main result is the following:
	\begin{prop}[Invertibility of $\Normal{\K}$]\label{an:prop:K}
		Fix $a_+\in(0,1)$, $a_\F\in(-1,0)$.
		For $f\in\A{b}^{a_++2,a_\F}(\K)$ there exists a unique $\phi\in\A{b}^{(a_+,a_\F)-}(\K)$ solving $\Normal{\K} \phi=f$.
	\end{prop}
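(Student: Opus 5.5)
The plan is to treat $\Normal{\K}$ as an elliptic b-operator on $\K\cong\Rcomp$ and invert it mode by mode in spherical harmonics, using the indicial roots at the two ends to fix the weights. Multiplying by $r^2$, the operator $P:=r^2\Normal{\K}=\partial_r(Dr^2\partial_r\,\cdot)+\slashed{\Delta}$ is, near $r=\infty$, a b-operator in $r\partial_r$ with principal part $(r\partial_r)^2+r\partial_r+\slashed{\Delta}$. Near $r=M$, writing $\zeta=r-M$, it is a b-operator in $\zeta\partial_\zeta$ with principal part $(\zeta\partial_\zeta)^2+\zeta\partial_\zeta+\slashed\Delta$, up to $O(\zeta)$ corrections; this is consistent with the Couch--Torrence symmetry \cref{not:eq:conformal_wave}, which exchanges the two ends. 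On the $\ell$-th spherical harmonic the indicial roots are as follows: homogeneous solutions behave like $r^{\ell}$ or $r^{-\ell-1}$ at infinity, and like $\zeta^{\ell}$ or $\zeta^{-\ell-1}$ at the horizon. The weights are chosen so that $\phi\sim r^{-a_+}$ with $a_+\in(0,1)$ and $\phi\sim\zeta^{a_\F}$ with $a_\F\in(-1,0)$. These lie strictly between the two indicial roots for every $\ell\geq0$ (the tightest case is $\ell=0$, with roots $0,-1$ at both ends). Hence no homogeneous solution is admissible at either end, which should give both uniqueness and surjectivity.

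\emph{Uniqueness.} I will decompose a kernel element $\phi\in\A{b}^{(a_+,a_\F)-}(\K)$ into modes $\phi_\ell$ that solve a second order ODE. By the indicial analysis, $\phi_\ell$ must equal a multiple of $r^{-\ell-1}(1+o(1))$ at infinity and of $\zeta^{\ell}(1+o(1))$ at the horizon, because the other branches violate the weights. Such a $\phi_\ell$ decays at infinity and is bounded at $\zeta=0$. Multiplying the ODE by $\phi_\ell r^2$ and integrating then gives
\[\int Dr^2|\partial_r\phi_\ell|^2+\ell(\ell+1)|\phi_\ell|^2\,dr=0,\]
with vanishing boundary terms: $Dr^2\phi_\ell\partial_r\phi_\ell\to0$ at both ends. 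Therefore $\phi_\ell=0$.

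\emph{Existence.} For each $\ell$ I will take the two homogeneous solutions $u^{\hor}_\ell\sim\zeta^\ell$ at $r=M$ and $u^{\infty}_\ell\sim r^{-\ell-1}$ at $r=\infty$. By the uniqueness step they are linearly independent, and since the equation is in divergence form the Wronskian satisfies $W_\ell=Dr^2(u^\hor_\ell\partial_ru^\infty_\ell-u^\infty_\ell\partial_r u^\hor_\ell)=\mathrm{const}\neq0$. Variation of parameters then gives
\[\phi_\ell(r)=\frac{1}{W_\ell}\Big(u^\infty_\ell(r)\int_M^r u^\hor_\ell f_\ell\, r'^2\,dr'+u^\hor_\ell(r)\int_r^\infty u^\infty_\ell f_\ell\, r'^2\,dr'\Big).\]
With $f_\ell\lesssim r^{-a_+-2}$ at infinity and $f_\ell\lesssim\zeta^{a_\F}$ at the horizon, both integrals converge precisely because $a_+\in(0,1)$ and $a_\F\in(-1,0)$ (for $\ell=0$ this is sharp). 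Elementary power counting then yields $\phi_\ell\lesssim r^{-a_+}$ and $\phi_\ell\lesssim\zeta^{a_\F}$. Conormal derivatives $r\partial_r$ and $\zeta\partial_\zeta$ are recovered from the ODE itself, and angular derivatives commute with $P$.

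\emph{Main obstacle: uniformity in $\ell$.} To sum the modes into an element of $\A{b}^{(a_+,a_\F)-}(\K)$, the mode estimates must be uniform, indeed polynomially controlled, in $\ell$. Rather than tracking explicit Legendre-type solutions, I will first prove a weighted $L^2$ estimate of the form
\[\norm{\phi}_{\Hb^{2;(a_+,a_\F)+\mathrm{const}}(\K)}\lesssim\norm{f}_{\Hb^{0;(a_++2,a_\F)+\mathrm{const}}(\K)}.\]
This will come from integrating $P\phi\cdot\phi\,w$ against suitable power weights $w$ at each end and using one-dimensional Hardy inequalities, as in \cref{curr:lemma:poincare}. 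The angular term $\ell(\ell+1)|\phi|^2$ only helps, so constants are uniform in $\ell$, with the worst case $\ell=0$ covered by the Hardy constants, which are finite exactly because the weights avoid the indicial roots. Higher b-regularity then follows from standard elliptic b-regularity by commuting with $r\partial_r$, $\zeta\partial_\zeta$ and $\Omega_i$, whose commutators with $P$ are lower order with the same weights. Finally, passing from $\Hb$ to $\A{b}$ via the Sobolev embedding \cref{not:eq:sobolev} costs an $\epsilon$ of weight, which accounts for the ``$-$'' in the conclusion.
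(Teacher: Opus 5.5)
Your proposal is correct, but it obtains existence and uniqueness by a different mechanism than the paper.

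\textbf{Comparison with the paper.} The paper never separates variables. It proves weighted a priori estimates for the full operator: it integrates $\phi\,\Normal{\K}\phi$ against $r^{-\alpha}(1-M/r)^{\beta}$ together with Hardy's inequality for $a_+<1/2$, $a_\F<-1/2$, and uses $r^p\partial_r$, $(r-M)^p\partial_r$ multipliers for the complementary range. It then gets existence by duality, from self-adjointness and triviality of the adjoint kernel. Finally it commutes with $\Vb(\K)$ and applies Sobolev embedding, with uniqueness left implicit in the a priori bound. You instead reduce to one ODE per spherical harmonic, read off uniqueness from the indicial branches, and build the solution from an explicit Green's function; weighted $L^2$ estimates are used only to sum the modes.

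Your route makes it transparent that $(0,1)\times(-1,0)$ is exactly the gap between the $\ell=0$ indicial roots. It also gives uniqueness directly in the $L^\infty$-based conormal class. The paper's route does not use separability, so it is stable under non-symmetric perturbations of the zero-energy operator and fits the multiplier machinery used later for the time-dependent estimates.

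\textbf{A misstatement.} The sentence ``no homogeneous solution is admissible at either end'' is wrong as written. At each end one branch is admissible: for example, the constant lies in $\zeta^{a_\F-}L^\infty$ near the horizon since $a_\F<0$. What is excluded is a global solution admissible at both ends, which is what your uniqueness paragraph correctly uses.

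\textbf{The ``main obstacle'' largely disappears.} Since $Dr^2=(r-M)^2$, one has exactly $r^2\Normal{\K}=\partial_\zeta(\zeta^2\partial_\zeta\,\cdot\,)+\slashed{\Delta}$. This is $\zeta^2$ times the flat Laplacian in the radial variable $\zeta$, with no $O(\zeta)$ corrections. Consequently:
\begin{itemize}
\item $u^\hor_\ell=\zeta^\ell$, $u^\infty_\ell=\zeta^{-\ell-1}$ and $W_\ell=-(2\ell+1)$;
\item your variation-of-parameters bounds hold with constants $O((2\ell+1)^{-1})$;
\item $\zeta\partial_\zeta$ commutes exactly with $r^2\Normal{\K}$.
\end{itemize}
So the modes can be summed directly, using the rapid decay of $f_\ell$ in $\ell$ coming from angular smoothness.

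\textbf{The $L^2$ route is also fine.} Your claim that $\phi w$-type multipliers alone cover the whole weight window is right. With $s=\log\zeta$, $w\zeta=e^{\kappa(s)}$ and $\phi=e^{-\kappa/2}\psi$, each mode satisfies $\int\phi\,(r^2\Normal{\K}\phi)\,w\,d\zeta=-\int\big((\partial_s\psi)^2+\tfrac14\kappa'(2-\kappa')\psi^2+\ell(\ell+1)\psi^2\big)\,ds$. This is coercive whenever $0<\kappa'<2$, so the paper's switch to $r^p\partial_r$ multipliers is not needed.
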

	
	\begin{proof}
		Note that \cref{an:eq:K_eq} is equivalent to the study of $\Box_{\ern}\phi=f$ for time independent $\phi$ and $f$.
		The result here is a slight adaptation of \cite[Theorem 3.1]{hintz_lectures_2023}.
		We first prove a priori estimates for the operator $\Normal{\K}$ for smooth compactly supported functions and extend these by density.
		Then, we show that the adjoint has no kernel and thus obtain invertibility.
		
		\emph{Step 1, existence:}
		First, we note that the $\partial_t$ energy estimate for $\T$ yields a coercive control for solutions of \cref{an:eq:K_eq}: integrating by parts $\phi\Normal{\K}\phi$ with $\phi$ compactly supported gives
		\begin{subequations}
			\begin{align}
				\int_\K \phi \big(r^{-2}\partial_r ( D r^2 \partial_r\phi ) + r^{-2}\slashed{\Delta}\phi\big)\mu_\K=-\int_\K  D(\partial_r\phi)^2+\abs{\slashed{\nabla}\phi}^2/r^2\mu_\K\\
				\implies
				\int_\K D(\partial_{r}\phi)^2+\abs{\slashed{\nabla}\phi}^2/r^2+(\phi/r)^{2}\mu_\K\lesssim 	\int_\K f^2r^2\mu_\K,\label{an:eq:elliptic_T}
			\end{align}
		\end{subequations}
		where we used Hardy inequality to obtain the zero-th order term and remember that $\mu_\K=r^2\dd r\dd\abs{\slashed{g}}$.
		We may express \cref{an:eq:elliptic_T} as $\norm{\phi}_{\Hb^{1;-1,0}(\K)}\lesssim\norm{f}_{\Hb^{0;1,0}(\K)}$.
		Integrating $r^{2}(\Normal{K}\phi)^2$ improves this estimate to
		\begin{equation}
			\int_\K r^{-2}(\partial_r (  D r^2 \partial_r\phi )+\slashed{\Delta}\phi)^2\mu_\K\lesssim \int_\K f^2r^2\mu_\K+\int_\K D(\partial_{r}\phi)^2+\abs{\slashed{\nabla}\phi}^2/r^2+(\phi/r)^{2}\mu_\K
		\end{equation}
		implying $\norm{\phi}_{\Hb^{2;-1,0}(\K)}\lesssim\norm{f}_{\Hb^{0;1,0}(\K)}$ after 2 further integration by parts on the left hand side.
		
		Using the self-adjointness of $\Normal{\K}$, this already yields that a solution $\phi\in\Hb^{2;-1,0}(\K)$ exists provided that $f\in \Hb^{0;1,0}(\K)\subset \A{b}^{0;5/2,-1/2}(\K)$:
		indeed, we may compute what is the kernel $\mathrm{Ker}(\Normal{\K}|_{X})$ restricted to the adjoint space of the image, $X=(\Hb^{0;1,0}(\K))^*=\Hb^{0;-1,0}(\K)$.
		For $\phi_{0}\in\mathrm{Ker}(\Normal{\K}|_X)$, by elliptic regularity, we have $\phi_{0}\in \Hb^{\infty;-1,0}(\K)$, and so, we may use the same integration by parts as in \cref{an:eq:elliptic_T} and keeping the boundary terms of \cref{an:eq:elliptic_T}, we get
		\begin{equation}
			\int_{\K\cap(r_1,r_2) } D(\partial_{r}\phi_{0})^2+\abs{\slashed{\nabla}\phi_{0}}^2/r^2+(\phi_{0}/r)^{2}\lesssim \int_{\K\cap\{(r_1/2,2r_1)\cup(r_2/2,2r_2)\}} D(\partial_{r}\phi_{0})^2+\abs{\slashed{\nabla}\phi_{0}}^2/r^2+(\phi_{0}/r)^{2}
		\end{equation}
		Taking $r_1\to M$ and $r_2\to \infty$, we get that the right hand side vanishes, and so $\phi_{0}\equiv0$. In particular, the kernel is empty and $\Normal{\K}$ is invertible.

		\emph{Step 2, weights:}
		Let's first consider $a_+<1/2,a_\F<-1/2$.
		We observe, that \cref{an:eq:elliptic_T} also yields a coercive estimate when using $r^{-\alpha}(1-M/r)^\beta$ with $\alpha,\beta\in(0,1)$.
		Thus we get $\norm{\phi}_{\Hb^{1;a_+-3/2,a_\F+1/2}(\K)}\lesssim\norm{f}_{\Hb^{0;a_++1/2,a_\F+1/2}(\K)}$ for $a_+\in(0,1/2)$ and $a_\F\in(-1,-1/2)$.
		
		For $a_+>1/2,a_\F>-1/2$ we use $r^p\partial_r$ estimates:
		Fix $p=(1,2a_+)$.
		Let $\chi$ be a cutoff localising to $r>10M$  and consider the energy estimate arising from the multiplier $V=r^p\chi\partial_r$ for $\Box_{\ern}$.
		That is, we integrate by part the expression $V(\phi)\Normal{\K}\phi$ to get
		\begin{equation}
			\int_\K \chi pr^{p-1}(\partial_r\phi)^2+(p-2)r^{p-3}\abs{\slashed{\nabla}\phi}^2\mu_\K\lesssim \mathrm{LHS}\cref{an:eq:elliptic_T}+\int_\K f^2 r^{p+1}\mu_\K.
		\end{equation}
		This already yields $\phi\in\Hb^{1;a_+-3/2,0}(\K)$.
		Using an $(r-M)^p\partial_r$  multiplier near the horizon with $p\in(1+2a_\F,1)$, we get $\phi\in\Hb^{1;a_+-3/2,1/2+a_\F}(\K)$.

		Finally, integrating $D^{p_\F}r^{p_+}(\Box_{\ern}\phi)^2$ with $p_+\in(2,1+2a_+)$, $p_\F\in(a_\F,0)$ similarly yields that $\phi\in\Hb^{2;a_+-3/2;1/2+a_\F}(\K)$.
		
		Given these a priori estimates, the existence follows by duality as before.
		
		\emph{Step 3, higher regularity:}
		Let us note that for $\Vb(\K)=\{(r-M)\partial_r,\Omega_{ij} \}$, it holds that
		\begin{equation}
			[\Normal{\K},\Vb]=\rho_+^2\Diff^2_\b(\K).
		\end{equation}    
		Therefore commuting with $\Vb$, we obtain that $\phi\in\Hb^{\infty;a_+-3/2-,1/2+a_\F}(\K)$.
		Indeed, by induction we get
		\begin{equation}        \Normal{\K}\Vb^k\phi=\rho_+^2\Diffb^{k+1}(\K)\phi+\Vb^kf\implies\phi\in\Hb^{2+k;a_+-3/2,1/2+a_\F}(\K).
		\end{equation}
		Via \cref{not:eq:sobolev}, this already yields $\phi_2\in\A{b}^{(a_+,a_\F)-}(\K)$. 
		
	\end{proof}
	
	\begin{cor}\label{an:cor:K}
		Fix $p\in\R$ and $p_+\in(p,p+1)$, $p_\F\in(p-1,p)$.
		Let $U_c=\{\rho_\K\in(-c,c)\}\cap\Mcomp$ and $f\in\A{b}^{p_++2,p,p_\F}(U_{1/4})$  with $\supp f\subset U_{1/4}$.
		Then, there exists $\phi\in\A{b}^{(p+1,p,p)-}(U_{1/4})$ with $\supp\phi\subset U_{1/4}$ such that 
		\begin{equation}
			\Box_{\ern}\phi-f\in\A{b}^{(p_i+2,p+1,p_\F)-}(U_{1/4}).
		\end{equation}
	\end{cor}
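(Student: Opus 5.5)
The plan is to solve the model problem $\Normal{\K}\phi=f$ fiberwise in $t_*$, treating $t_*$ as a parameter, and then invoke \cref{map:lemma:normal_ops} to control the resulting error.

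\emph{Step 1: reduction to \cref{an:prop:K}.} Since $\Normal{\K}=\hat{\Box}_{\ern}(0)$ involves no $t$-derivatives, I regard $f(t_*,\cdot)$ as a one-parameter family of functions on $\K\cong\Rcomp$. I factor out the decay towards $\K$ by setting $\tilde f:=t_*^{p}f$, and check the weights corner by corner.
\begin{itemize}
\item Near $\K\cap\ip$ one has $\rho_+\sim r^{-1}$ and $\rho_\K\sim r/t_*$. Hence $f\in\A{b}^{p_++2,p,p_\F}$ means $|f|\lesssim t_*^{-p}r^{-(p_+-p)-2}$.
\item Near $\K\cap\F$ one has $\rho_\F\sim r-M$ and $\rho_\K\sim ((r-M)t_*)^{-1}$. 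Hence $|f|\lesssim t_*^{-p}(r-M)^{p_\F-p}$.
\end{itemize}
The same bounds hold after applying $t_*\partial_{t_*}$, $(r-M)\partial_r$ and $\Omega_i$, because these vector fields lie in $\Vb$. It follows that $\tilde f(t_*,\cdot)\in\A{b}^{a_++2,a_\F}(\K)$ uniformly in $t_*$, where
\[
a_+:=p_+-p\in(0,1),\qquad a_\F:=p_\F-p\in(-1,0).
\]
These are exactly the hypotheses of \cref{an:prop:K}. I therefore define
\[
\tilde\phi(t_*,\cdot):=\Normal{\K}^{-1}\tilde f(t_*,\cdot)\in\A{b}^{(a_+,a_\F)-}(\K).
\]
The bounds are uniform in $t_*$, since the estimates of \cref{an:prop:K} depend only on the norms of $\tilde f$.

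\emph{Step 2: $t_*$-regularity.} Because $\Normal{\K}$ commutes with $t_*\partial_{t_*}$, uniqueness in \cref{an:prop:K} gives $(t_*\partial_{t_*})^k\tilde\phi=\Normal{\K}^{-1}(t_*\partial_{t_*})^k\tilde f$. Hence $\tilde\phi$ is conormal in all $\Vb$ directions. I then set $\phi:=\chi_\K\,t_*^{-p}\tilde\phi$, where $\chi_\K=\chi(\rho_\K)$ is a cutoff equal to $1$ on $U_{1/8}$ and supported in $U_{1/4}$. Translating the weights back as in Step 1 gives
\begin{itemize}
\item weight $p$ at $\K$;
\item weight $p+a_+=p_+$ (with an $\epsilon$-loss) at $\ip$;
\item weight $p+a_\F=p_\F$ (with an $\epsilon$-loss) at $\F$.
\end{itemize}
This has to be reconciled with the indices $(p+1,p,p)-$ claimed in the statement, and I flag it as the point to recheck. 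If the claimed indices are the intended ones, the extra decay must come from the cutoff and the support restriction, or the claimed weights must be read as an upper bound on what is used later.

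\emph{Step 3: the error.} I decompose
\[
\Box_{\ern}\phi-f=\chi_\K(\Box_{\ern}-\Normal{\K})(t_*^{-p}\tilde\phi)+[\Box_{\ern},\chi_\K]\,t_*^{-p}\tilde\phi+(\chi_\K-1)f.
\]
The last term vanishes, since $\supp f\subset U_{1/4}$; I will choose the cutoff (or shrink its transition region) so that $\chi_\K\equiv1$ on $\supp f$.

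For the first term, \cref{map:lemma:normal_ops} with $\bullet=\K$ gives an extra factor $\rho_\K$ together with the mapping \cref{map:eq:Box}. This produces weights $(p_++2,\,p+1,\,p_\F)-$ near $\K$, as required.

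For the commutator term, the coefficients of $[\Box_{\ern},\chi_\K]$ are supported where $\rho_\K\in(1/8,1/4)$, so $\rho_\K$ weights there are irrelevant. The $\rho_+^2$ gain in \cref{map:eq:Box} gives weight $p_++2$ at $\ip$, while the $\F$ weight stays $p_\F$.

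\emph{Main obstacle.} The main obstacle is the weight bookkeeping near the corners $\K\cap\ip$ and $\K\cap\F$. Two things must be checked there: that the change of boundary defining functions is consistent, and that the $\epsilon$-losses from \cref{an:prop:K} appear only as the ``$-$'' in the indices. Both checks are routine but delicate.
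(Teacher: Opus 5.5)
Your construction is the paper's own argument. You invert $\Normal{\K}$ parametrically in $t_*$ via \cref{an:prop:K}, with $a_+=p_+-p\in(0,1)$ and $a_\F=p_\F-p\in(-1,0)$, restore the factor $t_*^{-p}$, cut off, and estimate the error with \cref{map:lemma:normal_ops}. Your corner computations are correct. So is your error bound $\Box_{\ern}\phi-f\in\A{b}^{(p_++2,p+1,p_\F)-}(U_{1/4})$, where the ``$p_i$'' in the statement is $p_+$.

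You should resolve the point you flag by correcting the statement, not by looking for extra decay. The paper's proof produces $\phi\in\A{b}^p(\R_{t_*},\A{b}^{p_+-p-,p_\F-p-}(\K))$, which is exactly your $\A{b}^{(p_+,p,p_\F)-}(U_{1/4})$. The cutoff cannot improve on this. It is identically $1$ near all of $\K$, corners included, while generically $\tilde\phi\sim r^{-a_+}$ as $r\to\infty$ and $\tilde\phi\sim (r-M)^{a_\F}$ as $r\to M$.

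In fact no $\phi$ at all attains the weights $(p+1,p,p)-$, reading ``$-$'' as an arbitrarily small loss. Take $f$ spherically symmetric, equal to $t_*^{-p}r^{-2-a_+}$ on $\{4M<r<t_*/16\}$ and suitably cut off; it satisfies the hypotheses.
\begin{itemize}
\item If some $\phi$ had weights $p+1-\epsilon$ at $\ip$ and $p-\epsilon$ at $\K$, then $|r^2\partial_r\phi|\lesssim t_*^{-p+\epsilon}$ in that region.
\item Average over spheres and integrate $\partial_r(r^2D\partial_r\phi)=r^2(f+e+D^{-1}\partial_t^2\phi)$ in $(t,r)$ coordinates from $r=4M$ to $r=t_*^{1-2\epsilon}$, where $e=\Box_{\ern}\phi-f\in\A{b}^{p_++2-\epsilon,p+1-\epsilon}$. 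The increment of $r^2D\partial_r\phi$ over this interval is then of size $t_*^{-p+(1-2\epsilon)(1-a_+)}$. The contributions of $e$ and $\partial_t^2\phi$ are smaller by at least a factor $t_*^{-\epsilon}$.
\item Since $a_+<1$, this contradicts the bound in the first item once $(1-2\epsilon)(1-a_+)>\epsilon$.
\end{itemize}
So the corollary should read $\phi\in\A{b}^{(p_+,p,p_\F)-}(U_{1/4})$. The stated weights $(p+1,p,p)-$ arise only in the limiting case $p_+\uparrow p+1$, $p_\F\uparrow p$. Write up that corrected version.

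A minor point: you do not need $\chi_\K\equiv 1$ on $\supp f$. The term $(\chi_\K-1)f$ is supported away from $\K$ and already has weight $p_++2$ at $\ip$ and $p_\F$ at $\F$, as allowed.
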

	\begin{proof}
		Let us use coordinates $t_*,x$ and consider $f\in\A{b}^p(\R_{t_*},\A{b}^{p_++2-p,p_\F-p}(\K))$ as functions in $\A{b}(\K)$ parametrised by $t_*$.
		Inverting $\Box_{\ern}$ parametrically in $t_*$, we write
		\begin{equation}
			\phi(t_*,x):=\Normal{\K}^{-1}f(t_*,x)\in \A{b}^p(\R_{t_*},\A{b}^{p_+-p-,p_\F-p-}(\K))
		\end{equation}
		where the inverse is given by \cref{an:prop:K}.
		Using \cref{map:lemma:normal_ops} and localising $\phi$ to $U_{1/4}$ yields the result.
	\end{proof}

	The above proposition is closely related to the elliptic estimates that are used to recover all derivatives given that we can control the $T$ derivative, after using the equation.
	
	\begin{lemma}[$\K$ elliptic estimate]\label{ell:lemma:near}
		Let $\Box_{\ern}\phi=f$ in $\M$, and fix $r_1\in(0,1)$, $r_2\in(2M,\infty)$.
		Let $J^T=T\cdot\T$.
		Then, there exists $C$ independent of $r_1,r_2$ such that on $\Sigma_\tau=\{t=\tau\}$, we have
		\begin{equation}
			\int_{\Sigma_\tau\cap \{r\in(M+r_1,r_2)\}}r^2\abs{D\partial^2\phi}^2\mu_\K\leq C
			\int_{\Sigma_\tau\cap \{r\in(M+r_1/2,2r_2)\}} (J^T[\phi]+r^2D^{-1}J^T[T\phi])+r^2f^2\mu_\K.
		\end{equation}
		Similar higher order analogues for $s\in\N_{\geq0}$ and $\mathcal{V}=\{1,r\partial_r,\Omega\}$
		\begin{equation}
			\int_{\Sigma_\tau\cap \{r\in(M+r_1,r_2)\}}r^2\abs{D\partial^2\mathcal{V}^s\phi}^2\mu_\K\lesssim_{s} 
			\int_{\Sigma_\tau\cap \{r\in(M+r_1/2,2r_2)\}} r^2\mathcal{V}^s f^2+\sum_{s'\leq s+1}r^{2s'}D^{-s'}J^T[T^{s'}\phi]\mu_\K.
		\end{equation}
	\end{lemma}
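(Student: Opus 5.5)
The idea is to treat $\Box_{\ern}\phi=f$ on each slice as an elliptic equation for the spatial operator. On $\Sigma_\tau$ (a constant $t$ slice) we have, from the $(t,r)$ form of the metric,
\begin{equation*}
	\Normal{\K}\phi = f + D^{-1}T^2\phi .
\end{equation*}
Here $\Normal{\K}=r^{-2}\partial_r(Dr^2\partial_r\cdot)+r^{-2}\slashed{\Delta}$. The right hand side is controlled by the data of the lemma: $r^2 (D^{-1}T^2\phi)^2 \lesssim r^2D^{-1}J^T[T\phi]$, since $-\T[\dd t^\#,T]$ contains $(T\phi)^2/D$ by \cref{curr:eq:fluxT}. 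It therefore suffices to prove a localized weighted $H^2$ elliptic estimate for $\Normal{\K}$:
\begin{equation*}
	\int_{r\in(M+r_1,r_2)}r^2\abs{D\partial^2\phi}^2\mu_\K\lesssim\int_{r\in(M+r_1/2,2r_2)}r^2(\Normal{\K}\phi)^2+J^T[\phi]\,\mu_\K .
\end{equation*}
The constant must be independent of $r_1,r_2$.

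To prove this I would insert a cutoff $\chi$ equal to $1$ on $(M+r_1,r_2)$ and supported in $(M+r_1/2,2r_2)$. I choose it to be scale adapted, so that $\abs{\partial_r\chi}\lesssim (r-M)^{-1}$ near the horizon end and $\lesssim r^{-1}$ near the far end. Then I integrate $r^2\chi^2(\Normal{\K}\phi)^2$ by parts twice, exactly as in Step 1 of \cref{an:prop:K}. Expanding the square gives $\chi^2 r^{-2}(\partial_r(Dr^2\partial_r\phi))^2$, $\chi^2 r^{-2}(\slashed\Delta\phi)^2$ and a cross term. Integrating the cross term by parts in $r$ and on $\sphere$ produces $\chi^2 D\abs{\partial_r\slashed\nabla\phi}^2$ up to lower order terms. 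The genuinely second order quantities $D\partial_r^2\phi$, $D^{1/2}r^{-1}\partial_r\slashed\nabla\phi$ and $r^{-2}\slashed\nabla^2\phi$ are thus controlled; the last via Bochner on the sphere. The quantities $D\partial_r\partial_t$ and $D\partial_t^2$ come directly from $J^T[T\phi]$. So $r^2\abs{D\partial^2\phi}^2$ is bounded with the appropriate weights.

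The error terms come from derivatives falling on $\chi$, on $D$ and on powers of $r$. The scaling of $\chi$ and the bounds $\abs{D'}\lesssim D^{1/2}$, $\abs{D''}\lesssim1$ show that each such term is bounded by $\epsilon\cdot$(main terms) plus $C_\epsilon\int \chi^2$-supported first order energy. The first order energy is $D(\partial_r\phi)^2+r^{-2}\abs{\slashed\nabla\phi}^2\lesssim J^T[\phi]$, using \cref{curr:eq:fluxT} on $t$ slices. The main obstacle is uniformity near $r=M$ as $r_1\to0$. The cutoff error $(\partial_r\chi)^2 D^2 r^2(\partial_r\phi)^2\sim (r-M)^{-2}(r-M)^4(\partial_r\phi)^2$ must be dominated by $D(\partial_r\phi)^2$. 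This works precisely because $D=(r-M)^2$ supplies two powers of $(r-M)$, so the extremal degeneracy matches the scaling of the cutoff. The far end is analogous, with $r^{-1}$ scaling. Zero order terms never appear, since $\Normal{\K}$ annihilates constants, so no Hardy inequality is needed.

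For the higher order version I commute with $\mathcal{V}=\{1,r\partial_r,\Omega\}$. The rotations commute with both $\Normal{\K}$ and $T$. For $r\partial_r$, \cref{an:prop:K} Step 3 gives $[\Normal{\K},\Vb]\in\rho_+^2\Diffb^2$, with similarly harmless behaviour at the horizon. Hence
\begin{equation*}
	\Normal{\K}\mathcal{V}^s\phi=\mathcal{V}^sf+\mathcal{V}^s(D^{-1}T^2\phi)+\text{lower order terms in }\mathcal{V}^{s-1}\partial^2\phi .
\end{equation*}
I apply the $s=0$ estimate to $\mathcal{V}^s\phi$ and argue by induction on $s$, enlarging the support slightly at each step. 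The lower order terms are absorbed by the induction hypothesis. The terms $\mathcal{V}^s(D^{-1}T^2\phi)$, after converting $r\partial_r$ derivatives of $T\phi$ back using the equation, produce the sum $\sum_{s'\le s+1}r^{2s'}D^{-s'}J^T[T^{s'}\phi]$. Here each extra $T$ costs a factor $r^2D^{-1}$ in the weight.
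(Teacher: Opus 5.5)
Your $s=0$ argument is correct and is the paper's proof. The paper squares $\Normal{\K}\phi=f+D^{-1}T^2\phi$, integrates by parts as in Step~1 of \cref{an:prop:K}, and bounds the time derivatives by the flux \cref{curr:eq:fluxT}. Your scale-adapted cutoff, the identity $Dr^2=(r-M)^2$, and expanding $\chi^2(\cdot)^2$ rather than working with $\chi\phi$ (so that no zeroth-order term appears) supply the uniformity in $r_1,r_2$ that the paper leaves implicit.

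The higher-order part has a genuine gap, in two places. First, $r\partial_r$ is not a harmless commutator at the horizon. We have $r^2\Normal{\K}=((r-M)\partial_r)^2+(r-M)\partial_r+\slashed{\Delta}$, which commutes exactly with $(r-M)\partial_r$ and $\Omega$; this is what Step~3 of \cref{an:prop:K} uses. By contrast, $[r\partial_r,\partial_r(r-M)^2\partial_r]=2M\partial_r(r-M)\partial_r$ is a full factor $(r-M)^{-1}$ worse than your weights. With $r\partial_r$, uniformity in $r_1$ genuinely fails. The static solution $\phi=(r-M)^{-1}$ (so $f=0$, $T\phi=0$) gives, for $s=1$, a left-hand side $\sim r_1^{-3}$ against a right-hand side $\sim r_1^{-1}$. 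Near $\hor$ you must commute with $(r-M)\partial_r$ instead.

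Second, and more seriously, the step ``converting $r\partial_r$ derivatives of $T\phi$ back using the equation'' cannot yield only the terms $J^T[T^{s'}\phi]$. For $s\geq1$ the left-hand side contains $D\partial_t\partial_r\mathcal{V}^s\phi$, i.e.\ $s+1$ spatial derivatives of $T\phi$. The source $\mathcal{V}^s(D^{-1}T^2\phi)$ contains $s$ spatial derivatives of $T^2\phi$. Each $J^T[T^{s'}\phi]$ controls only first derivatives of $T^{s'}\phi$. Recovering more requires the elliptic equation for $T^j\phi$, whose source is $T^jf+D^{-1}T^{j+2}\phi$, and $T^jf$ does not appear on the right-hand side.

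No argument can close this, because the displayed higher-order inequality is false as written. Take $\phi=(t-\tau)\psi(x)$ with $\psi\in C^\infty_c(\{2M<r<3M\})$. Then $f|_{\Sigma_\tau}=0$ and $T^2\phi\equiv0$, so the right-hand side is $\lesssim\norm{\psi}_{H^1}^2$. For $s=1$ the left-hand side contains $\int r^2D^2\abs{\partial_r(r\partial_r\psi)}^2\mu_\K$, which this does not control. Even if $\partial$ is read as purely spatial, $\phi=\psi+\tfrac12(t-\tau)^2D\Normal{\K}\psi$ defeats the case $s=2$.

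Your induction does go through for a corrected statement. Add suitably weighted terms $\mathcal{V}^{s-j}T^jf$, $j\leq s$, to the right-hand side, and use $(r-M)\partial_r$ in place of $r\partial_r$ near $\hor$. That is also the form actually available where the lemma is used in \cref{lin:prop:main}, since there the $\Hb^s$ norms of $f$ include $t_*T$ derivatives.
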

	
	\begin{proof}
		Follows from squaring both sides of $\Normal{\K}\phi=D^{-1}\partial_t^2\phi+f$ and integrating, together with the flux on $\Sigma_\tau$ given by \cref{curr:eq:fluxT} and \cref{an:prop:K}.
	\end{proof}

	\begin{cor}[$i^+$ elliptic estimate]\label{ell:lemma:ext}
		Let $\Box_{\ern} \phi=f$ and fix $0<a<b<1$ together with $U_c=\{c^{-1}<r/t<c\}$.
		Then we have the far region estimate
		\begin{equation}
			\norm{\partial\phi}_{\Hb^{1;l}(U_a)}\lesssim\norm{T\phi}_{\Hb^{1;l}(U_a)}+\norm{\phi}_{\Hb^{1;l-1}(U_a)}+\norm{f}_{\Hb^{0;l+1}(U_a)},
		\end{equation}
		and higher order analogue for $s\in\N_{\geq1}$
		\begin{equation}
			\norm{\partial\phi}_{\Hb^{s;l}(U_a)}\lesssim_{a,s}\norm{\{1,tT\}^s\phi}_{\Hb^{1;l-1}(U_a)}+\norm{f}_{\Hb^{s-1;l+1}(U_a)}.
		\end{equation}
	\end{cor}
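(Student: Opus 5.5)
The estimate is local to the region $U_a=\{a^{-1}<r/t<a\}$ near $i^+$. There $r\sim t\sim t_*$, $D\sim1$, and $\Box_{\ern}$ is a small perturbation of $\Box_\eta$ by \cref{map:lemma:normal_ops}. The plan is to treat $\Box_{\ern}\phi=f$ as an elliptic equation for the spatial part, with $T^2\phi$ moved to the right-hand side. Concretely, in $(t,r,\omega)$ coordinates I would write
\begin{equation*}
\hat{\Box}_{\ern}(0)\phi = D^{-1}T^2\phi + f ,
\end{equation*}
where $\hat{\Box}_{\ern}(0)=r^{-2}\partial_r(Dr^2\partial_r\cdot)+r^{-2}\slashed{\Delta}$. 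On $U_a$ the coefficients are uniformly elliptic after rescaling.

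I would localise with a cutoff $\chi$ adapted to $U_b\supset U_a$. Since $a<b$, the derivatives of $\chi$ (homogeneous of degree zero in $(t,r)$) gain a factor $t^{-1}$. The core step is to square the equation weighted by $\rho^{-2l}$ and integrate over $t$-slices, exactly as in the proof of \cref{ell:lemma:near}. Integrating by parts twice in the spatial variables then controls $\norm{\partial_x^2\phi}$ by $\norm{T^2\phi}+\norm{f}$ plus lower order terms. Since $T\in\rho_+\Diffb$ near $i^+$, the $\Hb^{1;l}$ norm of $\partial\phi$ involves $\Vb\partial\phi\sim t\,\partial^2\phi$. The term $t\partial_x^2\phi$ is bounded by $tT^2\phi+tf$. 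The first is part of $\norm{T\phi}_{\Hb^{1;l}}$, and the second gives the $\Hb^{0;l+1}$ norm of $f$ after accounting for the one-weight shift $t\sim\rho_+^{-1}$. Mixed terms $tT\partial_x\phi$ come directly from $\Vb T\phi$ together with commuting $\partial_x$ and $T$. The cutoff and commutator errors have one fewer power of $t$ and one fewer derivative, so they are controlled by $\norm{\phi}_{\Hb^{1;l-1}}$. In $U_a$ there are no $D^{-1}$ or horizon degeneracies, so the weights near $\K$ and $\F$ play no role. One could instead simply rescale $(t,x)\mapsto(t/\lambda,x/\lambda)$ with $\lambda\sim t$, apply standard interior elliptic estimates on a unit annulus with $T^2\phi$ as source, and sum dyadically. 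This gives the same bound with constants uniform in $\lambda$, because the rescaled operator converges to the flat Laplacian.

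For the higher order version I would induct on $s$, commuting the equation with $\{1,tT\}$ and with $\Vb$. The vector field $T$ is Killing, and $tT$ commutes with $\Box_{\ern}$ up to $[\Box,t]T$-type terms. Those terms lie in $\rho_+^{2}\Diffb^2$ by \cref{map:lemma:normal_ops}, so they count as lower order with the right weight. Spatial $\Vb$ commutators are handled the same way, as in Step 3 of \cref{an:prop:K}. At each stage the elliptic step converts spatial derivatives into $tT$ derivatives plus $f$, which is why only $\{1,tT\}^s\phi$ and $\Vb^{s-1}f$ appear on the right.

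The main obstacle is bookkeeping rather than analysis: making the rescaling uniform and checking that every commutator and cutoff error indeed carries the extra $\rho_+$ factor. This is where I would rely on the mapping property \cref{map:eq:Box-normal} with $\bullet=i^+$.
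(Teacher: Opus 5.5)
Your proposal is correct and follows essentially the paper's route. The paper obtains this corollary as the far-region special case of \cref{ell:lemma:near}, whose proof squares $\hat{\Box}_{\ern}(0)\phi=D^{-1}T^2\phi+f$ on slices and integrates, with constants independent of $r_1,r_2$; your rescaling/dyadic-summation remark is exactly what justifies applying it where $r\sim t$, and your cutoff to $U_b\supset U_a$ is the sensible reading of the statement, which literally puts $U_a$ on both sides and has $U_c$ empty for $c<1$.
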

	\begin{proof}
		This is a special case of \cref{ell:lemma:near} using that the inequalities have $r_1,r_2$ independent constants. 
	\end{proof}
	
	\subsection{Solvability on timelike infinity}\label{an:sec:i+-inverse}
	In this section, we study the invertibility of $\Normal{\ip}$, i.e. the Minkowski wave operator on functions with polynomial decay.
	
	\begin{prop}[Conormal inverse of $\Normal{\ip}$]\label{an:prop:i+_conormal}
		Fix $q_\K\in(q-1,q)$ and $q>3/2$.
		Let $f\in\A{b}^{3,q+2,q_\K}(\MMink)$ and $\psi^\scri\in\A{b}^{q-1}(\scri)$.
		Then, the unique scattering solution to 
		\begin{equation}
			\Box_\eta\phi=f,\qquad r\phi|_\scri=\psi^\scri,
		\end{equation}
		satisfies $\phi\in\A{b}^{1,(q,q_\K)-}(\MMink)$.
	\end{prop}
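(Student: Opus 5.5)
We split the problem into a radiation-field part and an inhomogeneous part, and treat each with conormal ansatzes plus backwards weighted energy estimates in Minkowski space.

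\emph{Step 1: the data at $\scri$.} We first absorb the data. Let $\chi$ be a cutoff to $\{r>t_*/2\}$ and set
\[
\phi_0:=\chi\, r^{-1}\psi^\scri(u,\omega).
\]
Since $\psi^\scri\in\A{b}^{q-1}(\scri)$ and $r^{-1}\sim\rho_\scri\rho_+$ near $\scri\cap\ip$, we have $\phi_0\in\A{b}^{1,q,\infty}(\MMink)$, and $r\phi_0|_\scri=\psi^\scri$. By \cref{not:eq:wave_explicit} with $D=1$,
\[
\Box_\eta\phi_0=r^{-3}\slashed{\Delta}\psi^\scri+\text{cutoff terms}\in\A{b}^{3,q+2,\infty}.
\]
The cutoff terms are supported away from $\scri$ and decay like $t_*^{-q-2}$. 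Replacing $f$ by $f-\Box_\eta\phi_0$, we reduce to $\psi^\scri=0$ with $f\in\A{b}^{3,q+2,q_\K}$.

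\emph{Step 2: improving the decay of $f$.} We next peel off the inhomogeneity iteratively near $\scri$. In $(u,r)$ coordinates, $-2\partial_u\partial_r(r\phi)=rf+r^{-2}\slashed\Delta(r\phi)$. Integrating in $r$ from $\scri$, i.e.\ writing $r\phi=\int_r^\infty\int^u\dots$, with $rf\in\A{b}^{2,q+2}$, gives $r\phi\in\rho_\scri^{1}\A{b}$ with vanishing trace at $\scri$. Each such step gains a power of $\rho_\scri$ in the error and loses none at $\ip$. After finitely many steps the error lies in $\A{b}^{\infty,q+2,q_\K}$, or at least in a high power of $\rho_\scri$.

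\emph{Step 3: the exact solution.} We then solve $\Box_\eta\tilde\phi=\tilde f$ backwards, with trivial data at $\scri$, using the currents of \cref{curr:lem:ext} (Minkowski version, $D=1$, and no horizon). Take $N\approx 2q+$ and $\delta>0$ small, and apply $J^T_{N,\delta}$ together with the $r\partial_r|_u$ current $J^1_{>,N}$ on $\mathcal{D}_\tau^{T'}$ with $\tilde f$ truncated at $t_*<T'$. Let $T'\to\infty$. The flux at $t_*=\tau$ has a good sign. This yields
\[
\norm{\mathring{\Ve}\tilde\phi}_{\Hb^{0;\cdot,(N-3)/2,(N-1)/2}}\lesssim\norm{\tilde f}_{\dots},
\]
and with \cref{curr:lemma:poincare} also the zeroth order control.

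We then commute with the Minkowski symmetries $t\partial_t+r\partial_r$, $\Omega_{ij}$, and the boosts. These generate $\Vb(\MMink)$ and commute exactly (the scaling field up to a multiple of $\Box_\eta$), so we obtain $\Hb^{k}$ bounds for all $k$. Sobolev embedding \cref{not:eq:sobolev} then converts these into $\A{b}$ bounds with weight $q-$ at $\ip$.

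The weight $q_\K$ at $\K$ is obtained by applying \cref{ell:lemma:near} (Minkowski case, via \cref{an:prop:K} with $M=0$) parametrically in $t$. Solving $\Delta\phi=f+\partial_t^2\phi$, we use that $\partial_t^2\phi$ is two orders better in $t$; \cref{an:prop:K} needs $q_\K\in(q-1,q)$ to place the weights in its admissible range.

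\emph{Step 4: weight at $\scri$ and uniqueness.} The weight $1$ at $\scri$ follows from the explicit integration in Step 2 applied to the exact solution, i.e.\ $r\tilde\phi|_\scri=0$ gives $r\tilde\phi\in\rho_\scri$. Uniqueness follows from the same backwards $T$-energy estimate applied to the homogeneous problem.

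\emph{Main obstacle.} The delicate step is the energy estimate near $\scri\cap\ip$. We must gain $\rho_+^{2}$ while the $T$-energy gives only $(T\phi)^2$ weighted by $t_*/t$ near $\scri$. The $p=1$ current supplies $\rho_\scri$-weighted control of $\partial_r$. We expect the loss to be exactly the $-$ in $(q,q_\K)-$, and the condition $q>3/2$ to be what makes the $\rho_+$ weights admissible in \cref{curr:lemma:poincare}, since it needs $a_+>-2$ after the shift by $\vec a_{\mathrm{ER}}$.
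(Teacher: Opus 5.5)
There is a genuine gap. It sits at $\K$, not at $\scri\cap\ip$ where you locate the main obstacle.

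Your Step 2 improves the source only at $\scri$, so the $\tilde f$ fed into the backwards estimate of Step 3 still has weight $q_\K<q$ at $\K$. But the weights $t_*^{N-\delta}t^{\delta}$ of the currents in \cref{curr:lem:ext} are isotropic near $\K$: there $t_*\sim t\sim(\rho_+\rho_\K)^{-1}$, so the exponent you choose for $\ip$ is imposed at $\K$ as well. Concretely, at bounded $\abs{x}$ a general $\tilde f$ of this class has size $t^{-q_\K}$. The source term $\int\tilde f^2t_*^{N-\delta}t^{1+\delta}\mu\approx\int^\infty t^{N+1-2q_\K}\,\dd t$ is then finite only for $N<2q_\K-2$. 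A Morawetz weight as in \cref{curr:lem:morawetz} relaxes this only to $N<2q_\K-1$.

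After Sobolev embedding the estimate returns $\ip$-weight $(N+1)/2-$. Reaching $q-$ therefore needs $N$ arbitrarily close to $2q-1$, not $2q+$ as you wrote: near $\ip$ the source term is integrable only for $N<2q-1$. So at best you obtain an $\ip$-weight below $q_\K<q$.

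Your a posteriori elliptic step cannot repair this. Near the interior of $\ip$, $\partial_t^2$ and $\Delta$ are of the same order. Inverting $\Delta$ parametrically in $t$ therefore raises the $\K$-weight at most up to the $\ip$-weight you already have, since the admissible range of \cref{an:prop:K} forces $\K$-weight $<\ip$-weight. It never improves the $\ip$-weight itself.

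The missing idea is to remove the $\K$-singularity of the source before running any energy estimate; this is the first step of the paper's proof. Put $\phi_1=\chi\Delta^{-1}(\chi f)$ parametrically in $t$, with $\chi$ localising to $r/t<1/2$. This is exactly where $q+2-q_\K\in(2,3)$, i.e.\ $q_\K\in(q-1,q)$, is used. Then $\phi_1\in\A{b}^{\infty,q,q_\K}(\MMink)$ carries the weight $q_\K$, and $\Box_\eta\phi_1-f$ gains two orders at $\K$. Repeating, one reduces to $f\in\A{b}^{3,q+2,q+2}(\MMink)$, i.e.\ uniform $t^{-q-2}$ decay near $\K$. For such $f$ the backwards estimate with $N+\delta+1<2q$ has a finite right-hand side and yields $\ip$-weight $q-$, with the weight $q_\K$ at $\K$ supplied by $\phi_1$.

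The same reduction is needed for your commutators. Boosts are not b-vector fields of $\MMink$: near $\K$ they lie in $\rho_\K^{-1}\Diffb(\MMink)$, so $Bf$ has only weight $q_\K-1$ at $\K$, and the commuted problems suffer even more from the issue above. The paper commutes only with $T$, scaling and rotations. Alternatively, as in \cref{an:rem:Morawetz}, it uses boosts only after the smoothing, when the source lies in $\A{b}^{3,q+2}(\MMinkempty)$ and boosts are genuine b-vector fields.

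Your peeling at $\scri$ in Step 2 is harmless but unnecessary, since weight $3$ at $\scri$ already makes the source term finite.
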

	\begin{remark}[Propagation of weight from $\ip$ to $\scri$]
		Provided that $f\in\A{b}^{\infty,q+2,q_\K}(\MMink)$, one can prove via energy estimates that $\phi\in\A{b}^{q,q,q_\K}(\MMink)$, and (already in spherical symmetry) this is almost sharp.
		That is, the solution $\phi$ will vanish to order $q$ towards $\scri$ and will be conormal, and not smooth with respect to $r^2\partial_r$ in a neighbourhood of $\scri$.
		This in particular shows, how conformal smoothness may be lost by imposing $\psi^\scri=0$.
	\end{remark}
	\begin{proof}
		\emph{Step 1, smoothing:}
		It suffices to consider $f\in\A{b}^{3,q+2,q+2}(\MMink)$. Indeed, let us solve parametrically in $t$ the ansatz $\Delta\phi_1=f\chi(r/t)$, where $\chi$ is localising to $r/t<1/2$.
		Using that $f\in\A{b}^{3,q+2,q_\K}(\MMink)$ we have $\chi f\in\A{b}^{q_\K}(\Rcomp;\A{b}^{q+2-q_\K}(\K))$, thus we need to invert $\Delta$ on functions with decay of rate $r^{-2-q+q_\K}$.
		We define	
		$$\phi_1=\chi \Delta^{-1}\chi f \in\A{b}^{\infty,q,q_\K}(\MMink). $$
		Thus, we compute that $\Box(\phi_1\chi)-f\in\A{b}^{q_\K}(\Rcomp;\A{b}^{q+2-q_\K}(\K))\subset \A{b}^{3,q+2,q_\K+2}(\MMink)$.
		Repeating this construction yields the simplification.
		
		We  note that writing $\phi_0=\phi-r^{-1}\chi(r/t)\psi^\scri$ for a cutoff function $\chi$ supported on $r/t>1/2$, we get that $\Box_\eta\phi_0=f+\A{b}^{3,q+2,\infty}(\MMink)$, so it suffices to consider the case $\psi^\scri=0$.

		\emph{Step 2, scattering:}
		Let us assume that $\supp f\subset \{t_*<T_f\}$ for some $T_f\gg1$.
		Provided that we can prove the result uniformly in $T_f$, we can remove this assumption on $\supp f$ by a limiting argument.
		Let $\mathcal{D}_s=\MMink\cap {t_*\geq 0}$.
		
		Consider the current $J^T_{N,\delta},J^X_N$ from \cref{curr:lem:morawetz} for $N>0,\delta>0$.
		Using \cref{curr:eq:mor_minkowski} and \cref{curr:lemma:poincare}, we already have	
		\begin{equation}\label{an:eq:i+_proof}
			\begin{split}
				\int_{\mathcal{D}_s} \max(1,t_*/r^3)t_*^{N-1-\delta}t^\delta & \left((\partial_{r}|_{t_*}\phi)^2+\delta(t_*/t)(T\phi)^2+\frac{\abs{\slashed{\nabla}\phi}^2}{r^2}+\frac{\phi^2}{/\jpns{r}^2}\right) \, \mu \\ \lesssim_\delta & \int_{\mathcal{D}_s}f^2 t^{\delta+1}t_*^{N-\delta}\min(1,r^3/t_*) \, \mu .
			\end{split}
		\end{equation}
		The right hand side is bounded for $\delta<2$ and $N+\delta+1<2q$.
		This already yields for $j=1$
		\begin{equation}\label{an:eq:proof1}
			\phi\in\Hb^{j;\delta/2-1,N-3+\delta,N-1-\delta}(\MMink).
		\end{equation}
		
		Commuting with $T$, we can prove one order faster decay rate for $T\phi$, since $Tf\in\A{b}^{3,q+3,q+3}(\MMink)$. 
		After commuting furthermore with scaling $(r\partial_r+t\partial_t)$, rotations $(x_i\partial_j-x_j\partial_i)$, we have \cref{an:eq:proof1} for all $j$.
		Using \cref{not:eq:sobolev} yields the result.
	\end{proof}
	
	\begin{remark}[Emitting the Morawetz estimate]\label{an:rem:Morawetz}
		In the above proof, we may first smoothen out $f$ so that $f\in\A{b}^{3,q+2,\infty}(\MMink)\subset \A{b}^{3,q+2}(\MMinkempty)$, and use only the $J^T$ vectorfield together with commutations with Lorentz boosts to obtain the result.
		We presented a proof using a Morawetz estimate as in the case of finite regularity scattering data $\psi_\scri$ it yields fewer losses of derivatives to construct $\phi_{app}$.
	\end{remark}
	
	\begin{cor}\label{an:cor:i_+_conormal}
		Let $f\in\A{b}^{3,q+2,q_\K}(U_3)$ with $U_c=\Mcomp\cap\{r>cM\}$, $q_\K\in(q-1,q)$ with $q>1$.
		Then, there exists $\phi\in\A{b}^{1,(q,q_\K)-}(U_3)$ such that
		\begin{equation}
			\Box_{\ern}\phi-f\in\A{b}^{3,(q+3,q_\K)-}(U_3),\quad r\phi|_{\scri}=0.
		\end{equation}
	\end{cor}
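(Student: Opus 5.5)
The plan is to deduce the statement from \cref{an:prop:i+_conormal} by transplanting the source $f$ to Minkowski space. We solve the Minkowski problem there, then cut off and transfer the solution back. The error is controlled with \cref{map:lemma:normal_ops}, which says that $\Box_{\ern}-\Box_\eta$ gains a factor $\rho_+$ relative to the mapping \cref{map:eq:Box}.

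\textbf{Step 1 (transplant).} On $U_3=\{r>3M\}$ we identify the region with $\{r>3M\}\subset\R^{1+3}$ using the coordinates $(t_*,r,\omega)$. Since the ERN $t_*$ coincides with $u+\frac{1}{\log r}$ up to the shift $r_*-r$ for $r>20M$, the boundary faces $\scri,\ip,\K$ of $\Mcomp$ restricted to $U_3$ correspond to those of $\MMink$ restricted to $\{r>3M\}$. Concretely, we use the Minkowski coordinate $u_\eta=t_*$ and $r_\eta=r$ near $\scri$. The change of defining functions is smooth, because the logarithmic terms are conormal of weight $0^-$ at worst and are absorbed into the ``$-$'' in the weights. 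We then take $\tilde f=\chi_{>4M}(r)f$ extended by zero, so that $\tilde f\in\A{b}^{3,q+2,q_\K}(\MMink)$.

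\textbf{Step 2 (solve and cut off).} \cref{an:prop:i+_conormal} with $\psi^\scri=0$ gives a solution $\tilde\phi\in\A{b}^{1,(q,q_\K)-}(\MMink)$ of $\Box_\eta\tilde\phi=\tilde f$ with $r\tilde\phi|_\scri=0$. The proposition is stated for $q>3/2$; for $1<q\le 3/2$ we would rerun its proof, since its constraints $N+\delta+1<2q$ with $N>0$ and $0<\delta$ small still leave room. We set $\phi=\chi_{>3M}(r)\tilde\phi$ and compute
\[
\Box_{\ern}\phi-f=\chi(\Box_{\ern}-\Box_\eta)\tilde\phi+[\Box_{\ern},\chi]\tilde\phi+(\chi\tilde f-f).
\]
We treat the three terms as follows.
\begin{itemize}
\item The last term is supported in $3M<r<4M$ and vanishes identically after fixing the cutoffs so that $\chi=1$ on $\supp\chi_{>4M}$ and $f$ is only modified where $\chi_{>4M}\neq1$. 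Alternatively, the region $r<4M$ is lumped into the compact-$r$ error.
\item The commutator term is supported in a compact $r$-range near $\K$. There $\tilde\phi$ has $\rho_\K^{q_\K-}\rho_+^{q-}$ decay, which in compact $r$ means $t_*^{-q_\K}$. Each derivative that is not tangent to $\K$ costs nothing in the $\K$ weight, and the remaining $t_*$-weight is $\rho_\K^{q_\K}$ times $\rho_+$-trivial. This is consistent with $\A{b}^{3,(q+3,q_\K)-}$, since the $\ip$ index is vacuous on compact $r$-support.
\item For the first term, \cref{map:lemma:normal_ops} with $\bullet=\ip$ gives $(\Box_{\ern}-\Box_\eta)\tilde\phi\in\rho_+\A{b}^{2,q+2,q_\K}=\A{b}^{2,q+3,q_\K}$.
\end{itemize}

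\textbf{Main obstacle.} The first term has only $\scri$-weight $2$, whereas the statement asks for $3$. I expect this $\scri$-weight to be the main obstacle. The fix is to use the structure of \cref{not:eq:wave_explicit} rather than the crude mapping property. The difference acting on $r\phi$ is $\partial_r(D-1)\partial_r(r\phi)-r^{-1}D'r\phi$ plus the $r_*$-shift in $u$. Near $\scri\cap\ip$ the proof of \cref{map:lemma:normal_ops} records this as $\rho_{\ip}^3\rho_\scri^2\Diffb^2$. Applied to $\tilde\phi\in\rho_\scri^{1}$, this gives $\rho_\scri^{3}$, which is exactly the required weight. The $\log$ correction in $t_*$ contributes $\rho_+\rho_\scri\log$-type terms multiplying $\partial_{u}$, which map $\rho_\scri^1$ to better than $\rho_\scri^{2}$ with an extra $\rho_+$ gain, absorbed by the minus. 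Finally, $r\phi|_\scri=r\tilde\phi|_\scri=0$ because $\chi=1$ near $\scri$.
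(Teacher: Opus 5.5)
Your overall route matches the paper's: solve the Minkowski problem with zero radiation field via \cref{an:prop:i+_conormal}, cut off to the far region, and control $\Box_{\ern}-\Box_\eta$ with \cref{map:lemma:normal_ops}. You are also right that the \emph{stated} mapping property in \cref{map:lemma:normal_ops} only gives $\scri$-weight $2$. One therefore needs the finer near-corner structure $\rho_{\ip}^3\rho_\scri^2\Diffb^2$ recorded in its proof, which the paper's one-line proof leaves implicit.

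\textbf{The gap.} Your resolution of this obstacle fails for the identification you chose. Take $u_\eta=t_*=u+g(r)$ with $g=1/\log r$, where $u=v-2r_*$ is the ERN optical function. Then $\partial_r|_u=\partial_r|_{t_*}+g'\partial_{t_*}$. So writing \cref{not:eq:wave_explicit} in $(t_*,r)$ coordinates produces, relative to $\Box_\eta$ in $(u_\eta,r)$ coordinates, in particular the extra term $-2g'\partial_{t_*}^2=\frac{2}{r\log^2 r}\partial_{t_*}^2$.

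Since $r^{-1}\sim\rho_+\rho_\scri$ and $\partial_{t_*}\in\rho_+\Diffb$ near $\scri\cap\ip$, this term lies only in $\rho_+^3\rho_\scri\log^{-2}(r)\Diffb^2$. From the information $\tilde\phi\in\A{b}^{1,(q,q_\K)-}$ you therefore get an error of $\scri$-weight $2$ (times $\log^{-2}r$), not $3$. It cannot be ``absorbed by the minus'', for three reasons:
\begin{itemize}
\item the minus in $\A{b}^{3,(q+3,q_\K)-}$ acts only on the $\ip$ and $\K$ indices;
\item a gain in $\rho_+$ cannot be traded for a gain in $\rho_\scri$;
\item a logarithm is not a power.
\end{itemize}
The ``$r_*$-shift in $u$'' you also mention (identifying through $t$ instead) is worse. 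It yields $-2(D^{-1}-1)\partial_u^2\approx-\frac{4M}{r}\partial_u^2\in\rho_+^3\rho_\scri\Diffb^2$, which is not part of the $\rho_{\ip}^3\rho_\scri^2\Diffb^2$ structure you quote.

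\textbf{The fix.} Identify the two spacetimes on $\{r>3M\}$ through the exact ERN optical function, $(u,r,\omega)\mapsto(u_\eta,r,\omega)=(u,r,\omega)$, so that $t_\eta=u+r$. Then by \cref{not:eq:wave_explicit}
\[
\Box_{\ern}-\Box_\eta=r^{-1}\bigl(\partial_r(D-1)\partial_r-r^{-1}D'\bigr)r\qquad\text{in $(u,r,\omega)$ coordinates,}
\]
with no $\partial_u^2$ term. Near $\scri\cap\ip$ we have $\partial_r|_u\in\rho_+\rho_\scri\Diffb$ and $D-1=O(r^{-1})$ with $r^{-1}\sim\rho_+\rho_\scri$, so this operator lies in $\rho_+^3\rho_\scri^3\Diffb^2$ there. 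Near $\ip\cap\K$ it lies in $\rho_+^3\Diffb^2$. Together these give $\A{b}^{3,(q+3,q_\K)-}$, as required.

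This identification is admissible because $t_*-u=1/\log r$ and $t_\eta-t=r-r_*$ change things only mildly. They alter the boundary defining functions by factors that are conormal of order zero and bounded away from zero, and the b-vector fields by conormal combinations of b-vector fields. These changes leave the $\A{b}$ spaces and the radiation field unchanged.

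\textbf{A minor point.} In your third bullet, $\chi\tilde f-f=-(1-\chi_{>4M})f$ does not vanish. Your alternative is correct, though: it is a compact-$r$ term in $\A{b}^{\infty,\infty,q_\K}$, which lies in the target space.
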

	\begin{proof}
		This follows from \cref{an:prop:i+_conormal} together with \cref{map:lemma:normal_ops} and an application of a cutoff function localised to the far region.
	\end{proof}
	
	\subsection{Solvability with conformal smoothness}\label{an:sec:i+-inverse-smooth}
	In this section and the next, we concentrate on the normal operator $\Normal{\F}$.
	Via \cref{map:rem:isometry}, understanding solutions that are smooth across $\hor$ is equivalent to studying conformally smooth solutions near $\scri$ for $\Normal{\ip}$.
	However, as discussed in \cref{intro:rem:conf_vs_con}, conformal smoothness is not a physical requirement, while smoothness across $\hor$ certainly is.
	We will study $\Normal{\F}$ directly in the next section, but we include the conformally smooth solutions to $\Normal{\ip}$ for the reader's convenience and for comparison.
	
	In the next result, we enforce not only conormal smoothness near $\scri$ (regularity with respect to $r\partial_r|_u$), but also conformal smoothness (regularity with respect to $u^{-1}r^2\partial_r|_{u}$).
	\emph{For this section only}, let us use the notation
	\begin{equation}
		\Aext{b}^{3,q_+,q_\K}(\MMink):=\{f\in \A{b}^{3,q_+,q_\K}(\MMink): (\chi_{r>1}(r)u^{-1}r^2\partial_r|_{u})^k\in \A{b}^{3,q_+,q_\K}(\MMink)\}
	\end{equation}
	
	\begin{prop}\label{an:prop:i+_smooth_conormal}
		Fix an inhomogeneity $f\in\Aext{b}^{3,q+2,q_\K}(\MMink)$ where $q\in\R$ and $q_\K\in(q-1,q)$.
		Then, there exists $\psi^\scri\in \A{b}^{q-1-}(\scri)$ such that the scattering solution 
		\begin{equation}
			\Box_\eta\phi=f,\qquad r\phi|_\scri=\psi^\scri
		\end{equation}
		satisfies $\phi\in\Aext{b}^{1,q-,q_\K-}(\MMink)$.
	\end{prop}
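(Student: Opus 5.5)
Rather than prescribing radiation data and proving conformal regularity, we let the scattering data be whatever comes out of a conformally smooth construction. First, as in Step 1 of the proof of \cref{an:prop:i+_conormal}, we subtract $\chi\Delta^{-1}\chi f$ parametrically in $t$ (iteratively). This reduces to $f\in\Aext{b}^{3,q+2,q+2}(\MMink)$. The correction $\phi_1$ is supported in $r/t<1/2$, so it is trivially conformally smooth and contributes nothing at $\scri$.

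Next, apply the conformal (Kelvin-type) inversion of Minkowski space, $\tilde\phi=\Omega\,\Phi^*\phi$, as in \cref{map:rem:isometry}. This maps a neighbourhood of $\scri$ to a neighbourhood of a regular null cone (the image of $\scri$ is a finite null hypersurface), and conformal smoothness near $\scri$ becomes ordinary smoothness across that hypersurface. By \cref{not:eq:conformal_wave}, the equation $\Box_\eta\phi=f$ becomes $\Box_\eta\tilde\phi=\tilde f$ with $\tilde f=\Omega^3\Phi^*f$. The assumption $f\in\Aext{b}^{3,q+2,q_\K}$, in particular the weight $3$ at $\scri$ with $u^{-1}r^2\partial_r|_u$ regularity, is exactly what makes $\tilde f$ smooth across the image of $\scri$, with decay $q+2$ toward the image of $\ip$.

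The key step is then to solve $\Box_\eta\tilde\phi=\tilde f$ by a Cauchy/characteristic problem posed on the near side with trivial data at late times; concretely, solve the backward problem for $\phi$ from $\{t_*=T_f\}$ for $f$ truncated to $t_*<T_f$, with zero data there. Because we impose no condition at $\scri$, the solution is smooth across the image of $\scri$. The scattering data are then read off as $\psi^\scri:=r\phi|_\scri$. The resulting solution coincides with the one obtained from \cref{an:prop:i+_conormal} with those data, which gives uniqueness and identifies it as the scattering solution. To obtain decay uniformly in $T_f$, we use the weighted $T$-energy together with the $r^p$ currents of \cref{curr:lem:ext} adapted to Minkowski space. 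We commute with $T$, scaling, rotations and, additionally, $u^{-1}r^2\partial_r|_u$ near $\scri$. The commutator of $\Box_\eta$ with $r^2\partial_r|_u$ produces only terms of the form $r\partial_r\phi$, $\phi$ and lower-order terms, which are controlled inductively; the factor $u^{-1}$ compensates for the growth. This yields $\phi\in\Aext{b}^{1,q-,q_\K-}$ by \cref{not:eq:sobolev}, and in particular $\psi^\scri\in\A{b}^{q-1-}(\scri)$ follows by restricting $r\phi$ to $\scri$.

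I expect the main obstacle to be the commuted estimates with $u^{-1}r^2\partial_r|_u$. These must close without losing the weight $q$ at $\ip$, since each commutation can shift decay between the $\scri$ and $\ip$ weights. I would first test the scheme in spherical symmetry, where $\partial_u\partial_r(r\phi)=-rf/2$ can be integrated explicitly from $r=\infty$, to confirm the weights. After that, I would handle the angular terms $r^{-2}\slashed{\Delta}$ via the rotational commutators.
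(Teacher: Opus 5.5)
There is a genuine gap at the step you call the key step. The backward problem from $\{t_*=T_f\}$ ``with no condition at $\scri$'' is not well posed. For the paper's Minkowski $t_*$, this hypersurface coincides with the outgoing cone $\{u=T_f\}$ for $r>20$. Its past domain of dependence therefore misses a neighbourhood of $\scri\cap\{u<T_f\}$, and a backward solution there is determined only once $r\phi|_\scri$ is prescribed; after inversion, this means characteristic data on the image of $\scri$. Prescribing, say, $\psi^\scri=0$ returns the solution of \cref{an:prop:i+_conormal}, which by the remark following it is generically \emph{not} conformally smooth. To give ``no condition at $\scri$'' a meaning, you would have to continue $\tilde f$ and the data hypersurface past the image of $\scri$, and the choice of continuation is exactly what selects $\psi^\scri$. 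Your proposal never does this. The physical-space currents you then invoke (weighted $T$-energy, $r^p$) do not see the region beyond $\scri$, and they leave flux terms on $\scri$ that you cannot control. So the mechanism that determines $\psi^\scri$, which is the actual content of the proposition, is missing.

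The commuted estimates would not close either. The commutator with $r^2\partial_r|_u$ is not lower order. Set $\psi=r\phi$, so that $-2\partial_u\partial_r\psi+\partial_r^2\psi+r^{-2}\slashed{\Delta}\psi=rf$, and set $\Psi=r^2\partial_r|_u\psi$. Then $\Psi$ satisfies the same operator with the extra terms $-\frac{2}{r}\partial_r\Psi+\frac{2}{r^2}\Psi$, i.e.\ a scale-invariant first-order term in the commuted quantity itself. This is the analogue at $\scri$, via Couch--Torrence, of the enhanced log-redshift of \cref{an:rem:log_redshift}. For backward evolution it acts as a blueshift, and each commutation costs about one order of decay (cf.\ \cref{lin:rem:blueshift}, the restriction on $k$ in \cref{lin:prop:main}, and \cref{intro:thm:aag}). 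A backward scheme therefore yields only about $q$ conformal derivatives, not the infinitely many that $\Aext{b}$ requires. Backward weighted estimates also need $q$ bounded below, whereas the statement is for every $q\in\R$.

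The paper avoids both problems with the exact conformal symmetry $\mathcal{T}=u^2\partial_u+v^2\partial_v$:
\begin{itemize}
\item For $q>0$, it uses \cref{an:eq:conformal_commutation} to trade $f$ for $F=f_{1+\floor{q}}$, which decays only at rate $q'\in[1,2]$.
\item It solves the \emph{forward} problem $\Box_\eta\Phi=F$ with zero data at $t=0$. Forward estimates tolerate growing sources, so commuting with $\mathcal{T}$ arbitrarily often is harmless, and this gives conformal smoothness of $\Phi$.
\item It extends $\Phi$ smoothly across $\scri$ and recovers $\phi$ by integrating $1+\floor{q}$ times along the integral curves of $\mathcal{T}$, starting at $\{\rho_\scri=-1\}$ beyond $\scri$, using \cref{map:lemma:T-1}.
\end{itemize}
Each integration gains one order of decay while preserving conformal smoothness. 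The arbitrariness of the extension beyond $\scri$ is where the non-unique $\psi^\scri$ comes from.
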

	
	\begin{remark}[Non-uniqueness]
		We remark, that $\psi^\scri$ in the above proposition is \emph{not} unique.
		Indeed, given any homogeneous solution $\Box_\eta ( t^{-q'} \cdot \varphi_{q'}(x/t) ) =0$ for $q\geq q'$ such that $\varphi_q$ is smooth  near $\partial B_{x/t}$ we obtain that $\phi+t^{-q'} \cdot \varphi_{q'}(x/t)$ is also an admissible solution.
		For instance, within the spherically symmetric class, we can take $t^{-q-1}r\phi_{q+1}=v^{-q}-u^{-q}$.
	\end{remark}
	
	\begin{proof}
		\emph{Smoothing:}
		As in the proof of \cref{an:prop:i+_conormal}, it suffices to consider the case $f\in\Aext{b}^{3,q+2,q+4}(\MMink)$.
		
		\emph{Commutation with conformal scaling:}
		For the rest of the proof, assume that $q>0$, for otherwise the proof applies with $\Phi$ below replaced with $\phi$.
		We use the conformal Killing vectorfield $\mathcal{T}=u^2\partial_u+v^2\partial_v\in(\rho_\K\rho_+)^{-1}\Diffb(\MMink)$, satisfying the commutation
		\begin{equation}\label{an:eq:conformal_commutation}
			[\mathcal{T},u^2v^2r\Box_\eta r^{-1}]=0\implies \Box_\eta \left( r^{-1}\mathcal{T}^k (r\phi ) \right) =f_k:=\frac{\mathcal{T}^k ( u^2v^2r f )}{ u^2v^2r}\in\A{b}^{3,q+2-k,q+4-k}(\MMink).
		\end{equation}
		Let's set $F=f_{1+\floor{q}}$ and $q'=q+1-\floor{q}\in[1,2]$.
		We will construct $\phi$ as the $\mathcal{T}^{-1}$ integral of $\Phi$, where $\Phi$ satisfies 
		\begin{equation}
			\Box \Phi=F,\qquad \Phi|_{t=0}=T\Phi|_{t=0}=0.
		\end{equation}
		
		Let's fix the currents $J^T_{N,\delta},J^X_N$ as in \cref{curr:lem:morawetz} with $N<0,\delta<0$.
		Applying the divergence theorem to $J^T_{N,\delta},J^X_N$ and using \cref{curr:lemma:poincare} yields
		\begin{equation}\label{an:eq:proof_smooth1}
			\begin{split}
				\int_{\mathcal{D}_s} \max(1,t_*/r^3)t_*^{N-1-\delta}t^\delta & \left((\partial_{r}|_{t_*}\Phi)^2+\delta(t_*/t)(T\Phi)^2+\frac{\abs{\slashed{\nabla}\Phi}^2}{r^2}+\frac{\Phi^2}{/\jpns{r}^2} \right) \, \mu \\ \lesssim_\delta & \int_{\mathcal{D}_s}F^2 t^{\delta+1}t_*^N\min(1,r^3/t_*) \, \mu .
			\end{split}
		\end{equation}
		The right hand side is bounded for $\delta<2$ and $N+\delta<2q'-5/2$.
		Therefore, we obtain $\Phi\in \mathcal{X}^0$ where $\mathcal{X}^s=\Hb^{s;\frac{\delta-1}{2},\frac{N-3}{2},\frac{N}{2}}(\MMink)$.

		\emph{Higher regularity:}
		We can commute the equation with $\partial_x,\partial_t$ to obtain $\{\partial_t,\partial_x\}^k\Phi\in \mathcal{X}^0$ for any $k$.
		
		We compute
		\begin{equation}\label{an:eq:rdr_commutation}
			[r\partial_r|_{t*},r^2r \Box_\eta r^{-1}]=-2r\partial_{t_*}r\partial_r+\chi \Diff^2\implies \Box_\eta r\partial_r\Phi-\frac{1}{r}T(r\partial_r)\Phi=\chi\Diff^2\Phi+\Diffb^1 F
		\end{equation}
		where $\chi$ is compactly supported.
		The additional term on left hand side yields extra damping for $\Phi$.
		In particular, we can apply $J^T_{N,\delta},J^X_N$ to conclude that $r\partial_r\Phi\in\mathcal{X}^0$.
		Using further commutations, we get $(r\partial_r)^k\Phi\in\mathcal{X}^0$.
		Finally, using commutations with the scaling and rotation vectorfields, we get $\Phi\in\mathcal{X}^\infty$.
		Via \cref{not:eq:sobolev} we obtain $\Phi\in\A{b}^{1/2,q'-2,q'-2}(\MMink)$.
		
		Let us note that even though in the above construction, we had $q'\geq0$ the estimates for $\Box\Phi=F$ also apply for $q'<0$.
		
		\emph{Conformal smoothness:}
		We use \cref{an:eq:conformal_commutation} to prove estimates for $r^{-1}\mathcal{T}^kr\Phi$.
		Since applying $\mathcal{T}$ vectorfields decreases the decay rate, the previous estimates apply and  it follows that $\mathcal{T}^k\Phi\in\A{b}^{1/2,q'-k-2,q'-k-2}(\MMink)$. 
		Hence, we obtain $\Phi\in\Aext{b}^{1,q'-2,q'-2}(\MMink)$.
		
		\emph{Integration:}
		Let us work first in a neighbhourhood of $\scri$ with coordinates $\rho_+,\rho_\scri$, and extend $f$ (correspondingly $F$) and $\Phi$ smoothly across $\{\rho_\scri=0\}$ with support in $\rho_\scri<-1/2.$
		Then, we may obtain $f$ by integrating $F$ via \cref{map:lemma:T-1} below, and \emph{define} $\phi$ as the corresponding $\mathcal{T}^{-k}\Phi$ integral of $\Phi$ where the integration starts at $\{\rho_\scri=-1\}$.
		Using the mapping property of $\mathcal{T}^{-1}$ in \cref{map:eq:T-1} yields the result.
	\end{proof}
	
	\begin{remark}[Avoiding $r\partial_r|_{t_*}$ commutations]\label{an:rem:rpr}
		In the previous proof, we may use the same improvement as in \cref{an:rem:Morawetz} to only study $f\in\Aext{b}^{3,q+2}(\MMinkempty)$.
		In this case, we can commute with Lorentz boosts as well as scaling to yield infinite $\Hb$ regularity, and thus there is no need to commute with $r\partial_r$.
	\end{remark}
	
	\begin{cor}\label{an:cor:i_+_smoth_conormal}
		Let $U_c=\{r<cM\}\cap\Mcomp$ and fix $f\in\Aext{b}^{q_\K,q-1}(U_2)$ with $\supp f\subset U_2$, $q_\K\in(q-1,q)$ and $q\in\R$.
		Then, there exists $\phi\in\Aext{b}^{q_\K,q-1}(U_2)$ and $\supp\phi\subset U_2$ such that
		\begin{equation}
			\Box\phi-f\in\A{b}^{q_\K,q}(U_2).
		\end{equation}
	\end{cor}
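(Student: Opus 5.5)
This corollary is the near-horizon counterpart of \cref{an:cor:i_+_conormal}, obtained by transporting \cref{an:prop:i+_smooth_conormal} through the Couch--Torrence map $\Phi$. By \cref{map:rem:isometry}, $\Normal{\F}=(r-M)^{-3}\Box_{\eta,y}(r-M)$ with $y=(r-M)^{-1}x/\abs{x}$. Under this change of variables, a neighbourhood $U_2$ of $\F\cup\hor$ (near $\K$) maps to a neighbourhood of $\ip\cup\scri$ in $\MMink$. The roles of the faces are exchanged: $\F\leftrightarrow\ip$, $\hor\leftrightarrow\scri$, and $\K$ remains $\K$. Smoothness across $\hor$, i.e.\ regularity with respect to $\partial_r|_v$, corresponds to conformal smoothness $u^{-1}r^2\partial_r|_u$ near $\scri$, which is exactly the condition built into $\Aext{b}$ in \cref{an:prop:i+_smooth_conormal}.

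\emph{Step 1 (transfer of data).} Set $\tilde f:=(r-M)^{3}f$, viewed as a function of $(t_*,y)$, and cut it off away from $\abs{y}\lesssim 1$. I will check the weights using the defining functions \cref{not:eq:boundary_defining} and the relations $\rho_\F\leftrightarrow\rho_+$, $\rho_\hor\leftrightarrow\rho_\scri$. The extra factor $(r-M)^3\sim \abs{y}^{-3}$ should convert $f\in\Aext{b}^{q_\K,q-1}(U_2)$ into $\tilde f\in\Aext{b}^{3,q+2,q_\K}(\MMink)$. The main bookkeeping is that a weight $q-1$ at $\F$ becomes $q+2$ at $\ip$; this uses $\rho_\F\sim\rho_+\cdot(\text{factors of }\abs{y})$ together with the conversion factor $(r-M)^3$. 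Smoothness of $f$ across $\hor$ translates into $u^{-1}\abs{y}^2\partial_{\abs{y}}$ regularity near $\scri_y$.

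\emph{Step 2 (inversion).} Apply \cref{an:prop:i+_smooth_conormal} to $\tilde f$. This gives some (non-unique) radiation data $\psi^\scri$ and a solution $\tilde\phi\in\Aext{b}^{1,q-,q_\K-}(\MMink)$. Pull back by setting $\phi:=(r-M)\tilde\phi$ and multiply by a cutoff $\chi$ supported in $U_2$. Then $\Normal{\F}\phi=f$ holds exactly wherever $\chi=1$, and $\phi$ has the weights $(q_\K,q-1)$ at $(\K,\F)$. Near $\hor$, the relation $\phi=(r-M)\tilde\phi$ with $\tilde\phi\sim\abs{y}^{-1}\psi^\scri$ shows that $\phi|_\hor$ is the image of $\psi^\scri$, hence smooth. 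Conformal smoothness of $\tilde\phi$ gives smoothness of $\phi$ across $\hor$.

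\emph{Step 3 (error).} Write
\[
\Box_{\ern}\phi-f=(\Box_{\ern}-\Normal{\F})(\chi\phi)+[\Normal{\F},\chi]\phi .
\]
By \cref{map:lemma:normal_ops}, the first term gains a factor $\rho_\F$, placing it in $\A{b}^{q_\K,q}$. The commutator term is supported where $\rho_\K$ is bounded below, i.e.\ $r\sim 2M$, where any decay in $\K$ beyond $q_\K$ is harmless. There it lies in $\A{b}^{q_\K,q}$ after one checks that $\F$-decay is inherited, since at $r\sim 2M$ the $\F$ and $\K$ weights interact only through $t_*$.

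\emph{Main obstacle.} I expect the difficulty to be the weight bookkeeping under $\Phi$, particularly the precise identification of $\rho_\F$ with the image of $\rho_+$ and the matching of the $\Aext{b}$ (across-$\hor$) regularity with the conformal class used in \cref{an:prop:i+_smooth_conormal}. The $-$ losses in the weights must also be absorbed, which I would do by choosing $q_\K$ slightly differently, consistent with the strict inequalities assumed.
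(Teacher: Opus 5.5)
Your route is the paper's. Its proof is the single line ``apply \cref{an:prop:i+_smooth_conormal} after the Couch--Torrence map \cref{not:eq:conformal_wave}''. Passing through the near-horizon form \cref{map:rem:isometry}, cutting off, and absorbing $\Box_{\ern}-\Normal{\F}$ with \cref{map:lemma:normal_ops} is a faithful unpacking of that line. Three points are wrong as written, however.

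(1) The pullback is inverted. \cref{map:rem:isometry} reads $\Normal{\F}\phi=(r-M)^{-3}\Box_{\eta,y}\big((r-M)\phi\big)$, so with $\tilde f=(r-M)^3f$ you must take $\phi=(r-M)^{-1}\tilde\phi=\abs{y}\tilde\phi$. With your $\phi=(r-M)\tilde\phi$ one has $\Normal{\F}\phi\neq f$, the $\F$-weight would be $q+1$, and $\phi$ would vanish quadratically at $\hor$. The properties you go on to assert (weights $(q_\K,q-1)$, and $\phi|_{\hor}$ given by $\psi^\scri$) are those of $\abs{y}\tilde\phi$, via $r-M\sim\rho_\F\rho_\hor\leftrightarrow\rho_+\rho_\scri$. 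In the same vein, the conformal regularity must be imposed on the rescaled quantities: $\abs{y}^3\tilde f$ (i.e.\ $f$ itself) and $\abs{y}\tilde\phi$. The reason is that $u^{-1}\abs{y}^2\partial_{\abs{y}}$ hitting the factor $\abs{y}^{-3}$ costs a power of $\rho_\scri$; this is also how $\mathcal{T}$ enters the proof of \cref{an:prop:i+_smooth_conormal}, through $r\phi$.

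(2) In Step 3, the transition region $r\sim 2M$ is where $\rho_\F$ is bounded below, not $\rho_\K$, which still vanishes there as $t_*\to\infty$. That is why the commutator term, with $\K$-weight $q_\K$ and arbitrary $\F$-weight, lies in $\A{b}^{q_\K,q}$. Also take $\chi=1$ on $\supp f$, so that no $(1-\chi)f$ term appears.

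(3) The ``$-$'' losses cannot be absorbed by adjusting $q_\K$. Since $f$ has exactly the weights $(q_\K,q-1)$, you cannot feed \cref{an:prop:i+_smooth_conormal} better ones. Moreover, the loss at $\F$ is the image of the loss at $\ip$ in that proposition and has nothing to do with $q_\K$. What your argument delivers, and equally the paper's one-line proof, is $\phi\in\Aext{b}^{(q_\K,q-1)-}(U_2)$ with $\Box_{\ern}\phi-f\in\A{b}^{(q_\K,q)-}(U_2)$. This is exactly the form of the parallel \cref{an:prop:F_interior}, the statement should be read with these losses, and the iteration in \cref{an:sec:iteration} tolerates them.
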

	\begin{proof}
		This follows from \cref{an:prop:i+_smooth_conormal} after Couch-Torres isometry \cref{not:eq:conformal_wave}.
	\end{proof}
	
	To finish the proof of \cref{an:prop:i+_conormal}, we recall some standard ODE results for functions in $\A{b}(\Mcomp_\mathfrak{t})$. Similar results are also true on $(\MMink,\eta)$.
	
	We first give the form of the vector field $\mathcal{T}:=u^2\partial_u+v^2\partial_v=u^2\partial_u+2(u+r)r\partial_r=2(t^2+r^2)\partial_t+4tr\partial_r\in(\rho_\scri\rho_+\rho_\K\rho_F)^{-1}\Diffb(\Mcomp_{\mathfrak{t}})$, which is the timelike conformal Killing field of Minkowski in the far away region: 
	\begin{equation}\label{map:eq:conformal}
		\mathcal{T}=\begin{cases}
			\rho_+^{-1}(-\rho_+\partial_{\rho_+}-(1-\rho_\scri)\partial_{\rho_\scri})& \text{near }\scri\cap \ip\text{with }\rho_+=u^{-1},\rho_\scri=u/v\\
			\rho_\K^{-1}\rho_+^{-1}\big((1-\rho_\K^2)\rho_\K\partial_{\rho_\K}-2\rho_{+}\partial_{\rho_+}\big)&\text{ near }\ip\cap\K\text{ with} \rho_+=1/r,\rho_\K=r/t\\
			\rho_\K^{-1}\rho_F^{-1}\big((1-\rho_\K^2)\rho_\K\partial_{\rho_\K}-2\rho_{F}\partial_{\rho_F}\big)&\text{ near }\K\cap\F\text{ with} \rho_+=1/(r-M),\rho_\K=(r-M)/t\\
			\rho_\F^{-1}(-\rho_\F\partial_{\rho_\F}-(1-\rho_\hor)\partial_{\rho_\hor})& \text{near }\F\cap \hor\text{with }\rho_\F=v^{-1},\rho_\hor=v/u
		\end{cases}
	\end{equation}
	The future directed integral curve of $\mathcal{T}$ starting from $(u,v)$ is given by $\Gamma_{u,v}=\{(u',v'):1/u'-1/v'=1/u-1/v,v'\geq v\}$.
	Let us define the following time integration operator
	\begin{nalign}
		\mathcal{T}_{\downarrow}^{-1}f(u,v):=\int_{\Gamma_{u,v}} f(u',v')\, \dd v' ,
	\end{nalign}
	which is well defined for $f$ compactly supported in $\M_{\mathfrak{t}}$.
	
	The following is a standard result
	\begin{lemma}\label{map:lemma:T-1}
		Let $(u^{-1}r^2\partial_r|_u)^kf\in\A{b}^{0,q_+,q_\K,q_\F}(\Mcomp_{\mathfrak{t}})$ for all $k$.
		Then $\mathcal{T}_{\downarrow}^{-1}f$ is well defined and it satisfies 
		\begin{subequations}\label{map:eq:T-1}
			\begin{align}
				\chi_{r_*\notin(-1,1)}(u^{-1}r^2\partial_r|_u)^k\mathcal{T}_\downarrow^{-1} f&\in \A{b}^{0,q_++1,\min(q_\K+1,q_++2,q_\F+2),q_\F+1}(\Mcomp_{\mathfrak{t}}).\label{map:eq:Tbackward}
			\end{align}
		\end{subequations}
	\end{lemma}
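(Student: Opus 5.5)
Since $\mathcal{T}$ is a vector field whose integral curves foliate $\M_{\mathfrak{t}}$ (the curves $\Gamma_{u,v}$, on which $1/u-1/v$ is constant), $\mathcal{T}_\downarrow^{-1}$ is an ODE inversion along these curves. I would prove the lemma one corner at a time, using the local normal forms \cref{map:eq:conformal}. In each corner chart $\mathcal{T}=\rho_a^{-1}\rho_b^{-1}W$ (or $\rho_a^{-1}W$), where $W$ is a b-vector field that is transversal to the level sets of one defining function. So $\mathcal{T}_\downarrow^{-1}f$ is, up to a Jacobian, $\int \rho_a\rho_b f\,\dd s$ along the flow of $W$. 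Each such integral gains exactly one power in each weight carried by the prefactor.

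The key steps, in order, are as follows.

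(i) Well-definedness. Parametrize $\Gamma_{u,v}$ by $v'$ and use that $1/u'=1/v'+c$. For $f$ compactly supported the integral is finite. For general $f$ with the stated weights, I would bound $\abs{f}\lesssim \rho^{\vec q}$ along $\Gamma_{u,v}$ and show integrability as $v'\to\infty$. The curve tends to $i^+$/$\K$ when $c>0$ and to $\scri$ when $c=0$.

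(ii) Weight bookkeeping near each corner. Near $\scri\cap i^+$ the flow is $-\rho_+\partial_{\rho_+}-(1-\rho_\scri)\partial_{\rho_\scri}$ with prefactor $\rho_+^{-1}$. Integrating in $\log\rho_+$ with integrand $\rho_+^{q_++1}$ gives $\rho_+^{q_++1}$. Near $i^+\cap\K$ the prefactor is $\rho_\K^{-1}\rho_+^{-1}$ and the flow is $(1-\rho_\K^2)\rho_\K\partial_{\rho_\K}-2\rho_+\partial_{\rho_+}$. Along it, $\rho_+^{1/2}\rho_\K^{-1}$ is roughly conserved, so $\rho_+\sim\rho_\K^{2}$ up to the constant. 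Integrating $\rho_\K^{q_\K+1}\rho_+^{q_++1}$ then gives the $\min(q_\K+1,q_++2)$ weight at $\K$, depending on which end of the integral dominates. The $\K\cap\F$ corner is analogous and produces $q_\F+2$ and $q_\F+1$. The $\F\cap\hor$ corner mirrors $\scri\cap i^+$ under Couch--Torrence. The cutoff $\chi_{r_*\notin(-1,1)}$ removes the region where the normal forms switch chart.

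(iii) Conormal regularity. I would commute $\Vb$ through $\mathcal{T}_\downarrow^{-1}$. Since $[\rho^{-1}W,\Vb]\in\rho^{-1}\Diffb$, one gets $\Vb\mathcal{T}_\downarrow^{-1}f=\mathcal{T}_\downarrow^{-1}(\Vb' f)+\mathcal{T}_\downarrow^{-1}(\text{lower order})$ plus a boundary term, and induction closes.

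(iv) The $(u^{-1}r^2\partial_r|_u)^k$ derivatives. The commutator $[\mathcal{T},u^{-1}r^2\partial_r|_u]$ is a multiple of $u^{-1}r^2\partial_r|_u$ plus a smooth b-operator, so the same induction applies.

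I expect the main obstacle to be step (ii) at $\K$. The integral curves pass from near $\F$ through $\K$ to near $i^+$, so the weight at $\K$ is governed by the worse of the two endpoint contributions, which is where the $\min$ comes from. I would first verify this in spherical symmetry in $(u,v)$, where the integral is explicit, and then handle the angular variables as parameters.
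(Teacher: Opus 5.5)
Your plan (localize to faces and corners, solve the transport equation along the flow of $\mathcal{T}$ using its normal forms, then commute) is the same strategy as the paper's, which quotes the propagation results of its reference after excising $r_*\in(-1,1)$. The difficulty is that the flow picture you build on is wrong exactly where the argument depends on it.

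\emph{Endpoints of the curves.} Along $\Gamma_{u,v}$ one has $r_*=c\,uv/2$ with $c=1/u-1/v$. Since $\mathcal{T}(uv)=uv(u+v)>0$, the sign of $r_*$ is preserved along each curve. For $c>0$ you get $u'\to 1/c$ as $v'\to\infty$, so the curve leaves through $\scri$ at finite retarded time. The case $c=0$ is the invariant curve $\{u=v\}=\{r_*=0\}$, which tends to $\K$. For $c<0$ the curve ends on $\hor$ at $v'=1/\abs{c}$. So well-definedness is not an integrability question at $i^+$. The point is that $f$ has weight $0$ at $\scri$ and $\hor$, and the integral is finite only because the exit is transversal and occurs in finite flow time for $\dd s=\dd v'/v'^2$. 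This is the measure that actually inverts $\mathcal{T}$; with $\dd v'$ itself the integral diverges at $\scri$.

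\emph{The corner $\K\cap i^+$.} The normal form gives $\dd\log\rho_+/\dd\log\rho_\K\approx-2$, so the conserved quantity is $\rho_+\rho_\K^2$, not $\rho_+^{1/2}\rho_\K^{-1}$; along the flow $\rho_\K$ increases and $\rho_+$ decreases. The curves run along $\K$ away from the zero of the b-vector field $\rho_\K\mathcal{T}$ at $\K\cap\{r_*=0\}$. That saddle, not a change of chart, is what $\chi_{r_*\notin(-1,1)}$ removes. Each curve then passes through one corner and exits along $i^+$ (if $r_*>0$) or along $\F$ (if $r_*<0$). No curve goes from $\F$ through $\K$ to $i^+$, and the $\min$ in the statement is over these two decoupled halves.

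\emph{Weights from the correct invariant.} Write $\mathcal{T}=\rho_\K^{-1}\rho_+^{-1}W$ and integrate $Wg=\rho_\K\rho_+f$ over $\sigma\in[0,\log(1/\rho_\K(p))]$, with $\rho_\K\propto e^{\sigma}$ and $\rho_+\propto e^{-2\sigma}$. This yields $\rho_+^{q_++1}\rho_\K^{\min(q_\K+1,\,2q_++2)}$, with a logarithm at equality, and analogously $2q_\F+2$ at $\K\cap\F$. This is at least the stated exponent when $q_+,q_\F>0$, but not in general. For $f=\chi(r_*/t)\,t^{-q_+}$ with $\chi\in C^\infty_c((\tfrac14,\tfrac12))$ one computes $\abs{\mathcal{T}_\downarrow^{-1}f}=C_\chi c^{q_++1}\sim t^{-2q_+-2}$ at fixed $r$. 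For $q_+<0$ this is incompatible with a $\K$-weight $q_++2$. So correct bookkeeping cannot produce the $q_++2$ you claim.

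\emph{Commutation, step (iii).} This is not justified as written. For $X\in\Vb$, $[\mathcal{T},X]$ is first order with the same weight as $\mathcal{T}$, so it is not lower order. The derivatives $G=\Vb g$ therefore satisfy a coupled system $WG=\rho\,\Vb f+BG$ over flow times of length $\log(1/\rho_\K)$. Gr\"onwall with merely bounded $B$ would then lose powers of $\rho_\K$. The induction closes for a more specific reason. Near the corners, for smooth-coefficient $X$, one has $[X,\mathcal{T}]\in C^\infty\cdot\mathcal{T}+\rho_\K^{-1}\rho_+^{-1}\mathcal{O}(\rho_\K+\rho_+)\Diffb^1$. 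In addition, $\int(\rho_\K+\rho_+)\,\dd\sigma$ is bounded along each curve. This is what the cited propagation estimates supply.

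\emph{The commutator in step (iv).} In Minkowski, $[\mathcal{T},u^{-1}r^2\partial_r|_u]=u\cdot u^{-1}r^2\partial_r|_u$, whose coefficient $u$ is unbounded. It has to be absorbed through the identity $\mathcal{T}-u=u\,\mathcal{T}\,u^{-1}$, not treated as an error term.
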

	
	\begin{proof}
		This follows from the propagation results in \cite[Section 4]{kadar_scattering_2025}.
		In particular, it is sufficient to localise the transport equation near the individual boundary faces and near the corners, where the above reference applies.
		
		Note that the form of $\mathcal{T}$ given in \cref{map:eq:conformal} is only valid away from the special integral curve $u=v$, which is inside the region $r_*\in(-1,1)$.\footnote{The estimate \cref{map:eq:Tbackward} still holds in this region, but does not follow from the reference, and since we not need in the rest of the paper, we choose to localise.}
	\end{proof}
	
	\subsection{Solvability across the horizon}\label{an:sec:across_horizon}
	For future applications, it is helpful, to extend the scattering construction beyond the horizon of the black hole.
	This is the first section where we work in $\Mcomp_{\mathfrak{t}}$ and not $\Mcomp$.
	
	\begin{prop}\label{an:prop:F_interior}
		Let $U_c=\Mcomp_{\mathfrak{t}}\cap\{r<cM\}$.
		Fix  $f\in\A{b}^{q_\K,q-1}(U_{2})$ with $\supp f\subset U_{2}$ and $q\in\R$, $q_\K\in(q-1,q)$.
		Then, there exists $\phi\in\A{b}^{(q_\K,q-1)-}(U_{2})$ with $\supp\phi\subset U_{2}$ such that
		\begin{equation}\label{an:eq:through_horizon}
			\Box_{\ern}\phi-f\in\A{b}^{(q_\K,q)-}(U_{2}).
		\end{equation}
	\end{prop}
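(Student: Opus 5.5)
We solve the near horizon model problem $\Normal{\F}\phi=f$ directly on $U_2$, now including the interior region $\{M-r<\mathfrak{t}t_*^{-1}\}$, and then use \cref{map:lemma:normal_ops} to control the difference $\Box_{\ern}-\Normal{\F}$. In the similarity coordinates $\mathfrak{r}=(r-M)/v$, $s=\log v$ of \cref{map:eq:g_near_similarity_coord}, the operator $\Box_{g_{\mathrm{near}}}$ is translation invariant in $s$. Write $f=v^{-(q-1)}F(s,\mathfrak{r},\omega)$, where $F$ is conormal in $\mathfrak{r}$ with $\mathfrak{r}^{q_\K-q+1}$ behaviour as $\mathfrak{r}\to\infty$ (towards $\K$) and is smooth across $\mathfrak{r}=0$. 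The weight at $\F$ then corresponds to $e^{-(q-1)s}$ behaviour. Note that $f$ already decays at rate $q-1$ at $\F$, while $\Normal{\F}$ loses no decay there. We therefore look for $\phi$ at the same rate $q-1$ at $\F$; the gain comes from the fact that $\Box_{\ern}-\Normal{\F}$ produces an extra $\rho_\F$, which is precisely \cref{map:eq:Box-normal}.

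\textbf{Step 1: solve the model problem backwards in $s$ (equivalently, forward with decaying weights).} Rather than invert a Mellin-transformed ODE family, which would require ruling out indicial roots, I would solve $\Box_{g_{\mathrm{near}}}\phi=f$ with trivial data at $t_*=\infty$, using weighted $T$- and $(r-M)Y$-energy currents as in \cref{curr:lem:ext}(a),(c) and \cref{curr:lem:int}. These currents are also valid for $g_{\mathrm{near}}$, since it is extremal-type with causal Killing field $\partial_v$. Choose the weight $N$ with $N$ corresponding to decay $q-1-\epsilon$. First smooth $f$ near $\K$ as in Step 1 of \cref{an:prop:i+_conormal}, solving $\Normal{\K}$-type problems parametrically via \cref{an:cor:K}, so that it suffices to treat $f$ supported near $\F$. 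Alternatively, use \cref{map:rem:isometry}: $\Normal{\F}$ is conjugate to $\Box_{\eta,y}$ with $y=(r-M)^{-1}\omega$. Under this conjugation, the exterior part of $U_2$ corresponds to a neighbourhood of $\scri\cup i^+$ in $\MMink$, and regularity across $\hor$ corresponds to conformal smoothness at $\scri$. On the exterior, this is exactly \cref{an:prop:i+_smooth_conormal} / \cref{an:cor:i_+_smoth_conormal}. The new content is the extension past $\rho_\hor=0$.

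\textbf{Step 2: extension across $\hor$.} Following the \emph{Integration} step in the proof of \cref{an:prop:i+_smooth_conormal}, I extend $f$ smoothly to $r<M$ within $U_2\subset\Mcomp_{\mathfrak{t}}$. On the interior, $\phi$ is obtained by solving the transport-dominated equation in $(v,r)$ coordinates, which are regular across $\hor$. The interior estimate \cref{curr:lem:int}, combined with the Hardy inequality \cref{curr:lemma:hardy}, gives $\Hb$ bounds on $\McompIn$ at rate $(q-1)-$ at $\F$. Commuting with $\partial_v$, $v\partial_v+(r-M)\partial_r$ (the scaling symmetry of $g_{\mathrm{near}}$) and $\Omega_i$, which commute exactly with $\Normal{\F}$, and with $Y=-\partial_r$ (whose commutator has a good sign and yields redshift-type damping in the extremal setting), gives infinite regularity. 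The improved Sobolev embedding \cref{not:eq:sobolev2} then yields $\phi\in\A{b}^{(q_\K,q-1)-}$, smooth across $\hor$.

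\textbf{Step 3: cutoff and error.} Multiply $\phi$ by a cutoff in $r$ supported in $U_2$. The commutator error is supported away from $\F$, so it lies near $\K$ with arbitrary decay at $\F$, and by the $\K$ weight it sits in $\A{b}^{q_\K,\infty}$. Finally, \cref{map:lemma:normal_ops} gives $\Box_{\ern}\phi-f=(\Box_{\ern}-\Normal{\F})\phi\in\rho_\F\A{b}^{(q_\K,q-1)-}$, which is \cref{an:eq:through_horizon}.

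The main obstacle is Step 1/2 near $\hor$ with $Y$-commutations. In the extremal case the redshift degenerates, and a rate-$(q-1)$ weight permits only a limited number of $Y$-commutations before the enhanced blueshift noted in \cref{lin:rem:blueshift} becomes an issue. If this fails, I would instead exploit the exact conformal map $\Phi$ to transfer the construction to Minkowski space, where conformal smoothness is obtained via the $\mathcal{T}$ commutation of \cref{an:eq:conformal_commutation} and the integration \cref{map:lemma:T-1}. In that approach, the interior $r<M$ corresponds to extending past $\scri$, which is handled by the smooth extension across $\{\rho_\scri=0\}$ already used there.
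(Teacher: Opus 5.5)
Your main route has a genuine gap, and it sits exactly at what the proposition is about. The target space $\A{b}^{(q_\K,q-1)-}(U_2)$ lives on $\Mcomp_{\mathfrak{t}}$, where $\hor$ is not a boundary face. It therefore requires \emph{all} derivatives $(v^{-1}\partial_r)^k\phi$ across $\hor$ to decay at rate $q-1$.

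A backward solution does not have this property. By a backward solution I mean trivial data at $t_*=\infty$, i.e.\ data prescribed on $\hor$ or on $\mathcal{B}_{\mathfrak{t}}$. The commutator with $\tilde Y=v^{-1}\partial_r$ produces the extra term of \cref{an:rem:log_redshift}, which is a log-redshift only for \emph{forward} evolution. Backwards it is the enhanced blueshift of \cref{lin:rem:blueshift}: each commutation costs one order of decay, so you control only $\sim q$ transversal derivatives. This is the restriction on $k$ in \cref{lin:prop:main} and \cref{scat:lemma:interior}.

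The loss is not a technical artefact. Take the similarity variable $\mathfrak{r}=(r-M)v$. The rate-$p$ self-similar kernel of $\Box_{g_{\mathrm{near}}}$ is spanned by $v^{-p}$ and $v^{-p}\big(\mathfrak{r}/(\mathfrak{r}+2)\big)^p$, and the second behaves like $(r-M)^p$ at $\hor$. A backward problem fixes the coefficient of that second solution through the behaviour towards $\K$, not through regularity at $\hor$. Generically it is nonzero, so $\phi$ is only finitely differentiable at $\hor$. Compare \cref{an:sec:sharp}, and the loss of conformal smoothness for the backward problem noted after \cref{an:prop:i+_conormal}, transported by the Couch--Torrence map.

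Nor is the backward problem ``equivalent to forward with decaying weights''. The currents of \cref{curr:lem:ext,curr:lem:int} give forward estimates only with $N<0$, i.e.\ for growing solutions.

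Your alternative of quoting \cref{an:cor:i_+_smoth_conormal} does handle $r>M$. The remaining content is then the extension into the strip $0<M-r<\mathfrak{t}/t_*$, and neither of your suggestions supplies it.
\begin{itemize}
\item The estimate of \cref{curr:lem:int} is a backward estimate from data on $\mathcal{B}_{\mathfrak{t}}$. Its solution does not continue your exterior solution smoothly across $\hor$, and it has the same finite $\Vc$-regularity. A forward solve of the original, decaying problem from $\hor$ only admits growing weights.
\item The ``smooth extension across $\{\rho_\scri=0\}$'' in \cref{an:prop:i+_smooth_conormal} is only an auxiliary device to start the $\mathcal{T}$-integration in a region that is \emph{not} part of the spacetime. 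The strip, by contrast, lies in $U_2$, where $\Box_{\ern}\phi-f\in\A{b}^{(q_\K,q)-}$ must hold. A mere smooth extension of $\phi$ leaves an error of size $\rho_\F^{q-1}$ there, not $\rho_\F^{q}$.
\end{itemize}

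What is missing is to run the $\mathcal{T}$-mechanism directly for $g_{\mathrm{near}}$ on all of $U_2$, as the paper does.
\begin{enumerate}
\item With $\zeta=r-M$, commute with $\mathcal{T}_{\mathrm{near}}=v^2\partial_v-2(\zeta v+1)\partial_\zeta$, using $[\mathcal{T}_{\mathrm{near}},v^2(v+2\zeta^{-1})^2\Box_{g_{\mathrm{near}}}]=0$, until the forcing $F$ \emph{grows}.
\item Solve $\Box_{g_{\mathrm{near}}}\Phi=F$ \emph{forwards} from $t_*=1$ with zero data and a Dirichlet condition at $r=cM$. For a forward problem, smoothness across $\hor$ is automatic.
\item Prove growing-weight bounds ($N<0$) in $r>M$. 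Use the $T$ and $-v^{N-2}\partial_r$ currents together with the commutators $(r-M)\partial_r$ (damping), scaling, rotations and $\mathcal{T}_{\mathrm{near}}$.
\item Prove the same bounds in the strip, using $\abs{t}^{\delta}t_*^{N-\delta}T\cdot\T$, $-v^{-(N+2)}\partial_r\cdot\T$ and \cref{curr:lemma:hardy}.
\item Only then extend $\Phi$ smoothly beyond $\mathcal{B}_{\mathfrak{t}}$, and recover $\phi$ by $\mathcal{T}^{-1}$-integration (\cref{map:lemma:T-1}). The integral curves cross $\hor$ transversally, so this gives the decay and the smoothness across $\hor$ at the same time.
\end{enumerate}
Your Step 3 is then fine as written.
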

	
	\begin{proof}
		The proof is essentially the same as for \cref{an:prop:i+_smooth_conormal}, but we extend across the horizon.
		We prove \cref{an:eq:through_horizon} with $\Box_{g_{\mathrm{near}}}$, and substituting $\ern$ will yield the same error term using \cref{map:lemma:normal_ops}.
		
		\emph{Smoothing:} As in \cref{an:prop:i+_conormal}, it suffices to consider $f\in\A{b}^{q,q+2}(U_2)$.
		
		\emph{Conformal commuted:}
		Let $\zeta=r-M$.
		We use the vectorfield $\mathcal{T}_{\mathrm{near}}=v^2\partial_v-2(\zeta v+1)\partial_x$.
		Using \cref{an:eq:conformal_commutation}, we also get
		\begin{equation}\label{an:eq:conformal_near_commutation}
			[\mathcal{T}_{\mathrm{near}},v^2(v+2\zeta^{-1})^2\Box_{g_{\mathrm{near}}}]=0\implies \Box_{g_{\mathrm{near}}}\mathcal{T}^k\phi=f_k=\frac{\mathcal{T}^kv^2(v+2\zeta^{-1})^2f}{v^2(v+2\zeta^{-1})^2}.
		\end{equation}
		We set $q'=q-\ceil{q}\in(-5,-4]$ and $F:=f_{4+\ceil{q}}\in\A{b}^{q',q'+3}(U_2)$. 
		Let $\mathcal{B}=\{r=cM\}$ and consider the initial boundary value problem 
		\begin{equation}
			\Box_{g_\mathrm{near}} \Phi=F,\quad \Phi|_{t_*=1}=T\Phi|_{t_*=1}=0,\quad \Phi|_{\mathcal{B}}=0.
		\end{equation}
		We prove that $\Phi\in\A{b}^{q',q'-1}(U_2)$.
		
		\emph{Exterior:}
		We first work in $\{r>M\}$, and write $U_2^{\mathrm{e}}=U_2\cap\{r\geq M\}$.
		Fix $N<0$ and $\delta<0$.
		Using a $T$ energy estimate, we get from \cref{curr:lem:ext} that $J^T_{N,\delta}=t^\delta t_*^{N-\delta}T\cdot\T[\Phi]$ satisfies
		\begin{equation}
			\Div J^T_{N,\delta}+t^\delta t_*^{N-\delta}\abs{F(T\phi)} \sim t_*^{N-1-\delta}t^\delta \Big(D(\partial_r\Phi)^2+D'\Phi^2+\abs{\slashed{\nabla}\Phi}^2+(T\Phi)^2\big(\delta t_*/(Dt)+1\big)\Big)
		\end{equation}
		and $J^T_{N,\delta}\cdot \dd r|_\mathcal{B}=0$  vanishes by the Dirichlet condition $\Phi|_{\mathcal{B}}=0$.
		We also use the current $J_N^2[\phi]=-v^{N-2}\partial_r\cdot \T[\phi]$, which satisfies $J^2_N[\phi]\cdot \dd r|_{\mathcal{B}}>0$ and
		\begin{equation}
			\Div J^2_N[\phi]+v^{N-2}\abs{F\partial_r\Phi}\sim v^{N-3}(2-N+D')(\partial_r \Phi)^2.
		\end{equation}
		An application of the divergence theorem yields
		\begin{equation}
			\int_{\mathcal{D}_s}t^\delta t_*^{N-1-\delta}\Big( \rho_\F^2(\partial_r\Phi)^2+(T\Phi)^2t_*/(Dt)+\abs{\slashed{\nabla}\Phi}^2+D'\Phi^2
			\Big)\lesssim_{\delta}\int_{\mathcal{D}_s} t^\delta t_*^{N-\delta}F^2(Dt+1).
		\end{equation}
		The right hand side is bounded for $N<2q'-1$ and $\delta<0$, implying that 
		\begin{equation}\label{an:eq:across_proof1}
			(\rho_\F\partial_r,\rho_\F^{-1}\rho_\hor^{-1/2} T,\slashed{\nabla},1)\Phi\in t^{0+}\Hb^{0;q'-1,q'-1,0}(U_2^{\mathrm{e}}).
		\end{equation}
		
		\emph{Commutation:}
		We still restrict to  $\{r>M\}$.
		Similar to \cref{an:eq:rdr_commutation}, we have
		\begin{equation}
			[(r-M)\partial_r,\Box_{g_\mathrm{near}}]=-\frac{1}{r-M}\partial_v (r-M)\partial_r\implies (\Box_{g_\mathrm{near}}-\frac{1}{r-M}T)(r-M)\partial_r\Phi=(r-M)\partial_r F.
		\end{equation}
		The extra damping term already implies that $(r-M)\partial_r\Phi$ is in the same function space as $\Phi$ from \cref{an:eq:across_proof1}.
		Noting that scaling ($vT-(r-M)\partial_r$) and rotations both commute with $\Box_{g_\mathrm{near}}$, by induction, we get for all $k\in\N$
		\begin{equation}\label{an:eq:across_proof2}
			\{(r-M)\partial_r,vT,x_i\partial_j-x_j\partial_i,1\}^k\Phi\in t^{0+}\Hb^{0;q'-1,q'-1,0}(U_2^{\mathrm{e}}).
		\end{equation}
		Finally, we can use \cref{an:eq:conformal_near_commutation} to further obtain
		$\mathcal{T}^k\Phi\in t^{0+}\Hb^{0;q'-1-k,q'-1-k,0}(U_2^{\mathrm{e}})$.
		Using \cref{not:eq:sobolev}, we get $\Phi\in t^{0+}\A{b}^{\infty;q'-1/2,q'-1,0}(U_2^\mathrm{e})$.
		
		\emph{Flux:}
		A posteriori, we can apply a $T$ energy estimate without $t^\delta$ weight to conclude that
		\begin{equation}
			\int_{\hor}J^T_N[\{v^{-1}\partial_r,vT,x_i\partial_j-x_j\partial_i,1\}^k\Phi]\lesssim_{\alpha,F} 1,
		\end{equation}
		for $N<2q'-1$, and in particular $\Phi|_{\hor}\in\A{b}^{q'-1}(\hor)$.
		Similarly $\mathcal{T}^k\Phi|_{\hor}\in\A{b}^{q'-k-1}(\hor)$.
		
		\emph{Interior:}
		Finally, we can use a $T$ energy estimate to propagate the same bound in the interior.
		Write $\Phi(v,r,x/r)=\Phi|_{\hor}(v,x/r)\chi(v(M-r))+\bar{\Phi}$ for a cutoff function localising near $\hor$ with $\chi'$ disjoint from the artificial boundary of $\overline{\M}_\mathfrak{t}$ and $\hor$.
		It suffices to prove that $\bar{\Phi}\in\Aext{b}^{q'-1}(\McompIn)$.
		
		Let $J^T_{N,\delta}=\abs{t}^\delta t_*^{N-\delta}T\cdot\T[\Phi]$ for $\delta>0$, and note that $\dd \abs{t}=-\dd t$ is future directed causal, just like $\dd t_*^{-1}$.
		A straightforward computation yields
		\begin{equation}
			\Div J^T_{N,\delta}+t^\delta t_*^{N-1-\delta}F^2\gtrsim t_*^{N-1-\delta}t^\delta \Big(D(\partial_r\Phi)^2+\abs{\slashed{\nabla}\Phi}^2+(T\Phi)^2\big(\delta t_*/(Dt)+1\big)\Big).
		\end{equation}
		We also have the deformation tensor $\pi^{-\frac{1}{v^{N+2}}\partial_r}=v^{-2}(D'v-N+2)\dd v^2$.
		In the interior, we can bound $\abs{D'v}\leq\mathfrak{t}\geq 2-N$, and so we obtain for $J^2_N=-\frac{1}{v^{N+2}}\partial_r\cdot\T[\Phi]$ with $N<\min(-1,2q'-1)$
		\begin{equation}
			\Div J^2_N\sim v^{-(N+3)}(\partial_r\Phi)^2.
		\end{equation}
		In particular, we obtain with a Hardy inequality (\cref{curr:lemma:hardy}) that $\Phi\in\Hb^{0;q'-1/2,\delta}(\McompIn)$.
		
		The same commutations as in the exterior region apply and we conclude that $\Phi\in\A{b}^{q',q'-1}(U_2)$.
		
		\emph{Integration:}
		Extending $\Phi$ (correspondingly $F$) to the region $t_*(M-r)\in(\mathfrak{t},2\mathfrak{t})$ smoothly and supported in $t_*(M-r)<3\mathfrak{t}/2$, we can obtain $\phi$ by integrating along the integral curves of $\mathcal{T}$ via \cref{map:lemma:T-1}.
	\end{proof}
	
	\begin{remark}[Log redshift]\label{an:rem:log_redshift}
		We also observe that for $\tilde{Y}=v^{-1}\partial_r$ the following commutation holds\footnote{it is possible to exchange this part by commuting with $\mathcal{T}$}
		\begin{equation}\label{an:eq:across_proof3}
			[\tilde{Y},\Box_{g_\mathrm{near}}]=2\big(\partial_r (r-M)+v^{-1}\partial_r\big)\tilde{Y}\implies \big(\Box_{g_\mathrm{near}}+2\tilde{Y}\big)\tilde{Y}\Phi=\tilde{Y}F-\partial_r(r-M)\tilde{Y}\Phi.
		\end{equation}
		Let us comment on the structure of \cref{an:eq:across_proof3}.
		The appearance of the extra $2k\tilde{Y}$ linear term for $\tilde{Y}^k\Phi$ is due to an \emph{enhanced log-redshift} that the commuted quantities satisfy.
		Indeed, note that the classical redshift on sub-extremal black holes is due to the leading $\partial_{t_*}\partial_r+\kappa\partial_r$ part of $\Box_g$.
		This leading part shows that high frequency part of the solution decays at $e^{-t_*\kappa}$ rate near the horizon.
		In our case, $\tilde{Y}^k\Phi$ satisfies an equation with leading part $\partial_{t_*}\partial_r+\frac{2k}{t_*}\partial_r$.
		Using $\log t_*$ as a coordinates, this is of the same form as in the subextremal case.
	\end{remark}
	
	\subsection{Nonlinear mapping}\label{sec:an:nonlinear}
	In this section, we study the nonlinear part of \cref{an:thm:ansatz}, and prove that under the assumed decay rate it is completely perturbative.
	
	\begin{lemma}\label{an:lemma:nonlin}
		For $\phi\in\Aext{b}^{a_\scri,a_+,a_\K,a_\F}(\Mcomp)$,
		\begin{nalign}\label{an:eq:nonlin_mapping}
			\partial_r|_{t_*}\phi&\in \Aext{b}^{a_\scri+1,a_++1,a_\K,a_\F-1}(\Mcomp),
			&T\phi&\in \Aext{b}^{a_\scri,a_++1,a_\K+1,a_\F+1}(\Mcomp),\\
			\phi^3&\in \Aext{b}^{3a_\scri,3a_+,3a_\K,3a_\F}(\Mcomp), &\ern(\dd\phi,\dd\phi)&\in \Aext{b}^{2a_\scri+1,2a_++2,2a_\F+1,2a_\F}(\Mcomp)
		\end{nalign}
	\end{lemma}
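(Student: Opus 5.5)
The plan is to reduce everything to two facts. First, each of $\partial_r|_{t_*}$ and $T$ lies in $\rho^{\vec w}\Diffb(\Mcomp\setminus\hor)$ for an explicit weight vector $\vec w$. Second, conormal spaces form a filtered algebra:
\[
\Aext{b}^{\vec a}\cdot\Aext{b}^{\vec b}\subset\Aext{b}^{\vec a+\vec b},\qquad \Diffb\,\Aext{b}^{\vec a}\subset\Aext{b}^{\vec a}.
\]
Both parts of the algebra property follow from the Leibniz rule and the fact that $\Vb$ preserves $\rho^{\vec a}L^\infty$. For the latter, note that $V\rho_\bullet\in\rho_\bullet C^\infty$ for $V\in\Vb$. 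Since $\hor$ is artificial for $\Aext{b}$, smoothness across $\hor$ is ordinary smoothness and does not interact with the weights.

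\emph{Step 1 (derivatives).} \Cref{not:rem:examples} gives $\partial_v|_r\in\rho_\F\rho_\K\rho_+\Diffb$. In the $(t_*,r)$ chart, $T=\partial_{t_*}|_r=\partial_v|_r$. This yields the $T\phi$ claim directly, with weights $(a_\scri,a_++1,a_\K+1,a_\F+1)$. The remark also gives $\partial_r|_{t_*}\in\rho_\hor^{-1}\rho_\F^{-1}\rho_+\rho_\scri\Diffb$. Away from $\hor$ we instead write $\partial_r|_{t_*}=(r-M)^{-1}\cdot(r-M)\partial_r|_{t_*}$, and check $(r-M)^{-1}$ corner by corner in the local coordinates:
\begin{itemize}
\item near $\K\cap\F$ it is $\rho_\F^{-1}$;
\item near $\F\cap\hor$ it is $v\rho_\hor^{-1}$, which is harmless since $\hor$ is artificial and smoothness there only requires $\partial_r$-regularity, coming from the $\Aext{b}$ definition;
\item near $\K$, $i^+$ and $\scri$, $\partial_r|_{t_*}$ is $r^{-1}\cdot r\partial_r$, which gives one power of $\rho_+\rho_\scri$ (since $r^{-1}\sim\rho_+\rho_\scri$ near $\scri\cap i^+$ after multiplying by the $t_*/r$ factor) and no gain at $\K$.
\end{itemize}
This produces $(a_\scri+1,a_++1,a_\K,a_\F-1)$.

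\emph{Step 2 (products).} The estimate $\phi^3\in\Aext{b}^{3\vec a}$ is immediate from the algebra property.

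For $\ern(\dd\phi,\dd\phi)$, expand $\ern^{-1}$ using \cref{ginv_t*}:
\[
\ern(\dd\phi,\dd\phi)=-D(\partial_r\phi)^2+2(h'D+1)\partial_r\phi\,T\phi-h'(Dh'+2)(T\phi)^2+r^{-2}\abs{\slashed\nabla\phi}^2.
\]
Each coefficient is a conormal function whose weights must be tracked. For example, $D\in\Aext{b}^{0,0,0,2}$ near $\F$ gives $D(\partial_r\phi)^2$ weights $2a_\F$ at $\F$. Near $\scri$ one uses the null structure: in $(u,r)$ coordinates, $\ern^{-1}=D\partial_r^2-2\partial_r\partial_u+r^{-2}\slashed g^{-1}$, and $\partial_u$ carries no gain at $\scri$ while $\partial_r|_u$ carries $\rho_\scri\rho_+$. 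Each term then has at least one good derivative, which gives $2a_\scri+1$, and $r^{-2}$ gives the angular term extra decay.

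At $i^+$ and $\K$ one simply combines the weights from Step 1. The $T\phi\,\partial_r\phi$ term gives $2a_++2$ at $i^+$. At $\K$, the weakest term $(\partial_r\phi)^2$ gives $2a_\K$, and the stated index should then be checked against this.

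\emph{Main obstacle.} The main difficulty is the bookkeeping at the corners. The chosen boundary defining functions differ by smooth positive factors, and the $t_*$ versus $u$ coordinate changes near $\scri$ must not lose the null-form gain. The step I expect to need most care is $\scri$: there I would verify explicitly that $\partial_r|_{t_*}=\partial_r|_u-(r\log^2 r)^{-1}\partial_{t_*}$ keeps the $\rho_\scri\rho_+$ gain, so that the cross term has weight $2a_\scri+1$.
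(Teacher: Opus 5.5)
Your approach is the same as the paper's, whose proof is only the remark that the lemma is ``purely computational''. You identify $T$ and $\partial_r|_{t_*}$ as weighted b-operators, use the algebra property, and read off the weights of the coefficients of $\ern^{-1}$ in $(t_*,r)$ coordinates. The bookkeeping at $\scri$, $i^+$ and $\F$ is correct; the sign in $\partial_r|_{t_*}=\partial_r|_u+(r\log^2r)^{-1}\partial_{t_*}$ is a plus, but that does not affect any weight. Two points still need fixing.

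First, the $\K$ index should not be left ``to be checked''. Your computation shows that $\ern(\dd\phi,\dd\phi)$ has weight $2a_\K$ at $\K$, and this is sharp, so the printed entry $2a_\F+1$ is a misprint that cannot be proved. Take $\phi=\chi(r)t_*^{-a_\K}$ with $\chi\in C^\infty_c((3M,4M))$. Then $\phi\in\Aext{b}^{a_\scri,a_+,a_\K,a_\F}(\Mcomp)$ for every $a_\scri,a_+,a_\F$, while $\ern(\dd\phi,\dd\phi)=\pm D\chi'^2t_*^{-2a_\K}+O(t_*^{-2a_\K-1})$. The correct slot is $2a_\K$, and that is all \cref{an:lemma:nonlin_pert} needs, since the cross term has weight $q+q'>q'+1$ at $\K$.

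Second, near $\F\cap\hor$ the factorization $\partial_r|_{t_*}=(r-M)^{-1}\cdot(r-M)\partial_r|_{t_*}$ is the wrong one. It produces a genuine factor $\rho_\hor^{-1}$, which is not harmless. Appealing to ``$\partial_r$-regularity'' without a weight does not give the $\F$ index. Instead:
\begin{itemize}
\item use $\partial_r|_{t_*}=\partial_r|_v+T$ for $r<10M$;
\item in the coordinates $\rho_\F=v^{-1}$, $\rho_\hor=(r-M)v$ of \cref{not:rem:examples}, write $\partial_r|_v=\rho_\F^{-1}\partial_{\rho_\hor}$;
\item note that $\partial_{\rho_\hor}$ is an admissible derivative for $\Aext{b}$, because $\hor$ is artificial.
\end{itemize}
Since the $\Aext{b}$ regularity across $\hor$ is with respect to $v^{-1}\partial_r$ rather than $\partial_r$, this is exactly where the loss $a_\F\mapsto a_\F-1$ comes from.
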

	\begin{proof}
		This is purely computational.
	\end{proof}
	
	\begin{lemma}[Linearised operator of $\Box_{\ern}+\mathcal{N}$]\label{an:lemma:nonlin_pert}
		Let $q>3/2$ and $q_\F\geq1$. 
		Let $\bar{\phi}\in\Aext{b}^{1,q,q,q_\F}(\multiComp)$.
		For $q'>q$, $q_\F'>q_\F$ and $\tilde{\phi}\in\Aext{b}^{1,q',q',q_\F'}(\multiComp)$, we have
		\begin{equation}\label{an:eq:nonlinear_gain}
			\Box_{\ern}(\bar{\phi}+\tilde{\phi})-\mathcal{N}[\bar\phi+\tilde\phi]=\Box_{\ern}\bar{\phi}-\mathcal{N}[\bar\phi]+\Box_{\ern}\tilde{\phi}+\Aext{b}^{3,q'+3,q'+1,q'+1}(\multiComp).
		\end{equation}
	\end{lemma}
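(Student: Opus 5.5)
The identity is purely algebraic once we expand the nonlinearity, so I would write
\begin{equation*}
\mathcal{N}[\bar\phi+\tilde\phi]-\mathcal{N}[\bar\phi]=3\bar\phi^2\tilde\phi+3\bar\phi\tilde\phi^2+\tilde\phi^3+2\ern(\dd\bar\phi,\dd\tilde\phi)+\ern(\dd\tilde\phi,\dd\tilde\phi).
\end{equation*}
Linearity of $\Box_{\ern}$ then reduces \cref{an:eq:nonlinear_gain} to showing that each term on the right lies in $\Aext{b}^{3,q'+3,q'+1,q'+1}(\multiComp)$. Since the $\Aext{b}$ spaces are algebras whose weights add under multiplication, \cref{an:lemma:nonlin} gives everything needed. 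In the notation of that lemma, $\bar\phi$ has weights $(1,q,q,q_\F)$ and $\tilde\phi$ has weights $(1,q',q',q_\F')$.

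\emph{Cubic terms.} The worst cubic term is $\bar\phi^2\tilde\phi$, with weights $(3,2q+q',2q+q',2q_\F+q_\F')$. Using $q>3/2$ gives $2q>3>1$ at $i^+$ and $\K$, and using $q_\F\ge1$ gives $2q_\F\ge2$ at $\F$. The remaining cubic terms only involve more factors of $\tilde\phi$, which decays faster, so they satisfy the same bounds.

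\emph{Quadratic derivative terms.} I would decompose $\ern^{-1}$ as in \cref{ginv_t*} into $\partial_r|_{t_*}$, $T$ and angular pieces, and use the weights from \cref{an:lemma:nonlin}. There, $\partial_r|_{t_*}$ costs $(+1,+1,0,-1)$, $T$ costs $(0,+1,+1,+1)$, and angular derivatives come with $r^{-2}$.
\begin{itemize}
\item Near $\scri$ and $i^+$, the null structure $-2\partial_u\partial_r+\dots$ from \cref{not:eq:ern_ru} gives the weights $(1+1+1,\ q+q'+2)$. The $\partial_r\phi\,\partial_r\phi$ part carries the coefficient $D$ and gains $(2,2)$, so it is also fine. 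Since $q>1$, the $i^+$ weight $q+q'+2$ exceeds $q'+3$.
\item At $\K$, the weight is $q+q'$, which is at least $q'+1$ because $q\ge1$.
\item Near $\F$ I would use the $(v,r)$ form \cref{not:eq:ERN_in_rv}: $\ern^{-1}=D\partial_r^2+2\partial_r\partial_v+r^{-2}\slashed g$. The cross term $\partial_v\bar\phi\,\partial_r\tilde\phi$ has weight $(q_\F+1)+(q_\F'-1)=q_\F+q_\F'$. The $D$ term carries $D\sim\rho_\F^{2}$ times a loss of $\rho_\F^{-2}$, so it has weight at least $q_\F+q_\F'$ as well. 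The angular term is harmless.
\end{itemize}

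\emph{Main obstacle: the $\F$ weight.} The target weight at $\F$ is $q'+1$, so I need $q_\F+q_\F'\ge q'+1$. The statement as written mixes $q_\F'$ with $q'$ here, which I read as a bookkeeping convention. With $q_\F\ge1$ this works if $q_\F'\ge q'$; I would read the hypothesis as the $\F$ weight of $\tilde\phi$ being comparable to $q'$, as in the application where $q_\F=q$ and $q_\F'=q'$. Otherwise, I would replace the target $\F$ weight by $q_\F'+1$, which follows directly from $q_\F\ge1$ and is all that is used in the iteration.

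The same care is needed for the $\partial_r\bar\phi\,\partial_r\tilde\phi$ term at $\F$. Its smoothness across $\hor$ uses $\partial_r$-regularity, which is built into $\Aext{b}$ through the artificial boundary $\hor$. Finally, I would check the multi-black-hole setting: $g_\m$ coincides with $\ern$ in each $U_z$ and with a perturbation of Minkowski elsewhere, so the same weight count applies locally, and a partition of unity finishes the argument.
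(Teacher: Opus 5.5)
Your proposal is correct and is the same direct computation the paper intends: expand $\mathcal{N}[\bar\phi+\tilde\phi]-\mathcal{N}[\bar\phi]$, use linearity of $\Box_{\ern}$, and read off the weights of each term from the mapping properties in \cref{an:lemma:nonlin}. (The paper's proof simply cites \cref{an:eq:nonlin_mapping,map:lemma:normal_ops}.) Your remark about the $\F$ index is also right: the computation only gives $\F$-weight $q_\F+q_\F'\geq q_\F'+1$. So the stated $q'+1$ at $\F$ holds only if, e.g., $q_\F'\geq q'$, and should otherwise be read as $q_\F'+1$. That weaker version is exactly what the iteration in \cref{an:sec:iteration} uses.
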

	\begin{proof}
		This is a direct computation via \cref{an:eq:nonlin_mapping,map:lemma:normal_ops}.
	\end{proof}
	
	\begin{remark}\label{an:rem:optimality}
		We note that in \cref{an:eq:nonlinear_gain} 1 extra order of decay is gained compared to \cref{map:eq:Box}.
		We could take $q,q'$ smaller in \cref{an:lemma:nonlin_pert}, provided that we relax the gain to some $\epsilon>0$.
		Similarly, we can allow for a larger range of non-linearities as long as $\epsilon>0$.
		In this case, the iteration scheme in the proof of \cref{an:thm:ansatz} would need to take this into account and make the proof less clear.
		As the point is not to optimise  the decay rates $q$, we pursued no such improvements.
		See \cite[Definition 5.4]{kadar_scattering_2025}, for a similar classification of allowed decay and nonlinearities.
	\end{remark}
	
	\subsection{Iteration scheme}\label{an:sec:iteration}
	We are ready to put together the invertibility statements above:
	\begin{proof}[Proof of \cref{an:thm:ansatz} and \cref{an:cor:Aretakis}]
		We iteratively solve error terms at the faces $\F,\K,i^+$.
		
		\emph{Step 0)}
		To this end, let us already write $r\phi_0(u,r,\omega)=\psi_\scri(u,\omega)\chi(u/r)$ where $\chi$ is a cutoff function localising to $u/r<1$.
		Then, we already have that $\Box_{\ern}\phi_0\in\A{b}^{3,q+2,\infty,\infty,\infty}(\Mcomp)$ and $r\phi_0|_\scri=\psi_\scri$. 
		For the rest of the proof, we will take 0 data along $\scri$.
		
		\emph{Step 1)}
		Assume that we already found $\phi\in\A{b}^{1,q,q,q}(\Mcomp_\mathfrak{t})$ such that 
		\begin{equation}\label{eq:an:proof1}
			f=\Box_{\ern}\phi-\mathcal{N}[\phi]\in\Aext{b}^{3,p_++2,p_\K,p_\F-1}(\Mcomp),\qquad r\phi|_{\scri}=\psi_\scri,
		\end{equation}
		with $p_\K\geq p_{+}$, $p_\K\geq p_\F$, and $p_\F,p_+\geq q$.
		Let $\chi_{\ip}, \chi_\F$ localise to $\{r>10M\}$ and $\{r<2M\}$ respectively.
		Let $f_{\bullet}=\chi_\bullet f$ for $\bullet\in\{\ip,F\}$.
		From \cref{an:cor:i_+_conormal,an:prop:F_interior}, we know that there exists $\phi'\in\Aext{b}^{1,p_+,\min(p_+,p_\F),p_\F}(\Mcomp)$ such that 
		\begin{equation}
			\Box_{\ern}\phi'-f_\F-f_{\ip} \in \Aext{b}^{3,p_++3,\min(p_+,p_\F),p_\F}(\Mcomp).
		\end{equation}
		Using \cref{an:lemma:nonlin,an:lemma:nonlin_pert} it already follows that 
		\begin{equation}
			\mathcal{N}[\phi+\phi']-\mathcal{N}[\phi]\in \Aext{b}^{3,p_++3,\min(p_+,p_\F),p_\F}(\Mcomp),\quad r\phi'|_\scri=0.
		\end{equation}
		
		\emph{Step 2)}
		Assume that we already found $\phi$ such that  \cref{eq:an:proof1} holds with $p_\K\leq p_+$ and $p_\K\leq p_\F$.
		Using \cref{an:cor:K} it already follows that there exists $\phi'\in\Aext{b}^{\infty,p_\K+1,p_\K,p_\K}(\Mcomp)$ such that
		\begin{equation}
			\Box_{\ern}\phi'-f\in\Aext{b}^{3,\min(p_\K+3,p_++2),p_\K+1,(p_\K,p_\F-1)}(\Mcomp).
		\end{equation}
		Using \cref{an:lemma:nonlin,an:lemma:nonlin_pert} it already follows that 
		\begin{equation}\label{an:eq:iteration2}
			\mathcal{N}[\phi+\phi']-\mathcal{N}[\phi]\in\Aext{b}^{3,\min(p_\K+3,p_++2),p_\K+1,(p_\K,p_\F-1)}(\Mcomp).
		\end{equation}
		
		\emph{Step 3)}
		Iterating the previous steps clearly yield $\phi_N$ for any $N$ such that \cref{eq:an:proof1} holds with $p_+=p_\K=p_\F=N$.
		Here, we crucially use the fact that as indicated in \cref{an:eq:iteration2}, there is a gap of size 1 between $p_\K+3,p_++2$ at the case of equality between the variables $p_\K,p_+$.
		Borell summation proves the result.

		\emph{Proof of \cref{an:cor:Aretakis}}
		We simply note that at the first time we invert the model operator at $F$, without loss of generality, we can add an arbitrary amount of $\phi_1$, since $\Box_{\ern}\phi_1\in\A{b}^{\infty,\infty,2,2}(\Mcomp_{\mathfrak{t}})$.
	\end{proof}

	\subsection{Sharpness of ansatz and matching conditions}\label{an:sec:sharp}
	In this section, we revisit the creation of the ansatz close to the horizon and describe in some sense how sharp \cref{an:thm:ansatz} is.
	Let us focus for now on the linear wave equation $\Box_{\ern}\phi=0$.
	
	\begin{proof}[Proof of \cref{intro:cor:non-unique}]
		Clearly, we can write $\phi_p=\bar\phi_p+\tilde{\phi}_p$, where, by assumption we have
		\begin{equation}
			\Box_{\ern}\tilde{\phi}_p\in\Aext{b}^{\infty,\infty,p,p+1}(\Mcomp).
		\end{equation}
		We may apply \cref{an:thm:ansatz} to conclude that that there exists $\tilde{\phi}_p\in\Aext{b}^{1,p,p,p+1}(\Mcomp)$ such that
		\begin{equation}
			\Box_{\ern}(\bar\phi_p+\tilde{\phi}_p)=\Aext{b}^{3,\infty,\infty,\infty}(\Mcomp).
		\end{equation}
		The result follows from \cref{an:thm:ansatz}.
	\end{proof}
	
	Let us show that the set of $\bar\phi_p$ as required in \cref{intro:cor:non-unique} is not empty.
	For $p=1$, this is already given in the explicit formula \cref{an:eq:phi_p=1}.
	We focus on spherically symmetric solutions, but it is not difficult to show similar results outside symmetry.
	Using the ansatz $\bar\phi=v^{-p}\bar\phi(\mathfrak{r})$, where $\mathfrak{r}=(r-M)/v$, we get via \cref{map:eq:g_near_similarity_coord} that
	\begin{equation}\label{eq:an:non-unique}
		\Box_{g_{\mathrm{near}}}\bar\phi=v^{-p}\big(\partial_\mathfrak{r}(\mathfrak{r}^2+2\mathfrak{r}-2p)\big)\partial_{\mathfrak{r}}\bar\phi_p=0.
	\end{equation}
	The explicit solution that decays as $\mathfrak{r}\to\infty$ is given by $\partial_\mathfrak{r}\bar\phi=\mathfrak{r}^{p-1}(\mathfrak{r}+2)^{-(1+p)}$.
	This is only smooth near $\mathfrak{r}=0$, when $p\in\N$, and for this choice, it yields an admissible function in \cref{intro:cor:non-unique}.
	
	The argument in the proof of \cref{intro:cor:non-unique} also applies in the case that $\bar\phi_p$ is merely conormal near $\hor$, instead smooth.
	Therefore, taking the solution of \cref{eq:an:non-unique} for $p\notin\N$, we can construct solutions $\phi$ that settle down to a prescribed radiation field $\psi_\scri\in\A{b}^{q-1}(\scri)$, satisfy \cref{an:eq:ansatz}, but are only of conormal regularity $\phi\in\Aext{b}^{1,q,q,q}(\Mcomp)+\A{b}^{1,p,p,p,p}(\Mcomp)$ and $\phi\notin C^{p+1}(\M)$ near the horizon.

	\subsection{Subextremal black holes}\label{an:sec:subextremal}
	In this section, we show a version of \cref{an:thm:ansatz} to Schwarzchild black holes.
	This is significantly easier, as there is no need to make a compactification that includes the boundary $\F$.
	We emphasise once again, that such an approximate solution is not very useful in the subextremal case, as the corresponding errors cannot be removed by a simple energy estimate.
	
	We introduce separate notation for this section alone.
	\begin{definition}
		Let $\rho_\F=1$, and $\rho_\K,\rho_+,\rho_\scri$ as in \cref{not:eq:boundary_defining}.
		We define $\Mcomp_{\mathrm{s}}$ ($\mathrm{s}$ for subextremal) as a compactification of $\M_{\s}=\R_{t_*}\times\R^3_{\abs{x}>M/2}$, where $\rho_\K,\rho_+,\rho_\scri$ are extended to take value 0 and be boundary defining functions.
		The boundary $\{\abs{x}=M/2\}$ is an artificial boundary.
	\end{definition}
	Let us recall, that in line with \cref{not:def:b-ops,not:def:conormal}, we have on $\K$ the following space of functions
	\begin{equation}
		\A{b}^{q}(\K)=\{\abs{x}^{q}f\in L^\infty: \abs{x}^q\{\abs{x}\partial_x,1\}^\alpha f\in L^\infty,\, \forall \alpha\},
	\end{equation}
	where the vector fields do not degenerate at the inner boundary as that is an artificial one.
	
	We will consider $\M$ equipped with the Schwarzchild metric
	\begin{equation}
		g_{\mathrm{S}}=-(1-2M/r)\dd t_*^2+2\dd t_*\dd r+r^2\slashed{g},
	\end{equation}
	We define the zero energy operator $\widehat{\Box}_{g_\mathrm{S}}(0)=\lim_{\sigma\to0}e^{-t_*\sigma}\Box_{g_{\mathrm{S}}}e^{it_*\sigma}$.
	Let us recall from \cite[Theorem 6.1]{hafner_linear_2021} the relevant invertibility statement:
	\begin{lemma}\label{an:lemma:sub_zero}
		The operator
		\begin{equation}
			\widehat{\Box}_{g_\mathrm{S}}(0):\A{b}^l(\K)\mapsto\A{b}^{l+2}(\K)
		\end{equation}
		is invertible for $l\in(0,1)$.
	\end{lemma}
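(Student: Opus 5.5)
The plan is to follow the template of \cref{an:prop:K}, the extremal analogue: derive a priori estimates for $\widehat{\Box}_{g_\mathrm{S}}(0)$ on weighted spaces, show that the adjoint has trivial kernel, and then upgrade to conormal regularity by commuting with $\Vb(\K)$. A short computation gives
\[
\widehat{\Box}_{g_\mathrm{S}}(0)=r^{-2}\partial_r\big(r^2(1-2M/r)\partial_r\big)+r^{-2}\slashed{\Delta},
\]
since $\partial_{t_*}$ drops out at zero frequency. The mixed term $2\dd t_*\dd r$ only contributes terms containing $\partial_{t_*}$, which vanish in the limit. The operator is therefore formally self-adjoint with respect to $\mu_\K=r^2\dd r\dd\slashed{g}$. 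It is elliptic for $r>2M$, degenerates at $r=2M$ as a regular singular point of Fuchsian type with indicial roots $0,0$, and is of b-type at $r=\infty$ with $\widehat{\Box}_{g_\mathrm{S}}(0)\in\rho_+^2\Diffb^2(\K)$. The weight $l\in(0,1)$ lies strictly between the indicial roots $0$ and $1$ of $r^{-2}\partial_r r^2\partial_r$ on the $\ell=0$ mode at infinity: $r^0$ and $r^{-1}$. Higher modes have roots $\ell$ and $-\ell-1$, which lie farther away, so the weight is non-indicial for all modes uniformly.

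\textbf{Step 1: a priori estimates at infinity.} Exactly as in Step 2 of \cref{an:prop:K}, I would integrate by parts $\phi\,\widehat{\Box}_{g_\mathrm{S}}(0)\phi$ and $r^p\chi\partial_r\phi\,\widehat{\Box}_{g_\mathrm{S}}(0)\phi$. Combined with a Hardy inequality, this gives $\norm{\phi}_{\Hb^{1;l-3/2}}\lesssim\norm{f}_{\Hb^{0;l+1/2}}$ in the far region for $l\in(0,1)$. The multiplier $r^{-\alpha}$ handles $l<1/2$, and the multiplier $r^p\partial_r$ with $p=2l$ handles $l\geq1/2$. Squaring the equation then yields the second-order bound.

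\textbf{Step 2: the region near $r=2M$.} The artificial inner boundary $\{r=M/2\}$ lies inside the horizon, and $\Vb$ is nondegenerate there. Across $r=2M$ the operator is not elliptic, and this is where I expect the main difficulty. I would use the redshift vector field $Y$ of the time-dependent problem, in its stationary form: the multiplier $-\partial_r$ near $r=2M$ together with the Killing multiplier. This gives estimates for $\partial_r\phi$ up to and across $r=2M$ from the positive surface gravity, as in the redshift estimates of \cite{dafermos_new_2010}. In the interior $r\in(M/2,2M)$, $\partial_r$ is timelike for the stationary problem, so I would treat it as an evolution from $r=2M$ inward, with a Gronwall estimate closing the argument without boundary conditions at $r=M/2$. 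Equivalently, I can cite the Fredholm theory of \cite{hafner_linear_2021}, or of Vasy's framework, with the radial point estimate at $r=2M$ at threshold regularity.

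\textbf{Step 3: injectivity and regularity.} Fredholmness follows from the estimates. Injectivity on $\A{b}^l$ holds because a kernel element is smooth across the horizon by radial point propagation and decays like $r^{-l}$ at infinity. Decomposing into spherical harmonics, each mode solves a Legendre-type ODE whose solution regular at $r=2M$ grows like $r^\ell$, so the kernel element vanishes. The adjoint problem, posed on the dual space with the roles of the inner boundary reversed, is handled in the same way, giving surjectivity. Conormal regularity follows by commuting with $r\partial_r$ and $\Omega_{ij}$, using $[\widehat{\Box}_{g_\mathrm{S}}(0),\Vb]\in\rho_+^2\Diffb^2(\K)$, together with \cref{not:eq:sobolev}, as in Step 3 of \cref{an:prop:K}. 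The step I expect to be the real obstacle is Step 2, closing the estimate at the degenerate horizon with the artificial inner boundary. The first thing I would try is the stationary redshift multiplier.
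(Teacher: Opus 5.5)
Your outline is correct, and in substance it is not a different route from the paper's. The paper gives no argument for this lemma; it imports it from \cite[Theorem 6.1]{hafner_linear_2021}. There it follows from the Fredholm theory of the zero-energy operator in Vasy's framework, plus an analysis of kernel and cokernel. The ingredients are b-estimates at a non-indicial weight at $r=\infty$, a radial point estimate at $r=2M$, and hyperbolic propagation to the spacelike artificial boundary with extendible spaces there. That is exactly the structure you propose. Citing buys brevity and the slowly rotating Kerr case; reconstructing shows where the redshift and the window $l\in(0,1)$ enter.

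The step you flag as the obstacle is in fact elementary. It needs no Killing multiplier: at zero energy the substitute is the $\phi$-multiplier of \cref{an:prop:K}, which is not coercive here because $D<0$ on $(M/2,2M)$. Take $a=r(r-2M)$, $\mu_\K=r^2\dd r\,\dd\omega$ ($\slashed{\nabla}$ on the unit sphere), and $h=r^p$ with $0<p<2l$ for large $r$. Integration by parts then gives, for $\phi\in\A{b}^l(\K)$,
\[
\begin{aligned}
\int_{\K}h\,\partial_r\phi\,\widehat{\Box}_{g_\mathrm{S}}(0)\phi\,\mu_\K={}&\int_{\sphere}\int_{M/2}^{\infty}\Big(\tfrac12\big(ha'-h'a\big)(\partial_r\phi)^2+\tfrac12h'\abs{\slashed{\nabla}\phi}^2\Big)\dd r\,\dd\omega\\
&+\tfrac12h(M/2)\int_{\sphere}\Big(\abs{a}(\partial_r\phi)^2+\abs{\slashed{\nabla}\phi}^2\Big)\Big|_{r=M/2}\dd\omega .
\end{aligned}
\]
Choose $h>0$, $h'>0$ and $ha'>h'a$ on $[M/2,\infty)$. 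For example, take $h'/h=C/M$ with $C>4/3$ on $[M/2,2M]$, then decrease it to $p/r$ while keeping it below $a'/a$ on $(2M,\infty)$. With this choice:
\begin{itemize}
\item every term on the right is nonnegative;
\item the weight of $(\partial_r\phi)^2$ at $r=2M$ is $Mh(2M)>0$, which is the redshift;
\item the artificial-boundary term has a good sign, with no condition imposed there.
\end{itemize}
Cauchy--Schwarz then gives a global a priori estimate. Injectivity is immediate: $\widehat{\Box}_{g_\mathrm{S}}(0)\phi=0$ forces $\phi$ to be constant, hence zero by decay.

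Three smaller points.

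First, if you use the microlocal route instead, the radial point estimate must be applied strictly above threshold ($s>1/2$ at zero energy), not at threshold.

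Second, your $L^2$ argument closed with \cref{not:eq:sobolev} only gives $\phi\in\A{b}^{l-}(\K)$, just as \cref{an:prop:K} only gives $\A{b}^{(a_+,a_\F)-}$. To land exactly in $\A{b}^l(\K)$, add a normal-operator step at infinity. Since $\widehat{\Box}_{g_\mathrm{S}}(0)-\Delta=-2Mr^{-3}(r\partial_r)^2\in\rho_+^3\Diffb^2(\K)$, the function $\Delta(\chi\phi)$ is $\jpns{x}^{-l-2}$-conormal for a cutoff $\chi$ to large $r$. For $l\in(0,1)$ the Newton potential maps such functions to $\jpns{x}^{-l}$-conormal ones without loss.

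Third, the adjoint is posed on supported spaces with regularity below threshold at $r=2M$, so it is not literally handled the same way. It is simplest to combine closed range with a direct cokernel check. In $x=(r-M)/M$ the mode equation is Legendre's. A mode of a cokernel element vanishes on $(M/2,2M)$ by ODE uniqueness from $r<M/2$, and equals $\beta Q_\ell(x)$ on $(2M,\infty)$ by the dual decay. Since $(x^2-1)Q_\ell'(x)\to-1$ as $x\to1^+$, the mode operator applied to $\beta Q_\ell\mathbf{1}_{x>1}$ yields $-\beta\,\delta(x-1)$, so $\beta=0$. Any added $\delta^{(j)}(x-1)$ terms are ruled out by inspecting the highest derivative of $\delta$.
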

	
	Using \cref{an:lemma:sub_zero}, we can repeat the proof of \cref{an:thm:ansatz} to obtain:
	\begin{lemma}\label{an:lemma:subextremal}
		Let $\psi_\scri,q$ be as in \cref{an:thm:ansatz} and let $\mathcal{N}=\phi^3+g_{\mathrm{S}}(\dd\phi,\dd\phi)$.
		Then, there exists an ansatz $\phi\in\A{b}^{1,(q,q)-}(\Mcomp_{\mathrm{S}})$ satisfying
		\begin{equation}
			\Box_{g_{\mathrm{S}}}\phi-\mathcal{N}[\phi]\in\A{b}^{3,\infty,\infty}(\Mcomp_{\mathcal{S}}),\quad r\phi|_{\scri}=\psi_\scri.
		\end{equation}
	\end{lemma}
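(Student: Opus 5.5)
The plan is to run the iteration scheme from the proof of \cref{an:thm:ansatz} on $\Mcomp_{\mathrm{s}}$. Since there is no face $\F$, only two model operators are needed: $\Box_\eta$ near $\ip$ and $\widehat{\Box}_{g_\mathrm{S}}(0)$ near $\K$. The horizon $\{\abs{x}=M/2\}$ is artificial, so smoothness there amounts to ordinary smoothness in $(t_*,x)$. Because $g_{\mathrm{S}}$ is written in ingoing coordinates that are regular across $r=2M$, this is automatic provided the inverse in \cref{an:lemma:sub_zero} produces functions smooth up to $\abs{x}=M/2$.

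\textbf{Step 1: mapping properties.} First I would record the analogue of \cref{map:lemma:normal_ops}. The operator $\Box_{g_{\mathrm{S}}}$ maps $\A{b}^{p_\scri,p_+,p_\K}(\Mcomp_{\mathrm{s}})$ to $\A{b}^{p_\scri+1,p_++2,p_\K}(\Mcomp_{\mathrm{s}})$. Moreover, $\Box_{g_\mathrm{S}}-\Box_\eta$ gains $\rho_+\rho_\scri$ near $\scri\cap\ip$, and $\Box_{g_\mathrm{S}}-\widehat{\Box}_{g_\mathrm{S}}(0)$ gains $\rho_\K$ near $\ip\cap\K$. The second fact holds because $t_*$-derivatives carry $\rho_\K\rho_+$ weights, exactly as in \cref{not:rem:examples}. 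Both are direct computations from the explicit metric. The nonlinear lemmas \cref{an:lemma:nonlin,an:lemma:nonlin_pert} carry over verbatim, with the $\F$ index deleted. For $q>3/2$, $\mathcal{N}$ applied to $\A{b}^{1,q,q}$ therefore gains at least one order over the linear mapping.

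\textbf{Step 2: the two inversions.} Step 0 is unchanged: with $r\phi_0=\chi(u/r)\psi_\scri$, the error lies in $\A{b}^{3,q+2,\infty}$. For the $\ip$ step, I would apply \cref{an:prop:i+_conormal} with a cutoff to $\{r>10M\}$, as in \cref{an:cor:i_+_conormal}. For the $\K$ step, I would prove the analogue of \cref{an:cor:K}: given an error $f\in\A{b}^{p_++2,p}$ supported near $\K$, I would view it as $t_*^{-p}$ times a family in $\A{b}^{p_++2-p}(\K)$ with $p_+-p\in(0,1)$. Inverting $\widehat{\Box}_{g_\mathrm{S}}(0)$ parametrically in $t_*$ via \cref{an:lemma:sub_zero} gives a family in $\A{b}^{p_+-p}(\K)$. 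Conormal regularity in $t_*\partial_{t_*}$ follows because $t_*\partial_{t_*}$ commutes with the parametric inverse. Cutting off then leaves an error that is one order better at $\K$, by Step 1.

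\textbf{Step 3: iteration and summation.} I would alternate the two steps exactly as in the proof of \cref{an:thm:ansatz}, Steps 1--3, keeping $p_\K\ge p_+$ or $p_\K\le p_+$ and gaining one order each time. The nonlinear corrections are absorbed using the Schwarzschild analogue of \cref{an:lemma:nonlin_pert}. Borel summation of the corrections then gives $\phi\in\A{b}^{1,(q,q)-}$ with error in $\A{b}^{3,\infty,\infty}$.

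\textbf{Main obstacle.} The delicate point is matching weights in the $\K$ inversion. \cref{an:lemma:sub_zero} only allows $l\in(0,1)$, so each $\K$-step must be applied with a relative weight $p_+-p_\K$ in that window. This produces the ``$-$'' losses and requires the inductive bookkeeping $p_\K\in(p_+-1,p_+)$. I would handle this exactly as the gap-of-one argument in Step 3 of the extremal proof, by first smoothing the $\ip$-error (as in Step 1 of \cref{an:prop:i+_conormal}) so that its $\K$-weight sits strictly inside the admissible window.
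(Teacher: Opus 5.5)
Your proposal is correct and is the argument the paper intends: the paper says only that one repeats the proof of \cref{an:thm:ansatz} with \cref{an:lemma:sub_zero} replacing \cref{an:prop:K}, and with the face $\F$ (hence \cref{an:prop:F_interior}) dropped, and your Steps 1--3 spell this out. One small correction to your last paragraph: no smoothing is needed to land in the window $l\in(0,1)$, because the conormal spaces are nested and you can simply weaken the $\ip$ weight of the error to just below $p_\K+1$ before each $\K$-step, which is exactly what the $\min(p_\K+3,p_++2)$ bookkeeping in Step 2 of the extremal proof encodes.
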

	
	\begin{remark}[Admissible metrics]
		The result \cref{an:lemma:sub_zero} is known in the full subextremal Kerr case due to \cite{shlapentokh-rothman_quantitative_2015}, and presented in the function spaces of the present paper in \cite[Theorem 4.7]{andersson_mode_2024}.
		Indeed, the zero energy invertibility is a much less delicate and should hold for a much larger class of metrics.
		We do not pursue this direction in the present paper.
	\end{remark}
	
	\section{Linear estimates}\label{sec:lin}
	In this section, we consider the linear wave equations $\Box_{\ern}\phi=f$ and apply the currents from \cref{sec:currents} to obtain estimates with optimal decay rates at $\F,\ip$ and a loss of decay at $\K$.
	The main result is the following:
	\begin{prop}\label{lin:prop:main}
		\emph{Exterior:} Fix $q>3/2$, $k+k'\leq q$, $s\in\N$.
		The solution $\phi$ of the scattering problem 
		\begin{equation}
			\Box_{\ern}\phi=f,\qquad \phi|_{\hor}=r\phi|_{\scri}=0
		\end{equation}
		where $f$ is supported away from $\partial\Mcomp$, satisfies the estimate
		\begin{equation}\label{lin:eq:main}
			\norm{\Ve\Vc^{k'} \phi}_{\Hb^{s;-1/4,q-3/2,q-1/2,q-1/2,k+3/4}(\Mcomp)}\lesssim_q\norm{\Vc^{k'}f}_{\Hb^{s;3/4,q+1/2,q+1/2,q-1/2,k-1/4}(\Mcomp)}
		\end{equation}
		where  the regularity across the horizon is  $\Vc=\{1,v^{-1}Y\}$ and the regularity gain is 
		\begin{equation}\label{lin:eq:Ve}
			\Ve=\{(r-M)\partial_r|_{t_*},\rho_+^{-1}\rho_\F^{-1}T,\rho_\F^{1/2}\rho_\scri^{1/2}\slashed{\nabla},1\}.
		\end{equation}
		
		\emph{Interior:} Fix $q,k,s$ as above.
		Fix $f\in C^\infty(\McompIn)$.
		For the solution to
		\begin{equation}
			\Box_{\ern}\phi=f, \qquad \phi|_{\mathcal{B}_{\mathfrak{t}}}=T\phi|_{\mathcal{B}_{\mathfrak{t}}}=0
		\end{equation}
		in $\McompIn$, we have the following estimate
		\begin{equation}\label{lin:eq:interior}
			\norm{\Ve\Vc^k\phi}_{\Hb^{s;,q-3/2,1/2-1/10}(\McompIn)}\lesssim_q\norm{\Vc^kf}_{\Hb^{s;,q-3/2,-1/2-1/10}(\McompIn)}+\norm{\Vb^{s+k+1}\phi|_{\mathcal{B}}}_{\Hb^{0;q-3/2}(\mathcal{B})}.
		\end{equation}
	\end{prop}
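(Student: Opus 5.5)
The plan is to prove \cref{lin:eq:main} by a backwards weighted energy estimate using only the currents of \cref{curr:lem:ext}, then commute. Since $f$ is supported away from $\partial\Mcomp$, in particular in $\{t_*<T_f\}$, the solution vanishes for $t_*>T_f$. We apply the divergence theorem to
\[
J=CJ^T_{N,\delta}+J^1_{>,N}+J^1_{<,N}
\]
on $\mathcal{D}_{s}^{T_f}$, with $N-\delta>2$ and $\delta>0$. All currents are future directed causal, so the boundary term on $\Sigma_s$ has a good sign and the fluxes through $\hor$ and $\scri$ vanish by the trivial data. Summing \cref{curr:eq:T,curr:eq:p_far,curr:eq:p_near1} and letting $s\to1$ gives
\[
\int t_*^{N-1-\delta}t^\delta\Big(\tfrac{t_*}{tD}(T\phi)^2+\tfrac{D^{1/2}r}{t_*}(\partial_r|_{t_*}\phi)^2+r^{-2}\abs{\slashed\nabla\phi}^2\Big)\mu\lesssim\int\mathfrak{F}\,\mu .
\]
By the remark following \cref{curr:lemma:poincare}, the left side is the $\mathring{\Ve}$-norm with weights $((\delta-1)/2,(N-3)/2,(N-1)/2,(N-1)/2,\delta/2)$. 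The $\mathfrak{F}$ term is $f^2t_*^{N-\delta}t^{\delta+1}D$ plus lower weights, which is the stated norm of $f$ with one extra power at $\K$ and two at $\ip$. We then choose $N=2q-2$ and $\delta=1/2$; with $k=0$ this matches the weights in \cref{lin:eq:main}, and we do the bookkeeping at $\hor$ via the factor $t^\delta\sim\rho_\hor^{-\delta}$ near $\hor$. \cref{curr:lemma:poincare} recovers the zeroth-order term, which accounts for the $1\in\Ve$. Its hypotheses $a_\K>\max(a_+-3/2,a_\F+1/2)$ and $a_\F>0$ hold for these weights; if they do not exactly, we lower them by $\epsilon$.

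\textbf{Higher $s$.} We commute with $T$, which is Killing, with the rotations, and with the scaling-type field $t_*\partial_{t_*}$ near $\ip$, $\K$, $\F$. Only $T$ and the rotations commute exactly. For $t_*T$ the commutator $[\Box,t_*T]$ is $\rho$-better than $\Box\Vb$, and is absorbed using \cref{map:lemma:normal_ops} together with the gain in $\Ve$ (note $t_*T\phi$ is already controlled by $\rho_+^{-1}\rho_\F^{-1}T\phi$ up to weights). The remaining $\Vb$ directions, $(r-M)\partial_r$ near $\K$ and $r\partial_r$, are recovered from the equation through the elliptic estimates \cref{ell:lemma:near} and \cref{ell:lemma:ext}, once $T$-derivatives are controlled. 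We proceed by induction on $s$.

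\textbf{Horizon regularity $\Vc^{k'}$.} We commute with $\tilde Y=v^{-1}Y$. By \cref{an:rem:log_redshift}, $\tilde Y^{j}\phi$ satisfies $\Box\tilde Y^j\phi+2j\tilde Y^{j+1}\phi=\tilde Y^jf+\text{lower order}$, an enhanced log-redshift that for backwards estimates acts as a blueshift. We absorb the new bulk term $\abs{2j\,\tilde Y^{j+1}\phi\cdot J}$ by increasing the $t_*$ weight via $N$, which costs decay at $\hor$. This is why the horizon weight shifts with $k$ and why the constraint $k+k'\le q$ appears: each commutation consumes one power of $t_*$, and $N-\delta>2$ must survive.

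\textbf{Interior estimate \cref{lin:eq:interior}.} We use \cref{curr:lem:int} in $\McompIn$ with initial data on $\B_{\mathfrak t}$ producing the data term, and recover the zeroth-order term with \cref{curr:lemma:hardy}. The commutations are as above.

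\textbf{Main obstacle.} The hardest step is the commuted estimates. Commuting with $\tilde Y$ generates blueshift terms of the wrong sign for backwards propagation, and the non-commuting $\Vb$ fields near $\K\cap\F$ produce errors at the same $\K$-weight. I would handle both by first proving the estimate for the $T$-commuted quantities only, then recovering the rest elliptically, and choosing $N$ large relative to the number of commutations so that the blueshift errors are absorbed.
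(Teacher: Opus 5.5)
\textbf{Gap: conormal regularity at $\scri$ and $\hor$ for large $s$.} Your base estimate matches the paper: the currents of \cref{curr:lem:ext} on $\mathcal{D}_s^{T_f}$, the vanishing fluxes through $\hor$ and $\scri$, and Poincar\'e for the zeroth-order term. So do the $T$-commutation with extra $t_*$-weights and the interior argument via \cref{curr:lem:int}. The problem is the higher-$s$ step near $\scri$ and $\hor$.

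The $\Hb^s$ norm requires conormal regularity of arbitrary order at these faces. Near $\scri\cap\ip$ the b-fields are $u\partial_u$, $r\partial_r|_u$ and the rotations; near $\hor\cap\F$ they include $(r-M)\partial_r$. Commuting with $T$ (with weights) and rotations gives only $u\partial_u$ (resp.\ $v\partial_v$) and angular derivatives. You propose to recover the transversal field from \cref{ell:lemma:near,ell:lemma:ext}, but these do not reach the relevant regions:
\begin{itemize}
\item \cref{ell:lemma:near} is posed on $\{t=\tau\}$ with $r\in(M+r_1,r_2)$. The $t$-slices do not foliate a neighbourhood of $\hor$ at late times, since $t\to\infty$ at fixed $v$.
\item \cref{ell:lemma:ext} holds only where $r/t$ is bounded away from $1$.
\end{itemize}
The paper accordingly claims only that they give \cref{lin:eq:main} away from $\hor,\scri$.

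No elliptic recovery can work there, because both faces are characteristic. The leading part of $\Box_{\ern}$ is $-2\partial_u\partial_r$ near $\scri$ and $2\partial_v\partial_r$ near $\hor$, so control of $T$-derivatives does not determine $r\partial_r|_u\phi$ or $(r-M)\partial_r\phi$. For the same reason, your claim that $[\Box_{\ern},t_*T]$ is $\rho$-better than $\Box\Vb$ fails outside $\K$: for example $[-2\partial_u\partial_r,u\partial_u]=-2\partial_u\partial_r$, which is of the same order as $\Box$. Your $\tilde Y$-commutations cannot fill the gap at $\hor$ either, since they are limited to $k'\lesssim q$ while $s$ is arbitrary. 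At $\scri$ your plan offers nothing.

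\textbf{Missing idea: localised scaling commutators.} One must commute, near these faces, with fields that are approximate symmetries of the model operators there. The paper uses $\chi_<S$ with $\chi_<=\chi((r-M)t_*)$ and $S=(r-M)\partial_r-v\partial_v$. This $S$ commutes exactly with $\Box_{g_{\mathrm{near}}}$, so $[\Box_{\ern},S]\in\rho_\F\Diffb^2$ (\cref{lin:lemma:near_horizon}). Near $\scri$ it uses the analogous Minkowski scaling $u\partial_u+r\partial_r|_u$, cut off by $\chi_>=\chi(t_*/r)$. Since $v\partial_v$ (resp.\ $u\partial_u$) is already controlled, this yields the transversal derivative.

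Applying the currents to $(\chi_<S)^s\phi$ produces three kinds of error:
\begin{itemize}
\item $Sf$, which is part of the right-hand side;
\item $\chi'\Diffb^2\phi$, supported where the cutoff transitions, which is exactly where \cref{ell:lemma:ext} and the $T$-commuted bounds apply;
\item $\rho_\F^2\abs{\Diffb^2\phi}^2$, absorbed because $\rho_\F$ is small on $\supp\chi_<$.
\end{itemize}
One then inducts on $s$.

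\textbf{Horizon commutator for $\Vc$.} Your use of $\tilde Y=v^{-1}Y$ is a genuinely different route. The paper commutes with the conformal Killing field $\mathcal{T}=v^2\partial_v-2(1+v(r-M))\partial_r$ of $g_{\mathrm{near}}$. There $[v^2(v+2(r-M)^{-1})^2\Box_{g_{\mathrm{near}}},\mathcal{T}]=0$, so no first-order term appears. The only cost is $\mathcal{T}\in\rho_\F^{-1}\Diffb$, which forces the current $J_{N-2k}$, and $N-2k>0$ gives the constraint.

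Your route instead has to absorb the log-blueshift term $2j\tilde Y\psi_j$. A transport heuristic shows this is coercive in a backward $t_*$-weighted estimate once the weight exponent exceeds roughly $2j$, which gives the same type of constraint (cf.\ \cref{lin:rem:blueshift}). However, that term is nondegenerate at $\hor$, while the $\partial_r$-bulk of $J^1_{<,N}$ degenerates like $(r-M)$. You would therefore need a near-horizon current giving nondegenerate, $N$-weighted control of $(\partial_r\psi_j)^2$.

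\textbf{Weights.} Matching the weights of \cref{lin:eq:main} via the computation after \cref{curr:lemma:poincare} requires $N=2q$ with $\delta=1/2$. Your choice $N=2q-2$ is off by one power at $\ip$, $\K$ and $\F$.
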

	
	\begin{remark}\label{lin:rem:Morawetz}
		Using a Morawetz multiplier together with an appropriate commutation as done in \cite{holzegel_note_2020}, we could improve the solvability to $\Hb^{\cdot,q+2,q,q-1,\cdot}\ni f\to\phi \in\Hb^{\cdot,q,q,q-1,\cdot}$.
		See also \cite{dyatlov_spectral_2016,dyatlov_asymptotics_2015}.
		Even without a commutation this would be possible, but not at top order.
		Not doing this extra work is motivated by the approach of \cite{dafermos_quasilinear_2022,angelopoulos_nonlinear_2026}, and puts extra constraint on the minimum allowed value of $q$.
	\end{remark}

	\begin{remark}[Regularity and decay]\label{lin:rem:blueshift}
		We note that the amount of regularity that we can propagate across the horizon ($\Vc$) depends on the decay rate of $\Box_{\ern}\phi$.
		This can be understood via the enhanced blueshift as explained in \cref{an:rem:log_redshift}, even though in the proof, we use the commutator $\mathcal{T}$ from \cref{lin:lemma:near_horizon}, instead $\tilde{Y}=\frac{1}{v}Y$.
	\end{remark}
	
	Before discussing the proof, let us comment on how to control the initial data norm in \cref{lin:eq:interior}.
	\begin{lemma}[Initial data]\label{lin:lemma:data}
		Let $k,q$ be as in \cref{lin:prop:main}.
		Consider $\Box_{\ern}\phi=f+\mathcal{N}[\phi]$ together with $\phi|_{\mathcal{B}_\mathfrak{t}}=\phi_0$, $t_*T\phi_{\mathcal{B}_\mathfrak{t}}=\phi_1$.
		Then, the following estimate holds for $f,\phi_0,\phi_1$ smooth and compactly supported in $t_*$
		\begin{equation}
			\norm{\Vb^s\Vc^k\phi|_{\mathcal{B}_\mathfrak{t}}}_{\Hb^{1;q-3/2}(\mathcal{B}_\mathfrak{t})}\lesssim_{s,k} \norm{\Vb^{s+k}f|_{\mathcal{B}_\mathfrak{t}}}_{\Hb^{1;q-3/2}(\mathcal{B}_\mathfrak{t})}+\norm{\phi_0,\phi_1}_{\Hb^{s+k+1;q-3/2}(\mathcal{B}_\mathfrak{t})}.
		\end{equation}
	\end{lemma}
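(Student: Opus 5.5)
The plan is to express all $\Vb^s\Vc^k$ derivatives of $\phi$ on $\mathcal{B}_\mathfrak{t}$ algebraically in terms of $\phi_0,\phi_1$ and $f,\mathcal{N}[\phi]$ restricted to $\mathcal{B}_\mathfrak{t}$, using the equation itself to remove transversal derivatives. This is the standard way to compute higher jets of Cauchy data.

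The surface $\mathcal{B}_\mathfrak{t}=\{r-M=-\mathfrak{t}t_*^{-1}\}$ is spacelike. Its tangential b-vector fields are spanned by $t_*\partial_{t_*}$ along $\mathcal{B}$, i.e.\ a combination of $t_*T$ and $(r-M)\partial_r$, together with the rotations $\Omega_i$. One transversal direction is available from the data via $\phi_1=t_*T\phi|_{\mathcal{B}}$. The remaining derivatives in $\Vb$ and $\Vc=\{1,v^{-1}Y\}$ restricted to $\mathcal{B}$ are therefore determined by $\phi_0$, $\phi_1$ and their tangential derivatives at first order.

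For higher orders I will induct on the number of transversal derivatives. Near $\mathcal{B}$ I write $\Box_{\ern}$ in coordinates $(t_*,\zeta=t_*(r-M),\omega)$, which are adapted to $\McompIn$. In these coordinates $t_*^{-2}\Box_{\ern}$, after multiplication by an appropriate weight, is a b-operator whose principal part in the transversal variable is non-degenerate, because $\mathcal{B}$ is spacelike uniformly up to $\F$ (the $\rho_\F$ boundary). I will check this using \cref{map:lemma:normal_ops}: near $\F$ the leading part is $\Box_{g_{\mathrm{near}}}$, which in the similarity coordinates \cref{map:eq:g_near_similarity_coord} has coefficients independent of $\log v$, and the level set $\mathfrak{r}=-\mathfrak{t}$ is non-characteristic there since $\mathfrak{r}^2+2\mathfrak{r}\neq0$ for $0<\mathfrak{t}<2$. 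Solving for the second transversal derivative gives
\begin{equation*}
	(\text{transversal})^2\phi|_{\mathcal{B}}=\Diffb^{2}_{\mathrm{tan}}\phi_0+\Diffb^1_{\mathrm{tan}}\phi_1+t_*^{2}\big(f+\mathcal{N}[\phi]\big)|_{\mathcal{B}},
\end{equation*}
with coefficients in $\A{b}^0$. Because these coefficients are conormal, they preserve the weight $t_*^{-(q-3/2)}$ in $\Hb^{\cdot;q-3/2}(\mathcal{B})$. Differentiating the equation repeatedly and inducting then expresses $\Vb^s\Vc^k\phi|_{\mathcal{B}}$ through $\Vb^{s+k+1}$ of the data and $\Vb^{s+k}$ of the right-hand side. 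The extra $\Vc$ derivatives, $v^{-1}Y$, each count as one transversal b-derivative, because $v^{-1}Y$ equals $\rho_\F$ times a b-field $\partial_{\zeta}$-type derivative and the factor $\rho_\F$ only improves weights.

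Two points need care. The first is the weight bookkeeping: the factor $t_*^{2}$ in front of the inhomogeneity must match the convention that the $f$-norm is taken with the same weight $q-3/2$. I expect this to be absorbed by the normalisation of $\phi_1$ (which already carries $t_*T$) and by the measure on $\mathcal{B}$, but I would verify it against the definition of $\Hb^{1;q-3/2}(\mathcal{B}_\mathfrak{t})$. The second is the nonlinearity $\mathcal{N}[\phi]$. Since the stated bound contains only $f$ and the data, I interpret the lemma either as the linear version, or as using that $\mathcal{N}[\phi]|_{\mathcal{B}}$ is itself a polynomial in the already-computed jets. In the latter case I would bound it by the inductively controlled lower-order jets, using $L^\infty$ control from \cref{not:eq:sobolev} on the compactly supported data. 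This nonlinear jet estimate is the main obstacle: the dependence on the data is then nonlinear. I would handle it by assuming smallness or by letting the implicit constant depend on a lower norm of $(\phi_0,\phi_1)$.
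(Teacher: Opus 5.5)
Your strategy is the one the paper uses. Its proof notes that $\Box_{\ern}\in\Diffb^2$ near $\mathcal{B}_\mathfrak{t}$, commutes the equation with $\Vb$, and uses the equation to trade transversal derivatives for $f$ and the data, treating $\mathcal{N}[\phi]$ perturbatively via \cref{lin:lemma:algebra,lin:lemma:nonlin}. Your non-characteristic check via the similarity form of $\Box_{g_{\mathrm{near}}}$ is fine. So is your reading of the nonlinear term, with constants depending on lower norms. In the application in \cref{scat:lemma:interior} the data vanish and only $\mathcal{N}[\phi_{app}+\phi]-\mathcal{N}[\phi_{app}]$ enters.

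The weight bookkeeping is wrong, and the remedy you suggest would not work. There is no factor $t_*^{2}$ in front of $f+\mathcal{N}[\phi]$ when you solve for the second transversal derivative. The hypersurface $\mathcal{B}_\mathfrak{t}$ sits at bounded $\zeta=(r-M)t_*$, i.e.\ in the $\F$-regime, where $\Box_{\ern}$ is a b-operator of weight zero:
\begin{itemize}
\item \cref{map:lemma:normal_ops} preserves the $\F$-weight; the $\rho_+^2$ gain you seem to have in mind occurs only at $i^+$;
\item concretely, $D^{-1}T^2\sim\zeta^{-2}(v\partial_v)^2$ and $\partial_rD\partial_r\sim\partial_\zeta\zeta^2\partial_\zeta$;
\item the form \cref{map:eq:g_near_similarity_coord} that you invoke yourself contains no power of $v$.
\end{itemize}
Thus $\partial_\zeta^2\phi|_{\mathcal{B}_\mathfrak{t}}$ equals $(f+\mathcal{N}[\phi])|_{\mathcal{B}_\mathfrak{t}}$ times a coefficient bounded away from zero, plus tangential b-derivatives of $\phi_0,\phi_1$, all with $\A{b}^0$ coefficients. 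This is exactly why the same weight $q-3/2$ appears on both sides of the lemma.

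If a genuine $t_*^2$ were present, neither the normalisation $\phi_1=t_*T\phi$ nor the measure on $\mathcal{B}_\mathfrak{t}$ could absorb it. The $\phi$- and $f$-norms carry the same weight and measure, so the estimate would then require $f$ in $\Hb^{\cdot;q+1/2}$.

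Similarly, $v^{-1}Y=-\partial_\zeta$ exactly in $(v,\zeta=(r-M)v)$ coordinates. It is therefore a unit transversal b-derivative with no extra $\rho_\F$ factor. Your counting of it as one transversal derivative is still correct.
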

	\begin{proof}
		Using that $\Box_{\ern}=-D^{-1}T^2+\partial_r r^2D\partial_r+r^2\slashed{\Delta}\in\Diffb^2$ in a neighbourhood of $\mathcal{B}_\mathfrak{t}$, the result follows after commuting the equation with $\Vb$, and using via \cref{lin:lemma:algebra,lin:lemma:nonlin} that the nonlinearities always decay sufficiently fast.
	\end{proof}
	
	For the proof, we will utilise the current computation from \cref{curr:lem:ext,curr:lem:int} together with the following commutations:
	
	\begin{lemma}[Near horizon commutators]\label{lin:lemma:near_horizon}
		Let $\zeta=r-M$.
		Let $\mathcal{T}=v^2\partial_v-2(1+v\zeta)\partial_\zeta\in\rho_\F^{-1}\Diffb$, and $S=(r-M)\partial_r-v\partial_v\in\Diffb$
		Then in $r<2M$ 
		\begin{equation}\label{lin:eq:commute_SK}
			[\Box_{\ern},S]=\rho_\F\Diffb^2,\quad v^{-2}(v+2\zeta^{-1})^{-2}[v^2(v+2\zeta^{-1})^2\Box_{\ern},\mathcal{T}]=\Diffb^2
		\end{equation}
	\end{lemma}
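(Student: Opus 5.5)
The plan is to reduce both identities to exact symmetries of the near-horizon model operator $\Box_{g_{\mathrm{near}}}$ from \cref{map:def:model}. We then treat $\Box_{\ern}-\Box_{g_{\mathrm{near}}}$ as a perturbation that gains one power of $\rho_\F$, as in \cref{map:lemma:normal_ops}. The bookkeeping rests on the fact that $\rho^{\vec a}\Diffb^\bullet(\Mcomp_{\mathfrak t})$ is a filtered algebra. In particular, $[\rho^{\vec a}\Diffb^m,\rho^{\vec b}\Diffb^n]\subset\rho^{\vec a+\vec b}\Diffb^{m+n-1}$, because a b-vector field applied to a product of boundary defining functions returns the same product times a smooth (conormal) factor.

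\emph{The identity for $S$.} First I check that $S=(r-M)\partial_r-v\partial_v$ is Killing for $g_{\mathrm{near}}$. Its flow is the scaling $(v,\zeta)\mapsto(\lambda v,\lambda^{-1}\zeta)$ with $\zeta=r-M$. Under this scaling, $-\zeta^2\dd v^2$, $2\dd v\dd\zeta$ and $M^2\slashed g$ are each invariant, so $[\Box_{g_{\mathrm{near}}},S]=0$. Next I write $E:=\Box_{\ern}-\Box_{g_{\mathrm{near}}}$. From \cref{not:eq:wave_explicit}, $E$ comes from replacing $D=\zeta^2/r^2$ by $\zeta^2/M^2$, $r^{-2}\slashed\Delta$ by $M^{-2}\slashed\Delta$, and from the lower-order terms $r^{-1}\partial_v$ and $r^{-1}D'$. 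All of these differ from the model by relative factors $O(\zeta)$. With the counting of \cref{not:rem:examples}, this gives $E\in\rho_\F\Diffb^2$ in $\{r<2M\}$ (in fact $\rho_\F\rho_\hor\Diffb^2$), and the same holds across $\hor$ in $\Mcomp_{\mathfrak t}$, since everything is smooth in $\zeta$ there. Since $S\in\Diffb$, the algebra property gives $[\Box_{\ern},S]=[E,S]\in\rho_\F\Diffb^2$.

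\emph{The identity for $\mathcal T$.} Write $w=v^2(v+2\zeta^{-1})^2$. By \cref{an:eq:conformal_near_commutation}, which is the Couch--Torrence image (\cref{map:rem:isometry}) of the Minkowski conformal Killing commutation \cref{an:eq:conformal_commutation}, we have $[w\Box_{g_{\mathrm{near}}},\mathcal T]=0$. Therefore
\[
w^{-1}[w\Box_{\ern},\mathcal T]=w^{-1}[wE,\mathcal T]=[E,\mathcal T]-w^{-1}(\mathcal T w)\,E .
\]
Here $\mathcal T\in\rho_\F^{-1}\Diffb$ and $E\in\rho_\F\Diffb^2$, so the algebra property gives $[E,\mathcal T]\in\Diffb^2$. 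For the second term, $w$ is, up to a smooth nonvanishing factor, a product of powers of boundary defining functions in each corner chart. Hence $\mathcal T\log w\in\rho_\F^{-1}\A{b}^0$, which gives $w^{-1}(\mathcal Tw)E\in\Diffb^2$ and completes the proof.

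\emph{Main obstacle.} The delicate step is verifying that $E$ gains a full $\rho_\F$ uniformly in every chart of $\{r<2M\}$: near $\K\cap\F$, near $\F\cap\hor$, and in the interior region up to $\B_{\mathfrak t}$. The point is that the weights $w$ and $\mathcal T$ degenerate differently at these faces. I would first fix the normalization $M=1$, so that $D=\zeta^2(1+\zeta)^{-2}$ and $D-\zeta^2=O(\zeta^3)$. Then I would check the leading orders corner by corner using the explicit coordinates of \cref{not:rem:examples}. In the interior one should note that $|\zeta|\lesssim \mathfrak t v^{-1}$, so there the gain is simply $\rho_\F$.
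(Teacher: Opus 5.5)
Your treatment of $S$ is correct and is exactly the paper's argument: $\Box_{\ern}=\Box_{g_{\mathrm{near}}}+\rho_\F\Diffb^2$ together with $[\Box_{g_{\mathrm{near}}},S]=0$. The $\mathcal T$ part fails at the step ``$w$ is a product of powers of boundary defining functions, hence $\mathcal T\log w\in\rho_\F^{-1}\A{b}^0$''. Compute it directly (with $M=1$, as implicit in $g_{\mathrm{near}}$). Since $\mathcal T v=v^2$ and $\mathcal T(v+2\zeta^{-1})=v^2+4\zeta^{-2}(1+v\zeta)=(v+2\zeta^{-1})^2$, you get
\[
w^{-1}\mathcal T w=4\big(v+\zeta^{-1}\big),
\]
which has a pole on $\hor=\{\zeta=0\}$. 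Your factorisation $w\approx\rho_\F^{-4}\rho_\hor^{-2}$ treats $\hor$ as a boundary face. But $\mathcal T$ is transversal to $\hor$, so in that picture it is not in $\rho_\F^{-1}\Diffb$. In $\Mcomp_{\mathfrak t}$, where $\mathcal T\in\rho_\F^{-1}\Diffb$ does hold, $\hor$ is an interior hypersurface and $w$ is singular on it.

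Nor does $E$ vanish at $\hor$ to compensate. Your parenthetical $E\in\rho_\F\rho_\hor\Diffb^2$ is false, because $\Box_{\ern}$ contains the first-order term $2r^{-1}\partial_v$. This term comes from $\sqrt{-\det\ern}\propto r^2$ (cf.\ $2\partial_v\partial_r(r\phi)$ in \cref{not:eq:wave_explicit}), has no counterpart in $\Box_{g_{\mathrm{near}}}$, and has an $O(1)$ rather than $O(\zeta)$ coefficient. It still lies in $\rho_\F\Diffb$, so $E\in\rho_\F\Diffb^2$ survives. However, $w^{-1}(\mathcal Tw)E$ then contains $-8(v+\zeta^{-1})r^{-1}\partial_v\notin\Diffb^2$. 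So the real difficulty is the weight, not the uniform $\rho_\F$ gain of $E$ that you flag as the main obstacle.

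The root cause is the identity $[w\Box_{g_{\mathrm{near}}},\mathcal T]=0$ imported from \cref{an:eq:conformal_near_commutation}. This is also all the paper's one-line proof invokes, and with $w=v^2(v+2\zeta^{-1})^2$ it does not hold. From $\mathcal L_{\mathcal T}dv=2v\,dv$, $\mathcal L_{\mathcal T}d\zeta=-2\zeta\,dv-2v\,d\zeta$ and $\mathcal T(\zeta^2)=-4\zeta(1+v\zeta)$, one finds $\mathcal L_{\mathcal T}g_{\mathrm{near}}=0$. So $\mathcal T$ is an exact Killing field of the near-horizon metric; together with $\partial_v$ and $S$ it spans the $SL(2,\R)$ isometries. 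Equivalently, under Couch--Torrence $g_{\mathrm{near}}=R^{-2}\eta$ with $R=\zeta^{-1}$. The Minkowski operator $u^2v^2r\Box_\eta r^{-1}$ therefore becomes $\zeta^2w\,\Box_{g_{\mathrm{near}}}=v^2(v\zeta+2)^2\Box_{g_{\mathrm{near}}}$, and this weight is annihilated by $\mathcal T$; the factor $\zeta^2$ was dropped. Consequently $[w\Box_{g_{\mathrm{near}}},\mathcal T]=-4(v+\zeta^{-1})w\Box_{g_{\mathrm{near}}}\neq0$; for instance $[w\Box_{g_{\mathrm{near}}},\mathcal T]\zeta=-8w(1+v\zeta)$. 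The second displayed identity is then false as written, since
\[
w^{-1}[w\Box_{\ern},\mathcal T]=[E,\mathcal T]-4(v+\zeta^{-1})\Box_{\ern}
\]
has a coefficient with a pole at $\zeta=0$.

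The repair is to argue for $\mathcal T$ exactly as you did for $S$: $[\Box_{\ern},\mathcal T]=[E,\mathcal T]\in[\rho_\F\Diffb^2,\rho_\F^{-1}\Diffb]\subset\Diffb^2$. Equivalently, use the weight $v^2(v\zeta+2)^2$, for which your extra term $w^{-1}(\mathcal Tw)E$ vanishes identically. One further caveat: $\mathcal T\in\rho_\F^{-1}\Diffb$ holds only away from $\K$, since near $\K$ the factor $v^2\partial_v$ costs an extra $\rho_\K^{-1}$. This is harmless because the commutator is only used under the cutoff $\chi_<$ near $\hor$.
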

	\begin{proof}
		From \cref{map:lemma:normal_ops}, we write $\Box_{\ern}=\Box_{g_\mathrm{near}}+\rho_\F\Diffb^2$.
		Using \cref{an:eq:conformal_near_commutation}
		\begin{equation}
			[\mathcal{T},v^2(v+2\zeta^{-1})^2\Box_{g_\mathrm{near}}]=0.
		\end{equation}
		
		Similarly, for $S$, we observe that $[\Box_{g_{\mathrm{near}}},S]=0$, and so the result follows.
	\end{proof}

	\begin{proof}[Proof of \cref{lin:prop:main}]
		We only explain how to prove the exterior, the interior following verbatim, after replacing \cref{curr:lem:ext} with \cref{curr:lem:int}.
		
		\emph{Step 1)}
		Let $J^1_{<,N},J^T_{N,\delta},J^1_{>,N},C$ be as in \cref{curr:lem:ext}, and define the cutoff functions $\chi_<=\chi((r-M)t_*)$, $\chi_>=\chi(t_*/r)$ where $\chi(x)$ localises to $x<1/2$.
		Let $J_N=(1+2C)J^T_{N,1/4}+J^1_{>,N}+J^1_{>,N}$.
		Set $N=2q-5/2-2k$ and $\delta=1/2+2k$.
		We proceed by induction and write the proof for $k=0$ for notational convenience.
		
		For $s=1$, we first apply \cref{curr:lem:ext}.
		This already yields the result.
		
		\emph{Step 2)}
		Assume the result is true for some $s-1\geq1$.
		Commuting $\Box_{\ern}$ with $T^s$, and applying \cref{curr:lem:ext} for $J_{N+2s}[T^s\phi]$, we obtain 
		\begin{equation}
			\norm{\Ve t_*^sT^s\phi}_{\Hb^{0;-1/4,q-3/2,q-1/2,q-1/2,3/4}(\Mcomp}\lesssim_f 1.
		\end{equation}
		Via \cref{ell:lemma:near}, this already yields \cref{lin:eq:main} away from $\hor,\scri$.
		
		Next, we commute with  $\chi_>S,\chi_<S,\Omega$.
		Since $[\Box_{\ern},\Omega]=0$, control of angular derivatives follow.
		For the rest, we use \cref{lin:lemma:near_horizon}.
		Applying \cref{curr:lem:ext} for $J_{N}[(\chi_<S)^s\phi]$, we obtain error terms
		\begin{equation}
			t_*^{N-1}\Big(\abs{Sf}^2+\abs{\chi'\Diff^2_b\phi}^2+\abs{\Diffb^2\phi}^2\rho_\F^2\Big).
		\end{equation}
		The first term is already controlled by the right hand side of \cref{lin:eq:main}.
		The second term is supported away from $\mathcal{B}_1$ and $\hor$, so we can us \cref{ell:lemma:ext} to control the error term from $\Div J^T_{N,\delta}[T^s\phi]$, and the $s-1$ part of the right hand side of \cref{lin:eq:main}.
		Finally, the last term is controlled provided that $\rho_\F$ is sufficiently small.
		
		The same argument applies near $\scri$.
		In particular, we obtain \cref{lin:eq:main} for $k=0$.
		
		\emph{Step 3)}
		For $k\geq1$, we apply \cref{lin:lemma:near_horizon} and \cref{curr:lem:ext}.
		The result follows the same as for $S$, but we note that $\mathcal{T}$ has one order less decay than $S$, so we can only use the current $J_{N-2k+2s}[(\chi_<\mathcal{T})^kT^s\phi]$.
		The restriction on $k\leq\floor{q}$ comes from $N-2k>0$.
	\end{proof}
	
	\subsection{Algebra property}
	Finally, we make some short nonlinear computations with the spaces $\Hb(\Mcomp)$.
	These are $L^2$ analogues of the computations from \cref{sec:an:nonlinear}.
	
	\begin{lemma}[Algebra property]\label{lin:lemma:algebra}
		Fix $\vec{a}=\vec{a}^1+\vec{a}^2+\vec{a}_{ER}$, where $\vec{a}_{ER}=(3/2,2,1/2,0,-1/2)$ and $s\geq4$, $k\geq0$
		\begin{equation}
			\norm{\Vc^k\phi_1\phi_2}_{\Hb^{s;\vec{a}}(\Mcomp)}\lesssim\norm{\Vc^k\phi_1}_{\Hb^{s;\vec{a}^1 }(\Mcomp)}\norm{\Vc^k\phi_2}_{\Hb^{s;\vec{a}^2}(\Mcomp)}.
		\end{equation}
		In the interior we similarly have  for $s\geq4,k\geq1$ and  $a_{\F}=a^{1}_{\F}+a^{2}_{\F}$ $a_{\hor}=\min(a_\hor^1,a^2_\hor)$with $a^1_{\hor},a^2_{\hor}\in(-1/2,1/2)$
		\begin{equation}\label{lin:eq:interior_product}
			\norm{\Vc^k\phi_1\phi_2}_{\Hb^{s;\vec{a}}(\McompIn)}\lesssim\norm{\Vc^k\phi_1}_{\Hb^{s;\vec{a}^1 }(\McompIn)}\norm{\Vc^k\phi_2}_{\Hb^{s;\vec{a}^2}(\McompIn)}.
		\end{equation}
	\end{lemma}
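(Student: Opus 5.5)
The natural route is the standard proof of the algebra property for weighted b-Sobolev spaces: distribute derivatives with the Leibniz rule, then put the factor carrying fewer derivatives in $L^\infty$ via the weighted Sobolev embedding \cref{not:eq:sobolev}.

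\emph{Leibniz expansion.} For a multi-index with $\abs{\alpha}\leq s$ from $\Vb$ and $\abs{\beta}\leq k$ from $\Vc$, we expand $\Vb^\alpha\Vc^\beta(\phi_1\phi_2)$ as a sum of terms $(\Vb^{\alpha_1}\Vc^{\beta_1}\phi_1)(\Vb^{\alpha_2}\Vc^{\beta_2}\phi_2)$. Since $\Vc=\{1,v^{-1}Y\}$ consists of derivations (apart from $1$), the Leibniz rule applies directly. Mixed orderings of $\Vb$ and $\Vc$ are handled by the commutator $[\Vb,v^{-1}Y]\in \rho_\F^{0}\,\{v^{-1}Y\}\cdot C^\infty+\Diffb$, which only reshuffles lower-order terms of the same type. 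In each term, one factor carries at most $\lfloor s/2\rfloor\leq s-3$ $\Vb$-derivatives, using $s\geq4$ (for $s=4$ we get $2\leq 1$? no, so use $\lfloor s/2\rfloor+3\leq s$ for $s\geq6$, and for $s=4,5$ distribute more carefully using that $\Vc$-derivatives are also present, or state the bound with the $s-3$ split). In the same way, one factor carries at most $\lfloor k/2\rfloor$ $\Vc$-derivatives. The essential point is that the factor we place in $L^\infty$ must still have three $\Vb$-derivatives to spare.

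\emph{Exterior estimate.} We place the low factor, say $\Vb^{\alpha_1}\Vc^{\beta_1}\phi_1$, in $\rho^{\vec a^1+\vec a_{ER}}L^\infty$ using \cref{not:eq:sobolev}. This gives $\Hb^{3;\vec a^1}\subset \rho^{\vec a^1+\vec a_{ER}}L^\infty$, since $\Hb^{3;-\vec a_{ER}}\subset L^\infty$ after shifting weights; the commutators of $\Vb$ with powers of $\rho_\bullet$ are harmless because $\rho_\bullet^{-1}\Vb\rho_\bullet\in C^\infty$. The other factor stays in $\rho^{\vec a^2}L^2$. Multiplying the weights gives exactly $\rho^{\vec a}$ with $\vec a=\vec a^1+\vec a^2+\vec a_{ER}$, since the norm convention $\norm{\rho^{-\vec a}\cdot}_{L^2}$ is multiplicative. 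Summing over the terms yields the claim.

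\emph{Interior estimate.} In $\McompIn$ the embedding \cref{not:eq:sobolev} is not available at $\hor$ with a useful weight. There I would use \cref{not:eq:sobolev2}, which gives $L^\infty$ control, with weight only at $\F$, of functions $f$ with $f,\,t_*^{-1}Yf\in\Hb^{3;(a_\F,a+1/2)}$ for $a>-1/2$. This is exactly why $k\geq1$ is needed: the low factor needs one $v^{-1}Y$ derivative to spare. Here $v\sim t_*$ near $\hor$. The factor in $L^\infty$ loses its $\hor$-weight entirely, so the product inherits the $\hor$ weight of the $L^2$ factor. Because $a^i_\hor\in(-1/2,1/2)$ and $\rho_\hor\lesssim1$, this weight is at least $\min(a^1_\hor,a^2_\hor)$, which accounts for $a_\hor=\min$ and for the additivity only at $\F$.

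\emph{Main obstacle.} The delicate step is the derivative bookkeeping for small $s$ and $k$: ensuring that, in every Leibniz term, the factor sent to $L^\infty$ retains three spare $\Vb$-derivatives and, in the interior, one spare $\Vc$-derivative. I would handle this by symmetrising, choosing which factor goes to $L^\infty$ term by term; where the split fails at the lowest $s$, I would interpolate with Gagliardo–Nirenberg in b-form.
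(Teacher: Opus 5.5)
Your overall route is the paper's: Leibniz, the weighted embedding \cref{not:eq:sobolev} for the factor carrying fewer $\Vb$-derivatives (which gives exactly $\vec a=\vec a^1+\vec a^2+\vec a_{ER}$), and \cref{not:eq:sobolev2} in $\McompIn$ to replace $a^1_\hor+a^2_\hor-1/2$ by $\min(a^1_\hor,a^2_\hor)$. The exterior part works as you set it up. Note that the naive split $\lfloor s/2\rfloor\le s-3$ already holds for $s=5$. The case $s=4$ is handled by an $L^4\times L^4$ H\"older bound with $\Hb^1\subset L^4$ in four dimensions, or by your Gagliardo--Nirenberg remark.

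The step that fails as you state it is the interior bookkeeping, and the failure is not confined to small $s$. Take $\abs{\alpha}=s$, $\abs{\beta}=k$ and the Leibniz term $\big((v^{-1}Y)^k\phi_1\big)\big(\Vb^\alpha\phi_2\big)$.
\begin{itemize}
\item The first factor has no spare $v^{-1}Y$, so \cref{not:eq:sobolev2} does not apply. The unimproved embedding \cref{not:eq:sobolev} only places it in $\rho_\F^{a^1_\F}\rho_\hor^{a^1_\hor-1/2}L^\infty$. That yields the $\hor$-weight $a^1_\hor+a^2_\hor-1/2<\min(a^1_\hor,a^2_\hor)$, since $a^i_\hor<1/2$.
\item The second factor has no spare $\Vb$-derivative at all.
\end{itemize}
So no term-by-term choice of the $L^\infty$ factor works, for any $s$ and $k$. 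A b-Gagliardo--Nirenberg inequality cannot supply the missing transversal derivative either, because $v^{-1}Y\notin\Diffb^1$.

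The missing idea is a mixed-norm split adapted to $\hor$. Near $\hor\cap\F$, use coordinates $(\rho_\F,\rho_\hor,\omega)$ with $\rho_\F=v^{-1}$. There $v^{-1}Y=\pm\partial_{\rho_\hor}$, and $\mu$ is comparable to $\dd\rho_\hor$ times a b-density in the tangential variables $(\rho_\F,\omega)$.
\begin{itemize}
\item Put $\Vb^\alpha\phi_2$ in $L^\infty_{\rho_\hor}L^2_{\mathrm{tan}}$ by one-dimensional Sobolev with the smooth derivative $\partial_{\rho_\hor}$. This is lossless because $a^2_\hor>-1/2$, and it uses the spare $\Vc$-derivative after commuting $\partial_{\rho_\hor}$ through $\Vb^\alpha$.
\item Put $(v^{-1}Y)^k\phi_1$ in $L^2_{\rho_\hor}L^\infty_{\mathrm{tan}}$ by b-Sobolev in the three tangential variables, which uses two spare $\Vb$-derivatives.
\end{itemize}
H\"older then gives $\rho_\F^{a_\F}\rho_\hor^{a^1_\hor}L^2\subset\rho_\F^{a_\F}\rho_\hor^{\min(a^1_\hor,a^2_\hor)}L^2$.

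In every Leibniz term some factor $i$ has $\abs{\beta_i}\le k-1$ (as $k\ge1$), and some factor $j$ has $\abs{\alpha_j}\le s-2$ (as $s\ge4$). So either this split applies ($i\neq j$), or the mixed embedding $H^1_{\rho_\hor}H^2_{\mathrm{tan}}\subset L^\infty$ applies to that single factor ($i=j$); in both cases the estimate closes. The paper's one-line interior proof does not spell this out either, but your stated plan does not close without it.
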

	\begin{proof}
		\emph{Exterior:}
		For $k=0$, this is a consequence of \cref{not:eq:sobolev}, and a product estimate
		\begin{equation}
			\norm{\phi_1\phi_2}_{\Hb^{s;\vec{a}}}\lesssim \norm{\phi_1}_{\Hb^{s;\vec{a}^1}}\norm{w_2\Vb^{s-3}\phi_2}_{L^\infty}+\norm{\phi_2}_{\Hb^{s;\vec{a}^2}}\norm{w_1\Vb^{s-3}\phi_1}_{L^\infty}
			\lesssim \norm{\phi_1}_{\Hb^{s;\vec{a}^1 }(\Mcomp)}\norm{\phi_2}_{\Hb^{s;\vec{a}^2}(\Mcomp)},
		\end{equation}
		where $w_\bullet=\rho_{\scri}^{a_\scri^\bullet+3/2}\rho_{+}^{a_+^\bullet+2}\rho_{\K}^{a_\K^\bullet+1/2}\rho_{\F}^{a_\F^\bullet}\rho_{\hor}^{a_\hor^\bullet-1/2}$ for $\bullet\in\{1,2\}$.
		The extra $\Vc$ regularity is propagated by Leibniz rule.
		
		\emph{Interior:}
		We use the improvement provided by \cref{not:eq:sobolev2} to overcome the loss towards $\hor$ and improve $a_\hor^1+a_\hor^2$ to $\min(a^1_\hor,a^2_\hor)$.
	\end{proof}

	\begin{lemma}[Nonlinear estimate]\label{lin:lemma:nonlin}
		Fix decay rates $q_0>1$, $q>3/2$, and regularity indices $s\geq 4$, $k\geq0$.
		Let $\phi_0\in\Aext{b}^{1,q_0,q_0,1}(\Mcomp_{\mathfrak{t}})$, and let
		\begin{equation}
			\norm{\Vc^k\Ve \phi}_{\Hb^{s;-1/4,q-3/2,q-1/2,q-1/2,3/4}(\Mcomp)} \leq\epsilon\leq 1.
		\end{equation}
		Then
		\begin{equation}
			\norm{t_*^{1/2}\Vc^k(\mathcal{N}[\phi+\phi_0]-\mathcal{N}[\phi_0])}_{\Hb^{s;1,q+1/2,q+1/2,q-1/2,0}(\Mcomp)}\lesssim\epsilon.
		\end{equation}
		In the interior, we assume $s\geq4, k\geq1,q_0>1,q>3/2$ and
		\begin{equation}
			\norm{\Vc^k\Ve \phi}_{\Hb^{s;q-1/2,1/2-1/10}(\McompIn)} \leq\epsilon\leq 1.
		\end{equation}
		Then 
		\begin{equation}
			\norm{t_*^{1/2}\Vc^k(\mathcal{N}[\phi+\phi_0]-\mathcal{N}[\phi_0])}_{\Hb^{s;q-1/2,-1/2-1/10}(\McompIn)}\lesssim\epsilon.
		\end{equation}
	\end{lemma}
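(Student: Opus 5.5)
The plan is to expand the difference algebraically and estimate each term with the product estimate \cref{lin:lemma:algebra}, converting the ansatz $\phi_0$ into $\Hb$ norms through \cref{not:eq:sobolev}. Since $\mathcal{N}[\phi]=\phi^3+\ern(\dd\phi,\dd\phi)$, we have
\begin{equation*}
\mathcal{N}[\phi+\phi_0]-\mathcal{N}[\phi_0]=3\phi_0^2\phi+3\phi_0\phi^2+\phi^3+2\ern(\dd\phi_0,\dd\phi)+\ern(\dd\phi,\dd\phi).
\end{equation*}
The terms are linear, quadratic or cubic in $\phi$. Because $\epsilon\le1$, every term is bounded by $\epsilon$ times powers of $\epsilon$ and of a norm of $\phi_0$ that depends only on $\phi_0$. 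So it suffices to show that each term lies in the target space $t_*^{-1/2}\Hb^{s;1,q+1/2,q+1/2,q-1/2,0}$.

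\emph{Step 1: control of $\phi$.} From the hypothesis on $\Ve\phi$, and using that $\Ve$ contains $1$, we read off $\phi\in\Hb^{s+1;\vec b}$ with $\vec b=(-1/4,q-3/2,q-1/2,q-1/2,3/4)$ (appropriately signed), with $\Vc^k$ regularity.

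\emph{Step 2: derivatives of $\phi$.} Write $\ern^{-1}$ in the frame dual to $\Ve$. Near $\K\cap i^+$ this gives $\ern(\dd\phi,\dd\psi)\sim r^{-2}\big((r\partial_t\phi)(r\partial_t\psi)+\dots\big)$, and near $\F$ the analogous form with $(r-M)$ weights, as in \cref{g_t*}. Each factor $\dd\phi$ then contributes $\Ve\phi$ times an explicit weight $\rho_+\rho_\F\cdots$. The decay gains are those recorded in \cref{an:eq:nonlin_mapping}: $T$ gains $\rho_+\rho_\K\rho_\F$, and $\partial_r$ gains $\rho_\scri\rho_+$.

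\emph{Step 3: the ansatz.} The ansatz $\phi_0\in\Aext{b}^{1,q_0,q_0,1}$ is used in $L^\infty$, with its $\Vb$ and $\Vc$ derivatives bounded because it is smooth across $\hor$.

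\emph{Step 4: weight bookkeeping.} For the cubic and quadratic terms, \cref{lin:lemma:algebra} adds the weights plus $\vec a_{ER}$. For the linear terms $\phi_0^2\phi$ and $\ern(\dd\phi_0,\dd\phi)$, one factor is simply estimated in $L^\infty$ times its weight. We then check, face by face, that the resulting exponent is at least the target exponent after including the $t_*^{1/2}$ loss. At $\scri$, $t_*^{1/2}\sim\rho_+^{-1/2}\rho_\K^{-1/2}\dots$ in the respective regions.

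\emph{Main obstacle.} The linear term $\ern(\dd\phi_0,\dd\phi)$ near $\K$ and $\F$ is the hardest. The $\Ve$ norm loses $\rho_\K^{+1}$ relative to the optimal rate, so we need $q_0>1$, together with the $T\phi_0$ gain of $\rho_\K\rho_\F\rho_+$, to absorb both this loss and $t_*^{1/2}$. At $\F$, $\phi_0$ only decays at rate $1$, so we must use that $\partial_r\phi_0$ pairs with $T\phi$ via the $\dd v\dd r$ component, and that $\rho_\F^{-1}T\phi$ is controlled by $\Ve$. At $i^+$ the null structure is what matters: there is no $\partial_u\phi_0\partial_u\phi$ term, so the good $\rho_\scri$ decay of $\partial_r$ compensates the weak $\rho_\scri^{-1/4}$ weight.

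\emph{Interior.} The interior argument is identical on $\McompIn$ using \cref{lin:eq:interior_product}. Here $k\ge1$ is needed so that \cref{not:eq:sobolev2} provides the $L^\infty$ bound with $Y$-regularity. Without it we could not lose nothing at $\hor$, since the horizon weight $-1/2-1/10$ leaves no room for the product loss $\vec a_{ER}$ there.
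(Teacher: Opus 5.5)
Your route is the paper's. You expand the difference, keep $\phi_0$ in weighted $L^\infty$, treat the quadratic and cubic terms with \cref{lin:lemma:algebra}, and in the interior use \cref{lin:eq:interior_product}, which is where $k\ge1$ enters. The interior part and the terms quadratic or cubic in $\phi$ are fine. The gap is in the exterior bookkeeping for the terms \emph{linear} in $\phi$, which you defer: it does not close under the stated hypotheses, and your ``main obstacle'' paragraph gets the count wrong.

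The gain of $T\phi_0$ only helps the $T$-components of $2\ern(\dd\phi_0,\dd\phi)$. The components $D\,\partial_r\phi_0\,\partial_r\phi$ and $r^{-2}\slashed{\nabla}\phi_0\cdot\slashed{\nabla}\phi$ receive only $\rho_\K^{q_0}$ from $\phi_0$ at $\K$. The hypothesis puts $\partial_r\phi$ and $\slashed{\nabla}\phi$ only in $\K$-weight $q-1/2$. The target, after the factor $t_*^{1/2}\sim\rho_\K^{-1/2}$, is $\K$-weight $q+1$. So you need $q_0\ge 3/2$, not $q_0>1$. Likewise $3\phi_0^2\phi$ at $\ip$ needs $2q_0\ge 5/2$.

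More care cannot repair this, because the statement is false for $q_0<3/2$. Take $\phi_0=t^{-q_0}\chi(r)$ and $\phi=\epsilon\, t^{-q-\eta}\psi(r)$, with $\chi,\psi$ supported in $\{3M<r<5M\}$, $\chi'\psi'\not\equiv0$ and $0<\eta<3/2-q_0$. The hypothesis holds and every other term has finite norm. But the $\K$-weighted norm of $t_*^{1/2}D\,\partial_r\phi_0\,\partial_r\phi$ is $\sim\epsilon\,\big(\int^\infty t^{2-2q_0-2\eta}\,\dd t\big)^{1/2}=\infty$.

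The same thing happens at $\hor$. You only use the half $\partial_r\phi_0\,T\phi$ of the $\dd v\,\dd r$ pairing. The other half, $(1-D)\,T\phi_0\,\partial_r\phi$, has $T\phi_0|_{\hor}\neq0$ in general, since $\phi_0$ is smooth across $\hor$. The hypothesis controls $\partial_r\phi$ only through $(r-M)\partial_r\phi$, so this term lands in $\hor$-weight $3/4-1=-1/4$, not $0$. The $\Vc$-regularity does not help at top $\Vc$-order, because $\Vc^k\partial_r\phi$ is again only reached through $(r-M)\partial_r\Vc^k\phi$.

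The paper's own sketch has both defects:
\begin{itemize}
\item its last inequality for $\ern(\dd\phi_0,\dd\phi)$ needs $q+1-q_0\le q-1/2$, i.e.\ $q_0\ge3/2$;
\item its passage from $\rho_\F\partial_r\phi$ to $\Ve\phi$ at equal $\hor$-weight silently loses a factor $\rho_\hor$.
\end{itemize}
What can actually be proved is the exterior estimate with $q_0\ge 3/2$ and $\hor$-weight $-1/4$ in the conclusion. The condition $q_0\ge3/2$ is what \cref{scat:prop:main} assumes. The weight $-1/4$ is exactly the horizon weight required on the right of \cref{lin:eq:main} without peeling. With these changes your outline goes through.
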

	\begin{proof}
		Let us note that we already have from \cref{not:eq:sobolev} that $\Ve\phi\in\A{b}^{0;5/4,q+1/2,q,q-1/2,1/4}(\Mcomp)$.
		The proof is purely computational from \cref{lin:lemma:algebra,not:eq:sobolev,not:eq:sobolev2}.
		
		Exterior
		For instance a linear term is bounded as
		\begin{multline}
			\norm{g_{\ern}(\dd\phi_0,\dd\phi)}_{\Hb^{s;1,q+1/2,q+1/2,q-1/2,0}(\Mcomp)}\lesssim\norm{\{\rho_\F\partial_r|_{t_*},\rho_\F^{-1}\rho_\scri T,\rho_+\rho_\scri\Omega_{ij}\}\phi}_{\Hb^{s;0,q-q_0,q+1-q_0,q-q_0,0}(\Mcomp)}\\
			\lesssim\norm{\Ve\phi}_{\Hb^{s;-1/2,q-1-q_0,q+1-q_0,q-q_0,0}(\Mcomp)}\lesssim\epsilon.
		\end{multline}
		While we can bound the nonlinear terms for instance as
		\begin{nalign}
			&\norm{D(\partial_r|_{t_*}\phi^2)}_{\Hb^{s;1,q+1/2,q+1/2,q-1/2,0}(\Mcomp)}\lesssim\norm{\Ve \phi}_{\Hb^{s;-1/4,q-3/2,q-1/2,q-1/2,3/4}(\Mcomp)}
			\norm{\Vb \phi}_{\Hb^{s-3;-9/4,-2,1/2,0,-1/4}(\Mcomp)};\\
			&\begin{multlined}
				\norm{(h'D+1) \abs{\partial_{t_*}\phi\partial_r\phi}}_{\Hb^{s;1,q+1/2,q+1/2,q-1/2,0}(\Mcomp)}\lesssim\norm{(r-M)\partial_r\phi}_{\Hb^{s;-1/4,q-3/2,q-1/2,q-1/2,3/4}(\Mcomp)}\\
				\cdot \norm{(r-M)^{-1}\partial_{t_*}\phi}_{\Hb^{s;-1/4,0,1/2,0,-1/4}(\Mcomp)};
			\end{multlined}\\
			&\norm{\rho_+\rho_\scri\abs{\partial_{t_*}\phi}^2}_{\Hb^{s;1,q+1/2,q+1/2,q-1/2,0}(\Mcomp)}\lesssim\norm{\partial_{t_*}\phi}_{\Hb^{s;-1/4,q-3/2,q-1/2,q-1/2,3/4}(\Mcomp)}\norm{\partial_{t_*}\phi}_{\Hb^{s-3;-5/4,-3,1/2,0,-1/4}(\Mcomp)};\\
			&\begin{multlined}
				\implies\norm{g_{\ern}(\dd\phi,\dd\phi)}_{\Hb^{s;1,q+1/2,q+1/2,q-1/2,0}(\Mcomp)}\lesssim\norm{D(\partial_r\phi)^2+(h'D+1) \abs{\partial_{t_*}\phi\partial_r\phi}}_{\Hb^{s;1,q+1/2,q+1/2,q-1/2,0}(\Mcomp)}	\\
				+\norm{(Dh'+2)\abs{\partial_{t_*}\phi}^2+r^{-2}\abs{\Omega\phi}^2}_{\Hb^{s;1,q+1/2,q+1/2,q-1/2,0}(\Mcomp)}.
			\end{multlined}
		\end{nalign}
		The rest of the nonlinearities are bounded similarly.
		
		For the interior, we use the improvement \cref{lin:eq:interior_product}. 
		Let us only discuss the nonlinearity that has $\partial_r$ derivatives, for this needs the improvement:
		\begin{multline}
			\norm{\Vc^k\partial_{t_*}\phi\partial_r\phi}_{\Hb^{s;q-1/2,-1/2-1/10}}\sim\norm{\Vc^k\partial_{t_*}\phi(r-M)\partial_r\phi}_{\Hb^{s;q+1/2,1/2-1/10}}\\
			\lesssim\norm{\Vc^k(r-M)\partial_r\phi}_{\Hb^{s;q-1/2,1/2-1/10}}\norm{\Vc^k\partial_{t_*}\phi}_{\Hb^{s;0,1/2-1/10}}\\
			\lesssim\norm{\Ve \phi}_{\Hb^{s;q-1/2,1/2-1/10}}\norm{t_*^{-1/2}\Ve\phi}_{\Hb^{s;q-1/2,1/2-1/10}} \lesssim
			\epsilon^2.
		\end{multline}
	\end{proof}
	
	\section{Scattering theory for a semilinear equation}\label{sec:scat}
	
	In this section, we correct the approximate solution given by \cref{an:thm:ansatz} to an actual solution of \cref{intro:eq:scattering}.
	For this, we prove a slight modification of a result of \cite{angelopoulos_non-degenerate_2020} by constructing scattering solutions in limited regularity spaces.
	
	\begin{theorem}\label{scat:thm:main}
		Fix $s\geq20,q_0>1,q>2$ and $\phi_{app}\in\A{b}^{s;1,q_0,q_0,q_0-1}(\Mcomp_{\m,\mathfrak{t}})$ with  $\Box_{\ern}\phi_{app}-\mathcal{N}[\phi_{app}]\in\A{b}^{s;3,q+5/2,q+1,q-1/2}(\Mcomp_{\m,\mathfrak{t}})$.
		Then, there exists a unique $\phi\in\A{b}^{k;1,q+1/2,q,q-1/2}(\Mcomp_{\m,\mathfrak{t}})$, solution to 
		\begin{equation}\label{scat:eq:scat}
			\Box_{\ern}(\phi+\phi_{app})-\mathcal{N}[\phi_{app}+\phi]=0,\quad \phi|_{\mathcal{B}_\mathfrak{t}}=T\phi|_{\mathcal{B}_\mathfrak{t}}=0,\quad r\phi|_{\scri}=0.
		\end{equation}
		where $k\leq \min(\floor{q-1},(s-15)/2)$.
	\end{theorem}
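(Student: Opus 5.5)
The plan is to solve for the correction $\phi$ by a limiting argument in finite time, using the backwards estimates of \cref{lin:prop:main} as the linear engine and \cref{lin:lemma:nonlin} to treat $\mathcal{N}$ perturbatively.

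\textbf{Finite problems.} Write $f_{app}:=\Box_{\ern}\phi_{app}-\mathcal{N}[\phi_{app}]$. The equation for $\phi$ reads
\begin{equation*}
\Box_{\ern}\phi=-f_{app}+\big(\mathcal{N}[\phi_{app}+\phi]-\mathcal{N}[\phi_{app}]\big).
\end{equation*}
Fix a sequence $T_n\to\infty$ and a cutoff $\chi_n=\chi(t_*/T_n)$ localising to $t_*<T_n$. Replace $f_{app}$ by $\chi_n f_{app}$. In the exterior, solve the backwards problem with trivial data on $\Sigma_{2T_n}$; this gives a solution vanishing for $t_*>2T_n$, so that $\phi|_\hor=r\phi|_\scri=0$ near $t_*=\infty$ trivially.

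Existence of $\phi_n$ on $\{t_*\in(\tau_0,2T_n)\}$ follows from local well-posedness of the semilinear equation, run backwards in time, and a continuity argument. In the interior region $\McompIn$ the problem is forward from the trivial data on $\mathcal{B}_\mathfrak{t}$; the exterior solution provides the data on $\hor$, and the interior estimate \eqref{lin:eq:interior} together with \cref{lin:lemma:data} handles it.

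\textbf{Uniform bootstrap.} The bootstrap assumption is
\begin{equation*}
\norm{\Vc^k\Ve\phi_n}_{\Hb^{s';-1/4,q-3/2,q-1/2,q-1/2,3/4}}\le\epsilon,
\end{equation*}
with $s'\approx s-15$ chosen so that the Sobolev embedding \eqref{not:eq:sobolev} and the commutator losses fit; this is where the constraint $k\le(s-15)/2$ enters.

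The inhomogeneity $f_{app}\in\A{b}^{s;3,q+5/2,q+1,q-1/2}$ lies in $\Hb^{s;3/4,q+1/2,q+1/2,q-1/2,k-1/4}$ by \eqref{not:eq:sobolev}. Here the extra decay at $\K$ and $i^+$ compensates exactly for the $\rho_\K^{+1}$ loss in \eqref{lin:eq:main}, and on $\{t_*>\tau_0\}$ its norm is small, since it is bounded by $\tau_0^{-c}$.

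\cref{lin:lemma:nonlin}, applied with $\phi_0=\phi_{app}$ (using $q_0>1$), bounds the nonlinear difference by $\epsilon\,\tau_0^{-1/2}$ thanks to the $t_*^{1/2}$ weight. Applying \cref{lin:prop:main} therefore gives a bound $\lesssim\tau_0^{-c}+\tau_0^{-1/2}\epsilon\le\epsilon/2$ for $\tau_0$ large, which closes the bootstrap uniformly in $n$.

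\textbf{Convergence and uniqueness.} Apply the same estimate at one lower regularity to the difference $\phi_n-\phi_m$. Its source consists of $(\chi_n-\chi_m)f_{app}$, which is small as $n,m\to\infty$ because of the decay margin, plus nonlinear differences that are absorbed by the smallness factor $\tau_0^{-1/2}$. Hence $(\phi_n)$ is Cauchy, and the limit $\phi$ solves \eqref{scat:eq:scat}. Weak-$*$ compactness keeps the top-order bound.

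Sobolev embedding then converts the $\Hb$ bounds into $\phi\in\A{b}^{k;1,q+1/2,q,q-1/2}$. The indices at $\K$ and $\F$ shift by $+1/2$ relative to the $L^2$ weights, following $\vec{a}_{ER}$. Uniqueness in this class follows from the same difference estimate.

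\textbf{Main obstacle.} The hardest step is closing the commuted estimate near $\hor$ with the $\Vc$ derivatives while tracking the weights. The difficulty is the enhanced log-blueshift: each $\mathcal{T}$ commutation costs one power of decay, which forces $k\le\floor{q-1}$. Simultaneously, $\mathcal{N}$ must be checked to stay in the right space under $\Vc^k$; for the interior this uses \eqref{lin:eq:interior_product}. I would first check the $\partial_{t_*}\phi\,\partial_r\phi$ term, which is the worst one, exactly as in the proof of \cref{lin:lemma:nonlin}.
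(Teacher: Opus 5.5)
Your proposal has a genuine gap: the interior and exterior problems are coupled in the wrong order. Future-directed ingoing null rays (generated by $Y=-\partial_r$) cross $\hor$ first and $\mathcal{B}_\mathfrak{t}$ afterwards, so $\McompIn$ lies to the \emph{past} of $\mathcal{B}_\mathfrak{t}$. The interior problem with $\phi|_{\mathcal{B}_\mathfrak{t}}=T\phi|_{\mathcal{B}_\mathfrak{t}}=0$ is therefore a backward problem whose solution is determined by the data on $\mathcal{B}_\mathfrak{t}$ alone; this is why \eqref{lin:eq:interior} contains no horizon term. You cannot feed horizon data from the exterior into it, since that overdetermines the interior problem.

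Conversely, the exterior backward problem has $\hor$ as part of its future boundary and needs data there for all $t_*<2T_n$, which you never specify. If you take that data to be zero, as in the exterior part of \cref{lin:prop:main}, the exterior piece will not match the trace of the interior solution, which is generically nonzero. The glued function is then not a solution across $\hor$. Solving one backward problem on all of $\{t_*<2T_n\}$ with data on $\Sigma_{2T_n}\cup\mathcal{B}_\mathfrak{t}$ does not avoid this: the exterior estimate then carries a horizon flux that must first be supplied by the interior estimate. The correct order is the paper's. First solve in $\McompIn$ (\cref{scat:lemma:interior}), which outputs $\phi_\hor:=\phi|_\hor$ together with bounds on $(\Vb\cup\Vc)^k\phi|_\hor$. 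Then solve the exterior scattering problem with \emph{nontrivial} horizon data $\phi_\hor$ and $r\phi|_\scri=0$.

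This exposes the missing ingredient. \eqref{lin:eq:main} is only available for $\phi|_\hor=r\phi|_\scri=0$ and forcing supported away from $\partial\Mcomp$, with horizon weight $k-1/4$ on the right. Your claim that $f_{app}\in\Hb^{s;3/4,q+1/2,q+1/2,q-1/2,k-1/4}$ follows from \eqref{not:eq:sobolev} is false at $\hor$ for $k\geq1$. The function $f_{app}$ is merely smooth across $\hor$, and $\vec{a}_{\mathrm{ER}}$ has entry $-1/2$ there, so Sobolev embedding only gives horizon weight $1/2-$.

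The paper obtains both zero horizon data and vanishing at $\hor$ from the peeling lemma \cref{scat:lemma:peeling}. Starting from $\chi(t_*(r-M))\phi_\hor$, it integrates the transport equations $\partial_v\partial_r\phi_{j+1}\approx f_j$ near $\hor$ to strip off a Taylor expansion in $\rho_\hor$. What remains is a problem with $\phi|_\hor=0$ and error in $\rho_\hor^k\Aext{b}^{s;3,q+5/2,q+1,q-1/2}$.

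Only after this step are the cut-off forcings $\chi((r-M)2^n)\chi(2^n/r)f$ uniformly bounded in the norm of \eqref{lin:eq:main}. A $\Vc$ derivative falling on $\chi((r-M)2^n)$ produces a factor comparable to $\rho_\hor^{-1}$ on its support, and the vanishing of $f$ absorbs it. The regularity of the final solution across $\hor$ likewise comes from the smooth peeled part plus a remainder vanishing to order $k+1/4$.

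With these two corrections, the rest of your scheme is consistent with the paper's argument. That includes the time cut-off, the bootstrap closed by the $t_*^{1/2}$ gain from \cref{lin:lemma:nonlin}, the limiting argument and uniqueness via the difference estimate.
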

	The proof is split into two parts.
	First --in \cref{scat:lemma:interior}-- we solve the problem in the interior ($\McompIn$) and obtain estimates for the solution over $\hor$.
	Then --in \cref{scat:prop:main}-- we solve in the exterior ($\Mcomp$) and thus finish the result.
	
	\subsection{Interior solution}
	
	The main result of this section is the existence of nonlinear solutions in the interior.
	
	\begin{prop}\label{scat:lemma:interior}
		Let $s\geq 10$, $1\leq k\leq\floor{q-1}$.
		Let $\phi_{app}\in \Aext{b}^{s+k;q_0-1/2}(\McompIn)$ with $q_0>1$ such that 
		$f_0=\Box_{\ern}\phi_{app}-\mathcal{N}[\phi_{app}]\in\Aext{b}^{s+k;(q-1/2)+}(\McompIn)$,  for $q>2$.
		For $T_\mathrm{f}\gg1$, there exists a unique scattering solution in $\D_{T_\mathrm{f}}=\{\McompIn\cap t_*>T_\mathrm{f}\}$ to
		\begin{equation}\label{scat:eq:scatIn}
			\Box_{\ern}(\phi+\phi_{app})-\mathcal{N}[\phi_{app}+\phi]=0,\quad \phi|_{\mathcal{B}_\mathfrak{t}}=T\phi|_{\mathcal{B}_\mathfrak{t}}=0.
		\end{equation}
		satisfying $\phi\in\Aext{b}^{s-3;q-1/2}(\McompIn)+\A{b}^{s-3;\infty,q-1/2,k-1/10}(\McompIn)$ and $	(\Vb\cup\Vc)^k\phi|_{\hor}\in\A{b}^{s-3;q}(\hor)$.
	\end{prop}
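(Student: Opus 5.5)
The proof is a backwards-in-time limiting argument built on the interior linear estimate \cref{lin:eq:interior} of \cref{lin:prop:main}. The nonlinear terms are closed by \cref{lin:lemma:nonlin} together with a contraction (bootstrap) for large $T_\mathrm{f}$.

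\emph{Truncated problems.} Fix a cutoff $\chi_{<\tau}(t_*)$ equal to one for $t_*<\tau$ and vanishing for $t_*>2\tau$. Set $f_0^\tau=\chi_{<\tau}f_0$ and consider the truncated problem
\begin{equation*}
\Box_{\ern}\phi^\tau=\mathcal{N}[\phi_{app}+\phi^\tau]-\mathcal{N}[\phi_{app}]-f_0^\tau \quad\text{in } \D_{T_\mathrm{f}},
\end{equation*}
with vanishing data on $\mathcal{B}_\mathfrak{t}$. Because the source is compactly supported in $t_*$, this is a (backwards) characteristic initial value problem with trivial data near $t_*\sim\infty$. Local existence is classical, and $\phi^\tau$ vanishes for $t_*>2\tau$.

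\emph{Uniform bounds.} I would bootstrap the norm
\begin{equation*}
\mathcal{E}[\phi]:=\norm{\Ve\Vc^k\phi}_{\Hb^{s-3;q-3/2,1/2-1/10}(\McompIn)}\le \epsilon.
\end{equation*}
Applying \cref{lin:eq:interior}, the data term vanishes, or is controlled by \cref{lin:lemma:data} using only $f_0$, since $\phi|_{\mathcal{B}}=T\phi|_{\mathcal{B}}=0$. This gives
\begin{equation*}
\mathcal{E}[\phi^\tau]\lesssim \norm{\Vc^kf_0}_{\Hb^{s-3;q-3/2,-1/2-1/10}}+\norm{\Vc^k(\mathcal{N}[\phi_{app}+\phi^\tau]-\mathcal{N}[\phi_{app}])}_{\Hb^{s-3;q-3/2,-1/2-1/10}}.
\end{equation*}
The first term is finite because $f_0\in\Aext{b}^{s+k;(q-1/2)+}$. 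By \cref{not:eq:sobolev}, the loss of $3$ derivatives and $\vec a_{\mathrm{ER}}$ weights is absorbed by the strict $+$ in the decay, and the horizon weight $-1/2-1/10$ is compatible with smoothness across $\hor$. Restricting to $t_*>T_\mathrm{f}$ makes this term as small as we like, of size $T_\mathrm{f}^{-\eta}$. For the second term, \cref{lin:lemma:nonlin} with $\phi_0=\phi_{app}$ gives a bound $t_*^{-1/2}\epsilon$, which on $\D_{T_\mathrm{f}}$ is $\lesssim T_\mathrm{f}^{-1/2}\epsilon$. This is where the extra $t_*^{1/2}$ gain is used. Taking $T_\mathrm{f}$ large closes the bootstrap uniformly in $\tau$. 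A difference estimate for $\phi^\tau-\phi^{\tau'}$, using the same linearised bounds, shows the family is Cauchy in a slightly weaker norm as $\tau\to\infty$. It also gives uniqueness in the class, since any two solutions differ by a solution of a linear equation with perturbatively small coefficients.

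\emph{Regularity and horizon trace.} The limit $\phi$ satisfies the bound above. Sobolev embedding \cref{not:eq:sobolev}, together with the interior improvement \cref{not:eq:sobolev2} (which uses $t_*^{-1}Y=v^{-1}Y\in\Vc$), converts $\Hb^{s}$ control into $L^\infty$ conormal control with a loss of $3$ derivatives. The splitting $\Aext{b}^{s-3;q-1/2}+\A{b}^{s-3;\infty,q-1/2,k-1/10}$ reflects that only $k$ $\Vc$-derivatives are controlled: near $\hor$ the solution is smooth only to order $k$, with the residual $\rho_\hor^{k-1/10}$ conormal behaviour. For the trace on $\hor$, I would apply the $T$-energy flux (without the $t^\delta$ weight) through $\hor$ to $(\Vb\cup\Vc)^k\phi$, exactly as in the \emph{Flux} step of \cref{an:prop:F_interior}, and then use Sobolev on $\hor$.

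\emph{Main obstacle.} The delicate step is the nonlinear estimate at the horizon with $\Vc$ commutations, namely the term $\partial_{t_*}\phi\,\partial_r\phi$, since $\partial_r$ is not b-regular there. This needs the improved product estimate \cref{lin:eq:interior_product} and the requirement $k\ge1$. A second restriction is that the commutation with $\mathcal{T}$ in the proof of \cref{lin:prop:main} only works for $k\le\floor{q-1}$, because of the enhanced blueshift (\cref{lin:rem:blueshift}); this is why $k$ is capped in the statement.
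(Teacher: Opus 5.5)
Your proposal follows the paper's proof essentially step by step: truncating the error in $t_*$ and solving the backward problem from $\mathcal{B}_\mathfrak{t}$, closing uniform bounds via the interior estimate of \cref{lin:prop:main} with the $T_\mathrm{f}^{-1/2}$ gain from \cref{lin:lemma:nonlin} and \cref{lin:lemma:data} for the $\mathcal{B}$ term, taking a limit, proving uniqueness by an energy estimate for the difference, and getting regularity from \cref{not:eq:sobolev2} plus the horizon trace from a $T$-flux. The only real deviation is that you pass to the limit with a Cauchy difference estimate rather than the paper's subsequence/compactness argument, which works equally well. One bookkeeping fix: run the bootstrap at regularity $s$ rather than $s-3$, since embedding $f_0$ into $\Hb$ costs weights but no derivatives; otherwise the final Sobolev step leaves you with $s-6$ instead of the claimed $s-3$.
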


	\begin{proof}[Proof of \cref{scat:lemma:interior}]
		\emph{Existence:}
		We consider the truncated problem
		\begin{equation}
			\Box_{\ern}\phi_1=\mathcal{N}[\phi_{app}+\phi_1]-\mathcal{N}[\phi_{app}]+\chi(t_*/T_1)(\mathcal{N}[\phi_{app}]-\Box_{\ern}\phi_{app})
		\end{equation}
		For some $T_1>T_\mathrm{f}$, and look for a solution with $\phi=0$ for $t_*>T_1$.
		From local existence for wave equations, we know that a solution exists in the region $t_*\in(T_\mathfrak{f},T_1)$ for some $T_\mathrm{f}<T_1$, and that $H^4$ norm of $\phi$ can be used as a breakdown criteria.
		
		We prove that for $T_{\mathrm{f}}$ sufficiently large and some fixed $s\geq4,\, k\geq 1$, the norm
		\begin{equation}
			\norm{\Vc\Ve^k\phi}_{\Hb^{s,q}(\D_{T_\mathrm{f}})}
		\end{equation}
		is uniformly controlled.
		Applying the linear estimate from \cref{lin:prop:main} together with the nonlinear bound \cref{lin:lemma:nonlin}, we obtain
		\begin{equation}
			\norm{\Vc^k\Ve \phi}_{\Hb^{s,q}(\D_{T_\mathrm{f}})}\lesssim_{s,k} T_{\mathrm{f}}^{-1/2} \norm{\Vc^k\Ve \phi}_{\Hb^{s,q}(\D_{T_\mathrm{f}})}+\norm{\Vc^kf_0}_{\Hb^{s,q}(\D_{T_\mathrm{f}})}++\norm{\Vb^{s+k+1}\phi|_{\mathcal{B}}}_{\Hb^{0;q-3/2}(\mathcal{B})},
		\end{equation}
		provided that the left hand side is smaller than 1.
		Using \cref{lin:lemma:data} we obtain that the right most term is controlled by the assumption on $f$.
		For $T_{\mathrm{f}}$ sufficiently large, we can absorb the first term on the left to the right and obtain uniform estimates.
		Existence follows by a limiting argument:
		Denoting by $\phi_{n}$ the above found solutions for $T^1=2^n$, we can find a sub sequence in $\Hb^{s-1,q-\epsilon}(\Mcomp)$ for any $\epsilon>0$ such that the convergence $\phi_n$ converges to a limit.
		
		\emph{Uniqueness:}
		Let $\tilde{\phi},\bar{\phi}\in\Aext{b}^{4;q-1/2}(\McompIn)$ be two solutions to \cref{scat:eq:scatIn}, and set $\phi=\tilde{\phi}-\bar{\phi}$.
		Then, $\phi$ satisfies a linear equation with no data and no forcing.
		We apply the energy estimate from the proof of \cref{lin:prop:main} with $1<q'<q$ in a bounded region $\D_{T_\mathrm{f}}^{T_1}$, and keep track of the boundary term at $T_1$ to get 
		\begin{equation}
			\norm{\Ve\phi}_{\Hb^{q'-1/2,1/2-1/10}(\D^{T_1}_{T_2})}\lesssim\int_{t_*=T_1}J^T_{2q'-5/2}[\phi].
		\end{equation}
		The boundary term goes to zero by assumption on $\tilde{\phi},\bar{\phi}$, and thus $\phi=0$.

		\emph{Regularity:}
		Via \cref{not:eq:sobolev2}, we obtain that $\Vc^{k}\phi\in\A{b}^{s-3;q-1/2,-1/10}(\McompIn)$, since we can take $s$ arbitrarily large.
		In particular, $\partial_{\rho_\hor}^k\phi|\A{b}^{s-3;q-1/2,-1/10}(\McompIn)$.
		Integrating $k$ times yields the result.

		\emph{Flux:}
		Finally, having found a solution, we can simply apply a $J^T_{N,0}[\phi]$ current, and bound all nonlinearities with the apriori knowledge that $\norm{\Vc^k\Ve \phi}_{\Hb^{s,q}(D_{T_\mathrm{f}})}$ is bounded.
		
	\end{proof}
	
	\subsection{Exterior solution}
	\begin{prop}\label{scat:prop:main}
		Let $q>2$, $q_0>3/2$, $s>10$, $0\leq k<\floor{q-1/2}$.

		Let $\psi_\scri=0$, $\phi|_{\hor}\in \A{b}^{s+2k+2;(q-1/2)+}(\hor)$, $\phi_{app}\in\Aext{b}^{s+2k;1,q_0,q_0,q_0-1}(\Mcomp)$ and $\Box_{\ern}\phi_{app}-\mathcal{N}[\phi_{app}]=f\in\Aext{b}^{s+2k;(3,q+5/2,q+1,q-1/2)+}(\Mcomp)$.
		Then, for $T_\mathrm{f}$ sufficiently large, there exists a unique scattering solution to 
		\begin{equation}\label{scat:eq:ext}
			\Box_{\ern}(\phi+\phi_{app})-\mathcal{N}[\phi_{app}+\phi]=0,\qquad r\phi|_\scri=\psi_\scri,\qquad \phi|_\hor=\phi_\hor
		\end{equation}
		satisfying $\phi\in\Aext{b}^{s;1,q+1/2,q,q-1/2}(\Mcomp)+\A{b}^{s-3;1,q+1/2,q,q-1/2,k+1/4}(\Mcomp)$.
	\end{prop}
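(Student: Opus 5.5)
We mirror the proof of \cref{scat:lemma:interior}: a truncation in time, a priori estimates via \cref{lin:prop:main} and \cref{lin:lemma:nonlin}, a compactness limit, and an energy-based uniqueness argument. The horizon data is handled first. Let $\chi_\hor$ be a cutoff localising to $\{(r-M)t_*<1\}$. Set $\phi_\hor^{\mathrm{ext}}(v,r,\omega)=\chi_\hor\,\phi|_\hor(v,\omega)$, which is constant in $r$ near $\hor$. Using $\phi|_\hor\in\A{b}^{s+2k+2;(q-1/2)+}(\hor)$, \cref{map:lemma:normal_ops} and \cref{not:rem:examples}, we get $\phi^{\mathrm{ext}}_\hor\in\A{b}^{s+2k+2;\infty,\infty,q+1/2,q-1/2}$, with $\Box_{\ern}\phi^{\mathrm{ext}}_\hor$ supported near $\F$ and lying in $\Aext{b}^{s+2k;\infty,\infty,q+1/2,q-1/2}$. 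By \cref{an:lemma:nonlin_pert}, the nonlinear cross terms with $\phi_{app}$ are at least as good. We then replace $\phi_{app}$ by $\phi_{app}+\phi^{\mathrm{ext}}_\hor$ and look for $\tilde\phi=\phi-\phi^{\mathrm{ext}}_\hor$ with trivial data on $\hor$ and $\scri$. The new error $f$ stays within the weights $(3,q+5/2,q+1,q-1/2)$, up to an $\epsilon$-loss at $\K$. That loss is harmless, since the right-hand side of \cref{lin:eq:main} only requires $\rho_\K^{q+1/2}$-type $L^2$ weights, which follow from \cref{not:eq:sobolev}.

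Next we truncate. Fix $T_1>T_\mathrm{f}$ and solve, backwards from $t_*=T_1$ with zero data,
\[
\Box_{\ern}\tilde\phi_{T_1}=\mathcal{N}[\phi_{app}+\tilde\phi_{T_1}]-\mathcal{N}[\phi_{app}]-\chi(t_*/T_1)f,
\]
with $\tilde\phi_{T_1}|_\hor=r\tilde\phi_{T_1}|_\scri=0$. Local existence up to $\hor$ is standard because $\hor$ is characteristic. Near $\scri$ one works in the conformally compactified picture, or, more simply, uses the flux bounds of \cref{curr:lem:ext}, where the $J^1_{>,N}$ current controls $\scri$. Breakdown is controlled by a low $H^4$ norm. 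To set up the bootstrap, assume
\[
\norm{\Vc^{k}\Ve\tilde\phi_{T_1}}_{\Hb^{s;-1/4,q-3/2,q-1/2,q-1/2,3/4}(\D_{T_\mathrm{f}})}\leq \epsilon\leq 1 .
\]
Apply \cref{lin:prop:main} with the inhomogeneity equal to the right-hand side above. The forcing term is bounded by the assumption on $f$, after converting $\A{b}$ weights to $\Hb$ weights via \cref{not:eq:sobolev}; this is where the $+$ in the weights of $f$ and the $2k$ surplus derivatives are spent. The nonlinear term is bounded using \cref{lin:lemma:nonlin}, where the factor $t_*^{1/2}$ gives a gain $T_\mathrm{f}^{-1/2}$. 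For $T_\mathrm{f}$ large this term is absorbed, which closes the bootstrap uniformly in $T_1$. A subtlety is that \cref{lin:prop:main} as stated assumes $f$ supported away from $\partial\Mcomp$. We handle this by approximating the truncated forcing by compactly supported ones, which only requires cutting off near $\scri$ and $\F$ and a density argument, since all norms are weighted $L^2$.

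We expect the main obstacle to be the bookkeeping of weights at $\K$ and at $\hor$. The linear estimate loses $\rho_\K^{1}$ and only propagates $\Vc^{k}$ regularity for $k\lesssim q$, and the nonlinearity $\ern(\dd\phi,\dd\phi)$ contains $\partial_r\phi\,\partial_{t_*}\phi$, which is borderline at $\F$. The first thing I would try is to check term by term, as in the proof of \cref{lin:lemma:nonlin}, that the weights $-1/4,q-3/2,q-1/2,q-1/2,3/4$ on $\Ve\tilde\phi$ reproduce $1,q+1/2,q+1/2,q-1/2,0$ on the nonlinearity with a spare $t_*^{1/2}$. The hypotheses $q_0>3/2$ and $q>2$ are what make this work.

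Finally, we take the limit and upgrade regularity. Along $T_1=2^n$, the uniform bound gives a subsequence that converges in a slightly weaker space $\Hb^{s-1;\cdot-\epsilon}$, and the limit solves \cref{scat:eq:ext}. Uniqueness follows as in \cref{scat:lemma:interior}: the difference of two solutions solves a linear equation with small coefficients, and a $J^T_{N,\delta}$ estimate on $\D_{T_\mathrm{f}}^{T_1}$ with boundary term at $T_1$ tending to $0$ forces it to vanish. For regularity, \cref{not:eq:sobolev} turns the $\Hb$ bound on $\Ve\tilde\phi$ into $\tilde\phi\in\A{b}^{s-3;1,q+1/2,q,q-1/2,k+1/4}$, with the horizon weight coming from the $\Vc^k$ control. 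Adding back the decomposition $\phi^{\mathrm{ext}}_\hor\in\Aext{b}^{s;1,q+1/2,q,q-1/2}$ then gives the stated membership.
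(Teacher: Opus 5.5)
There is a genuine gap at the horizon for $k\geq1$; for $k=0$ your scheme coincides with the paper's. Subtracting only $\phi^{\mathrm{ext}}_\hor=\chi_\hor\phi|_\hor$ gives $\tilde\phi|_\hor=0$, but not $Y\tilde\phi|_\hor=0$. Restrict the equation to $\{r=M\}$, where $D=D'=0$ and $\tilde\phi,\partial_v\tilde\phi,\slashed{\nabla}\tilde\phi$ vanish. This gives the transport equation $2\partial_v\partial_r\tilde\phi-2\partial_v\phi'_{app}\,\partial_r\tilde\phi=-\tilde f|_\hor$ along $\hor$. Here $\phi'_{app}=\phi_{app}+\phi^{\mathrm{ext}}_\hor$, and $\tilde f$ is your new error, which contains $f|_\hor$ and $\Box_{\ern}\phi^{\mathrm{ext}}_\hor|_\hor=\tfrac{2}{M}\partial_v\phi_\hor+M^{-2}\slashed{\Delta}\phi_\hor$. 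So generically $\partial_r\tilde\phi|_\hor\neq0$.

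Your bootstrap quantity $\norm{\Vc^{k}\Ve\tilde\phi}_{\Hb^{s;-1/4,q-3/2,q-1/2,q-1/2,3/4}}$ contains, since $1\in\Ve$, the term $\int\rho_\hor^{-3/2}\abs{v^{-1}Y\tilde\phi}^2\mu$. Because $\mu=\rho_\hor\mu_{\b}$ is a smooth density transversally to $\hor$, this term is infinite whenever $Y\tilde\phi|_\hor\neq0$. The bootstrap therefore cannot close for the actual solution.

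The same obstruction defeats your approximation step. After the time truncation the forcing still reaches $\scri$ and $\hor$, and it is the cutoff at $\hor$ that fails. A cutoff $\chi((r-M)2^n)$ is uniformly bounded under $\Vb$, but not under $v^{-1}Y\in\Vc$. Moreover, for $k\geq1$ the $\Vc^k$-commuted forcing norm controls the trace at $\hor$, so forcings supported away from $\hor$ are not dense. Hence \cref{lin:prop:main} with $k'\geq1$ and horizon weight index $0$, whose left side forces $\Vc^j\phi|_\hor=0$, does not extend to your $\tilde f$. For the same reason your final claim $\tilde\phi\in\A{b}^{s-3;1,q+1/2,q,q-1/2,k+1/4}$ is false: generically $\tilde\phi$ vanishes only to first order at $\hor$.

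The missing ingredient is the peeling step, \cref{scat:lemma:peeling}, which the paper stresses is crucial. You must subtract the full Taylor expansion of the solution at $\hor$ up to order $k$, not just its zeroth term. Starting from $\phi_0=\chi(t_*(r-M))\phi_\hor$, one recursively solves the leading-order horizon transport equations $\partial_v\partial_r\phi_{j+1}-\partial_r\phi_{j+1}\partial_v\phi_{app}=f_j$. One integrates in $v$ from the future, then in $r$ from $\hor$ with $\phi_{j+1}|_\hor=0$, gaining one power of $\rho_\hor$ in the error at each step. The result is that the peeled function lies in $\Aext{b}^{s;1,q+1/2,q,q-1/2}$ and the error lies in $\rho_\hor^k\Aext{b}^{s;3,q+5/2,q+1,q-1/2}$.

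Only then is the forcing cut off near $\hor$ and $\scri$. Thanks to the $\rho_\hor^k$ vanishing, the error has finite $\Hb^{s;3/4,q+1/2,q+1/2,q-1/2,k-1/4}$ norm, and the $\Vb$-bounded cutoffs are uniformly bounded in it. One then applies \cref{lin:prop:main} with horizon weight index $k$ and no $\Vc$-commutation ($k'=0$). This gives $\Ve\phi\in\Hb^{s;-1/4,q-3/2,q-1/2,q-1/2,k+3/4}$ and, via \cref{not:eq:sobolev}, the $\A{b}^{s-3;1,q+1/2,q,q-1/2,k+1/4}$ component.

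So in the exterior, regularity at $\hor$ is encoded by vanishing weights after peeling, not by $\Vc$-commutation. The $\Vc$-commuted estimate is what the paper uses in the interior, \cref{lin:eq:interior}, where the horizon weight $1/2-1/10$ is compatible with nonzero traces. Your time truncation, bootstrap with the $T_{\mathrm{f}}^{-1/2}$ gain, limiting argument and uniqueness argument are otherwise as in the paper.
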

	
	The first step of the proof is to put \cref{scat:eq:ext} into a form, so that \cref{lin:prop:main} is applicable.
	
	\begin{lemma}[Peeling]\label{scat:lemma:peeling}
		Let $\psi_\scri,\phi_\hor,f,\phi_{app},k$ be as in \cref{scat:prop:main}.
		Then, for any $j\leq k$ there exists $\phi\in \Aext{b}^{s+2(k-j);1,q+1/2,q,q-1/2}(\Mcomp)$ satisfying $r\phi|_{\scri}=\psi_\scri$, $\phi|_{\hor}=\phi_\hor$
		\begin{equation}\label{scat:eq:peeling}
			\Box_{\ern}(\phi+\phi_{app})-\mathcal{N}[\phi_{app}+\phi]\in\rho_\hor^j\Aext{b}^{s+2(k-j);3,q+5/2,q+1,q-1/2}(\Mcomp).
		\end{equation}
	\end{lemma}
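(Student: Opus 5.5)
We would construct $\phi$ as a finite Taylor expansion in $\rho_\hor=t_*(r-M)$ (equivalently in $r-M$) near the horizon, arguing by induction on $j$. For $j=0$ we take $\phi^{(0)}=\chi_{<}(r)\,\phi_\hor(v,\omega)$, where $\chi_<$ localises to $\{r<2M\}$. Since $\psi_\scri=0$, this satisfies both boundary conditions. Its regularity follows from $\phi_\hor\in\A{b}^{(q-1/2)+}(\hor)$: extended constantly in $r$ at fixed $v$, it lies in $\Aext{b}^{s+2k;\infty,\infty,q-1/2,q-1/2}(\Mcomp)$, because $v\sim t_*$ near $\F$ and $\K$. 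Applying \cref{map:lemma:normal_ops} together with the nonlinear mapping \cref{an:lemma:nonlin,an:lemma:nonlin_pert}, the error $\Box_{\ern}(\phi^{(0)}+\phi_{app})-\mathcal{N}[\cdot]$ then differs from $f$ by an element of the required space with $j=0$. The decay at $\F$ must be checked against $q-1/2$; it is not improved, which is consistent with the target weight $q-1/2$ at $\F$.

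For the inductive step, suppose $\phi^{(j)}$ has error $e_j\in\rho_\hor^j\Aext{b}^{s+2(k-j);3,q+5/2,q+1,q-1/2}$. We would use the structure of $\Box_{\ern}$ near $\hor$ in $(v,r)$ coordinates, from \cref{not:eq:wave_explicit}:
\[
r\Box_{\ern}\phi=2\partial_v\partial_r(r\phi)+\partial_rD\partial_r(r\phi)+\dots .
\]
A correction $\phi'=\chi_<\,(r-M)^{j+1}a_{j}(v,\omega)$ produces the leading term $2(j+1)(r-M)^j\partial_v a_j$; the remaining terms carry one more power of $(r-M)$ or $D$-weights. To cancel the leading coefficient $e_j^{(j)}(v,\omega)=\frac{1}{j!}\partial_r^je_j|_\hor$, we solve the transport equation $2(j+1)\partial_va_j=-e_j^{(j)}$ backwards from $v=\infty$, i.e.\ $a_j(v)=\frac{1}{2(j+1)}\int_v^\infty e_j^{(j)}$. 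This gains one power of $v$, so $(r-M)^{j+1}a_j\sim\rho_\hor^{j+1}v^{-(j+1)}\cdot v\,e_j^{(j)}$. The bookkeeping therefore has to confirm that the weights at $\F$ and $\K$ of the correction match $q-1/2$ and $q$. The nonlinear contribution $\mathcal{N}[\phi+\phi']-\mathcal{N}[\phi]$ vanishes at $\hor$ to the same order as $\phi'$ (or better, via \cref{an:lemma:nonlin}), so it only enters at order $\rho_\hor^{j+1}$. Each step costs two derivatives, one from restricting $\partial_r^j$ and one from the transport and commutators, which accounts for the loss $s+2(k-j)$. Because $\phi'$ vanishes on $\hor$ and is localised near $\hor$, the boundary data $\phi|_\hor=\phi_\hor$ and $r\phi|_\scri=0$ are preserved.

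The main obstacle will be the weight bookkeeping at $\F$. There, $(r-M)\sim v^{-1}$, so the Taylor coefficients $\partial_r^je$ scale like $v^{j}$ times the $\F$-weight of $e$. Integrating in $v$ requires this coefficient to be integrable, and this is exactly what limits $j\le k<\floor{q-1/2}$: we need $q+1/2-j>1$ roughly. I would first work in similarity coordinates $\mathfrak r=(r-M)/v$, $\log v$ via \cref{map:eq:g_near_similarity_coord}, where the expansion is in $\mathfrak r$ with coefficients conormal in $v$. That makes the integrability condition transparent, and it lets the correction be cut off by $\chi(\mathfrak r)$ so that it is supported near $\hor\cap\F$. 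Away from that corner, $\rho_\hor\sim1$, so the claimed $\rho_\hor^j$ vanishing is automatic.
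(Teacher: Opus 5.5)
Your scheme is essentially the paper's: cut $\phi_\hor$ off near $\hor$, then remove the $(r-M)^j$ Taylor coefficient of the error with a correction $(r-M)^{j+1}a_j$, where $a_j$ solves a transport equation in $v$ integrated backwards from $v=\infty$. However, two steps fail as written.

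\textbf{The inductive step does not close.} You claim that $\mathcal{N}[\Phi+\phi']-\mathcal{N}[\Phi]$, with $\Phi=\phi_{app}+\phi^{(j)}$, only enters at order $\rho_\hor^{j+1}$. This is false for the quadratic term. In $(v,r)$ coordinates $2\ern^{-1}(\dd\Phi,\dd\phi')$ contains $2\partial_v\Phi\,\partial_r\phi'$, and $\partial_r\phi'=(j+1)(r-M)^ja_j+O((r-M)^{j+1})$ vanishes only to order $j$. Hence the $(r-M)^j$ coefficient of the new error is
\[
e_j^{(j)}+2(j+1)\big(\partial_va_j-(\partial_v\Phi)|_\hor\,a_j\big).
\]
With your choice of $a_j$, the remainder $-2(j+1)(\partial_v\Phi)|_\hor\,a_j\,(r-M)^j$ survives. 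It decays faster in $v$, but it vanishes only to order $j$ at $\hor$, so it does not lie in $\rho_\hor^{j+1}\Aext{b}^{3,q+5/2,q+1,q-1/2}(\Mcomp)$. This is why the paper's transport equation has the integrating factor, $e^{\phi_{app}}\partial_v(e^{-\phi_{app}}\partial_r\phi_{j+1})=f_j$. You should solve $\partial_v\big(e^{-\Phi|_\hor}a_j\big)=-\tfrac{1}{2(j+1)}e^{-\Phi|_\hor}e_j^{(j)}$ from $v=\infty$. Since $\Phi|_\hor$ is bounded and decaying, the integrating factor does not change any weights.

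\textbf{The base step has the wrong weight at $\K$.} With the cutoff $\chi_<(r)$, $\phi^{(0)}$ has $\K$-weight $q-1/2$, as you note yourself. The lemma requires $\K$-weight $q$ for $\phi$. Moreover, the error $\Box_{\ern}\phi^{(0)}=\frac2r\partial_v\phi_\hor+r^{-2}\slashed{\Delta}\phi_\hor+(\text{terms with }\chi_<',\chi_<'')$ has $\K$-weight only $q-1/2$, not the required $q+1$. You need to localise in $\rho_\hor=t_*(r-M)$, as the paper does with $\phi_0=\chi(t_*(r-M))\phi_\hor$, and do the same for every correction. Then everything is supported away from $\K$. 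The cutoff errors live where $\rho_\hor\sim1$, so they only need the $\F$-weight $q-1/2$, which they have.

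The relevant similarity variable is $\mathfrak r=(r-M)v$. The $(r-M)/v$ printed in \cref{map:eq:g_near_similarity_coord} is a misprint, and a cutoff in $(r-M)/v$ would not localise near $\hor\cap\F$.

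Finally, since $e_j^{(j)}\sim v^{j-(q-1/2)}$, backwards integrability requires $j<q-3/2$, not $q+1/2-j>1$. This still holds for all $j\le k-1$ under the hypothesis $k<\floor{q-1/2}$.
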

	\begin{proof}
		We write $\phi_{\leq j}=\phi_{0}+\phi_1+...+\phi_j$, such that \cref{scat:eq:peeling} holds for $\phi_{\leq j}$.
		
		We set $\phi_0=\chi(t_*(r-M))\phi_\hor$, where $\chi_\hor$ to be a cut-off supported in an open neighbourhood of a $\hor$.
		
		Assume by induction that $\phi_{\leq j}$ is already constructed such that \cref{scat:eq:peeling} holds, with $f_j$ denoting the right hand side.
		For $\phi_{j+1}\in \rho_\hor^{j+1}\Aext{b}^{s+2(k-j)+2;\infty,\infty,\infty,q-1/2}(\Mcomp)$,  we can compute in $(r,t_*)$ coordinates that
		\begin{equation}
			\Box_{\ern}(\phi_{j+1})-\mathcal{N}[\phi_{app}+\phi_{j+1}]=2\partial_v\partial_r\phi_j-\partial_r\phi_j\partial_v\phi_{app}+\rho_{\hor}^{j+1}\Aext{b}^{s+2(k-j);\infty,\infty,\infty,q-1/2}(\Mcomp).
		\end{equation}
		We can integrate the leading order part of this equation for $\partial_r\phi_j$ in $v$  with $f_{j}$ on the right hand side to obtain 
		$\partial_r\phi_{j+1}\in\rho_{\hor}^{j}\Aext{b}^{s+2(k-j);\infty,\infty,\infty,q-1/2-1}(\Mcomp)$ satisfying in $(r-M)t_*<1$
		\begin{equation}
			\partial_v\partial_r\phi_{j+1}-\partial_r\phi_{j+1}\partial_v\phi_{app}=e^{\phi_{app}}\partial_ve^{-\phi_{app}}\partial_r\phi=f_{j}. 
		\end{equation} 
		Integrating in $r$ with $\phi_{j+1}|_\hor=0$ we obtain 		$\phi_{j+1}\in\rho_{\hor}^{j+1}\Aext{b}^{s+2(k-j);\infty,\infty,\infty,q-1/2-1}(\Mcomp)$.
		We compute using \cref{map:eq:model_op} and \cref{lin:lemma:nonlin}
		\begin{nalign}
			\Box_{\ern}\phi_{j+1}-f_j\in\rho_\hor^{j+1}\Aext{b}^{s+2(k-j);3,q+5/2,q+1,q-1/2}(\Mcomp);\\
			\mathcal{N}[\phi_{app}+\phi_{\leq,j+1} ]-\mathcal{N}[\phi_{app}+\phi_{\leq,j} ]\in \rho_\hor^{j+1}\Aext{b}^{s+2(k-j);3,q+5/2,q+1,q-1/2}(\Mcomp).
		\end{nalign}
		This yields the result.
	\end{proof}
	
	\begin{proof}[Proof of \cref{scat:prop:main}]
		\emph{Step 1)}
		We apply \cref{scat:lemma:peeling} to set $\psi_\scri=\phi_\hor=0$ and obtain $\rho_\hor^{k}$ with $k$ vanishing towards $\hor$.
		
		\emph{Step 2)}
		We study a cutoff problem with $f$ multiplied by $\chi(t_*/T_2)$ where $\chi$ is a smooth cutoff function satisfying $\chi|_{x<1}=1$ and $\chi|_{x>2}=0$.
		The result follows from taking a limit to obtain the solution $\phi$.
		Accordingly, for the rest of the proof, we assume that $f$ has compact in $t_*$ support and we look for a solution to \cref{scat:eq:ext} satisfying $\phi=0$ for $t_*\gg1$ and we prove uniform estimates independent of the support assumption.
		
		\emph{Step 3)}
		Next, we also apply a cutoff function, localising $f$ away from $\hor,\scri$, so we take $f_n=\chi((r-M)2^n)\chi(r^{-1}2^{n})f$.
		It follows from \cref{not:eq:sobolev} that
		\begin{equation}
			\norm{f_n}_{\Hb^{s;3/4,q+1/2,q+1/2,q-1/2,k-1/4}}\lesssim \norm{f}_{\Hb^{s;3/4,q+1/2,q+1/2,q-1/2,k-1/4}}.
		\end{equation}
		The right hand side is finite by assumption.
		Note that at this step, it is crucial that we apply the peeling in \emph{Step 1)}.
		
		We consider the solution $\phi_n$ to \cref{scat:eq:ext}, with $f_0$ replaced by $f_n$. 
		We apply \cref{lin:prop:main,lin:lemma:nonlin}, to obtain uniform control over $\phi_n$ as $n\to \infty$ in the norm
		\begin{equation}
			\norm{\Ve \phi}_{\Hb^{s;-1/4,q-3/2,q-1/2,q-1/2,k+3/4}(\Mcomp)}.
		\end{equation}
		Hence, we get a solution $\phi$ that solves \cref{scat:eq:ext} with $f$.
		
		Existence and uniqueness follow from the same argument as in \cref{scat:lemma:interior}.
	\end{proof}

	\section{Multi-black hole scattering}\label{sec:multi}
	To highlight the robustness of the scheme, we consider nonlinear scalar equations on a multi-black hole backgrounds.
	More concretely, we consider the same nonlinearity as before, on a metric that  \emph{does not} solve the Einstein equations, but it does describe $N$ extremal black holes moving away from one another on hyperbolic orbits.
	We construct scattering solutions in the late future, i.e. a small neighbourhood of $\ip$.
	
	\subsection{Geometry}
	
	Let us recall that in $(t_*,r)$ coordinates $\ern$ takes the form \cref{g_t*}.
	In the following, we will introduce a Lorentzian manifold $(\M_{\m,\mathfrak{t}},g_\m)$ with $\M_{\m,\mathfrak{t}}\subset\R^{1+3}$.
	We will use $(s,x)$ for the standard temporal and spatial coordinates on $\R^{1+3}$.
	For $z\in \mathring{B}$ and $\gamma_z:=(1-\abs{z}^2)^{-1/2}$, introduce the Lorentz boost map
	\begin{equation}
		\Phi_z:(s,x)\mapsto (t_z,x_z):=\big(\gamma_z(s-z\cdot x),x+(\gamma_z-1)(x\cdot\hat{z})\hat{z}-\gamma_zsz \big).
	\end{equation}
	Fix a finite collection $z_i\in A\subset\mathring{B}$, $\abs{A}<\infty$ corresponding to  velocities of the final black holes.
	Without loss of generality we take $0\in A$.
	We define a multi extremal black hole metric
	\begin{equation}\label{nonlinear:eq:multi_metric}
		g_{\mathrm{m}}:=\eta+\sum_{z\in A} \chi(x_{z}/t_{z})\Big(\Phi_{z}^* (\ern)-\eta\Big),
	\end{equation}
	where $\chi$ is a localising to a $\delta \ll1$ neighbourhood of $0$, such that for $s>T_\mathrm{f}\gg 1$ we have $\supp \chi(x_{z}/t_{z})$ all pairwise disjoint.
	Here, $\ern$ is expressed with respect to $(t_*,r)$ coordinates, and the pullback is defined accordingly.
	$T_\mathrm{f}$ stands for final time, and we will only construct solutions in the region $T_\mathrm{f}$ is large.
	In fact, the construction will only work in $s-\abs{x}> T_\mathrm{f}$, but it is well known how to perform the scattering construction in the remaining region, see for instance \cite{kadar_scattering_2025}.
	
	Let us note that on $\abs{x_z}=\text{const}$ it holds that $s=\gamma_z^{-1}t_z+\mathcal{O}(1)$, and thus $s^{-1}$ may be used locally as the boundary defining function for each component of the compact manifold $\Mcomp_{\m,\mathfrak{t}}$ that we are to define.

	\begin{remark}[Asymptotically flat time coordinates]
		In our work, we localise the perturbation of $\ern-\eta$ to be away from $\scri$.
		Provided that one wishes to keep the sum of $\ern$ black holes all the way to $\scri$ it is  necessary to choose coordinates $(s,x)$ in which $s-\abs{r}$ is an approximately null coordinate, so that this property will be preserved after applying Lorentz boosts $\Phi_z$.
		A possible such choice is to use on $(\M,\ern)$ the ingoing-flat coordinate $s$ satisfying $\ern^{-1}(\dd s,\dd s)\leq0$ and
		\begin{equation}
			s=v+\begin{cases}
				0&r<10M,\\
				-2r_*+r&r>20M.
			\end{cases}
		\end{equation}
		This choice implies that $s-r$ foliates $\scri$ in $r>20M$, while $s$ foliates $\hor$ in $r<10M$.
	\end{remark}
	
	\begin{remark}[Possible extensions]\label{multi:rem:extension}
		One could replace $g_\m$ with a metric that is not exactly equal to $\ern$ but only settles down to $\ern$ at a polynomial rate.
		More precisely, using the present techniques, it suffices if $g_\m$ converges in the near horizon ($\F$), compact ($\K$) and self similar ($i^+$) regions to the respective leading order nondegenerate quadratic Lorentzian metric at a rate $\A{b}^{\cdot,0+,1+,0+}(\multiComp)$:
		Denoting by $\dd x_\mu$ elements from $\{\dd x,\dd t\}$, by $\dd\omega$ one forms on the sphere, we would require for some quadratic forms $Q$ (changing from line to line)
		\begin{nalign}
			g_\m\equiv \eta & \mod \A{b}^{0+}(\{r/t\in(1/4,1/2)\})Q(\dd x_\nu,\dd x_\mu);\\
			g_\m\equiv \ern & \mod \A{b}^{1+}(\{r\in(2M,10M)\}) Q(\dd x_\mu,\dd x_\mu);\\
			g_\m\equiv g_{\mathrm{near}} & \mod \A{b}^{0+}(\{(r-M)v\in(-1,1)\}) Q\Big(\frac{\dd r}{(r-M)},(r-M)\dd v,\dd \omega\Big).
		\end{nalign} 
		The loss of $1+$ at the compact region is due to trapping and is well known how to improve on it, see \cref{lin:rem:Morawetz}.
		We pursue no such extension of our result in this introductory work.
	\end{remark}

	It is straightforward to check that $g_\m$ describes a Lorentzian metric on the manifold
	\begin{equation}
		\M_{\m,\mathfrak{t}}:=\{(s,x)\in\R^{1+3}: s-\abs{x}>T_f,\, \abs{x_{z}}-M>-\mathfrak{t}/t , \quad\forall z\in A\}.
	\end{equation}
	Moreover, by construction, each part of the spacetime $\M_{\m,\mathfrak{t}}\cap\{\abs{x_{z}}<\delta s_{z}\}$ is isometric to $(\M,\ern)$.
	One can also check, that $\M_\m$ describes a spacetime that has a future complete null infinity and a black hole region ($\hor_{g_\m}$) formed by $\abs{A}$ disconnected components.
	Finally, we mention that $\cup_z\hor_z=\{\abs{x_i}=M\}$ is \emph{not} the black hole horizon\footnote{Since, this global geometric facts plays no role here, we leave it to the reader to check explicitly.} for $g_\m$, but approximate them as $s\to\infty$.
	
	We define the following vectorfields in $\M_{\m,\mathfrak{t}}$
	\begin{equation}
		T=\partial_s|_x,\quad T_z=\partial_{t_z}|_{x_z}.
	\end{equation}
	
	Let us introduce boundary defining functions
	\begin{equation}
		\rho^\hor_z=s(\abs{x_z}-M),\quad\rho_z^\F=\frac{1+(\abs{x_{z}}-M)s}{\abs{x_z}s},\quad \rho^\K_z=\frac{\jpns{x_{z}}}{s\rho_z^\F},\quad \rho^{+}=\frac{\sum_{z\in A}(\rho_z\rho_z^\F)^{-1}}{s-\abs{x}},\quad \rho^\scri:=1-\abs{x}/s.
	\end{equation}
	We define $\multiComp$ to be the manifold with corners as a compactification of $\M_{{\m,0}}$ obtained by extending all the functions $\rho^\hor_z,\rho^\F_{z_i},\rho^\K_{z_i},\rho^+,\rho_\scri$ to 0 smoothly.
	We have the following boundary components:
	\begin{itemize}
		\item $\scri\cong \Rcomp\times\sphere$ and $\mathring{\ip}\cong B\setminus A$ similar to $\Mcomp$;
		\item $\K_{z}$, spacially compact timelike infinity, the endpoint of curves that are a constant distance away from the black hole with velocity $z_i$;
		\item $\F_{z}$,  the near horizon geometry of black hole with velocity $z_i$;
		\item $\hor_z$, the image of the Extremal Reissner-Nordstrom horizon under $\Phi_z$;
		\item boundary hypersurface $\Sigma_{T_f}:=\{s-\abs{x}=T_f\}$ is an artificial boundaries.
	\end{itemize} 
	
	We also introduce $\multiCompin$ as the compactification of $\M_{\m,\mathfrak{t}}\cap\{\abs{x_z}<M\}$ by extending $\rho^\hor_z,\rho^\F_z$ smoothly to 0, which has
	$\mathcal{B}_z=\{\abs{x_{z_z}}-M=-\delta/s\}$ as an additional artificial boundary.
	We note that $\multiCompin$ is isomorphic to $\McompIn$.
	Finally, we define $\Mcomp_{\m,\mathfrak{t}}$ as the compactification of $\M_{\m,\mathfrak{t}}$ with $\rho^\K_z,\rho^+,\rho^\scri$ and $\rho^{F\prime}_z=(\abs{x_z}-M)s$ extending to $0$ smoothly.
	See \cref{fig:multi} for a diagrammatic representation.
	
	Next, we extend the notation from \cref{sec:setup} to capture the extra regions associated to the geometry of $\multiComp$.
	Given $(a_\scri,a_+,a_{K,1},...,a_{K,N},a_{F,1},....,a_{F,N})=(a_\scri,a_+,\vec{a}_\K,\vec{a}_\F)=\vec{a}\in \R^{2+2N}$ we write $\A{b}^{\vec{a}}(\multiComp)$ for the conormal space of functions on $\multiComp$.
	Whenever $a_{F,1}=a_{F,2}=...=a_{F,N}$, we simply write $\overrightarrow{a_{F,1}}$ for $\vec{a}_{\F}$.
	
	Finally, let us introduce $T^z,\Omega^z_{ij},Y^z$ that are the vectorfields defined in \cref{not:sec:spacetime} pulled back via the diffeomorphism $\Phi_{z}$.
	\subsection{Main result}
	
	We study scattering solutions for a seminilinear wave equation on $\M_{\m,\mathfrak{t}}$.
	We are not interested in obtaining sharp result regarding the admissible form of nonlinearities nor to optimize the required decay at $\scri$.
	We consider the semilinear equation $\mathcal{N}[\phi]=g_{\mathrm{m}}^{-1}(\dd \phi,\dd\phi)+\phi^3$.
	
	\begin{theorem}\label{multi:thm:main}
		Let $q>3/2$ and let $\psi^\scri\in\A{b}^{q-1}(\scri)$.
		Then, there exists a smooth solution $\phi\in\A{b}^{1,(q,\vec{q},\vec{q})-}(\Mcomp_{\m,\mathfrak{t}})$ of the half-scattering problem
		\begin{equation}\label{multi:eq:scat}
			\Box_{g_\m}\phi=\mathcal{N[\phi]},\qquad r\phi|_{\scri}=\psi^\scri.
		\end{equation}
	\end{theorem}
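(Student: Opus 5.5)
We follow the same two-step scheme as for \cref{intro:thm:sem}: first an ansatz on $\multiComp$, then a backwards energy-estimate correction. The guiding observation is that all the model operators of $\Box_{g_\m}$ at the boundary faces are those already studied for a single black hole. Near $\ip$ (away from the points $z\in A$) and near $\scri$, $g_\m=\eta$, so the normal operator there is $\Box_\eta$. Near $\K_z$, $\F_z$ and $\hor_z$ the metric is exactly $\Phi_z^*\ern$, so the normal operators are the Lorentz-boosted versions of $\Normal{\K}$ and $\Normal{\F}$. Since $\chi(x_z/t_z)$ has disjoint supports for $s>T_\mathrm{f}$, each black-hole neighbourhood can be treated separately after pulling back by $\Phi_z$.

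\textbf{Step 1 (ansatz).} Begin with $r\phi_0=\psi^\scri\chi(u/r)$, exactly as in Step 0 of the proof of \cref{an:thm:ansatz}; this satisfies $\Box_{g_\m}\phi_0\in\A{b}^{3,q+2,\infty,\infty}$ because $g_\m=\eta$ near $\scri$. Then run the iteration of \cref{an:sec:iteration}, with the weights at $\K_z,\F_z$ taken equal for all $z$.

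At $\ip$, apply \cref{an:prop:i+_conormal} on Minkowski space with $A\setminus\{0\}$ points removed. The inhomogeneity is localised near $\ip$ away from all $\K_z$, so after the smoothing step of that proof (which is performed near each $\K_z$ in boosted coordinates via \cref{an:cor:K}) it lies in $\A{b}^{3,q+2}$ of Minkowski space with conormal decay at each point $z$. The Minkowski scattering solution is then conormal. We need the analogue of \cref{map:lemma:normal_ops} on $\multiComp$, namely $\Box_{g_\m}-\Box_\eta\in\rho_+^3(\cdots)\Diffb^2$ away from the $\K_z$. This holds because $\Phi_z^*\ern-\eta$ decays like $1/|x_z|$ on the support of $\chi(x_z/t_z)$, and the derivatives of that cutoff are supported where $|x_z|\sim s$.

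At $\K_z$ and $\F_z$, pull back by $\Phi_z$ and apply \cref{an:cor:K} and \cref{an:prop:F_interior} verbatim, since the metric there is $\ern$ and the boundary defining functions agree up to smooth factors (recall $s=\gamma_z^{-1}t_z+\mathcal{O}(1)$). The nonlinear terms are handled by \cref{an:lemma:nonlin_pert}, which is already stated on $\multiComp$. Borel summation then gives $\phi_{app}\in\A{b}^{1,(q,\vec q,\vec q)-}$ with error in $\A{b}^{3,\infty,\vec\infty,\vec\infty}$.

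\textbf{Step 2 (correction).} Solve $\Box_{g_\m}(\phi_{app}+\phi)=\mathcal{N}[\phi_{app}+\phi]$ with trivial data on $\scri$ and on each $\mathcal{B}_z$, proceeding as in \cref{scat:thm:main}: truncate the forcing in time, derive uniform estimates, and pass to the limit. The interior regions are literally copies of $\McompIn$, so \cref{scat:lemma:interior} applies unchanged in each of them and also provides the horizon data for each $\hor_z$. In the exterior we need the analogue of \cref{lin:prop:main}. We would construct the current as a sum of three pieces, glued by cutoffs in $x_z/t_z$:
\begin{itemize}
\item near each black hole, the boosted currents $J^T_{N,\delta},J^1_{<,N}$ built from $T_z$;
\item in the Minkowski region, the current $t_*^{N-\delta}s^\delta\partial_s\cdot\T$;
\item near $\scri$, $J^1_{>,N}$.
\end{itemize}
Commutators would use $T_z$ and $\Omega^z$, $S$ and $\mathcal T$ near each black hole, and scaling and boosts of Minkowski in the far region, where the boost fields are exact symmetries.

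\textbf{Main obstacle.} The key difficulty is gluing the current with Killing field $T=\partial_s$ to the currents with $T_z$ in the transition zones $|x_z|\sim\delta s$. The two fields differ there, and no Killing field exists globally. We would treat the transition region as a Minkowski region with an $\A{b}^{1}$ perturbation of the metric. There, any fixed timelike combination of $T$ and $T_z$ is causal, and differentiating the cutoff produces error terms of size $\rho_\K\rho_+\,|\partial\phi|^2$ multiplied by the weight. These have one power of extra decay compared with the bulk terms, because $s\partial(\chi)$ is bounded and the deformation tensor of $T_z$ relative to $\eta$ is $\mathcal O(s^{-2})$ in that region. We would absorb them by taking $T_\mathrm{f}$ large, exactly as the nonlinear terms are absorbed in the proof of \cref{scat:lemma:interior}. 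If this absorption fails, a fallback is to exploit the positivity of the $\delta t_*/(Nt)(T\phi)^2$ bulk term given by \cref{curr:eq:T}, with $\delta$ increased.

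Once the linear estimate is in place, \cref{lin:lemma:nonlin} and the limiting argument of \cref{scat:prop:main} give existence and regularity. Adding the ansatz of Step 1 then yields the claimed solution $\phi$.
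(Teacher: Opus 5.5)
Your ansatz step matches the paper's: there the error near $\ip$ is split as $f_++\sum_z f_z$, and \cref{an:prop:i+_conormal} is applied to each piece in its own boosted frame using Lorentz invariance of $\Box_\eta$. You also correctly identify the gluing of $T$ and $T_z$ as the crux of the correction step. However, the mechanism you propose for handling it fails.

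With $X=\chi T_z+(1-\chi)T$ and $\chi=\chi(x_z/t_z)$, the deformation tensor $\pi^X$ contains $\dd\chi\otimes(T_z-T)^\flat$. Here $\abs{\dd\chi}\sim s^{-1}$ (this is what ``$s\partial\chi$ bounded'' means), and $\abs{T_z-T}\sim_A 1$ for $z\neq0$. So the error is $C_A s^{-1}\abs{\partial\phi}^2$ times the weight. The coercive bulk term it must be absorbed into is itself produced by differentiating the weight $t_*^{N-\delta}s^\delta$, and has size $N s^{-1}\abs{\partial\phi}^2$ times the weight. The $1/N$ normalisation in \cref{curr:eq:T} hides this factor.

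So there is no extra power of decay. The ratio is $C_A/N$ uniformly in $s$, and enlarging $T_\mathrm{f}$ gains nothing. The nonlinear terms in \cref{scat:lemma:interior} are absorbable only because \cref{lin:lemma:nonlin} gives them an extra factor $T_\mathrm{f}^{-1/2}$ from the decay of $\phi,\phi_0$. A linear error with a homogeneous coefficient has no such gain.

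Your fallback does not help either. Raising $\delta$ at fixed $N$ only redistributes the total homogeneity $N$ between $t_*$ and $t$. Moreover, $\delta$ is capped by the fixed $\rho_\scri^3$ decay of the forcing at $\scri$ (cf.\ $\delta<2$ in the proof of \cref{an:prop:i+_conormal}), while $C_A$ is unbounded as $\abs{z}\to1$. Gluing whole currents with separate weights $t_z^N$ and $s^N$ would also introduce ratios $(t_z/s)^N$ with $t_z/s\approx\gamma_z^{-1}$ near the hole.

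The missing idea is to absorb these errors with the decay exponent $N$. This is affordable because the ansatz error lies in $\A{b}^{3,\infty,\vec\infty,\vec\infty}$. The paper glues time functions rather than currents:
\begin{itemize}
\item \cref{multi:lemma:tau} gives global $\tau,\tau_*$ with uniformly timelike gradients, equal to $s$ (resp.\ $s-\abs{x}$) far away and to $\gamma_z^{-1}t_z$ near each hole.
\item One sets $\tilde T=g_\m^{-1}(\dd\tau,\cdot)$ and uses the current $\frac1N\tau_*^{N-\delta}\tau^\delta\tilde T\cdot\T$.
\item $N$ is chosen large depending on $A$, so that $\dd(\tau_*^{N-\delta}\tau^\delta)\times\tilde T\cdot\T$ dominates $\tau_*^{N-\delta}\tau^\delta\abs{\pi^{\tilde T}\cdot\T}$ (\cref{multi:lemma:T_multiplier}).
\end{itemize}

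Your commutators have the same problem. Gluing $T_z,\Omega^z,S,\mathcal{T}$ near the holes to boosts and scaling far away produces cutoff errors of the same order. Unlike the single black hole case, there is no global Killing field with which to control them via \cref{ell:lemma:ext}. The paper commutes only with $\tilde T$, whose commutator $[\Box_{g_\m},\tilde T]$ is supported in the transition zones. It controls that commutator with the elliptic estimate and absorbs it by taking $N\sim_A n$ for $n$ commutations (\cref{multi:lemma:commutators}).

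This is why the required decay grows with both the regularity and the velocities in $A$. It is also why an ansatz with infinite-order decaying error is needed to obtain a smooth solution.
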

	
	The proof of \cref{multi:thm:main} is divided into an ansatz construction and a correction part just as for \cref{intro:thm:rough}.
	These are contained in \cref{multi:prop:ansatz,multi:prop:nonlin} respectively.
	
	\begin{remark}[Finite regularity]
		Provided that $\psi^\scri\in\A{b}^{n;q-1}$ has finite regularity, the solutions that we obtain are also only of finite regularity $\phi\in\A{b}^{n';q-1}$, but we merely have $n\sim n'$, with a not implicit constant.
	\end{remark}
	
	\begin{remark}[Limited regularity for scattering solutions]
		We also mention, that as in the case of \cref{scat:prop:main}, the exact solutions that we construct with an inverse polynomially decaying data along $\cup_z \B_z$ and $\scri$ are smooth in $\M_{\m,\mathfrak{t}}$, except along the hypersurfaces $\hor_{z}$.
		As we already mentioned, we believe, that since $\cup_z\hor_{z}\neq \hor_{g_\m}$, there is a nonempty region between $\hor_{g_\m}$ and $\hor_{z}$, as indicated on \cref{fig:multi}.
		On the figure we have 3 region i) the region between $\mathcal{B}_z$ and $\hor_z$ ($\multiCompin$) denoting the region where the scattering solution only depends on the data at $\B_0$; ii) the region between $\hor_{g_\m}$ and $\hor_z$ where the solution depends on $\cup_z \B_z$; iii)  $\M_{\m,0}$ where the solution depends on all the data.
		Therefore, the appearance of limited regularity is \emph{not} connected to the black hole horizon, rather the domain of influence of individual black hole interiors.
	\end{remark}
	
	\begin{figure}[htbp]
		\centering
		\includegraphics[width=0.6\textwidth]{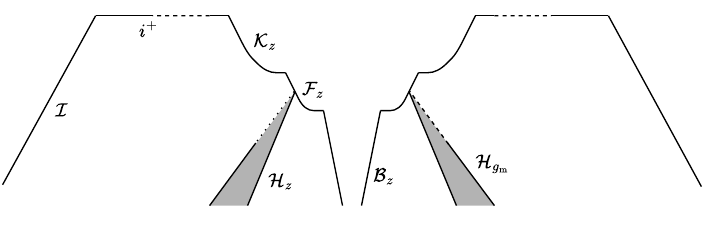}
		\caption{We drawn the compactification $\Mcomp_{\m,\mathfrak{t}}$ on which our scattering result takes place.
			We only indicated how the geometry looks around a single black hole, with $\hor_{g_\m},\hor_{\ern}$ denoting the black hole horizon of $g_\m$ and the image of the horizon of $\ern$ under the diffeomorphism $\Phi_z$. 
			Part of $\hor_{g_\m}$ is dotted as we do not compute the precise convergence rate of the two hypersurfaces and thus the asymptotic behaviour on the compactification $\Mcomp_{\m,\mathfrak{t}}$.}
		\label{fig:multi}
	\end{figure}
	\subsection{Ansatz}
	We begin by constructing an approximate solution to \cref{multi:eq:scat}.
	
	\begin{prop}\label{multi:prop:ansatz}
		Let $q>3/2$ and let $\psi^\scri\in\A{b}^{q-1}(\scri)$.
		Then, there exists a smooth ansatz $\phi_A\in\A{b}^{1,(q+1,\vec{q+1},\vec{q+1})-}(\Mcomp_{\m,\mathfrak{t}})$ of \cref{multi:eq:scat} satisfying
		\begin{equation}
			\Box_{g_\m}\phi_A-\mathcal{N[\phi_A]}\in \A{b}^{1,\infty,\vec{\infty},\vec{\infty}}(\Mcomp_{\m,\mathfrak{t}}),\qquad r\phi_A|_{\scri}=\psi^\scri.
		\end{equation}
	\end{prop}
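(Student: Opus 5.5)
The plan is to repeat the iteration scheme from the proof of \cref{an:thm:ansatz}, now with one set of model problems for each black hole. The starting point is that $\multiComp$ is, near each face, locally diffeomorphic to the corresponding piece of $\Mcomp$. Near $\scri$ and the part of $\ip$ away from $A$, the metric $g_\m$ is exactly $\eta$. Near $\K_z$, $\F_z$, $\hor_z$ and the part of $\ip$ close to $x/s=z$, the pullback $\Phi_z^*$ turns $g_\m$ into exactly $\ern$, because $\chi(x_z/t_z)=1$ there.

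\textbf{Step 0 and the mapping property.} Step 0 is identical to the single black hole case. We set $r\phi_0=\psi^\scri\chi(u/r)$, where the cutoff is supported near $\scri$, so the region where $g_\m=\eta$. This gives $\Box_{g_\m}\phi_0-\mathcal{N}[\phi_0]\in\A{b}^{3,q+2,\vec\infty,\vec\infty}$. We then record the analogue of \cref{map:lemma:normal_ops} on $\multiComp$. The model operators are $\Box_\eta$ at $\ip$, $\Phi_z^*\Normal{\K}$ at $\K_z$ and $\Phi_z^*\Normal{\F}$ at $\F_z$, each with an error gaining one power of the relevant $\rho_\bullet$. The only new feature is at $\ip$. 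The transition region $\supp\nabla\chi(x_z/t_z)$ lies in the interior of $\ip$, where $\Phi_z^*\ern-\eta\in\A{b}^{1}$ in $\rho_+$ (mass term $M/r$ with $r\sim s$). So $\Box_{g_\m}-\Box_\eta$ still gains $\rho_+$, and the errors created by the cutoffs are fast decaying. The analogue of \cref{an:lemma:nonlin,an:lemma:nonlin_pert} is purely local and carries over verbatim.

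\textbf{Iteration.} Given an error $f\in\A{b}^{3,p_++2,\vec p_\K,\vec p_\F-1}$, we decompose it with a partition of unity subordinate to the faces.

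At $\ip\cup\scri$ we solve $\Box_\eta\phi'=f_{\ip}$ with zero radiation field using \cref{an:prop:i+_conormal}. In Minkowski space the points $z\in A$ are just interior points of $\ip$, and we only need conormality there. The new solution $\phi'$ is then conormal on $\multiComp$ with weight $p_+$ at $\ip$. Near each $\K_z$ it has weight $\min(p_+,p_{\K,z})$, because $\K_z$ is the blow-up of the point $z\in\ip$.

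At each $\F_z$ and $\K_z$ we pull back by $\Phi_z$ and apply \cref{an:prop:F_interior} and \cref{an:cor:K} exactly as in Steps 1--2 of the proof of \cref{an:thm:ansatz}. We then truncate with cutoffs supported in $\{\abs{x_z}<\delta t_z\}$; since the supports are pairwise disjoint for large $s$, the black holes do not interact. The bookkeeping gap between $p_\K+3$ and $p_++2$ is the same as before. So each round improves the weights by one, face by face, and Borel summation produces $\phi_A$.

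\textbf{Main obstacle.} The step I expect to be delicate is the interface between the global Minkowski solve at $\ip$ and the local black hole solves at the $\K_z$. Concretely, one has to check that a Minkowski solution which is merely conormal near the interior point $z\in\ip$ lifts to a conormal function on the blown-up face $\K_z$ with the correct weight. One also has to check that, after truncation, the mismatch $(\Box_{g_\m}-\Box_\eta)\phi'$ lands in $\K_z$-weight $\ge p_+$ with an extra gain at $\ip$.

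I would handle this by working near $z$ in boosted coordinates $(t_z,x_z)$. In those coordinates the statement reduces to the corner $\ip\cap\K$ of a single Reissner--Nordstr\"om spacetime, which is treated by \cref{map:lemma:normal_ops} and \cref{an:cor:i_+_conormal}. The Lorentz invariance of $\Box_\eta$ and of the conormal vector field algebra under $\Phi_z$ makes this reduction legitimate.
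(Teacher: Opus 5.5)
Your proposal is essentially the paper's proof. The iteration of \cref{an:thm:ansatz} is reused at each $\K_z,\F_z$, and the only new input is the $\ip$ model problem, which the paper handles by the reduction in your last paragraph: it splits $f=f_++\sum_{z\in A}\chi(x_z/t_z)f$, applies \cref{an:prop:i+_conormal} to each piece separately (to $\chi(x_z/t_z)f$ in the frame $(t_z,x_z)$, where $\K_z$ plays the role of the $\K$-face of $\MMink$), and localises the solutions to $\bigcap_z\{\abs{x_z}>10M\}$.

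One correction: this splitting has to be done on the source before solving, not as a local check afterwards. The error lives on the blow-up with $\K_z$-weight below $p_++2$, so it is not conormal at the interior point $z\neq0$ of $\ip$ in the rest-frame compactification, since rest-frame derivatives $s\partial_x$ cost a factor $\rho_{\K_z}^{-1}$. A single rest-frame application of \cref{an:prop:i+_conormal} (your ``we only need conormality there'') is therefore not justified, and the issue at $z$ is not one of lifting conormality to the blow-up.
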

	\begin{proof}
		Since the proof of \cref{an:thm:ansatz} is completely modular, it applies to the multi-black hole setting as well,  whenever we solve model problems on the disjoint neighbourhood of $\K_z$ and $\F_z$.
		Let us show how the model problem at $\ip$ is treated, and this will complete the proof.
		\begin{claim}[Improvement at $i^+$]
			Fix $s>3/2$, and $s'\in\big(\min(s-1,3/2),s\big)$.
			For $f\in\A{b}^{3,s+2,\vec{s}',\vec{\infty}}(\multiComp)$, there exists $\bar{\phi}\in\A{b}^{1,s,\vec{s}',\vec{\infty}}(\multiComp)$ such that 
			\begin{equation}
				\Box_{g_\m}\bar\phi-f\in\A{b}^{3,s+3,\overrightarrow{s}',\vec{\infty}}(\multiComp),\qquad r\phi|_{\scri}=0.
			\end{equation}
		\end{claim}
		We write $f_{z}=\chi(x_{z}/s_{z})f$ for $\chi$ as in \cref{nonlinear:eq:multi_metric}, and let $f_+=f-\sum_{z\in A} f_{z}$.
		Next, we apply \cref{an:prop:i+_conormal} for each $f_{\bullet}$ and localise the corresponding $\bar\phi_{\bullet}$ to the region $\cap_j\{\abs{x_{z_j}}>10M\}$.
		Thus, we get that $\Box_{g_\m}\bar\phi-f\in\A{b}^{3,s+3,\vec{s}',\vec{\infty}}(\multiComp)$.
		
		The nonlinear terms are still pertrubative following the same computation as in the proof of \cref{an:thm:ansatz}.
	\end{proof}
	
	\subsection{Linear estimate}
	
	It is useful at this point, to introduce the collection of vectorfields that we can uniformly bound at top order
	\begin{equation}
		\mathring{\Ve}:=\bigcup_{z\in A}\chi_z\{D_z^{1/2}\partial_{\abs{x_z}},\rho_{\F_z}^{-1}\rho_{+}^{-1}T_z,\rho_\hor^{1/2}\Omega_{i}^z\}
		\bigcup \chi_{\mathrm{far}} \{\rho_\scri^{1/2}\Omega_{i},r\partial_r|_{t_*},t_* T\},
	\end{equation}
	where $\chi_z$ localises to $\{\abs{x_z}/t_z<\delta\}$ and $\chi_{\mathrm{far}}$ localises to the complement region.
	We also set $\Ve:=\mathring{\Ve}\cup \{1\}$.
	
	We also introduce the commutators
	\begin{equation}
		\Vc:=\{s^{-1}\chi_z(\abs{x_z})\partial_{\abs{x_z}}|_{t_z}:z\in A\}
	\end{equation}
	where $\chi_z(x_z)$  localises to $\{\abs{x_z}<2M\}$.
	
	The main result of this section is the following linear estimate:
	\begin{prop}\label{multi:prop:linear}
		Fix $n\geq1$, and $f$ smooth supported away from $\partial\multiComp$.
		There exists $q\sim_A n$, such that for $k\leq n/2$ the following estimate holds
		\begin{equation}
			\norm{\Ve\Vc^k\phi}_{\Hb^{n;-1/4,q-3/2,\vec{q}-1/2,\vec{q}-1/2,\overrightarrow{3/4}}(\multiComp)}\lesssim_n\norm{\Vc^kf}_{\Hb^{n;3/4,q+1/2,\vec{q}+1/2,\vec{q}-1/2,\overrightarrow{-1/4}}(\multiComp)}
		\end{equation}
	\end{prop}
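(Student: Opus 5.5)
We will prove \cref{multi:prop:linear} by repeating the proof of \cref{lin:prop:main}, now on $\multiComp$ with the metric \cref{nonlinear:eq:multi_metric}. We localise with a partition of unity subordinate to the disjoint regions $\{\abs{x_z}/t_z<\delta\}$ and the far region. The key structural fact is that in each region $g_\m$ is exactly either $\Phi_z^*\ern$ or $\eta$. The only non-exact zones are the transition annuli $\{\abs{x_z}/t_z\sim\delta\}$, where $g_\m-\eta=\Phi_z^*(\ern-\eta)\in\A{b}^{1}$ near $\ip$.

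\textbf{Step 1: energy estimate at $s=0$, $k=0$.} We build the global current
\[
J=\sum_{z}\chi_z\Big((1+2C)J^{T_z}_{N,\delta}+J^1_{<,N,z}\Big)+\chi_{\mathrm{far}}\big(J^T_{N,\delta}+J^1_{>,N}\big),
\]
with the boosted currents $J^{T_z}_{N,\delta}$ built from $T_z$ and $t_z,t_{*,z}$. In each black-hole region, \cref{curr:lem:ext} applies verbatim after pulling back by $\Phi_z$. In the far region we use the Minkowski analogue, \cref{curr:lem:ext} with $M=0$. Three sources of error arise:
\begin{itemize}
\item[(i)] $\dd\chi_z$ terms, supported where $\abs{x_z}\sim\delta t_z$;
\item[(ii)] the mismatch between $T_z$ and $T$ in the overlaps;
\item[(iii)] the metric error $g_\m-\eta$ in the transition annuli.
\end{itemize}
All three are supported near $\ip$ away from $\K_z,\F_z,\scri$. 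There $T$ and $T_z$ are both uniformly timelike and comparable, so the bulk term coming from the $\delta t_*/(Nt)$ weight controls $t_*^{-1}\abs{\partial\phi}^2$. The errors are bounded by $t_*^{N-1-\delta}\rho_+\abs{\partial\phi}^2$, which carries an extra power of $\rho_+$. They are therefore absorbed for $T_f$ large, or equivalently for $\rho_+$ small. We recover the zeroth-order term via \cref{curr:lemma:poincare} applied in each region. Taking $N=2q-5/2$ and $\delta=1/2$ as in \cref{lin:prop:main} gives the weights claimed.

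\textbf{Step 2: higher regularity $s=n$.} We commute with the following operators:
\begin{itemize}
\item $\chi_zT_z$ and $\chi_z\Omega^z$, which commute exactly with $\Box_{g_\m}$ in the interior of $\supp\chi_z$;
\item $\chi_zS_z$, with commutator controlled by \cref{lin:lemma:near_horizon};
\item in the far region, $T$, the scaling $S$, the rotations, and the boosts of Minkowski.
\end{itemize}
Since $\Box_{g_\m}-\Box_\eta\in\rho_+^{3}\Diffb^2$ away from the black holes, all commutators in the far region lose no decay. The commutator terms created by the cutoffs are supported in the annuli, and we control them with the lower-order induction hypothesis together with \cref{ell:lemma:ext}, exactly as in Step 2 of the proof of \cref{lin:prop:main}. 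For each $z$ we use \cref{ell:lemma:near} near $\K_z$ to trade $T_z$ control for full derivatives.

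\textbf{Step 3: horizon regularity $\Vc^k$.} For $k\geq1$ we commute with the boosted conformal field $\chi_z\mathcal{T}_z$ of \cref{lin:lemma:near_horizon}. The leading obstacle is that each commutation costs one order of decay: we must use $J_{N-2k+2s}$ with $N-2k>0$. We choose $q\sim_A n$ large enough that all $k\le n/2$ commutations stay within admissible weights; this dependence on $n$ is forced by the enhanced blueshift of \cref{lin:rem:blueshift}.

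\textbf{Main obstacle.} We expect the hardest step to be verifying that the error in (ii)–(iii) carries a genuine extra $\rho_+$ after all $n$ commutations. Here the boosted weights $t_z$ and the global $t_*$ must be compared uniformly on $\supp\chi_z$, using $s=\gamma_z^{-1}t_z+\mathcal{O}(1)$. Boosts and scaling do not preserve the cutoffs $\chi_z$ exactly, so we would first check that their commutators with $\chi_z$ land in $\rho_+\Diffb$ on the annuli. If this failed, we would instead use only the $T$, $S$ and rotation commutators in the far region, together with \cref{ell:lemma:ext}.
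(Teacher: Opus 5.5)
There is a genuine gap. You treat the errors on the transition annuli $\{\abs{x_z}/t_z\sim\delta\}$ as lower order: in Step 1 as carrying an extra $\rho_+$, in Step 2 as controlled by the level-$(j-1)$ induction hypothesis. Neither is true.

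\emph{Step 1.} The bulk term of $J^T_{N,\delta}$ comes from differentiating the weight $t_*^{N-\delta}t^\delta$, which costs exactly one factor $t^{-1}$. Differentiating $\chi_z=\chi(\abs{x_z}/t_z)$ costs the same factor. So $\dd\chi_z\cdot J$, and with it the $T_z$ versus $T$ mismatch, is of the same order in $\rho_+$ as the bulk, smaller only by $1/N$; taking $T_{\mathrm{f}}$ large gains nothing. The gluing is also worse than you allow, for two reasons. First, you glue currents with different weights. On the annuli $s/t_z=\gamma_z(1+z\cdot x_z/t_z)$ is not constant; the relation $s=\gamma_z^{-1}t_z+\mathcal{O}(1)$ that you quote holds only for bounded $\abs{x_z}$. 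After raising to the power $N$, the two weights are therefore not uniformly comparable. Second, your $t_*$ and $t_{*,z}$ are (almost) outgoing null there. The transversal derivative is then controlled only through the $\frac{\delta t_*}{NDt}(T\phi)^2$ term, which carries the same $1/N$ as the gluing errors.

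\emph{Step 2.} Here the problem is at top order. Each $T_z$-commutation is estimated with two more powers of $t_*$. Hence $[\Box_{g_\m},\chi_z]T_z^j\phi\sim t^{-1}\partial T_z^j\phi$ has exactly the weight and derivative count of the bulk of $\chi_zT_z^j\phi$, so it cannot be bounded by the previous level. In \cref{lin:prop:main} the cutoff terms from $\chi_<S$ and $\chi_>S$ are harmless only because the exact, global Killing field $T$ has already given $T^j$-control at the same level, which \cref{ell:lemma:ext} then upgrades. The metric $g_\m$ has no such field. Your black-hole and far-region commuted estimates therefore depend on each other at top order, and nothing in your argument closes them.

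The missing idea is the paper's single global multiplier and commutator.
\begin{itemize}
\item \cref{multi:lemma:tau} constructs time functions $\tau,\tau_*$. They agree with a multiple of $t_z$ near each hole, and far out with $s$ (respectively $s-\abs{x}$ near $\scri$).
\item Their differentials are \emph{uniformly} timelike on the transition zones. This requires the interpolating cutoff to have $\sup_\rho\rho\chi'(\rho)$ small.
\item One sets $\tilde T=g_\m^{-1}(\dd\tau,\cdot)$ and uses the single current $\frac1N\tau_*^{N-\delta}\tau^\delta\tilde T\cdot\T$. This gives full coercivity on the transition zones, and the deformation-tensor error of $\tilde T$ is absorbed by taking $N$ large depending on $A$ (\cref{multi:lemma:T_multiplier}).
\item Commuting with $\tilde T^n$ produces the term $Cn\rho_+^3\bar\chi\Vb\Vb^*\tilde T^{n-1}\phi$. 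After \cref{ell:lemma:ext} this is again top order, and it is absorbed only for $N\sim_A n$ (\cref{multi:lemma:commutators}).
\end{itemize}
This is the source of $q\sim_A n$. It is not the horizon constraint $N-2k>0$ from your Step 3, which has no dependence on $A$.

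Once $\tilde T$ is in place, your far-region Minkowski commutators and the $\chi_z\mathcal{T}_z$, $\chi_zS_z$ commutations near each horizon can be run as in \cref{lin:prop:main}. Their cutoff errors then sit where the $\tilde T$-commuted and elliptic estimates give full control.
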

	
	The proof of \cref{multi:prop:nonlin} will essentially follow that of \cref{lin:prop:main}, with the main novelty, that we need a coercive multiplier replacing $t_*^{2N}T$.
	\begin{lemma}[Time functions]\label{multi:lemma:tau}
		There exist function $\tau,\tau_*$ with the properties that $\dd\tau,\dd\tau_*$ are causal in $\multiComp$ with uniform bound
		\begin{equation}\label{multi:eq:tau_uniform}
			g_m^{-1}(\dd\tau,\dd\tau),g_m^{-1}(\dd\tau_*,\dd\tau_*)<c<0\quad\text{ in } \multiComp\cap \{\abs{x_z}>3M\}\cap\{1-x/s>\delta\},
		\end{equation}
		together with the following properties
		\begin{equation}
			\tau_*=\begin{cases}
				s-\abs{x}& \text{in } \{1-\abs{x}/s<\delta/2\},\\
				\gamma_z^{-1}t_z & \text{in } \{\abs{x_z}/t_z<\delta/2\},
			\end{cases}\qquad
			\tau=\begin{cases}
				s& \text{in } \{1-\abs{x}/s<\delta\},\\
				\gamma_z^{-1}t_z & \text{in } \{\abs{x_z}/t_z<\delta/2\}.
			\end{cases}
		\end{equation}
	\end{lemma}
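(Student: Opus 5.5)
The plan is an explicit construction by gluing with cutoffs: take the local time functions prescribed in each region and interpolate between them using the partition of unity already built into $g_\m$. Set
\[
\tau:=\chi_{\mathrm{far}}\,s+\sum_{z\in A}\chi_z\,\gamma_z^{-1}t_z,\qquad \tau_*:=\tilde\chi_{\mathrm{far}}\,(s-\abs{x})+\sum_{z\in A}\tilde\chi_z\,\gamma_z^{-1}t_z+\tilde\chi_{\mathrm{mid}}\,s .
\]
Here $\chi_z=\chi(x_z/t_z)$ localises to $\{\abs{x_z}/t_z<\delta/2\}$, the cutoffs form a partition of unity, and $\tilde\chi_{\mathrm{far}}$ localises to $\{1-\abs{x}/s<\delta/2\}$. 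All cutoffs are taken homogeneous of degree zero, i.e.\ functions of $x/s$ only. Then $\tau,\tau_*$ have exactly the stated form in the prescribed regions, and it remains to check causality of $\dd\tau,\dd\tau_*$ and the uniform bound \cref{multi:eq:tau_uniform}.

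\textbf{Step 1: the regions where a single piece is active.} Where $\chi_z\equiv 1$, the metric $g_\m$ is isometric via $\Phi_z$ to $\ern$, and $\dd(\gamma_z^{-1}t_z)$ pulls back to a multiple of $\dd t$ (up to the choice of $t_*$). The quantity $\ern^{-1}(\dd t,\dd t)=-D^{-1}$ is strictly negative, even tending to $-\infty$ near the horizon. If one prefers a form regular up to $\hor_z$, $t_z$ can be replaced by the boosted $t_*$, whose differential is timelike by \cref{not:eq:t*}. In the far region, near $\scri$, $g_\m=\eta$ and $\dd(s-\abs{x})$ is null, so it is causal. Causality without the uniform bound is exactly what is claimed there, since \cref{multi:eq:tau_uniform} excludes $\{1-\abs{x}/s<\delta\}$. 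For $\tau$ the far piece $\dd s$ is uniformly timelike for $\eta$.

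\textbf{Step 2: the transition regions (the main obstacle).} Here
\[
\dd\tau=\sum\chi_\bullet\,\dd\tau_\bullet+\sum\tau_\bullet\,\dd\chi_\bullet ,
\]
and the second sum is a potentially large error, since $\tau_\bullet\sim s$. I plan to control it as follows.

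First, in the overlap $\{\delta/4<\abs{x_z}/t_z<\delta/2\}$ we have $g_\m=\eta+O(\rho_+)$, so $g_\m$ is a small perturbation of Minkowski. Moreover $s$ and $\gamma_z^{-1}t_z=s-z\cdot x$ are both linear functions on Minkowski space with future timelike differentials.

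Second, the cutoff error is harmless. Because $\chi_\bullet$ is homogeneous of degree zero, $\dd\chi_\bullet=O(s^{-1})$. The cutoffs sum to one, so $\sum_\bullet\dd\chi_\bullet=0$, and hence
\[
\sum_\bullet\tau_\bullet\,\dd\chi_\bullet=\sum_\bullet(\tau_\bullet-s)\,\dd\chi_\bullet ,
\]
where $\tau_\bullet-s=-z\cdot x=O(\delta s)$ on the support of $\dd\chi_z$. The error is therefore $O(\delta)$ in magnitude.

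Third, a convex combination of future timelike covectors lying in a compact set of Minkowski timelike covectors is uniformly timelike. The $O(\delta)$ cutoff error and the $O(\rho_+)$ metric error are then absorbed by choosing $\delta$ small and $T_\mathrm{f}$ large.

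The same argument handles $\tau_*$ in $\{\delta/2<1-\abs{x}/s<\delta\}$. There I interpolate between the null form $\dd(s-\abs{x})$ and $\dd s$, and a convex combination of a null and a timelike covector is causal. The bound \cref{multi:eq:tau_uniform} is only required where the weight on $\dd s$ is bounded below.

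\textbf{Step 3: the compact regions.} Inside $\{\abs{x_z}>3M\}$ near each black hole the forms are exact Reissner--Nordstr\"om forms, uniformly timelike with constant depending on $M$. This gives \cref{multi:eq:tau_uniform} there. Near $\abs{x_z}\le 3M$ only causality is needed, and it holds by Step 1.
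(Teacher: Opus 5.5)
\textbf{The gap is in Step 2.} The cutoff term is not $O(\delta)$, and for the functions you glue it cannot be made small.

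On $\supp\dd\chi_z$ you are within $O(\delta s)$ of the worldline $x=zs$. So $\tau_z-s=-z\cdot x=-\abs{z}^2s+O(\delta s)$ is of order $s$; in the boosted frame, $s-\gamma_z^{-1}t_z=\gamma_z\abs{z}^2t_z+\gamma_z\,z\cdot x_z$. Also, since $\chi_z$ switches off across a shell of aperture $\sim\delta$, you have $\abs{\dd\chi_z}\sim(\delta s)^{-1}$, not $s^{-1}$. Hence $(\tau_z-s)\,\dd\chi_z$ has size $\sim\abs{z}^2/\delta$ and dominates $\dd s$ and $\gamma_z^{-1}\dd t_z$. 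Convexity of the timelike cone does not help.

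This is a genuine obstruction, not a lossy estimate. On the worldline $\gamma_z^{-1}t_z=(1-\abs{z}^2)s$. Take a point $q$ on the outer edge of your shell, where $\tau(q)=s_q$. An ingoing null ray from $q$ reaches $\{\abs{x_z}/t_z<\delta/4\}$ after an $s$-increment $O(\delta s_q)$, and there $\tau=(1-\abs{z}^2+O(\delta))s_q<s_q$. So once $\abs{z}^2\gg\delta$, no function equal to $\gamma_z^{-1}t_z$ inside your shell and to $s$ outside it has future causal differential.

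Your $\tau_*$ has the same defect. Where $\tilde\chi_{\mathrm{mid}}=1$ you have $\tau_*=s$. The outgoing null ray from a point $q$ with $1-\abs{x_q}/s_q=\delta$ keeps $s-\abs{x}=\delta s_q$ and enters $\{1-\abs{x}/s<\delta/2\}$, where $\tau_*=\delta s_q<s_q=\tau_*(q)$. Correspondingly, the neglected term $-\abs{x}\,\dd\tilde\chi_{\mathrm{far}}$ has size $\sim\delta^{-1}$, so the remark that ``a convex combination of a null and a timelike covector is causal'' does not settle anything.

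\textbf{What is missing is the mechanism of the paper's computation.} There the transition is written as $\tau=\gamma_z(t_z+\chi\,z\cdot x_z)$. That is, the local time near the black hole is normalised as $\gamma_z t_z$, which coincides with $s$ on the worldline. The two glued functions then differ only by $\gamma_z\,z\cdot x_z=O(\rho t_z)$, with $\rho=\abs{x_z}/t_z$.

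The constants $\gamma_z^{-1}$ (in the statement) and $1-\abs{z}^2$ (in the proof) are inconsistent with this computation. What matters in \cref{multi:lemma:tildeT} and afterwards is only that $\tau$ is a constant multiple of $t_z$ near each black hole, so that $\tilde T$ is a multiple of the Killing field $T_z$.

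With this normalisation, the cutoff error $\gamma_z(z\cdot x_z)\chi'(\rho)\,\dd\rho$ has relative size $\abs{z}\rho\abs{\chi'(\rho)}$. It is absorbed by choosing $\chi$ with $\sup_\rho\rho\abs{\chi'(\rho)}$ small, i.e.\ varying slowly in $\log\rho$ over many dyadic scales. A cutoff switching over a single range such as $(\delta/4,\delta/2)$ forces $\sup\rho\abs{\chi'}\geq 1/\log 2$, and then the argument still fails for $\abs{z}$ close to $1$.

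For $\tau_*$, the function used away from $\scri$ must itself already be $\scri$-penetrating. This is the paper's ``extra transition to a $\scri$ penetrating coordinate in the far region''; with it, no thin-layer gluing of $s-\abs{x}$ to $s$ is needed.
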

	\begin{proof}
		The construction is motivated by \cite{kadar_construction_2024}.
		We set
		\begin{subequations}\label{multi:eq:tau}
			\begin{empheq}[left={\tau= \empheqlbrace}]{alignat=3}
				&s && \text{if }\cap \{ x_{z}/s_{z}>\delta\},\label{multi:eq:tau_ext}\\
				&(1-\abs{z}^2)t_{z} && \text{if } \{ x_{i}/s_{i}<\delta/2\},\label{multi:eq:tau_int}\\
				&s-\chi_zz\cdot x \qquad&& \text{if } \{x_{z}/s_{z}\in(\delta/2,\delta)\}\label{multi:eq:tau_transition},
			\end{empheq}
		\end{subequations}
		where $\chi_z=\chi(\abs{x_z}/t_z)$ is a cutoff function to be determined.
		Clearly, $\tau$ is timelike outside of the transition zones in \cref{multi:eq:tau_transition}.
		We compute in this zone $\tau=\gamma_z(t_z+\chi z_z\cdot x_z)$ and thus 
		\begin{equation}
			\gamma_z^{-2}\eta(\dd\tau,\dd\tau)\leq-1+\chi'{}^2\frac{(z\cdot x_z \abs{x_z})^2}{t_z^4}+\eta\Big(\chi z\cdot \dd x_z+\chi'\frac{z\cdot x_z\dd\abs{x_z}}{t_z}\Big).
		\end{equation}
		We can bound the last term by $\abs{z}^2(1+\sup_\rho \rho\chi'(\rho))$.
		Choosing a cutoff such that $\sup_\rho \rho\chi'(\rho)$ is sufficiently small yields that $\dd\tau$ is strictly timelike in the transition region.
		
		For $\tau_*$ the same argument works, with $t_z$ behaviour in the near horizon region and an extra transition to a $\scri$ penetrating coordinate in the far region.
	\end{proof}
	
	\begin{lemma}[Vectorfield]\label{multi:lemma:tildeT}
		There exists a timelike vectorfield $\tilde{T}$
		with uniform bound as in \cref{multi:eq:tau_uniform}
		satisfying
		\begin{equation}
			\tilde{T}=\begin{cases}
				T & \text{in } \cap \{\abs{x_z}/t_z>2\delta\}\\
				\gamma_z^{-1} T_z & \text{in } \{\abs{x_z}/t_z<\delta/4\}.
			\end{cases}
		\end{equation}
		and for some quadratic form $Q$ the estimate
		\begin{equation}\label{multi:eq:pi_tildeT}
			\pi^{\tilde{T}}\in \A{b}^{\infty,2,\infty,\infty}(\multiComp) Q(\dd x_\mu,\dd x_\nu).
		\end{equation}
	\end{lemma}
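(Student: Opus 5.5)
We construct $\tilde{T}$ by interpolating between the Minkowski field $T=\partial_s|_x$ and the boosted fields $\gamma_z^{-1}T_z$ with a partition of unity adapted to the self-similar variables $x_z/t_z$. Concretely, we set
\begin{equation*}
	\tilde{T}:=\Big(1-\sum_{z\in A}\chi_z\Big)T+\sum_{z\in A}\chi_z\,\gamma_z^{-1}T_z,\qquad \chi_z=\chi\big(\abs{x_z}/t_z\big),
\end{equation*}
where $\chi$ equals $1$ on $[0,\delta/4]$ and vanishes on $[2\delta,\infty)$. For $s>T_\mathrm{f}$ the supports of the $\chi_z$ are pairwise disjoint, and they are contained in the region where $g_\m$ is isometric (via $\Phi_z$) to $\ern$ or lies in the transition zone. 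The two displayed identities for $\tilde{T}$ then hold by construction.

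\emph{Timelikeness.} Both $T$ and $\gamma_z^{-1}T_z$ are future timelike for $\eta$, and the set of future timelike vectors is convex. So $\tilde{T}$ is $\eta$-timelike, with a uniform bound $\eta(\tilde{T},\tilde{T})\le -c$ depending only on $\max_z\abs{z}<1$. In the region $\{\abs{x_z}>3M\}$ we have
\begin{equation*}
	g_\m-\eta=\sum_z\chi\big(\Phi_z^*\ern-\eta\big)=\mathcal{O}(M/\abs{x_z}),
\end{equation*}
which is small once $\abs{x_z}>3M$ in the $\ern$-near regions (on the support where $\chi_z\neq1$, we have $\abs{x_z}\gtrsim\delta t_z\gg M$). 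Hence \cref{multi:eq:tau_uniform} transfers to $g_\m$. In $\{\abs{x_z}/t_z<\delta/4\}$ the vector field is exactly $\gamma_z^{-1}T_z$, which is causal for $\Phi_z^*\ern$ by the extremal Reissner--Nordstr\"om structure. The uniform bound is only claimed away from $\abs{x_z}\le 3M$ and near $\scri$, so the degeneration of $T$ at the horizon causes no problem.

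\emph{Deformation tensor.} We write $\pi^{\tilde{T}}=\mathcal{L}_{\tilde T}g_\m$ and split it according to the regions.
\begin{itemize}
	\item Where $\chi_z\equiv1$, the metric is $\Phi_z^*\ern$ and $T_z$ is Killing, so $\pi=0$.
	\item Far from all $\K_z$, $\F_z$, the metric is $\eta$ and $T$ is Killing, so $\pi=0$.
	\item In the transition annuli $\{\delta/4<\abs{x_z}/t_z<2\delta\}$, which lie in the interior of $\ip$ away from $\scri$ and all $\K,\F,\hor$, we expand
	\begin{equation*}
		\pi^{\tilde{T}}=\dd\chi_z\otimes(\gamma_z^{-1}T_z-T)^\flat+\chi_z\pi^{\gamma_z^{-1}T_z}+(1-\chi_z)\pi^{T}.
	\end{equation*}
\end{itemize}

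In the transition annuli we estimate each term. First, $\dd\chi_z\in\A{b}^{1}$ at $\ip$, since $\chi_z$ is homogeneous of degree $0$. Second, $\gamma_z^{-1}T_z-T$ is a constant-coefficient vector field; with respect to $\eta$ it is bounded, so this term alone is only $\mathcal{O}(t^{-1})$.

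This is the main obstacle: the $\dd\chi_z$ term is only $\rho_+^1$, while $\rho_+^2$ is claimed. The first thing to try is to use the freedom in $\tau$ and $\tilde T$. Instead of the constant fields, interpolate with a multiple of the Lorentzian-boost-invariant field $\partial_\tau$ adapted to the timelike functions of \cref{multi:lemma:tau}. Alternatively, and more naturally, choose the interpolation so that $\tilde T$ is Killing for $\eta$ to leading order: take $\tilde T$ to be a combination of translations whose coefficient varies only at order $\rho_+^{\,}$ slower.

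If that fails, the fallback reading is that the weights in \cref{multi:eq:pi_tildeT} are measured against the b-covectors $\dd x_\mu$ with the $\ip$ normalisation, in which $Q(\dd x_\mu,\dd x_\nu)$ already carries $\rho_+^{-0}$ while b-forms carry $\rho_+^{-1}$. In that convention $\dd\chi_z=\rho_+\,\dd_{\b}$-form counts as order $2$ relative to $\dd x_\mu$ after accounting for $\pi^{T}$ being a symmetric 2-tensor. I would therefore check the convention by computing in $(s,x)$ coordinates: $\partial_\mu\chi_z=\mathcal{O}(1/s)$. Together with the remaining terms $\pi^{T}=\mathcal{L}_T(g_\m-\eta)=\mathcal{O}(M s^{-2})$ and $\pi^{T_z}$ in the $\eta$-region being $0$, this gives the claimed decay provided the $\dd\chi_z$ contribution is arranged to be pure gauge. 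I expect that arranging (or correctly accounting for) this leading $\dd\chi_z$ term is the only nontrivial point. Everything else is supported away from $\scri,\K_z,\F_z$, which yields the $\infty$ weights there.
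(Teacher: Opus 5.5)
Your construction is not the paper's. The paper simply takes $\tilde T=g_\m^{-1}(\dd\tau,\cdot)$ with $\tau$ from \cref{multi:lemma:tau}, so the uniform timelike bound is just $g_\m(\tilde T,\tilde T)=g_\m^{-1}(\dd\tau,\dd\tau)$, and $\pi^{\tilde T}=2\nabla^2\tau$. Your handling of the two displayed identities and of timelikeness (by convexity, plus $g_\m-\eta$ small on the support of $\dd\chi_z$) is fine.

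There is, however, a genuine gap at \cref{multi:eq:pi_tildeT}. You correctly find that the symmetrised term $\dd\chi_z\otimes(\gamma_z^{-1}T_z-T)^\flat$ is only $O(\rho_+)$ in Cartesian components. You then leave the claimed $\rho_+^2$ unproved and offer remedies that cannot work. For $z\neq0$, no vector field $X$ that equals $\partial_s$ on one open cone and $\gamma_z^{-1}T_z=\partial_s+z\cdot\partial_x$ on another can satisfy $\pi^X_{\mu\nu}=O(s^{-2})$ with $\partial\pi^X=O(s^{-3})$, which is what weight $2$ at $\ip$ means in the interior of $\ip$. To see this, use
\[
\partial_\mu\partial_\nu X_\rho=\tfrac12\big(\partial_\mu\pi^X_{\nu\rho}+\partial_\nu\pi^X_{\mu\rho}-\partial_\rho\pi^X_{\mu\nu}\big)+O(s^{-2})\abs{\partial X}+O(s^{-3})\abs{X},
\]
where the last two terms come from $g_\m-\eta=O(s^{-1})$. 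This would give $\partial^2X=O(s^{-3})$. Integrate twice along a path of length $O(s)$ in $\{s=\mathrm{const}\}$, staying in the interior of $\ip$, starting in the first cone (where $\partial X=0$) and ending in the second. This yields $\abs{X(p_1)-X(p_0)}=O(s^{-1})$, contradicting $X(p_1)-X(p_0)=z\cdot\partial_x$.

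So neither making $\tilde T$ \emph{Killing to leading order} nor interpolating with $\partial_\tau$-type fields can help. The paper's own choice has the same defect: in the transition zone $\tau=s-\chi_z\,z\cdot x$, so $\nabla^2\tau=-\nabla^2(\chi_z\,z\cdot x)+O(s^{-2})=O(s^{-1})$. Your \emph{fallback} reading goes the wrong way too. The b-covectors at $\ip$ are $\dd x_\mu/s$, so rewriting $Q(\dd x_\mu,\dd x_\nu)$ in a b-frame loses powers of $\rho_+$ rather than gaining them.

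What you should state and prove instead is the bound your computation actually delivers:
\[
\pi^{\tilde T}\in\A{b}^{\infty,1,\infty,\infty}(\multiComp)Q(\dd x_\mu,\dd x_\nu),
\]
supported in the transition annuli. There the remaining terms $\chi_z\pi^{\gamma_z^{-1}T_z}+(1-\chi_z)\pi^T$ only involve Lie derivatives of $g_\m-\eta$ along constant-coefficient fields, so they are $O(\rho_+^2)$. Moreover $\pi^{\tilde T}$ vanishes identically near $\scri$ and near $\K_z,\F_z,\hor_z$, where $\tilde T$ is $T$, respectively $\gamma_z^{-1}T_z$, and $g_\m$ is $\eta$, respectively $\Phi_z^*\ern$.

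This weaker bound is all that is used later:
\begin{itemize}
\item In \cref{multi:lemma:T_multiplier}, the error $\tau_*^{N-\delta}\tau^\delta\abs{\pi^{\tilde T}\cdot\T}$ is absorbed by taking $N$ large. The term coming from $\dd(\tau_*^{N-\delta}\tau^\delta)$ carries an extra factor $N\tau_*^{-1}$, so this only needs $\abs{\pi^{\tilde T}}\lesssim\tau_*^{-1}$.
\item $\pi^{\tilde T}=O(\rho_+)$ is exactly what produces the $\rho_+^3\Diffb^2$ commutator in \cref{multi:eq:commutator}.
\end{itemize}

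With this correction your interpolation is a complete proof. Near the black holes it is better suited than a gradient field: it is literally the Killing field $\gamma_z^{-1}T_z$ there. By contrast, the gradient of $\gamma_z^{-1}t_z$ (a boosted $t_*$) has a non-vanishing $Y_z$-component at $\hor_z$ and is not Killing. What the paper's route buys is timelikeness for free from \cref{multi:lemma:tau}.
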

	\begin{proof}
		We simply take $\tilde{T}=g_\m^{-1}(\dd \tau,\cdot)$.
		
	\end{proof}
	
	\begin{lemma}[N-body exterior current]\label{multi:lemma:T_multiplier}
		Let $\tau,\tau_*,\tilde{T}$ as above and consider $N-\delta,\delta>0$.
		There exists $N$ sufficiently large, depending on $A\subset\mathring{B}$, such that 
		$J^{\tilde{T}}_{N,\delta}=\frac{1}{N}\tau_*^{N-\delta}\tau^\delta\tilde{T}\cdot \T$ satisfies the coercivity
		\begin{equation}\label{multi:eq:V2}
			-\Div J^{\tilde{T}}_{N,\delta}+\mathfrak{F}\gtrsim\tau_*^{N-1-\delta}\tau^{\delta} \underbrace{\begin{cases}
					D_{z}(\partial_{\abs{x_{z}}}\phi)^2+\big(1+\frac{\delta\tau_*}{ND\tau}\big)(T_{z}\phi)^2+\abs{x_{z}}^{-2}\abs{\slashed{\nabla}_{z}\phi}^2 &  \abs{x_{z}}/s_{z}<\delta,\\
					(\partial_{\abs{x}}\phi)^2+\big(1+\frac{\delta\tau_*}{N\tau}\big)(T\phi)^2+\abs{r}^{-2}\abs{\slashed{\nabla}\phi}^2 & \text{else},
			\end{cases}}_{E_T[\phi]}
		\end{equation}
		where $\mathfrak{F}=\delta^{-2}\tau_*^N\rho_\K^{-2}\rho_+^{-2}\rho_\scri^{-1-\delta}\abs{\Box_{\ern}\phi}^2$.
		
		Let $\chi_z$ be a localiser onto $\{\abs{x_z}<2M\}$ and $\chi_{\mathrm{far}}$ localising onto $\{1-\abs{x}/s<\delta\}$.
		Let $J^1_{>,N}=\tau_*^{N-1-\delta}\tau^\delta\chi_{\mathrm{far}}\frac{1}{N}r\partial_r|_{\tau_*}\cdot\T$ .
		Let $J^{1,z}_{<,N}$ be as in \cref{curr:lem:ext} with respect to $t_z,r_z$ coordinates.
		There exist $C$ sufficiently large, such that
		$J_{N,\delta}=CJ^{\tilde{T}}+J^1_{>,N}+\sum_zJ^{1,z}_{<,N}$ satisfies
		\begin{equation}\label{multi:eq:master1}
			-\Div J_{N,\delta}+\mathfrak{F}\gtrsim \tau_*^{N-1-\delta}\tau^\delta E_T[\phi]+\tau_*^{N-2-\delta}\tau^\delta r\chi_{\mathrm{far}} (Y\phi)^2+\sum_z\chi_z \tau_*^{N-2-\delta}\tau^\delta D_z'(Y_z\phi)^2,
		\end{equation}
		where $\mathfrak{F}=\delta^{-2}\tau_*^N\rho_\K^{-2}\rho_+^{-2}\rho_\scri^{-1-\delta}\abs{\Box_{\ern}\phi}^2$.
	\end{lemma}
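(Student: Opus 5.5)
The plan is to compute the divergence of $J^{\tilde T}_{N,\delta}$ from its deformation tensor, split $\multiComp$ into the regions where $\tilde T$ and the weights are exactly those of a single $\ern$ (or of Minkowski space), and treat the transition zones as errors absorbed by taking $N$ large.

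\textbf{Divergence formula.} Start from
\begin{equation*}
-N\Div J^{\tilde T}_{N,\delta}=-\T\big(\dd(\tau_*^{N-\delta}\tau^\delta)^\#,\tilde T\big)-\tfrac12\tau_*^{N-\delta}\tau^\delta\,\pi^{\tilde T}\cdot\T-\tau_*^{N-\delta}\tau^\delta(\Box\phi)\,\tilde T\phi .
\end{equation*}
Expanding the first term gives
\begin{equation*}
-\tau_*^{N-1-\delta}\tau^\delta\Big((N-\delta)\,\T(\dd\tau_*^\#,\tilde T)+\delta\,\tfrac{\tau_*}{\tau}\,\T(\dd\tau^\#,\tilde T)\Big).
\end{equation*}

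\textbf{Single-black-hole regions.} In $\{\abs{x_z}/t_z<\delta/4\}$ we have $\tilde T=\gamma_z^{-1}T_z$, $\tau_*=\gamma_z^{-1}t_{z,*}$ and $\tau=\gamma_z^{-1}t_z$. The metric is the pullback of $\ern$ and $\pi^{\tilde T}=0$, so the computation is literally that of \cref{curr:lem:ext} a). Combined with \cref{curr:eq:fluxT}, it yields the first line of $E_T$.

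\textbf{Far region.} In $\{\abs{x_z}/t_z>2\delta\ \forall z\}$ the metric is $\eta$, $\tilde T=T$ and $\pi^{T}=0$. The coercivity is the Minkowski analogue, with $\tau_*=s-\abs{x}$ near $\scri$.

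\textbf{Transition zones.} In the intermediate regions, \cref{multi:lemma:tau,multi:lemma:tildeT} give that $\dd\tau,\dd\tau_*,\tilde T$ are uniformly timelike, by \cref{multi:eq:tau_uniform}. Hence $-\T(\dd\tau_*^\#,\tilde T)\gtrsim\abs{\partial\phi}^2$ uniformly, and the main term is bounded below by $N\tau_*^{N-1-\delta}\tau^\delta\abs{\partial\phi}^2$. The deformation error is controlled by \cref{multi:eq:pi_tildeT}: it is $\lesssim\tau_*^{N-\delta}\tau^\delta\rho_+^{2}\abs{\partial\phi}^2$. Since $\rho_+\sim\tau_*^{-1}$ away from $\scri$ and the black holes, this is $\lesssim\tau_*^{N-1-\delta}\tau^\delta\cdot\tau_*^{-1}\abs{\partial\phi}^2$. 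It is absorbed for $N$ large, or even just for $T_{\mathrm f}$ large. This is where $N\gg1$ depending on $A$ enters: the uniform timelike constant $c$ depends on $\max_{z\in A}\abs z$, and the error constant depends on $A$.

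\textbf{Inhomogeneity.} The term with $\Box\phi$ is handled by Young's inequality, splitting $\abs{\Box\phi\,\tilde T\phi}\le\epsilon\,w\,(\tilde T\phi)^2+\epsilon^{-1}w^{-1}\abs{\Box\phi}^2$. Choose the weight $w$ so that $(\tilde T\phi)^2$ is absorbed by the $\tfrac{\delta\tau_*}{ND\tau}(T\phi)^2$ term. This produces $\mathfrak F$, whose $\rho_\K^{-2}\rho_+^{-2}\rho_\scri^{-1-\delta}$ weights come from the $t/t_*$ ratio and $D$, as in \cref{curr:lem:ext}.

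\textbf{Second statement.} Add $J^1_{>,N}$ and $J^{1,z}_{<,N}$. Each is supported where the metric is exactly Minkowski (near $\scri$, after enlarging $T_{\mathrm f}$) or exactly $\ern$ (for $\abs{x_z}<2M$). Therefore \cref{curr:eq:p_far,curr:eq:p_near1} apply verbatim, with $C J^{\tilde T}$ playing the role of $CJ^T$ to absorb the non-sign-definite terms, such as the $\partial_r\phi\,\partial_u\phi$ cross term. The cutoff derivatives of $\chi_{\mathrm{far}}$ and $\chi_z$ produce terms supported in regions of uniform timelikeness. They are of size $\tau_*^{N-1-\delta}\tau^\delta\abs{\partial\phi}^2$ with an $O(1)$ constant, hence absorbed by choosing $C$ large.

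\textbf{Main obstacle.} The hardest step is the transition-zone bookkeeping. One must check that $\dd\tau_*$ and $\dd\tau$ remain future causal and that the weight derivative terms keep a sign there. One must also check that the $\pi^{\tilde T}$ error genuinely decays like $\rho_+^2$, i.e. one power better than the main term. I would first verify that the ratio $\tau_*/\tau$ stays bounded between positive constants in these zones, and that the cross term $\T(\dd\tau^\#,\tilde T)$ is nonnegative by causality. With those two facts, the absorption is routine.
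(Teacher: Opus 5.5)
Your proposal is correct and is essentially the paper's proof. It uses the same divergence identity, in which the causal weight one-form $\dd(\tau_*^{N-\delta}\tau^\delta)$ contracted with $\tilde T$ gives the coercive term; it absorbs the $\pi^{\tilde T}$ error (supported in the transition zones) by taking $N$ large, uses Cauchy--Schwarz for the forcing, and adds the $r^p$-type currents exactly as in \cref{curr:lem:ext}. One correction: do not rely on a $\rho_+^2$ gain for $\pi^{\tilde T}$. In the conic transition zone $\tau$ is homogeneous of degree one but not linear, so $\nabla^2\tau\sim\abs{z}t^{-1}$, which is consistent with the commutator bound $\chi\rho_+^3\Diffb^2$ in \cref{multi:eq:commutator}; the error is therefore of the same order as $\tau_*^{N-1-\delta}\tau^\delta\abs{\partial\phi}^2$, and it is the $(N-\delta)$ prefactor, not large $T_{\mathrm f}$, that absorbs it, which is exactly why $N$ must depend on $A$.
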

	
	\begin{proof}
		The one form $\dd \tau_*^{N-\delta}\tau^\delta$ is timelike, since both $\dd \tau_*,\dd \tau$ are.
		Therefore, we compute that
		\begin{equation}
			-N\Div J^{\tilde{T}}_{N,\delta}\geq (\dd(\tau_*^{N-\delta}\tau^\delta) \times \tilde{T})\cdot \T- \dd \tau_*^{N-\delta}\tau^\delta \abs{\pi^{\tilde{T}}\cdot\T}.
		\end{equation}
		From \cref{multi:eq:pi_tildeT} and the uniform timelike property \cref{multi:eq:tau_uniform} together with the coercivity \cref{curr:eq:fluxT}, it follows that for $N$ sufficiently large, the first term on the right is at least twice as large as the second.
		Hence, we may drop the negative term by introducing an implicit constant.
		
		The $J^{1,z}_{<,N}$  terms can be introduce into a current the same way as in \cref{curr:lem:ext}.
		For $J^1_{>,N}$, the error terms arising from the cutoff can be estimated using the uniform control provided by $J^T_{N,\delta}$ for all derivatives.
		This yields \cref{multi:eq:master1} with an error 
		\begin{equation}
			\mathfrak{F}'=\tau_*^N\abs{\Box_{g_\m}\phi}\big(\tau_*^{-\delta}\tau^\delta\abs{\tilde{T}\phi}+\abs{\chi_{\mathrm{far}}\tau_*^{-1}rY\phi}+\abs{\chi_z\tau_*^{-2}Y_z\phi}\big).
		\end{equation}
		Using Cauchy-Schwarz, and the control on the right hand side of \cref{multi:eq:master1}, we get the result.
	\end{proof}
	
	Next, we show how to recover $\Ve$ from $\mathring{\Ve}$ via a Poincare type estimate.
	
	\begin{lemma}[Poincare]\label{multi:lemma:poincare}
		Fix $a_+,a_\K,a_\F$ satisfying $a_\K>\max(a_+-3/2,a_\F+1/2)$ and $\min(a_\F,a_+)>-2$.
		For $\phi\in\A{b}^{a_\scri,a_+,\vec{a}_\K,\vec{a}_\F}(\multiComp)$, it holds that
		\begin{equation}
			\norm{\phi}_{\Hb^{0;a_\scri,a_+,a_\K,a_\F}(\multiComp)}\lesssim\norm{\mathring{\Ve}\phi}_{\Hb^{0;a_\scri,a_+,a_\K,a_\F}(\multiComp)}.
		\end{equation}
	\end{lemma}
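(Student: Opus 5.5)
The plan is to reduce \cref{multi:lemma:poincare} to the single black hole \cref{curr:lemma:poincare} by localisation. The geometry of $\multiComp$ near each $\K_z\cup\F_z$ is, via $\Phi_z$, isometric to a neighbourhood of $\K\cup\F$ in $\Mcomp$. Away from the black holes it agrees with Minkowski near $\ip$ and $\scri$. I would fix a partition of unity $1=\chi_{\mathrm{far}}+\sum_{z\in A}\chi_z'$, with $\chi_z'$ a function of $\abs{x_z}/t_z$ supported in $\{\abs{x_z}/t_z<\delta\}$ and equal to $1$ on $\{\abs{x_z}/t_z<\delta/2\}$. These cutoffs lie in $\A{b}^0(\multiComp)$, and their $\mathring{\Ve}$ derivatives are supported in the transition zones $\{\abs{x_z}/t_z\in(\delta/2,\delta)\}$. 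Those zones lie in the interior of $\ip$, away from all $\K_z,\F_z$ and from $\scri$.

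\emph{Far region.} On the support of $\chi_{\mathrm{far}}$ I would use coordinates $\rho_+,\rho_\scri,\omega$ near $\scri\cap\ip$, and more generally coordinates adapted to $\ip$. The family $\mathring{\Ve}$ contains $t_*T$ and $r\partial_r|_{t_*}$, which together control the scaling field $\rho_+\partial_{\rho_+}$ transversal to $\ip$. The one-dimensional Hardy inequality $\int_0^1\phi^2\rho_+^{a}\dd\rho_+\lesssim_a\int_0^1(\rho_+\partial_{\rho_+}\phi)^2\rho_+^a\dd\rho_+$ for $a<-1$ holds because of the vanishing at the artificial boundary and the finiteness of the weighted norm. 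Applied along the integral curves of the scaling field, it gives $\norm{\chi_{\mathrm{far}}\phi}\lesssim\norm{\mathring{\Ve}(\chi_{\mathrm{far}}\phi)}$ once $a_+>-2$, exactly as in the far-exterior step of \cref{curr:lemma:poincare}. The volume form of $g_\m$ is comparable to the Minkowski one there, so the weights are unchanged.

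\emph{Near each black hole.} On the support of $\chi_z'$ the metric is $\Phi_z^*\ern$, and $\mathring{\Ve}$ restricts to $\{D_z^{1/2}\partial_{\abs{x_z}},\rho_{\F_z}^{-1}\rho_+^{-1}T_z,\rho_\hor^{1/2}\Omega^z\}$. Also $s$ is comparable to $t_z$ up to $\gamma_z$ and an $\mathcal{O}(1)$ error, so the boundary defining functions are equivalent to those of $\Mcomp$. I would therefore apply \cref{curr:lemma:poincare} verbatim to $\chi_z'\phi$ pulled back by $\Phi_z$. The hypotheses $a_\K>\max(a_+-3/2,a_\F+1/2)$ and $a_\F>-2$ cover the near-horizon and spatially compact steps there. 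Those steps require $a_\F-a_\K<-1/2$ and $a_\K-a_+>-3/2$, and a lower bound on $a_\F$ for the $\F$ Hardy step; I would check that $\min(a_\F,a_+)>-2$ suffices for the latter, and if not, reuse the exact range from the single-body lemma.

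\emph{Commutator terms and summation.} Writing $\phi=\chi_{\mathrm{far}}\phi+\sum_z\chi_z'\phi$, we have $\mathring{\Ve}(\chi\phi)=\chi\mathring{\Ve}\phi+(\mathring{\Ve}\chi)\phi$. The errors $(\mathring{\Ve}\chi)\phi$ are supported in the transition zones, which are compact subsets of $\mathring{\ip}$ in the angular variable $x/s$. The main obstacle is absorbing these zeroth-order terms. Unlike in \cref{curr:lemma:poincare}, there are several disjoint black holes, and the ordering "far region first, then propagate inward" must be respected. I would first prove the far-region estimate for $\phi$ itself on $\{\abs{x_z}/t_z>\delta/4\ \forall z\}$, integrating the scaling Hardy inequality along rays $x/s=\mathrm{const}$. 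Each such ray stays in the far region, so no cutoff in $x/s$ is needed. This controls $\phi$ on the transition zones by $\mathring{\Ve}\phi$. The transition-zone term is then fed as the boundary term $\norm{\phi}_{\Hb^{0;a_+}(\{\abs{x}\in(t\delta/2,t\delta)\})}$, in the form already present in the spatially compact step of \cref{curr:lemma:poincare}, for each $z$ separately. Summing over the finitely many $z\in A$ concludes the proof.
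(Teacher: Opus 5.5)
Your route is the paper's own. Its proof of this lemma is the single sentence ``same proof as in \cref{curr:lemma:poincare}''. Your localisation is a faithful and more detailed version of that: first the scaling Hardy inequality along the rays $x/s=\mathrm{const}$ on $\{\abs{x_z}/t_z>\delta/4\ \forall z\}$, then for each $z$ the near-horizon and spatially compact steps, with the transition zone supplying the outer boundary term.

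The point you deferred, however, does not work out. The near-horizon step is a Hardy inequality in $\rho_\F$ with no vanishing imposed at the artificial boundary, exactly like the one at $\ip$. The difference is that $\mu$ carries $\rho_\F^{0}$ rather than $\rho_+^{-4}$, so the integrand is $\rho_\F^{-2a_\F-1}\phi^2\,\dd\rho_\F$. The inequality $\int_0^1\phi^2\rho^{b}\,\dd\rho\lesssim\int_0^1(\rho\partial_\rho\phi)^2\rho^{b}\,\dd\rho$ then requires $b<-1$, i.e.\ $a_\F>0$. At $\ip$ the same computation gives $b=-2a_+-5$, which is where the threshold $-2$ comes from; it does not transfer to $\F$. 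This is exactly why \cref{curr:lemma:poincare} assumes $a_\F>0$. So your fallback is forced: your argument, and equally the paper's one-line proof, gives the lemma only for $a_\F>0$, $a_+>-2$ (plus the $\K$ conditions). Neither says anything for $a_\F\in(-2,0]$, and the hypothesis $\min(a_\F,a_+)>-2$ should be read as the single-body one.

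There is also a smaller point. Applying the single-body lemma ``verbatim'' tacitly identifies the radial field $D_z^{1/2}\partial_{\abs{x_z}}$ in the multi-body $\mathring{\Ve}$ with the b-field $(\abs{x_z}-M)\partial_{\abs{x_z}}$ in the single-body $\mathring{\Ve}$. The two are comparable only for bounded $\abs{x_z}$; near $\K_z\cap\ip$ the former is weaker by a factor $\abs{x_z}$. Taken literally, the inequality fails there. Take $\phi=g(\log t_z-\log T_0)\,h(\log\abs{x_z}-\tfrac12\log T_0)$ with $g,h\in C^\infty_c((-1,1))$. Then $D_z^{1/2}\partial_{\abs{x_z}}\phi$ and $\rho_{\F_z}^{-1}\rho_+^{-1}T_z\phi\sim\abs{x_z}t_z^{-1}\partial_{\log t_z}\phi$ are both $O(T_0^{-1/2})$, and $\Omega^z\phi=0$. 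Meanwhile all weights are comparable to constants on the support, so $\norm{\mathring{\Ve}\phi}/\norm{\phi}\lesssim T_0^{-1/2}$. You should state explicitly that you use the b-version of the radial field, since that is what \cref{curr:lemma:poincare} is about.
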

	\begin{proof}
		This follows using the same proof as in \cref{curr:lemma:poincare}.
	\end{proof}
	
	Next, we introduce \emph{good} commutators.
	Since $g_\m$ lacks any global almost Killing vectorfields, to obtain coercive estimates for commuted quantities, we loose some decay at each commutation step.
	
	\begin{lemma}[N-body commutator]\label{multi:lemma:commutators}
		Let $\tilde{T}$ be as in \cref{multi:lemma:T_multiplier}.
		It holds that
		\begin{equation}\label{multi:eq:commutator}
			[\Box_{g_\m},\tilde{T}]=\chi\rho_+^3\Diffb^2(\multiComp).
		\end{equation}
		for some cutoff function $\chi$ localised in $\bigcap_z\{\abs{x_z}/t_z\in(\delta/2,\delta)\}$.
		Furthermore, for $n\geq1$, and $f=\Box_{g_\m}\phi$ compactly supported, there exists $N\sim_A n$ such that
		\begin{equation}\label{multi:eq:n_commuted}
			\int_{\D_s}-\Div J_{N,q}[\tilde{T}^n\phi]\mu\lesssim\int_{\D_s} \tau_*^{-2n}(\abs{\Diff_b^n f}^2+\bar{\chi}\abs{\Diffb^n\phi}^2)\tau_*^N\rho_\K^{-2}\rho_+^{-2}\rho_\scri^{-1-q}\mu,
		\end{equation}
		where $\bar{\chi}$ is a cutoff function localised to a neighbourhood of $i^+\setminus(K,\scri)$.
	\end{lemma}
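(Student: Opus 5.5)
The plan is to establish the two claims in order, starting with the commutator identity \cref{multi:eq:commutator} by a region-by-region computation. In the region $\bigcap_z\{\abs{x_z}/t_z>2\delta\}$ the metric $g_\m$ coincides with $\eta$ and $\tilde{T}=T=\partial_s$ is Killing, so the commutator vanishes there. In each region $\{\abs{x_z}/t_z<\delta/4\}$ the metric is isometric via $\Phi_z$ to $\ern$ and $\tilde{T}=\gamma_z^{-1}T_z$ is again Killing, so the commutator vanishes there as well. The only contribution therefore lives in the transition annuli $\{\abs{x_z}/t_z\in(\delta/2,\delta)\}$, supported where $\chi'$ and the cutoffs in \cref{nonlinear:eq:multi_metric} are nonconstant.

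In those annuli I would use the general identity $[\Box_g,X]\phi=-\nabla\cdot(\pi^X\cdot\nabla\phi)+\tfrac12\nabla(\mathrm{tr}\,\pi^X)\cdot\nabla\phi$. This expresses the commutator as $\pi^{\tilde{T}}\cdot\partial^2\phi+(\partial\pi^{\tilde{T}})\cdot\partial\phi$. By \cref{multi:eq:pi_tildeT}, $\pi^{\tilde{T}}\in\A{b}^{\infty,2,\infty,\infty}$ in the Cartesian frame. On this region, which is near the interior of $i^+$, one has $\partial_{x_\mu}\in\rho_+\Diffb$. The second-order part is then $\rho_+^2\cdot\rho_+^2\Diffb^2$, which is better than needed, and the first-order part is $\rho_+^3\Diffb^1$. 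Together these give \cref{multi:eq:commutator}. The one check required here is that $\eta-\Phi_z^*\ern$ and its cut-off version are $\A{b}^{1}$ at $i^+$, so that $\pi^{\tilde T}$ has at least two orders of $\rho_+$ decay, consistent with \cref{multi:eq:pi_tildeT}.

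For \cref{multi:eq:n_commuted} I would argue by induction on $n$, in the spirit of Step 2 of the proof of \cref{lin:prop:main}. The function $\tilde{T}^n\phi$ satisfies
\[
\Box_{g_\m}\tilde{T}^n\phi=\tilde{T}^nf+\sum_{j<n}\tilde{T}^{n-1-j}\,\chi\rho_+^3\Diffb^2\,\tilde{T}^j\phi .
\]
Since $\tilde{T}\in\rho_+\Diffb$ near $i^+$ (and $\rho_\F\rho_\K\rho_+\Diffb$ near the black holes, by the computation in \cref{not:rem:examples}), the first term is controlled by $\tau_*^{-n}\Diffb^nf$. The error terms are bounded by $\bar\chi\,\tau_*^{-n-1}\Diffb^{n+1}\phi$ and are supported in the transition region, away from $\K,\F,\scri$. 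I would then apply \cref{multi:lemma:T_multiplier} to $\tilde{T}^n\phi$, which gives the $\mathfrak{F}$ term with $\abs{\Box_{g_\m}\tilde{T}^n\phi}^2$, and square these bounds to reach the right-hand side of \cref{multi:eq:n_commuted}. The resulting weight is $\tau_*^{-2n}$, which is exactly why $N$ must be chosen depending on $n$. The weights $\rho_\K^{-2}\rho_+^{-2}\rho_\scri^{-1-q}$ are harmless on $\supp\chi$ because $\rho_\K,\rho_\scri\sim1$ there.

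The main obstacle is that the error contains $\Diffb^{n+1}\phi$ with only one extra power of $\tau_*$ decay, so on the right it appears with $\Diffb^{n+1}$ rather than $\Diffb^n$. I would first try to absorb it using an elliptic estimate: near $i^+$ away from $\K,\scri$, \cref{ell:lemma:ext} recovers all derivatives from $\tilde{T}$-derivatives and $f$. The top-order $\Diffb^{n+1}\phi$ would then be bounded by $\{1,\tau\tilde{T}\}^{n+1}\phi$ plus $\Diffb^{n}f$, which is lower order in the induction scheme provided $N$ increases by a fixed amount at each step. If this absorption fails, I would simply keep the $\bar\chi\abs{\Diffb^n\phi}^2$ term on the right-hand side, as the statement allows, with the loss of one derivative accounted for by this bookkeeping.
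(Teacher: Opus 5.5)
Your proof of \cref{multi:eq:commutator} follows the paper's argument: $\tilde T$ is Killing outside the transition annuli, and the deformation tensor decays inside them. One caveat on the check you name there. Since $\tilde T=g_\m^{-1}(\dd\tau,\cdot)$ and $\tau$ is interpolated by a degree-zero cutoff, $\pi^{\tilde T}=2\nabla^2\tau$ also picks up the Hessian of that interpolation, which is only $O(\rho_+)$. Metric decay alone therefore does not settle $\pi^{\tilde T}$, but $O(\rho_+)$ is all that $\rho_+^3\Diffb^2$ requires, so your conclusion stands.

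The gap is in \cref{multi:eq:n_commuted}. You have the right obstacle and the right tool, \cref{ell:lemma:ext}, but not the mechanism that closes the estimate.

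\begin{itemize}
\item After the elliptic estimate, the top-order part of the commutator error is $\partial(\tilde T^n\phi)$ on the annuli. It carries the same weight $\tau_*^{N-1}$ as the coercive bulk of $-\Div J_{N,q}[\tilde T^n\phi]$ itself.
\item So it is not ``lower order in the induction scheme''. Nothing proved at level $n-1$ controls $n+1$ derivatives of $\phi$. Raising $N$ from step to step does not help either, since it shifts both sides by the same power of $\tau_*$.
\item The term must instead be absorbed into the left-hand side, and the smallness comes from the $1/N$ normalisation of the current. The bulk has an $O(1)$ coefficient, because the $1/N$ cancels the $N$ produced by differentiating $\tau_*^N$. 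The forcing enters only through $N^{-1}\tau_*^{N}\abs{\Box\psi}\abs{\tilde T\psi}\le\tfrac12\tau_*^{N-1}\abs{\tilde T\psi}^2+\tfrac{1}{2N^2}\tau_*^{N+1}\abs{\Box\psi}^2$.
\item To obtain $N\sim_A n$ you must also split $[\Box_{g_\m},\tilde T^n]=\sum_j\tilde T^{n-1-j}[\Box_{g_\m},\tilde T]\tilde T^{j}$ into two pieces. Move the outer $\tilde T$'s to the right; each resulting commutator gains a $\rho_+$. This gives a principal part $Cn\,\bar\chi\rho_+^3\Vb\Vb^*\tilde T^{n-1}$ and a remainder in $\rho_+^{n+2}\bar\chi\Diffb^{n}$.
\item The remainder has only $n$ derivatives, and it is exactly what the term $\bar\chi\abs{\Diffb^n\phi}^2$ on the right accounts for.
\item The principal part, after \cref{ell:lemma:ext}, is $(Cn/N)^2$ times the bulk, plus terms with at most $n$ derivatives of $\phi$ and derivatives of $f$. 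It is therefore absorbed once $N\gtrsim C_A n$.
\end{itemize}

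Lumping everything into $\bar\chi\tau_*^{-n-1}\Diffb^{n+1}\phi$ hides both which term has to be absorbed and the linear growth of its constant in $n$.

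Your fallback is not available. The statement allows $\bar\chi\abs{\Diffb^n\phi}^2$ on the right, not $\bar\chi\abs{\Diffb^{n+1}\phi}^2$, and no bookkeeping converts one into the other. An estimate with $n+1$ derivatives of $\phi$ on the right is of the same order as its left-hand side, so it would be circular and useless for the induction in \cref{multi:prop:linear}. A minor point as well: the elliptic estimate bounds $\Diffb^{n+1}\phi$ by $\Diffb\{1,\tau\tilde T\}^{n}\phi$ plus $\Diffb^{n-1}$ of the forcing, not by $\{1,\tau\tilde T\}^{n+1}\phi$.
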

	\begin{remark}
		Using a Morawetz estimate, we could obtain smaller implicit constant in $N\sim_A n$, but still order one losses for each commutation.
	\end{remark}
	\begin{proof}
		\cref{multi:eq:commutator} follows trivially, using the decay rate of $g_\m$, and that $\tilde{T}$ is Killing outside the support of the cutoff function.
		Let us more precisely write 
		$\mathrm{RHS}\cref{multi:eq:commutator}=C\chi\rho_+^3\Vb\Vb^*$, where we used unit size vectorfields, thus we introduced a constant $C$ depending on $A$.
		
		Using \cref{multi:eq:commutator} iteratively, we get for $n\geq1$
		\begin{equation}\label{multi:eq:proof_comm1}
			[\Box_{g_\m},\tilde{T}^n]=Cn\rho_+^3\bar\chi \Vb\Vb^* \tilde{T}^{n-1}+\rho_+^{n+2}\bar\chi\Diffb^{n-1}(\multiComp)\Vb^*.
		\end{equation}
		Applying the divergence computation from \cref{multi:lemma:T_multiplier}, we see that the right hand side of \cref{multi:eq:n_commuted} already controls the second term from \cref{multi:eq:proof_comm1}.
		For the first term is controlled by the elliptic estimate in \cref{ell:lemma:ext}:
		Set $U_1=\{\delta/2<\abs{x}/t<\delta\}$ and $U_2=\{\delta/4<\abs{x}/t<2\delta\}$.
		Then
		\begin{equation}
			\norm{Cn\rho_+^3\chi \Vb\Vb^* \tilde{T}^{n-1}\phi}_{\Hb^{0;l}(U_1)}\lesssim Cn\norm{\{1,sT\}\tilde{T}^{n-1}\phi}_{\Hb^{1;-4}(U_2)}+\norm{\Box_{g_\m}\tilde{T}^{n-1}\phi}_{\Hb^{0;-2}(U_2)}.
		\end{equation}
		Noting that the right hand side of \cref{multi:eq:V2} yields control over $T\phi$ nondegenerately in terms of $N$ in $U_2$, we see that for $N\sim n$ sufficiently large, the error term arising from the first term in \cref{multi:eq:proof_comm1} can be absorbed into the left hand side of \cref{multi:eq:n_commuted}.
	\end{proof}
	
	Finally, we have 
	
	\begin{proof}[\cref{multi:prop:linear}]
		The proof is essentially the same as for \cref{lin:prop:main}, with the main difference that we need to take $N$ in $J^T_{N,1/4}$ large enough to control the error terms in \cref{multi:eq:n_commuted}.
	\end{proof}

	\subsection{Scattering}
	Finally, we show via energy estimates that we can remove the fast decaying error.
	Note that since the regions $\multiCompin$ are isometric to $\McompIn$, we can already use \cref{scat:lemma:interior}, to obtain that the solution is regular up to $\hor_z$, with a \emph{necessarily} loss of regularity towards $\hor_z$, depending on the decay rate.
	\begin{prop}\label{multi:prop:nonlin}
		Let $s\geq10$, $q_0>3/2$ $\phi_{app}\in\Aext{b}^{1,q_0,\vec{q}_0,\vec{q}_0}(\multiComp)$.
		There exists $N\sim_{A}s$, such that the following holds:
		Let $k\leq s$.
		If $\Box_{g_\m}\phi_{app}-\mathcal{N}[\phi_{app}]\in\Aext{b}^{s+2k;3,N+5/2,\vec{N}+1,\vec{N}-1/2}(\multiComp)$ and $\phi|_{\hor_z}=\psi_{\hor_z}=0$, then 
		there exists $T_{\mathrm{f}}$ large enough such that the nonlinear scattering problem
		\begin{equation}
			\Box_{g_\m}(\phi+\phi_{app})-\mathcal{N}[\phi_A+\phi]=0,\quad r\phi|_{\scri}=0,\quad \phi|_{\hor_z}=\psi_{\hor_z}
		\end{equation}
		has a unique solution $\phi\in\Aext{b}^{s-3;1,N+1/2,\vec{N},\vec{N}-1/2}(\multiComp)+\A{b}^{s-3;1,N+1/2,\vec{N},\vec{N}-1/2,k+1/4}(\multiComp)$ within $s-\abs{x}>T_\mathrm{f}$.
	\end{prop}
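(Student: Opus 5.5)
We follow the single black hole argument of \cref{scat:prop:main}, with \cref{multi:prop:linear} replacing \cref{lin:prop:main}, and \cref{scat:lemma:interior} used verbatim in each interior region $\multiCompin$. These regions are isometric to $\McompIn$ and each is causally determined by its own $\B_z$.

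\textbf{Reductions.} First we apply \cref{scat:lemma:interior} in every $\multiCompin$, taking data $0$ on $\B_z$. This gives the trace of the solution on $\hor_z$ in $\A{b}^{s-3;N}(\hor_z)$, and in fact the traces $\Vc^k$ of its transversal derivatives. Next we run the peeling construction of \cref{scat:lemma:peeling} separately near each $\hor_z$, working in the $(t_z,x_z)$ coordinates. This is possible because the cutoffs $\chi(x_z/t_z)$ have disjoint supports, so near $\hor_z$ the metric is exactly $\Phi_z^*\ern$. The outcome is a correction $\phi_{\le k}$ such that the error vanishes to order $\rho_{\hor_z}^k$ at every $\hor_z$ and the remaining data on $\scri$ and $\hor_z$ are trivial. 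After that we truncate the forcing with $\chi(\tau_*/T_2)$ and with cutoffs $\chi((\abs{x_z}-M)2^n)\chi(r^{-1}2^n)$ supported away from $\hor_z,\scri$. By \cref{not:eq:sobolev} the truncated forcing is uniformly bounded in $\Hb^{n;3/4,N+1/2,\vec N+1/2,\vec N-1/2,\overrightarrow{k-1/4}}(\multiComp)$. This bound is where the order-$k$ vanishing from peeling is used.

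\textbf{Uniform a priori estimate.} For the truncated problem we solve backwards from large $\tau_*$, starting from trivial data, using local existence. We bootstrap the norm $X=\norm{\Ve\Vc^k\phi}_{\Hb^{n;-1/4,N-3/2,\vec N-1/2,\vec N-1/2,\overrightarrow{3/4}}}$. Here $N\sim_A s$ is chosen as in \cref{multi:prop:linear}, i.e.\ large enough for the commuted estimate \cref{multi:eq:n_commuted} to absorb the commutator errors of $\tilde T^n$. \cref{multi:prop:linear} bounds $X$ by the forcing plus $\norm{\Vc^k(\mathcal N[\phi_{app}+\phi]-\mathcal N[\phi_{app}])}$. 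We control this nonlinear term by the analogue of \cref{lin:lemma:nonlin} on $\multiComp$, which holds because the algebra property \cref{lin:lemma:algebra} and the Sobolev embedding are local and only see the weights near each face. That lemma gives a bound $T_{\mathrm f}^{-1/2}X(1+X)$, with the $t_*^{1/2}$ gain coming from $q_0>3/2$. Taking $T_{\mathrm f}$ large, we absorb this term and close the bootstrap uniformly in $T_2$ and $n$.

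\textbf{Limit, uniqueness, regularity.} We extract a weakly convergent subsequence in a slightly weaker weighted space and pass to the limit in the semilinear equation, which is legitimate by compactness of the embedding with weight loss. Uniqueness follows from the difference argument of \cref{scat:lemma:interior}: we apply the current $J_{N',\delta}$ with $N'<N$ on $\D^{T_1}$ and let $T_1\to\infty$, using the assumed decay to kill the boundary term. Sobolev embedding then places the solution in $\Aext{b}^{s-3;\dots}$ plus the $\rho_{\hor}^{k+1/4}$-conormal piece near each $\hor_z$.

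\textbf{Main obstacle.} We expect the difficulty to be the compatibility of the $\Vc$ commutations near $\hor_z$ with the global multiplier $\tilde T$. Near $\hor_z$ one has to use $\mathcal T_z$ and $S_z$ from \cref{lin:lemma:near_horizon}, which cost decay as in Step 3 of \cref{lin:prop:main}. These commutators have to be cut off before reaching the transition zones of \cref{multi:lemma:tau}, and their cutoff errors, together with the $\rho_+^3$ errors from \cref{multi:eq:commutator}, must be absorbed by choosing $N\sim_A s$ large. We would first check that all cutoff errors are supported where \cref{ell:lemma:ext} gives nondegenerate control, and then increase $N$ until the errors can be absorbed.
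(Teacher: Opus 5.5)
Your proposal is correct and is essentially the paper's argument, which reruns the proof of \cref{scat:prop:main} with $N$ fixed as in \cref{multi:prop:linear} in place of \cref{lin:prop:main}: peeling at the horizons, cutting the error off in time and away from the boundary, obtaining uniform bounds from the linear estimate together with the nonlinear bound, and passing to a limit. The commutator issue you flag as the main obstacle is already packaged into \cref{multi:prop:linear}, so you may quote it. One correction: the statement poses the problem on $\multiComp$ with prescribed horizon data $\psi_{\hor_z}=0$, so you should drop the preliminary application of \cref{scat:lemma:interior} and peel with zero horizon data. Using the interior trace as horizon data would in general change the boundary condition, and you would then be proving the multi-black-hole analogue of \cref{scat:thm:main} rather than the stated proposition.
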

	We already mention, that $N(s)$ in the above theorem is not sharp, but this is not of interest in this paper.

	\begin{proof}[Proof of \cref{multi:prop:nonlin}]
		As in the proof of \cref{scat:prop:main}, we assume that the error $f=\Box_{g_\m}\phi_{app}-\mathcal{N}[\phi_{app}]$ is supported in $s<T_2$ for some $T_2\gg1$, and the solution $\phi$ is constructed by a limiting argument.
		
		We fix $N$ as in \cref{multi:prop:linear}.
	\end{proof}

	\appendix
	
	\section{Polyhomogeneous ansatz}\label{sec:poly}
	The aim of this section is to put \cref{an:thm:ansatz} into the polyhomogeneous setting.
	We  introduce the space of functions appropriate for the partial expansions in \cref{poly:sec:function_spaces}, and then revisit each step of the proof of \cref{an:thm:ansatz} in the new functional framework.
	We prove
	\begin{prop}\label{poly:prop:an}
		Fix polyhomogeneous scattering data $\psi_\scri\in\A{}^{\E_\scri}(\scri)$ for some index set $\E_\scri$ with $\min(\E_\scri)>1$.
		Then, there exists index set $\E_\hor$ with $\min(\E_\hor)=\min(\E_\scri)-1$ and data on the horizon $\phi_\hor\in\A{}^{\E_\hor}(\hor)$ together with $\phi\in\Aext{phg}^{(1,0),2,2,2}(\Mcomp)$ such that 
		\begin{equation}
			\Box_{\ern}\phi-\mathcal{N}[\phi]\in\Aext{phg}^{3,\infty,\infty,\infty}(\Mcomp),\qquad r\phi|_{\scri}=\psi_\scri,\qquad \phi_{\hor}=\phi_\hor.
		\end{equation}
	\end{prop}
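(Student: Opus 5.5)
We follow the structure of the proof of \cref{an:thm:ansatz}, with conormal spaces replaced by polyhomogeneous ones. At each stage we track index sets at every face instead of decay rates. The functional framework is the usual one. $\Aext{phg}^{\E_\scri,\E_+,\E_\K,\E_\F}(\Mcomp)$ consists of functions admitting, at each boundary hypersurface, an expansion $\sum_{(z,k)\in\E_\bullet}\rho_\bullet^{z}\log^k\rho_\bullet\, a_{z,k}$. Here the coefficients $a_{z,k}$ are themselves polyhomogeneous on the face, with compatible expansions at corners, and the remainders lie in $\A{b}$ with arbitrarily high weight. Under this notion the mapping properties of \cref{map:lemma:normal_ops} and the nonlinear estimates of \cref{an:lemma:nonlin} carry over directly. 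The reason is that $\Box_{\ern}-\Normal{\bullet}\in\rho_\bullet\Diffb^2$, and that products and $\Diffb$ act on index sets by addition and shifts.

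\emph{Step 0} is unchanged. We set $r\phi_0=\chi(u/r)\psi_\scri$, which is polyhomogeneous with $\scri$-index set $(1,0)$ and $i^+$-index set $\E_\scri+1$. The inductive step then inverts each model operator in the polyhomogeneous category, face by face.

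At $\K$, the zero-energy operator $\Normal{\K}$ is an elliptic b-operator on $\Rcomp$. Its indicial roots come from the spherical harmonic decomposition: at $r=\infty$ they are $\ell$ and $-\ell-1$, and at $r=M$ they are the corresponding roots of $\partial_\rho\rho^2\partial_\rho-\ell(\ell+1)$. Combining \cref{an:prop:K} with the standard Mellin-transform argument, polyhomogeneous $f$ yields polyhomogeneous $\Normal{\K}^{-1}f$. Its index set is the extended union of the index set of $f$ shifted by $2$ and the indicial roots, and logarithms appear exactly at coincidences.

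At $i^+$, the Minkowski problem is handled by the explicit self-similar structure. For $f$ with leading term $t^{-z}\log^k t\, f_{z,k}(x/t)$, we solve the ODE in the hyperbolic variable obtained by restricting $\Box_\eta$ to homogeneous functions of degree $-z+2$. This is an elliptic operator on the hyperboloid, conjugate to a shifted Laplacian. Its solution with vanishing radiation field is polyhomogeneous at $\scri\cap i^+$, with leading terms determined by $u^{-z}$ integration, and at $\K\cap i^+$ it has index set given by the harmonic exponents. Iterating over the index set gives the expansion, and \cref{an:prop:i+_conormal} controls the remainder.

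At $\F$ we use the conformal symmetry \cref{map:eq:isometry}, which converts $\Normal{\F}$ into the Minkowski problem in the $y$ variables with smoothness at $\hor$ corresponding to conformal smoothness at $\scri$. Concretely, in the similarity variable $\mathfrak{r}$ from \cref{map:eq:g_near_similarity_coord}, each term $v^{-p}\log^k v$ leads to an ODE of the type \cref{eq:an:non-unique}, sourced by $f$. We choose the solution smooth at $\mathfrak{r}=0$. This is always possible by \cref{an:prop:F_interior}, possibly after adding the homogeneous solutions described in \cref{an:sec:sharp}, which is where the horizon index set $\E_\hor$ acquires its integer contributions. Its expansion as $\mathfrak{r}\to\infty$ feeds into the $\K$ index set.

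The induction of \cref{an:sec:iteration} then goes through verbatim, with the gap-of-one structure now stated for index sets: each step improves the minimum of the error index set at one face by one. This produces index sets $\E_\bullet$ generated by $\E_\scri$, the indicial roots, and nonlinear sums. They are closed under the iteration and have the stated minima; in particular $\min\E_\hor=\min\E_\scri-1$, read off from the first inversion at $\F$. A Borel summation of the expansions at each face finishes the argument, and the error lies in $\Aext{phg}^{3,\infty,\infty,\infty}$.

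The main obstacle is the step at $\F$. We must verify that the inversion there preserves polyhomogeneity with smoothness across $\hor$, including when $\log$ terms appear from resonances between the source exponents and the homogeneous exponents $p\in\N$. We would handle this by solving the similarity ODE explicitly for each $(z,k)$. Here $\partial_{\mathfrak{r}}(\mathfrak{r}^2+2\mathfrak{r})\partial_{\mathfrak{r}}-2z\partial_{\mathfrak{r}}$ has a regular singular point at $\mathfrak{r}=0$ with exponents $0$ and $z$. The smooth branch is therefore always available, but for $z\in\N$ resonance forces a $\log v$ term, and it is this term that one must check remains compatible with smoothness in $r$ at fixed $v$.
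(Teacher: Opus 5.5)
Your overall structure matches the paper's. There the proof is literally the induction of \cref{an:thm:ansatz}, with the three conormal face inversions replaced by \cref{poly:cor:K,poly:corr:i+,poly:corr:i+_smooth}. Your $\K$ step, indicial roots of $\Normal{\K}$ at $r=\infty$ and $r=M$ on top of the conormal inverse, is the same as the paper's. At $i^+$ and $\F$, however, the paper never solves a face problem. It takes the \emph{unique} conormal solution of a scattering or forward problem and extracts the expansion by commuting with the scaling field $S$, which commutes with $r^2\Box_\eta$. At $\F$ it also commutes with $\mathcal{T}$ after the Couch--Torrence map (\cref{app:prop:i+_conormal,app:prop:i+_smooth_conormal}). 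Your explicit hyperboloidal solve at $i^+$ is a heavier but standard alternative, and it gives explicit corner coefficients. Your treatment of $\F$, though, has a genuine gap.

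The model problem on $\F$ for a term $v^{-p}\log^k v$ is a \emph{two-point} problem on $\mathfrak{r}\in[0,\infty]$, not a local one at $\mathfrak{r}=0$.
\begin{itemize}
\item In harmonic $\ell$ the homogeneous solutions behave like $\mathfrak{r}^{\ell}$ or $\mathfrak{r}^{-\ell-1}$ as $\mathfrak{r}\to\infty$, and $v^{-p}\mathfrak{r}^{\ell}\sim\rho_\F^{p}\rho_\K^{p-\ell}$.
\item A solution chosen only for smoothness at $\mathfrak{r}=0$ generically contains the growing branch. Letting ``its expansion as $\mathfrak{r}\to\infty$ feed into the $\K$ index set'' then produces $\K$-exponents $p-\ell$ for every $\ell$. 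That set is unbounded below, so it is not an index set, and the iteration breaks.
\item You must also impose the growth condition at infinity, and existence then becomes a Fredholm question for the $\F$ normal operator. The ODE is of hypergeometric type, so this can be done explicitly, but you also need bounds uniform in $\ell$ to resum over $\sphere$.
\item For non-integer $p$ you must show that the branch smooth at $0$ always carries a nonzero $\mathfrak{r}^{\ell}$ component at infinity.
\item For integer $p\geq1$ the operator has a kernel: the Aretakis-type $\bar\phi_p$ of \cref{an:sec:sharp} and their low-$\ell$ analogues. There is a matching cokernel, and the $\log v$ terms are exactly what absorbs it, subject to a non-degeneracy check between the two.
\end{itemize}

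Citing \cref{an:prop:F_interior} does not replace this analysis. It yields only a conormal approximate solution, which has no leading term to identify with a solution of your ODE. You would need either a Mellin-transform argument with the meromorphically continued inverse of the $\F$ normal family, or the paper's commutator route.

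The issue you single out as the main obstacle is harmless. $\log v$ times a function of $\mathfrak{r}$ that is smooth at $0$ is automatically smooth across $\hor$.

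Finally, $\min\E_\hor$ is not ``read off from the first inversion at $\F$'', since the Step 0 error vanishes to infinite order at $\F$. Tracking decay along $i^+\to\K\to\F$ as in \cref{an:thm:ansatz} (where $\psi_\scri\in\A{b}^{q-1}(\scri)$ gives $\phi_\hor\in\A{b}^{q}(\hor)$) places the horizon exponents at $\min\E_\scri+1$ and above. The stated value $\min\E_\scri-1$ is therefore reached only by enlarging the index set.
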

	\begin{proof}
		We follow the same induction as for \cref{an:thm:ansatz}, replacing \cref{an:cor:K,an:cor:i_+_conormal,an:prop:F_interior} with \cref{poly:cor:K,poly:corr:i+,poly:corr:i+_smooth} respectively.
	\end{proof}
	
	\subsection{Function spaces with expansion}\label{poly:sec:function_spaces}
	
	For a more precise understanding of the solution, it is convenient to use polyhomogeneous function spaces to capture functions with generalised Taylor expansions towards the respective boundaries:
	\begin{definition}[Index set]
		We call  $\E\subset \R_z\times\N_k$ an index set if it satisfies that
		\begin{itemize}
			\item $\E_c:=\E\cap\{(z,k)\in\E:z\leq c\}$ has finite cardinality for all $c\in\R$;
			\item $(z,k)\in\E\implies (z+1,k)\in\E$;
			\item $(z,k)\in\E$ and $k\geq1$ $\implies$ $(z,k-1)\in\E$.
		\end{itemize}
		We will use the notation $\min(\E)=\min\{z:(z,k)\in\E\}$ as well as
		\begin{itemize}
			\item $(z,k)\geq (z',k')$ if $z>z'$ or $z=z'$ and $k\leq k'$;
			\item $(z,k)> (z',k')$ if $z>z'$ or $z=z'$ and $k< k'$;
			\item $z\geq (z',k')$ if $z\geq z'$,
			\item $\E^1\cupdex\E^2=\{(z,k):(z,k_i)\in\E^i,\, k_1+k_2+1\geq k\}$,
			\item for $A\subset\R\times\N$ with $\abs{A}<\infty$, let $\overline{A}:=\bigcap\limits_{A\subset\E}\E$ for index sets $\E$. 
			We use the shorthand $\overline{(s,k)}:=\overline{\{(s,k)\}}$.
		\end{itemize}
	\end{definition}
	
	Next, we make a simplifying assumption to streamline some of the proofs , but lifting this extra requirement is only a notational modification.
	Unless otherwise stated, we will use:
	\begin{assumption}
		Assume that \emph{all} index sets are subsets of $\N\times\N$.
	\end{assumption}
	
	\begin{definition}[Polyhomogeneous functions]
		Fix a manifold with corners $X$, an open set $U\subset X$ containing at most one corner and corresponding coordinates $\rho_1,\rho_2,y\in[0,1)^2\times\R^n$ in $U$.
		We say that a function $f:X\to\R$ is polyhomogeneous in $U$ with index sets $\E_1,\E_2$ and write $f\in\A{}^{\E_1,\E_2}(U)$ if for all $c_1,c_2\in\R$
		\begin{equation}\label{not:eq:poly_def}
			\sum_{\substack{(z_1,k_1)\in\E^1_{c_1}\\ (z_2,k_2)\in\E^2_{c_2}}} (\rho_1\partial_{\rho_1}-z_1)^{k_1+1}(\rho_2\partial_{\rho_2}-z_2)^{k_2+1}f \in \A{b}^{c_1+,c_2+}(U).
		\end{equation}
		We say $f$ is polyhomogeneous on $X$ if it is polyhomogeneous near all of $\partial X$.
		
		We write $f\in\O^{\vec{a}}$ for some $a=((z_1,k_1),....,(z_m,k_m))$ if $f\in\A{}^{\vec{\E}}$ with $\min(\E_i)\geq(z_i,k_i)$, and when we write $a_i\in\R$ instead $a_i\in\R\times\N$, we mean that there exists some $k\in\N$, such that $(z,k)$ works in place of $a_i$.
		Similarly, for $\E_1$ index set and $a_i\in\R\times\N$ write $f\in\A{phg}^{\E_1,a_2,...,a_n}$, if there exist index sets $\E_j$ with $\min(\E_j)=a_j$ for $j\neq1$ such that $f\in\A{}^{\vec{\E}}$.
	\end{definition}
	
	Let us already note that \cref{not:eq:poly_def} in $U$, a neighbhourhood of a single boundary $\{\rho_1=0\}$, is equivalent to the existence of functions $f_{(z_1,k_1)} \in C^\infty(\R^{n+1})$ such that
	\begin{equation}
		f-\sum_{(z,k)\in\E^1_{c_1}}f_{z_1,k_1}(y)\rho_1^{z}\log^k(\rho_1)\in\A{b}^{\infty;c_1+}(U)
	\end{equation}
	
	We note that the restriction of polyhomogeneous functions onto the boundary hypersurfaces are well behaved:
	
	\begin{lemma}[Hypersurface projections]\label{not:lemma:projection}
		Let $f\in\O^{p_\scri,(p_+,k_+),(p_\K,k_\K),(p_\F,k_\F)}(\Mcomp_{\mathfrak{t}})$.
		Then, the following restrictions are well defined
		\begin{subequations}
			\begin{align}
				\rho_+^{p_+}\log^{k_+}\rho_+ f|_{\ip}\in\O^{p_\scri,p_\K}(\ip)\\
				\rho_\K^{p_\K}\log^{k_\K}\rho_\K f|_{\K}\in\O^{p_+,p_\F}(\K)\\
				\rho_\F^{p_\F}\log^{k_\F}\rho_\F f|_{\F}\in\O^{p_\K,p_{\hor}}(\F).
			\end{align}
		\end{subequations}
	\end{lemma}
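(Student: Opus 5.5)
The plan is to localise to a neighbourhood of each boundary face and use the description of polyhomogeneity in terms of generalised Taylor expansions. Consider the first statement. Away from corners, near $\ip$ we use the single-boundary characterisation just recorded. It gives functions $f_{(z,k)}$ on $\ip$ such that
\[
f-\sum_{(z,k)\in\E^+_{c}}f_{(z,k)}\rho_+^{z}\log^k\rho_+\in\A{b}^{\infty;c+}.
\]
Since $\min(\E^+)\geq(p_+,k_+)$, the leading term is $\rho_+^{p_+}\log^{k_+}\rho_+ f_{(p_+,k_+)}$, and every other term is $o(\rho_+^{p_+}\log^{k_+}\rho_+)$. So the normalised limit $\lim_{\rho_+\to0}(\rho_+^{p_+}\log^{k_+}\rho_+)^{-1}f$ exists and equals $f_{(p_+,k_+)}$. (The statement's prefactor should be read as this normalisation.) The same argument applies at $\K$ and at $\F$. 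The restriction is therefore well defined, and it is independent of the choice of boundary defining function up to a smooth positive factor raised to the power $p_+$, which does not affect the membership statement.

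The real content is the index sets of the restriction at the corners, for instance at $\ip\cap\K$. Here I would work in coordinates $(\rho_+,\rho_\K,\omega)$ and start from the defining condition
\[
\prod_{(z_1,k_1)\in\E^+_{c_1}}(\rho_+\partial_{\rho_+}-z_1)^{k_1+1}\prod_{(z_2,k_2)\in\E^\K_{c_2}}(\rho_\K\partial_{\rho_\K}-z_2)^{k_2+1}f\in\A{b}^{c_1+,c_2+}.
\]
The idea is to peel off only the $\rho_+$ factors. Take $c_1$ just above $p_+$, so the only relevant index is $(p_+,k_+)$ together with the finitely many $(p_+,k)$ with $k<k_+$. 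Applying $(\rho_+\partial_{\rho_+}-p_+)^{k_+}$ to $f$ isolates $k_+!\,\rho_+^{p_+}f_{(p_+,k_+)}$ modulo $\A{b}^{p_++}$ in $\rho_+$. Moreover, the $\rho_\K$ operators commute with the $\rho_+$ operators.

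This gives $\prod(\rho_\K\partial_{\rho_\K}-z_2)^{k_2+1}f_{(p_+,k_+)}\in\A{b}^{c_2+}(\ip)$ near $\ip\cap\K$. That is exactly polyhomogeneity of the restriction at the corner with index set $\E^\K$, hence of order $\geq p_\K$. The corner $\ip\cap\scri$ is handled identically and yields order $p_\scri$. The faces $\K$ (corners with $\ip$ and $\F$) and $\F$ (corners with $\K$ and $\hor$) follow the same scheme. One caveat concerns $\F\cap\hor$: the statement lists $p_\hor$ even though the hypothesis omits it. I would take $f$ on $\Mcomp_{\mathfrak{t}}$, where $\hor$ is interior or artificial, so the restriction is smooth there and trivially satisfies the claim.

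The main obstacle is keeping the error terms conormal uniformly up to the corner while dividing by $\rho_+^{p_+}\log^{k_+}\rho_+$ and taking the limit. For this I would use that $\A{b}^{c_1+,c_2+}$ functions with $c_1>p_+$ vanish after normalisation, in a way that holds uniformly in $\rho_\K$ together with all $\rho_\K\partial_{\rho_\K}$ and $\partial_\omega$ derivatives. Plain $L^\infty$ control gives this directly once the normalisation is made, since $\rho_+^{c_1-p_+}|\log\rho_+|^{-k_+}\to0$ uniformly.
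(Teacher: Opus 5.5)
The paper states this lemma without any proof, treating it as a standard fact about polyhomogeneous functions, so there is no argument to compare yours against. Judged on its own, your strategy is the standard one and is correct in substance. You read off the leading coefficient $f_{(p_+,k_+)}$ from the single-face expansion, then get its behaviour at $\ip\cap\K$ by commuting $P_{\K}=\prod_{(z_2,k_2)\in\E^{\K}_{c_2}}(\rho_{\K}\partial_{\rho_{\K}}-z_2)^{k_2+1}$ through the $\rho_+$-operators. Your readings of the statement are also the right ones: the prefactor as $\rho_+^{-p_+}\log^{-k_+}\rho_+$, a product rather than a sum in the defining condition, and the $p_\hor$ entry being vacuous on $\Mcomp_{\mathfrak{t}}$.

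The one step you must actually write out is the corner step, and your last paragraph puts the difficulty in the wrong place. The single-face expansion is only available on $\{\rho_{\K}\geq\delta\}$. So near $\ip\cap\K$ you do not yet have a decomposition of $P_{\K}f$ with remainder in $\A{b}^{c_1+,c_2+}$. That such a remainder vanishes after normalisation is the easy half.

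What actually produces the corner weight is a one-dimensional integration in $\rho_+$. Set $h=(\rho_+\partial_{\rho_+}-p_+)^{k_+}P_{\K}f$. The definition with some $c_1'<p_+$ gives $h\in\A{b}^{c_1',c_2+}$. With $c_1>p_+$ chosen below the next exponent of $\E^{+}$, it gives $e:=(\rho_+\partial_{\rho_+}-p_+)h\in\A{b}^{c_1+,c_2+}$. Hence, for fixed small $\rho_0>0$,
\[
\lim_{\rho_+\to0}\rho_+^{-p_+}h=\rho_0^{-p_+}h|_{\rho_+=\rho_0}-\int_0^{\rho_0}s^{-p_+-1}e(s)\,\dd s .
\]
Both terms on the right lie in $\A{b}^{c_2+}$ in $(\rho_{\K},\omega)$. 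This holds because b-derivatives in these variables commute with restriction to $\rho_+=\rho_0$ and with the $s$-integral, and because $s^{c_1-p_+-1}$ is integrable.

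On $\{\rho_{\K}>0\}$ the single-face expansion identifies the left side with $k_+!\,P_{\K}f_{(p_+,k_+)}$. Therefore $P_{\K}f_{(p_+,k_+)}\in\A{b}^{c_2+}(\ip)$ near the corner for every $c_2$, which is exactly polyhomogeneity there with index set $\E^{\K}$. With this step inserted, the same argument covers every corner of $\ip$, $\K$ and $\F$.
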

	
	It is easy to check that the mapping properties of \cref{map:lemma:normal_ops} and the algebra property holds in $\A{phg}$ as well:
	\begin{lemma}\label{app:lemma:map}
		For the wave operator on ERN it holds that
		\begin{equation}
			\Box_{\ern}:\A{phg}^{p_{\scri},p_+,p_\K,p_\F}(\Mcomp_\mathfrak{t})\to\A{phg}^{p_{\scri}+1,p_++2,p_\K,p_\F}(\Mcomp_\mathfrak{t}).
		\end{equation}
		and for $\bullet\in\{K,\ip,F\}$ the model operators defined in \cref{map:def:model}
		\begin{equation}
			\Box_{\ern}-\Normal{\bullet}:\A{phg}^{p_{\scri},p_+,p_\K,p_\F}(\M)\to \rho_\bullet \A{phg}^{p_{\scri}+1,p_++2,p_\K,p_\F}(\M).
		\end{equation}
	\end{lemma}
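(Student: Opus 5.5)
The statement to prove is \cref{app:lemma:map}, the polyhomogeneous analogue of \cref{map:lemma:normal_ops}. The plan is to reduce it to \cref{map:lemma:normal_ops} together with a structural observation. Each of the operators $\Box_{\ern}$, $\Normal{\bullet}$ and $\Box_{\ern}-\Normal{\bullet}$, written near any corner of $\Mcomp_\mathfrak{t}$ in the local boundary defining functions, is a weighted b-operator with smooth (in fact polyhomogeneous with integer index sets) coefficients. That is, it has the form $\rho^{\vec{w}}P$ with $P\in\Diffb^2(\Mcomp_\mathfrak{t})$ having coefficients in $C^\infty(\Mcomp_\mathfrak{t})$. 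The weights $\vec{w}$ are exactly those read off in the proof of \cref{map:lemma:normal_ops}: for example $\Box_{\ern}-\Box_\eta\in\rho_{\ip}^3\rho_\scri^2\Diffb^2$ near $\scri\cap\ip$, $\Box_{\ern}+\partial_t^2-\hat\Box_{\ern}(0)\in\rho_\K\rho_+^3\Diffb^2$ near $\ip\cap\K$, and $\Box_{\ern}-\Box_{g_{\mathrm{near}}}\in\rho_\hor\rho_\F\Diffb^2$ near $\K\cap\F$.

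The first step is to record that explicit form corner by corner. I would do this using \cref{not:eq:wave_explicit} and the vector field weights of \cref{not:rem:examples}, and check that every coefficient is a smooth function of the local boundary defining functions (no logarithms appear, since $r_*$ enters only through $u,v$, which are used as coordinates). Near $\F$ I would use the similarity form \cref{map:eq:g_near_similarity_coord}. Near $\hor$ the relevant statement is that $\Box_{\ern}$ is smooth in $(v,r)$, so that it is a genuine differential operator across $\hor$.

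The second step is the algebraic core. For $f\in\A{}^{\E_1,\E_2}(U)$, multiplication by $\rho_1^{w_1}\rho_2^{w_2}$ shifts the index sets to $\E_i+w_i$. Multiplication by $a\in C^\infty$ preserves the index sets: expand $a$ in Taylor series in $\rho_i$, and use the closure property $(z,k)\in\E\Rightarrow(z+1,k)\in\E$ of index sets. The vector fields $\rho_i\partial_{\rho_i}$ and $\partial_y$ preserve the index sets, because they commute with $(\rho_j\partial_{\rho_j}-z_j)$ modulo the same class. The annihilation condition \cref{not:eq:poly_def} is then preserved: the commutator of a smooth-coefficient b-operator with $\prod(\rho_j\partial_{\rho_j}-z_j)^{k_j+1}$ produces terms carrying an extra factor $\rho_j$, and these terms are handled by induction on $c_j$ together with the conormal statement of \cref{map:lemma:normal_ops}. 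Combining the two steps gives both mapping properties, with $\min$ of the index sets shifted exactly as in \cref{map:eq:Box,map:eq:Box-normal}, where $p_\bullet$ is now read as $\min\E_\bullet$.

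The main obstacle is the bookkeeping at the corners $\scri\cap\ip$ and $\K\cap\F$. There the coordinates used for the model operators ($u/t$ versus $u/v$, or $(r-M)t_*$ versus $(r-M)v$) differ by smooth changes of boundary defining function. I would verify that such a change $\tilde\rho=\rho\,e^{b}$ with $b$ smooth maps $\A{}^{\E}$ to itself. This holds because $\tilde\rho^z\log^k\tilde\rho$ expands in $\rho^z\log^{j}\rho$ with $j\le k$ and smooth coefficients, and the index sets are closed under lowering $k$. Granting this, the remainder is routine, and the nonlinear (algebra) counterpart follows from the $\cupdex$ rule for products.
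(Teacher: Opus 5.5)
\textbf{Overall.} Your strategy is the one the paper intends. The paper gives no proof beyond remarking that it is easy to check that \cref{map:lemma:normal_ops} carries over to $\A{phg}$: near each corner the operators are weighted b-operators with smooth coefficients, and such operators preserve polyhomogeneity with index sets shifted by the weights. Your algebraic core (Taylor expanding coefficients, commuting $\rho_j\partial_{\rho_j}-z_j$ through the operator at the cost of a factor $\rho_j$, induction on $c_j$) is the standard way to do that half, and it is fine.

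\textbf{The gap in Step~1.} The paper shares this gap. You claim all coefficients are smooth in the boundary defining functions of $\Mcomp_\mathfrak{t}$ because $u,v$ can serve as coordinates. But the smooth structure of $\Mcomp_\mathfrak{t}$ is fixed by $\rho_\scri=t_*/r$ and $\rho_+=(r+t_*)/(rt_*)$, and for $r>20M$ the paper's time function is $t_*=u+1/\log r$. In the interior of $\ip$, with smooth coordinates $\rho=1/t_*$ and $y=r/t_*$, this gives $u=t_*-(\log y-\log\rho)^{-1}$. So $u$ is not even polyhomogeneous there. The $u$-charts of \cref{not:rem:examples} are compatible with $\Mcomp_\mathfrak{t}$ only conormally, not smoothly. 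That suffices for \cref{map:lemma:normal_ops} but not here.

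In fact the statement fails as literally posed. From $\dd t_*=\dd u-\frac{\dd r}{r\log^2 r}$ one gets
$\ern^{-1}(\dd t_*,\dd t_*)=\frac{2}{r\log^2 r}+\frac{D}{r^2\log^4 r}$ and $\Box_{\ern}t_*=-\frac{2}{r}-\frac{D+rD'}{r^2\log^2 r}+\frac{2D}{r^2\log^3 r}$.
Take $p>0$ and $\psi=t_*^{-p}$, cut off to the interior of $\ip$; it is smooth there. Then $\Box_{\ern}\psi$ contains the term $c(y)\,t_*^{-p-3}(\log t_*+\log y)^{-2}$ with $c(y)=p\big(2(p+1)y+1\big)/y^2\neq 0$. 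No polyhomogeneous expansion produces such a term: applying $(\rho\partial_\rho-z)$ factors only raises the inverse power of $\log\rho$.

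\textbf{The fix.} Your final paragraph checks only the smooth changes $u/t\leftrightarrow u/v$ and $(r-M)t_*\leftrightarrow(r-M)v$, so it misses exactly the change that matters. Give $\Mcomp_\mathfrak{t}$ the smooth structure in which $u$ is smooth for large $r$ and $v$ is smooth near $\hor$, for instance by replacing $1/\log r$ in $h$ with a suitable multiple of $1/r$. These two structures glue smoothly over the compact-$r$ part of $\K$, where $u-v=-2r_*(r)$ is smooth. With that choice your argument goes through. In $(u,r,\omega)$ and $(v,r,\omega)$ coordinates all coefficients are rational in $r$, and $t$ enters only through $\partial_t|_r=\partial_u|_r$ and $\partial_r|_t=\partial_r|_u-D^{-1}\partial_u$.

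\textbf{A related point.} The $\K\cap\ip$ chart $(1/r,r/t)$ of \cref{not:rem:examples} is not smoothly compatible with the $u$-based chart at $\scri\cap\ip$. Since $t-u=r_*$ contains $2M\log r$, the transition carries $\rho_+\log\rho_+$ terms. This is harmless: it only adds logarithmic terms to index sets in $\N\times\N$ and leaves their minima unchanged, which is all the lemma records. But your invariance claim must then allow $\tilde\rho=\rho e^{b}$ with $b$ polyhomogeneous and vanishing at the face, not just smooth $b$.
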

	
	\begin{lemma}
		Let $f^{(i)}\in\O^{p^{(i)}_\scri,p^{(i)}_+,p^{(i)}_\K,p^{(i)}_\F}(\Mcomp_{\mathfrak{t}})$ for $p^{(i)}_\bullet\in\R$ and $i\in\{1,2\}$.
		Then, for $p_\bullet=p^{(1)}_\bullet+p^{(2)}_\bullet$ it holds that $f^{(1)}f^{(2)}\in\O^{p_\scri,p_+,p_\K,p_\F}(\Mcomp_{\mathfrak{t}})$
	\end{lemma}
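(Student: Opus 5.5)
The statement is the polyhomogeneous product lemma: $f^{(1)}f^{(2)}\in\O^{p_\scri,p_+,p_\K,p_\F}$ with $p_\bullet=p^{(1)}_\bullet+p^{(2)}_\bullet$. The argument is local, so I would fix a finite cover of $\Mcomp_{\mathfrak{t}}$ by open sets $U$, each meeting at most one corner, with local boundary defining functions $\rho_1,\rho_2$ and tangential coordinates $y$, and prove the statement in each chart. By definition there are index sets $\E^{(i)}_j$ with $\min \E^{(i)}_j\geq p^{(i)}_j$, where $j\in\{1,2\}$ labels the two boundary faces of $U$, such that $f^{(i)}\in\A{}^{\E^{(i)}_1,\E^{(i)}_2}(U)$. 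The target index sets are the sums
\[
\E_j:=\E^{(1)}_j+\E^{(2)}_j=\{(z_1+z_2,k_1+k_2):(z_i,k_i)\in\E^{(i)}_j\}.
\]
These are again index sets: for each cut-off $c$ only finitely many elements lie below $c$, since each $\E^{(i)}_j$ is bounded below. They are contained in $\N\times\N$, they are closed under $z\mapsto z+1$ and under lowering $k$, and $\min\E_j=\min\E^{(1)}_j+\min\E^{(2)}_j\geq p_j$.

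The key step is to show $f^{(1)}f^{(2)}\in\A{}^{\E_1,\E_2}(U)$. I would use the expansion characterisation stated after the definition rather than the operator form \cref{not:eq:poly_def} directly. Near a single face, write $f^{(i)}=\sum_{(z,k)\in\E^{(i)}_{c}}f^{(i)}_{z,k}(y)\rho^z\log^k\rho+R^{(i)}_c$ with $R^{(i)}_c\in\A{b}^{c+}$. Multiplying the two expansions gives three kinds of terms. The product of the two finite sums is a finite sum of terms $\rho^{z_1+z_2}\log^{k_1+k_2}\rho$, whose exponents all lie in $\E$; terms with exponent above $c$ can be moved into the remainder. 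The cross terms such as $f^{(1)}_{z,k}\rho^z\log^k\rho\cdot R^{(2)}_{c'}$ lie in $\A{b}^{(z+c')-}$. This uses the conormal algebra property: products of $\A{b}$ functions add weights, because $\Vb$ obeys the Leibniz rule and the weights are multiplicative. The log factor costs an arbitrarily small $\epsilon$, which is absorbed by the $+$ in $\A{b}^{c+}$. Choosing $c'=c-\min\E^{(1)}$ puts every cross term and the product of the two remainders into $\A{b}^{c+}$.

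At a corner the two-variable statement requires joint expansions. I would handle these by applying the operators $\prod_{(z,k)}(\rho_j\partial_{\rho_j}-z)^{k+1}$ to the product via a Leibniz formula. The operator $\rho\partial_\rho-z$ is a derivation up to a shift, $(\rho\partial_\rho-z_1-z_2)(gh)=((\rho\partial_\rho-z_1)g)h+g((\rho\partial_\rho-z_2)h)$. Expanding a product over $(z,k)\in\E_c$ in this way, every resulting term contains, for at least one factor, a full annihilating product over that factor's index set up to a complementary level. That factor then lies in the improved $\A{b}$ space, and the algebra property again closes the estimate. This corner Leibniz bookkeeping is the main obstacle. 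The concrete difficulty is the combinatorics: showing that the sum over the finitely many splittings $c=c_1+c_2$ with $c_1\in\{\Re\E^{(1)}\}\cup\{c-\min\E^{(2)}\}$ always produces a fully annihilated factor. Here I would first try to reduce to the expansion form in each variable separately, iteratively, using that the coefficients of a corner expansion in $\rho_1$ are themselves polyhomogeneous in $\rho_2$.

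Finally, patching the charts with a partition of unity preserves polyhomogeneity, because multiplying by a smooth function on $\Mcomp_{\mathfrak{t}}$ does not enlarge the index sets. Taking the minimal exponent on each face then gives the $\O^{p_\scri,p_+,p_\K,p_\F}$ conclusion.
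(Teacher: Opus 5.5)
Your single-face argument is correct. The paper gives no proof beyond ``it is easy to check'', and your argument is presumably what is meant there. You note that $\E^{(1)}+\E^{(2)}$ is an index set with minimum $\min\E^{(1)}+\min\E^{(2)}$, that the product of the finite sums has its exponents in it, and that the cross and remainder terms land in $\A{b}^{c+}$ by the conormal algebra property with $c'=c-\min\E^{(1)}$.

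The gap is at the corners $\scri\cap\ip$, $\ip\cap\K$, $\K\cap\F$. There \cref{not:eq:poly_def} is a joint condition in $(\rho_1,\rho_2)$, and this is where the content of the lemma on $\Mcomp_{\mathfrak t}$ lies. You give no proof at the corners, and the mechanism you state for the Leibniz route is false as written.

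Each factor $\rho\partial_\rho-w$ of the annihilator splits into \emph{two} terms, one acting on each factor. So mixed terms, in which neither factor carries a full annihilator up to the complementary level, appear for every choice of splittings. Take $g,h$ smooth up to a face, so $\E^{(1)}=\E^{(2)}=\N\times\{0\}$, write $L=\rho\partial_\rho$, and take $c=2$. Splitting $L$, $L-1$, $L-2$ in the natural way produces the term $\big((L-1)Lg\big)(Lh)$. The definition only gives $(L-1)Lg\in\A{b}^{1+}$ and $Lh\in\A{b}^{0+}$, so this term is controlled in $\A{b}^{1+}$, not $\A{b}^{2+}$. It is harmless only because $(L-1)Lg=O(\rho^2)$ and $Lh=O(\rho)$. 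In other words, a partially annihilated polyhomogeneous function decays like the \emph{next} exponent of its index set, and the gains of the two factors add. At a corner, with conormal dependence on the other variable, this sharper statement is exactly the joint structure result you have not established.

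To close the gap, make your fallback the main argument. In a corner chart, near $H_1=\{\rho_1=0\}$, write
\[
f^{(i)}=\sum_{(z,k)\in\E^{(i)}_{1,c}}\rho_1^{z}\log^{k}\rho_1\,a^{(i)}_{z,k}+R^{(i)}_c,\qquad a^{(i)}_{z,k}\in\A{}^{\E^{(i)}_2}(H_1).
\]
Here $R^{(i)}_c$ is conormal of weight $c+$ at $H_1$ and polyhomogeneous with index set $\E^{(i)}_2$ at $H_2$, with $\rho_2$-coefficients that are conormal in $\rho_1$.

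Products of the $a^{(i)}_{z,k}$ are handled by your single-face lemma on the manifold with boundary $H_1$. The cross and remainder terms are handled by the same single-face computation at $H_2$. Now the expansion coefficients are only conormal in $\rho_1$ rather than smooth, but this costs nothing, since the only tool used is the $\A{b}$ algebra property.

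The one ingredient you must prove or cite is the equivalence of this iterated characterization with the joint definition \cref{not:eq:poly_def}. It follows by integrating $(\rho_j\partial_{\rho_j}-z)u=v$ with conormal dependence on the other variable. The paper records only the single-face version.
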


	\subsection{Spatially compact region}
	\begin{lemma}[Invertibility of $\Normal{\K}$]
		Fix $a_+\in(0,1),a_\F\in(-1,0)$.
		For $f\in\O^{a_++2,a_\F}(\K)$ there exists a unique $\phi\in\O^{a_+,a_\F}(\K)$ solving $\Normal{\K} \phi=f$.
	\end{lemma}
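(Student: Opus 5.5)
The plan is to reduce the statement to \cref{an:prop:K} and then upgrade conormality to polyhomogeneity by studying the indicial (Mellin) structure of $\Normal{\K}$ at the two ends $r\to\infty$ and $r\to M$ of $\K\cong\Rcomp$. Since $\O^{a_++2,a_\F}(\K)\subset\A{b}^{a_++2-,a_\F-}(\K)$ and the open intervals $a_+\in(0,1)$, $a_\F\in(-1,0)$ leave room to shrink the weights, \cref{an:prop:K} already gives a unique $\phi\in\A{b}^{(a_+,a_\F)-}(\K)$ with $\Normal{\K}\phi=f$. Uniqueness in the polyhomogeneous class is then immediate from uniqueness there, so only the expansion of $\phi$ needs proof.

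Near $r=\infty$ I would use $\rho=r^{-1}$ and decompose in spherical harmonics. There
\[
r^2\Normal{\K}=(\rho\partial_\rho)^2-\rho\partial_\rho+\slashed{\Delta}+\rho\Diffb^2,
\]
with indicial roots $z=-\ell,\ \ell+1$ on the $\ell$-th mode. Writing $r^2\Normal{\K}\phi=r^2f\in\O^{a_+}$ and treating the $\rho\Diffb^2\phi$ term as an error that improves the weight by one, I would peel off terms iteratively. If $\phi\in\A{b}^{c}$ and the right-hand side is polyhomogeneous up to $c+1$, then applying $\prod(\rho\partial_\rho-z)^{k+1}$ over the relevant index set and solving the model ODE $(\rho\partial_\rho-\ell-1)(\rho\partial_\rho+\ell)u=g$ mode by mode by explicit integration in $\rho$ gives $\phi\in\O^{c+1}$ modulo explicit terms $\rho^{z}\log^k\rho$. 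These are either forcing terms, or indicial terms $\rho^{\ell+1}$, which may acquire logarithms when they collide with a forcing exponent. The negative roots $-\ell$ are excluded because $\phi$ decays. The assumption that index sets lie in $\N\times\N$ keeps all exponents integral, so at most finitely many extra log powers appear below any weight. Iterating in $c$ gives the full expansion, and the $\rho^{c+}$ remainders are controlled by the conormal estimates of \cref{an:prop:K}.

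Near $r=M$ I would do the same with $\rho=r-M$. There
\[
\Normal{\K}=M^{-2}\big(\rho\partial_\rho(\rho\partial_\rho+1)+\slashed{\Delta}\big)+\rho\Diffb^2
\]
up to smooth factors, with indicial roots $-1/2\pm(\ell+1/2)$, i.e.\ $\ell$ and $-\ell-1$. The solution lies in $\A{b}^{a_\F-}$ with $a_\F>-1$, which rules out $-\ell-1$ for all $\ell\ge0$. The root $\ell$ is allowed, and produces the terms $(r-M)^\ell$ plus logs at collisions. The same iteration applies. Finally, localise with cutoffs so that the two ends are handled independently; commutators with cutoffs are supported in the interior, where $\phi$ is smooth.

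The main obstacle I expect is making the mode-by-mode ODE solution uniform in $\ell$, so that summing over $\ell$ preserves the $\A{b}$ remainders. I would first try to avoid modes by inverting the indicial operator as a Mellin multiplier on the sphere. For $z$ off the roots, $(z-\ell-1)(z+\ell)$ grows like $\ell^2$, so the inverse gains angular regularity and is uniformly bounded. At the finitely many collision exponents below a given weight, only finitely many $\ell$ are resonant, and these can be handled explicitly.
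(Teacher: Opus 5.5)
Your proposal is correct and follows essentially the same route as the paper: existence and uniqueness come from \cref{an:prop:K} after shrinking the weights slightly, and the expansion is then obtained order by order by inverting the indicial family of $\Normal{\K}$ at the two ends of $\K$. The paper treats the end $r=M$ by appealing to the Couch--Torrence symmetry of the normal operator rather than by a separate computation, and your explicit indicial roots ($\ell+1,-\ell$ at $r=\infty$; $\ell,-\ell-1$ at $r=M$), together with the observation that the a priori conormal weights exclude $-\ell$ and $-\ell-1$, spell out what the paper's terse argument leaves implicit.
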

	\begin{proof}
		From \cref{an:prop:K} we already know that a solution exists in $\A{b}^{a_+,a_\F}(\K)$.
		
		\emph{Expansion:}
		We expand \cref{an:eq:K_eq} by studying the normal operator of $\Normal{\K}$ at the boundaries $\partial K=K_++K_-$ with $K_+=K\cap\{r=\infty\}$ and $K_-=K\cap\{r=M\}$.
		To this end, let us note that $\Normal{\K}-\Delta_{r}:\O^{p,q}(\K)\to\O^{p+3,q}(\K)$.
		The indicial family of $\Delta_r$ at $\{r=\infty\}$ is
		\begin{equation}
			N(r^2\Delta_r,\lambda):= r^{\lambda }r^2\Delta_r r^{-\lambda}=(\lambda+1)\lambda+\slashed{\Delta}\in\Diff^2(\sphere).
		\end{equation}
		For $\lambda\notin\N$ this is clearly invertibly, while, for $\lambda\in\N$ we the kernel is the eigenspace of $\slashed{\Delta}$, $\ker N(r^2\Delta_r,\lambda)=\{f\in C^{\infty}(\sphere):\slashed{\Delta} f=-(\lambda+1)\lambda f\}$.
		Hence, we have that for $f_0=r^{-\lambda}\log^k(r)\chi(r)$, with $\chi$ localising to $r>2M$, there exists $\phi_0=\O^{(\lambda,k+1),\infty}(\K)$ such that $\Delta_r\phi_0-\chi f_0\in\O^{(\lambda+3,k-1),\infty}(\K)$.
		By induction, it follows that $\chi_{r>3}\phi\in\A{phg}^{a_+,\infty}(\K)$.
		
		As the normal operator of $\Normal{\K}$, up to a factor of $r^2_*$, is the same at both ends of $K$, we conclude that for $f\in\O^{a_++2,a_\F}(\K)$, the solution $\phi$ found in the previous section satisfies $\phi\in\O^{a_+,a_\F}(\K)$.
	\end{proof}
	
	\begin{cor}\label{poly:cor:K}
		Fix $U_c=\Mcomp\cap\{r_*/t\in(-c,c)\}$, and $p\geq1$.
		Let $f\in\O^{p+3,p,p}(U_{1/4})$.
		Then, there exists $\phi\in\O^{p+1,p,p}(U_{1/4})$ such that $\supp\phi\in U_{1/4}$ and 
		\begin{equation}
			\Box_{\ern}\phi-f\in\O^{p+3,p+1,p}(U_{1/4}).
		\end{equation}
	\end{cor}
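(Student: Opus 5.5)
The plan is to mirror the proof of \cref{an:cor:K}, with \cref{an:prop:K} replaced by the polyhomogeneous invertibility lemma just proved. Near $\K$ we use coordinates $(t_*,x)$ and take $\rho_\K\sim t_*^{-1}$ (localised to $U_{1/4}$) as the defining function of $\K$. We regard $f\in\O^{p+3,p,p}(U_{1/4})$ as a family, parametrised by $t_*$, of functions on $\K\cong\Rcomp$. By the definition of polyhomogeneity near the corners $\ip\cap\K$ and $\K\cap\F$, the function $f$ has an expansion in $t_*^{-1}$ with coefficients $f_{(z,k)}(x)\,t_*^{-z}\log^k t_*$, where $z\geq p$ and $f_{(z,k)}\in\O^{p+3-z,p-z}(\K)$. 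The $\K$-weights of the coefficients are obtained by factoring out $\rho_\K^z$, using the same renormalisation as in \cref{not:lemma:projection}.

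Step 1 (parametric inversion). For each term with $z<p+1$, which by the index-set assumption means only $z=p$, we set $\phi_{(p,k)}:=\Normal{\K}^{-1}f_{(p,k)}$. The restriction $f_{(p,k)}$ lies in $\O^{3,0}(\K)$, so we need the lemma with $a_+=1$ and $a_\F=0$. These are the endpoints of the ranges $(0,1)$ and $(-1,0)$, so I would first rewrite the weights using the shift built into the normalisation: $\rho_\K$ absorbs a factor $r/t_*$, so near $\ip\cap\K$ the weight $\rho_+^{p+3}\rho_\K^p$ corresponds to $r^{-3}$ on $\K$. Near $\F$ it corresponds to $(r-M)^{0}$.

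Step 2 (main obstacle). This endpoint issue is where I expect the difficulty. I would resolve it by first peeling off the leading term of $f$ at $\ip$ using the polyhomogeneous Minkowski inverse, as in the smoothing step of \cref{an:prop:i+_conormal}. Alternatively, I would apply the expansion argument of the invertibility lemma directly: the indicial roots of $r^2\Delta_r$ at $r=\infty$ are $\lambda\in\N$, and by the conformal symmetry the same roots appear at $r=M$. Outside those roots the lemma's duality argument extends verbatim. At those roots the construction produces $\log$ terms, which polyhomogeneity permits because index sets are allowed to contain $(z,k+1)$. The expected output is $\phi_{(p,k)}\in\O^{1,0}(\K)$, possibly with extra logarithms.

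Step 3 (reassembly). We define
\[
\phi:=\chi\sum_{k}t_*^{-p}\log^k t_*\,\phi_{(p,k)},
\]
where $\chi$ is a cutoff to $U_{1/4}$, so that $\supp\phi\subset U_{1/4}$. Converting the $\K$-weights back gives $\phi\in\O^{p+1,p,p}(U_{1/4})$.

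Step 4 (error estimate). We compute $\Box_{\ern}\phi-f$ using the polyhomogeneous mapping property \cref{app:lemma:map}. There are three contributions. First, $\Box_{\ern}-\Normal{\K}$ gains one power of $\rho_\K$. Second, $t_*$-derivatives falling on $t_*^{-p}\log^k t_*$ gain at least $t_*^{-1}$, equivalently $\rho_\K\rho_+$ or $\rho_\K\rho_\F$ near the corners. Third, commutators with the cutoff are supported where $\chi'\neq0$, which is away from $\K$, so they are $O(\rho_\K^\infty)$ there and inherit the $\ip$ and $\F$ weights of $f$. Together with the remainder terms of $f$ with $z\geq p+1$, this yields $\Box_{\ern}\phi-f\in\O^{p+3,p+1,p}(U_{1/4})$.
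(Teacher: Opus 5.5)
Your proposal is correct in outline and has the same skeleton as the paper's proof: take the leading $\rho_\K^p$ coefficient of $f$ on $\K$, invert $\Normal{\K}$ there, reinsert it with the time weight and a cutoff in $r_*/t$, and control the error with \cref{app:lemma:map}. The difference is in how you handle the logarithms at $\K$.

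The paper takes the top logarithmic power $k$ and restricts only the coefficient of $t^{-p}\log^k(t/\jpns{r_*})$, i.e.\ of $\rho_\K^p\log^k\rho_\K^{-1}$ with the intrinsic $\rho_\K=\jpns{r_*}/t$, so that \cref{not:lemma:projection} applies directly. However, $\Normal{\K}$ differentiates the $x$-dependence of $\log(t/\jpns{r_*})$. The error therefore only drops to $(p,k-1)$ at $\K$, and the construction is iterated downward in $k$ with a sequence of cutoffs.

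You instead rewrite $\rho_\K^p\log^j\rho_\K=t^{-p}\jpns{r_*}^p(\log\jpns{r_*}-\log t)^j$ and absorb the factors $\jpns{r_*}^p\log^i\jpns{r_*}$ into the coefficients; these stay in $\O^{3,0}(\K)$, only with more logarithms. You then invert all log coefficients at once. The weights $t^{-p}\log^k t$ are independent of $x$, and in Boyer--Lindquist coordinates $\Box_{\ern}=-D^{-1}\partial_t^2+\Normal{\K}$ exactly. So one step cancels the whole $\rho_\K^p$ layer, and the leftover $\partial_t^2$ term is even $O(\rho_\K^2)$ relative to $\phi$. You thus avoid the iteration in $k$, at the price of a coordinate-dependent rather than intrinsic splitting of the leading coefficient.

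Two points need fixing.

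\emph{Use $t$, not $t_*$.} Work with $t$, as the paper's appendix does. By \cref{not:eq:t*}, $t_*=t-r_*+1/\log r$ for $r>20M$, and $U_{1/4}$ reaches this region near $\ip$. There $t_*^{-p}$ carries a factor $1+O(\rho_\K\rho_+/\log\rho_+)$, which is conormal but not polyhomogeneous, so your $\phi$ would not lie in $\O^{p+1,p,p}$.

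\emph{The endpoint issue is simpler than you think.} Neither of your proposed fixes is needed; peeling at $\ip$ would not even affect the horizon-end weight $a_\F=0$. Since $\O^{3,0}(\K)\subset\O^{5/2,-1/2}(\K)$, the lemma with $a_+=1/2$, $a_\F=-1/2$ gives a solution whose index sets have minima $\ge 1/2$ and $\ge -1/2$. All index sets are integral by the standing Assumption, consistent with the integral indicial roots in the lemma's proof. Hence these minima are $\ge 1$ and $\ge 0$, i.e.\ $\phi_{(p,k)}\in\O^{1,0}(\K)$ with logarithms, such as $r^{-1}\log r$ and $\log(r-M)$. The paper applies the inverse at this endpoint without comment.
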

	\begin{proof}
		Let $k$ be minimal with $f\in\O^{p,(p,k),p+3}(U_{1/4})$.
		Let $f_\K:=\big(ft^p\log^{k}(t/\jpns{r_*})\big)|_\K$.
		By \cref{not:lemma:projection}, we have $f_\K\in\O^{3,0}(\K)$.
		Using \cref{an:prop:K}, we get that there exists $\phi\in\O^{1,0}(\K)$ solving $\Normal{\K}\phi=f_\K$.
		Using \cref{map:lemma:normal_ops}, we get that
		\begin{equation}
			\Box_{\ern}t^p\log^{k}t/\jpns{r_*}\phi\chi(t/\jpns{r_*})-f\in\begin{cases}
				\O^{p,(p,k-1),p+3}(U_{2^{1/k-2}}) &k\geq1\\
				\O^{p,p+1,p+3}(U_{2^{1/k-2}} ) &k=0,
			\end{cases}
		\end{equation}
		where $\chi$ is a cutoff localising around 0 such that $\supp\chi'\subset[-2^{1/k-2},-2^{-2}]\cup[2^{-2},2^{1/k-2}]$.
		
		Iterating the above construction by increasing the support of $\chi'$ in each step yields the result. 
	\end{proof}
	
	\subsection{Timelike infinity}
	In this section, we study the invertibility of $\Normal{\ip}$, i.e. the Minkowski wave operator acting on homogeneous functions $u^{-\sigma}f(x/t)$ at $\ip\cong\Bcomp$.
	Note that the polyhomogeneous part has already been studied in precisely the same setting in \cite[Lemma 7.17]{kadar_scattering_2024}.
	Nonetheless,  we recall the necessary material here, as we  need to extend the solvability theory for conformally smooth solutions in \cref{an:sec:i+-inverse-smooth}.
	
	\begin{prop}[Conormal inverse of $\Normal{\ip}$]\label{app:prop:i+_conormal}
		Fix $q_\K\in(q-1,q)$ and $q>3/2$.
		Let $f\in\mathcal{O}^{3,q+2,q_\K}(\MMink)$ and $\psi^\scri\in\mathcal{O}^{q-1}(\scri)$.
		Then, the unique scattering solution to 
		\begin{equation}
			\Box_\eta\phi=f,\qquad r\phi|_\scri=\psi^\scri
		\end{equation}
		satisfies $\phi\in\mathcal{O}^{1,q,q_\K}(\MMink)$.
	\end{prop}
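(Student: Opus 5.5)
The plan mirrors the proof of \cref{an:prop:i+_conormal}, carried out in three stages: reductions, a weighted energy estimate giving the conormal weights $(1,q,q_\K)-$, and then an inductive normal-operator argument at $\ip$, $\K$ and $\scri$ that upgrades conormality to membership in $\mathcal{O}^{1,q,q_\K}(\MMink)$ without losing the leading orders.

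\emph{Reductions.} First remove the data. Set $\phi_0=r^{-1}\chi(r/t)\psi^\scri$ with $\chi$ supported in $r/t>1/2$. Since $\psi^\scri\in\mathcal{O}^{q-1}(\scri)$, a direct computation with $\Box_\eta=-2\partial_u\partial_r-2r^{-1}\partial_u+\Delta_r$ gives $\phi_0\in\mathcal{O}^{1,q,\infty}$ and $\Box_\eta\phi_0\in\mathcal{O}^{3,q+2,\infty}$. This reduces the problem to $\psi^\scri=0$. Next improve the weight at $\K$. Following Step 1 of \cref{an:prop:i+_conormal}, restrict $f$ to $\K$ parametrically in $t$ using \cref{not:lemma:projection}, say $t^{q_\K}\log^k t\, f|_\K$. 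Invert $\Delta$ on it via the polyhomogeneous version of the $\K$-inversion (the Lemma on invertibility of $\Normal{\K}$ in the appendix, with $M=0$), and subtract $\chi(r/t)t^{-q_\K}\log^k t\,\Delta^{-1}(\cdot)$. Each step lowers the log power or raises the $\K$ weight by one, with a polyhomogeneous correction in $\mathcal{O}^{\infty,q,q_\K}$. After finitely many steps, $f\in\mathcal{O}^{3,q+2,q+2}$, which equals $\mathcal{O}^{3,q+2}(\MMinkempty)$ up to a smooth change of compactification.

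\emph{Conormal bounds.} With $f$ now supported in $t_*<T_f$ (removed by a limit uniform in $T_f$), apply the currents $J^T_{N,\delta}$ and $J^X_N$ of \cref{curr:lem:morawetz}, together with \cref{curr:lemma:poincare}, exactly as in \cref{an:eq:i+_proof}. Choose $N+\delta+1<2q$ and $\delta<2$. Then commute with $T$, scaling, rotations and boosts, as in \cref{an:rem:Morawetz}, and use \cref{not:eq:sobolev} to get $\phi\in\A{b}^{1,q-,q-}$ relative to $\MMinkempty$, hence $\phi\in\A{b}^{1,(q,q_\K)-}(\MMink)$ after undoing the $\K$ correction.

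\emph{Expansions (main step).} The $\epsilon$-losses are removed, and polyhomogeneity is established, by peeling off leading terms face by face and re-applying the conormal bound to a remainder with better weight.

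At $\ip$: write $\Box_\eta=\rho_+^2\big(N_+(\rho_+\partial_{\rho_+})+\rho_+\Diffb\big)$ in the self-similar coordinates $(t,x/t)$. Here $N_+(\sigma)$ is the indicial family on the ball $\Bcomp$, acting on $u^{-\sigma}g(x/t)$. For each $(z,k)$ in the index set of $f$ at $\ip$:
\begin{itemize}
\item solve $N_+(z)g=f_{z,k}$ with $g$ vanishing at $\partial B$ to order $1$ (i.e.\ $r\phi|_\scri=0$), using Lemma 7.17 of \cite{kadar_scattering_2024};
\item when $z$ hits an indicial root, take a log increase $(z,k+1)$;
\item subtract $t^{-z}\log^k t\, g$.
\end{itemize}
The error gains one order at $\ip$, while keeping weight $3$ at $\scri$ and $q_\K$ at $\K$. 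The corner behaviour of $g$ at $\ip\cap\K$ is handled by the $\K$ reduction above, and at $\ip\cap\scri$ by the next step.

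At $\scri$: use $\Box_\eta\phi=r^{-1}\big(-2\partial_u\partial_r(r\phi)+r^{-2}\slashed\Delta(r\phi)\big)$. Integrate the transport equation $\partial_r\partial_u(r\phi)=\dots$ from $r=\infty$, with zero data. Solving term by term in $\rho_\scri$ yields an expansion whose leading power at $\scri$ is $1$, with index sets determined by those of $f$ together with the $\partial_u$-integration.

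Iterating, each remainder lies in $\A{b}^{3,c+2,c}$ with $c\to\infty$. The conormal estimate from the previous stage then shows the remainder of $\phi$ lies in $\A{b}^{1,c,c}$, which gives \cref{not:eq:poly_def}.

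I expect the main obstacle to be the corner $\ip\cap\scri$: showing that the solutions $g$ of $N_+(z)g=f_{z,k}$ are themselves polyhomogeneous at $\partial B$ with leading order $1$, consistent with the transport expansion at $\scri$, and that resonances between the two faces generate only finitely many new log terms per order. I would handle this with the explicit hypergeometric/Legendre structure of $N_+(z)$ on spherical harmonics, as in \cite{kadar_scattering_2024}. This requires checking that the condition $q>3/2$ keeps the leading order $q$ below the first problematic indicial root.
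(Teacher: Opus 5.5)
Your route differs from the paper's at the decisive step. After removing the $\K$-singularity to infinite order and quoting \cref{an:prop:i+_conormal} for the conormal bound, the paper never inverts a model operator at $\ip$. Instead it uses that the scaling field $S=t\partial_t+x\cdot\partial_x$ commutes with $r^2\Box_\eta$, so that $\Box_\eta(S+q)^k\phi=(S+q+2)^kf$. Choosing $k$ one larger than the log power of $f$ at order $q+2$ gives $(S+q+2)^kf\in\mathcal{O}^{3,q+3}(\MMinkempty)$. The same conormal estimate then yields $(S+q)^k\phi\in\A{b}^{1,q+1-}(\MMinkempty)$. This is an ODE statement in $\rho_+$ that produces the leading terms $t^{-q}\log^j t\,g_j(x/t)$, $j<k$, and iterating gives the full expansion at $\ip$. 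It uses nothing beyond the already proven estimate and the exact dilation symmetry.

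Your scheme (solve $N_+(z)g=f_{z,k}$ on the ball, subtract, re-estimate) is the Mellin-side version of the same mechanism. It buys explicit coefficients and makes their behaviour at $\partial B$ visible. The price is an external invertibility result for $N_+(z)$ and the resonance bookkeeping you anticipate. That bookkeeping is in fact empty: the commutator argument shows that the log power of $\phi$ at order $z$ never exceeds that of $f$ at order $z+2$. So, with the zero-radiation condition, no extra logarithms arise at any order $z\geq q$ that occurs, and there is no ``first problematic indicial root'' to stay below. You can import this fact instead of doing hypergeometric analysis.

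Two steps of your sketch need repair. First, $\mathcal{O}^{3,q+2,q+2}(\MMink)$ is \emph{not} $\mathcal{O}^{3,q+2}(\MMinkempty)$ up to a change of compactification. Take non-constant $a\in C^\infty(\sphere)$; then $t^{-q-2}a(x/\abs{x})\chi_{>1}(\abs{x})\chi_{<1/2}(\abs{x}/t)$ lies in the former. But $\lim_{t\to\infty}t^{q+2}f$ at fixed $x/t$ is discontinuous at $x/t=0$. For such $f$ neither the boost commutations of \cref{an:rem:Morawetz} nor an inversion of $N_+(z)$ on the unpunctured ball is available. You must peel the $\K$-expansion to infinite order and Borel sum, as the paper does with $\mathcal{O}^{3,q+2,\infty}(\MMink)\subset\mathcal{O}^{3,q+2}(\MMinkempty)$.

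Second, remainders in $\A{b}^{1,c,c}$ for every $c$ give the expansions at $\ip$ and $\K$ only. Membership in $\mathcal{O}^{1,q,q_\K}$ also needs remainders of arbitrarily high order at $\scri$. The estimate behind \cref{an:prop:i+_conormal}, run with sources of weight $3$ at $\scri$ (hence $\delta<2$), gives $\scri$-weight below $3/2$, so it cannot supply this. Your transport step has to be iterated on each remainder as a genuine ODE argument. Integrate $2\partial_u\partial_r(r\phi)=\partial_r^2(r\phi)+r^{-2}\slashed{\Delta}(r\phi)-rf$ in $u$ from $\ip$ and in $r$ from $\scri$. Each pass gains one power of $\rho_\scri$, up to the $\ip$-weight of the remainder. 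This is also where the joint expansion at $\ip\cap\scri$ that you flag is produced. The paper's ``iterating'' is equally terse on this point.
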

	\begin{proof}
		From \cref{an:prop:i+_conormal} it already holds that $\phi\in\A{b}^{1,(q,q_\K)-}(\MMink)$.
		As in the proof of \cref{an:prop:i+_conormal}, we may assume that $f\in\mathcal{O}^{3,q+2,\infty}(\MMink)\subset \mathcal{O}^{3,q+2}(\MMink_{\emptyset})$ by removing the singularity at $\K$ order by order.

		Let $S=t\partial_t+x\cdot\partial_x$ be the scaling vectorfield.
		We note that for some $k\in\N$ it holds that $(S+q+2)^kf\in \mathcal{O}^{3,q+3}(\MMink_{\emptyset})$, and since $S$ commutes with $r^2\Box_\eta$, we obtain that $(S+q+2)^k\phi\in \in \A{b}^{1,q+1-}(\MMinkempty)$.
		Iterating this construction yields the result.
	\end{proof}

	We can use the above result to deduce appropriate invertibility of $\Box_\eta$ for functions that have an expansions towards $\ip$ and $\scri$.
	
	\begin{cor}\label{poly:corr:i+}
		Let $U_s=\Mcomp\cap\{r>s\}$. 
		Fix a function $f$ such that  $\supp f\subset U_{3M}$  and $f\in\O^{p_\scri,p_++2,p_+}(U_{3M})$  with  $p_\scri>3$ and $\psi_\scri\in\O^{p_++1}(\scri)$.
		Then, there exists $\phi$ with $\supp\phi\subset U_{2M}$ and $\phi\in\O^{(1,0),p_+,p_+ }(U_{2M})$ such that 
		\begin{equation}
			\Box_{\ern}\phi-f\in\O^{3,p_++3,p_+}(U_{2M}),\qquad r\phi|_{\scri}-\psi_\scri\in\O^{p_++2})(\scri)
		\end{equation} 
	\end{cor}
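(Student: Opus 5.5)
The plan mirrors \cref{an:cor:i_+_conormal}, with the polyhomogeneous inverse \cref{app:prop:i+_conormal} replacing \cref{an:prop:i+_conormal}. There are four steps: transfer to Minkowski space, solve the model problem there, cut off, and use \cref{app:lemma:map} to convert the Minkowski solution into an error for $\Box_{\ern}$ with one extra order of decay at $\ip$.

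\emph{Step 1 (transfer).} I view $U_{3M}$ as a subset of $\MMink$ via the identification $(t_*,x)$. In the far region, $\rho_+,\rho_\K,\rho_\scri$ for $\Mcomp$ and for $\MMink$ agree up to smooth nonvanishing factors. Hence $f\in\O^{p_\scri,p_++2,p_+}(\MMink)$. Since $p_\scri>3$, we have in particular $f\in\O^{3,p_++2,p_+}(\MMink)$.

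\emph{Step 2 (Minkowski solve).} The $\K$-weight $p_+$ is not in the range $(q-1,q)$ required by \cref{app:prop:i+_conormal}. I will handle this as in its proof: first remove the $\K$-part of $f$ order by order. Concretely, I solve $\Delta\phi_1=\chi(r/t)f$ parametrically in $t$. In the polyhomogeneous setting this uses the indicial-root argument of the invertibility lemma for $\Normal{\K}$ (with $\Delta$ in place of $\Normal{\K}$, now on $\R^3$). Each such step raises the $\K$-weight of the error by $2$. After finitely many steps, the residual source lies in $\O^{3,p_++2,q_\K}$ with $q_\K\in(p_+-1,p_+)$ or better. I then apply \cref{app:prop:i+_conormal} with $q=p_+$ and data $\psi_\scri\in\O^{p_++1}(\scri)\subset\O^{p_+-1}(\scri)$. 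This gives $\phi_{\mathrm M}\in\O^{1,p_+,p_+}(\MMink)$ solving $\Box_\eta\phi_{\mathrm M}=f$ up to an error that is rapidly decaying at $\K$, with $r\phi_{\mathrm M}|_\scri=\psi_\scri$ exactly.

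The weight $(1,0)$ at $\scri$ claimed in the statement should come from the radiation field: the term $r^{-1}\chi\psi_\scri$ is $\rho_\scri^1$ times a function smooth in $\rho_\scri$. The remainder vanishes to the higher order dictated by $f$ near $\scri$; I will only record the leading index $(1,0)$.

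\emph{Step 3 (cutoff and error).} I set $\phi=\chi_{>}(r)\phi_{\mathrm M}$ with $\chi_>$ equal to $1$ on $U_{3M}$ and supported in $U_{2M}$. Then
\[
\Box_{\ern}\phi-f=\chi_>(\Box_{\ern}-\Box_\eta)\phi_{\mathrm M}+[\Box_{\ern},\chi_>]\phi_{\mathrm M}+\bigl(\text{the rapidly decaying }\K\text{-error}\bigr).
\]
The commutator term is supported in $\{2M<r<3M\}$, a compact-in-$x$ set. It therefore lies in $\O^{\infty,\infty,p_+}$ and is admissible at $\K$. For the first term, \cref{app:lemma:map} with $\bullet=\ip$ gives the bound $\rho_+\O^{2,p_++2,p_+}=\O^{2,p_++3,p_+}$. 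Near $\scri$, however, I instead use $\Box_{\ern}-\Box_\eta\in\rho_\ip^3\rho_\scri^2\Diffb^2$ from the proof of \cref{map:lemma:normal_ops}. Applied to $\phi_{\mathrm M}\in\O^{1,\cdot,\cdot}$, this yields $\scri$-weight $3$, as required.

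\emph{Main obstacle.} The delicate point is the $\scri$ bookkeeping: I must ensure that the expansion of $\phi_{\mathrm M}$ at $\scri$ has leading index exactly $(1,0)$ with no logarithms. The assumption $p_\scri>3$ should prevent a log from being generated when integrating $\partial_u\partial_r(r\phi)$ in $r$. The condition $r\phi|_\scri-\psi_\scri\in\O^{p_++2}(\scri)$ then holds trivially, since $r\phi_{\mathrm M}|_\scri=\psi_\scri$ exactly.
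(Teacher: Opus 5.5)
Your route is the paper's: its proof is just \cref{app:prop:i+_conormal} plus \cref{app:lemma:map} and a cutoff. Your Step~3 is in fact more careful than that one-liner. The stated mapping property of \cref{app:lemma:map} only puts $(\Box_{\ern}-\Box_\eta)\phi_{\mathrm M}$ at $\scri$-weight $2$, and it is the finer structure $\rho_{\ip}^3\rho_\scri^2\Diffb^2$ near $\scri\cap\ip$ that gives the required weight $3$.

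That finer structure comes from \cref{not:eq:wave_explicit}. It compares the two operators in $(u,r,\omega)$, with $u=t-r_*$ on $\M$ identified with $t-r$ on Minkowski space, which is how the paper writes $\Normal{\ip}$. Your Step~1 instead identifies via $(t_*,x)$, and the paper's $t_*$ equals $u+1/\log r$ on $\M$ but $t-r$ on Minkowski space. Under that identification $\Box_{\ern}-\Box_\eta$ picks up the term $\frac{2}{r\log^2 r}\partial_u^2$. Applied to $\phi_{\mathrm M}\approx r^{-1}\psi_\scri(u)$, this term has $\scri$-weight only $2$ and is not polyhomogeneous. So make the comparison in $(u,r,\omega)$. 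For the polyhomogeneous bookkeeping near $\scri$, also build the defining functions from $u$; the $1/\log r$ shift is invisible only at the conormal level.

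The step you single out as the main obstacle is left at ``should'', and part of its justification is wrong. With vanishing radiation field, the remainder does not in general vanish at $\scri$ to an order set by $f$. By the remark following \cref{an:prop:i+_conormal}, that order is capped by the $\ip$-weight. You do not need an integration argument here. The proof of \cref{app:prop:i+_conormal} starts from \cref{an:prop:i+_conormal}, which gives $\phi_{\mathrm M}$ conormal with weight exactly $1$ (not $1-$) at $\scri$. A polyhomogeneous function with $\rho_\scri^{-1}\phi_{\mathrm M}\in L^\infty$ cannot contain terms $\rho_\scri\log^k\rho_\scri$ with $k\geq 1$. Hence the leading $\scri$-index is $(1,0)$.

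Finally, drop the ``rapidly decaying $\K$-error'' from Step~3. Once the residual source is solved exactly by \cref{app:prop:i+_conormal}, there is no such term. A leftover that decays rapidly only at $\K$ would carry $\ip$-weight $p_++2$, which violates the required $p_++3$.
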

	\begin{proof}
		Follows using part \cref{app:prop:i+_conormal} and \cref{app:lemma:map}.
	\end{proof}
	
	\subsection{Near horizon}
	In this section, we will again work on Minkowski space and put \cref{an:prop:i+_smooth_conormal} into a polyhomogeneous setting.
	Via the Couch-Torres isometry \cref{not:eq:conformal_wave} this is sufficient to also deduce the improvement in the near-horizon region.
	
	\begin{prop}\label{app:prop:i+_smooth_conormal}
		Let $\Vc=\{1,t_*^{-1}r^2\partial_r|_u\}$.
		Fix an inhomogeneity $\Vc^kf\in\O^{3,q+2,q-1}(\MMink)$ $\forall k$.
		Then, there exists $\psi^\scri\in \O^{q-1}(\scri)$ such that the scattering solution 
		\begin{equation}
			\Box_\eta\phi=f,\qquad r\phi|_\scri=\psi^\scri
		\end{equation}
		satisfies $\Vc^k\phi\in\mathcal{O}^{1,q,q-1}(\MMink)$ for all $k$.
	\end{prop}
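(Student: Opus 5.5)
The plan is to first obtain a conormal, conformally smooth solution from \cref{an:prop:i+_smooth_conormal}, and then upgrade the conormal weights to the polyhomogeneous ($\O$) statement. The upgrade is done face by face: at $\K$ we peel off the expansion by elliptic inversion; at $\ip$ we use the scaling vector field, exactly as in the proof of \cref{app:prop:i+_conormal}; at $\scri$ the expansion comes for free from conformal smoothness. The scattering datum $\psi^\scri$ is not unique, and we simply fix the one produced in the first step. All later corrections will have vanishing radiation field, up to terms we track explicitly.

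\emph{Step 1 (reduction at $\K$).} First we remove the $\K$-part of $f$ order by order, so that we may assume $\Vc^k f\in\O^{3,q+2,\infty}(\MMink)\subset\O^{3,q+2}(\MMinkempty)$ for all $k$. The procedure is as in the smoothing step of \cref{an:prop:i+_conormal}, but with the polyhomogeneous inversion of $\Delta$ from the proof of \cref{poly:cor:K}. Write $f=t^{-p}\log^j(t)\,f_{p,j}(x)+\dots$ near $\K$ and solve $\Delta\phi_{p,j}=f_{p,j}$ in $\O(\Rcomp)$. The corrections are supported in $r/t<1/2$, away from $\scri$, so they do not affect $\Vc$-regularity. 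Each such step gains one order at $\K$ at the cost of logarithmic terms; these are admissible in $\O$.

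\emph{Step 2 (conormal solution).} We apply \cref{an:prop:i+_smooth_conormal} to the reduced $f$. This yields $\psi^\scri\in\A{b}^{q-1-}(\scri)$ and a solution $\phi$ with $\Vc^k\phi\in\A{b}^{1,q-}(\MMinkempty)$ for all $k$. The required $q-1$ weight toward $\K$ then follows after adding back the Step~1 corrections.

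\emph{Step 3 (expansion at $\ip$).} Let $S=t\partial_t+x\cdot\partial_x=u\partial_u+r\partial_r|_u$. This vector field commutes with $r^2\Box_\eta$. A direct computation also gives $[S,u^{-1}r^2\partial_r|_u]=0$, because $S(u^{-1}r^2)=u^{-1}r^2$ and $[S,\partial_r]=-\partial_r$. Hence $S$ preserves the whole $\Vc$-algebra. By the polyhomogeneity of $f$ at $\ip$, some product $\prod_{(z,j)}(S+z+2)$ over the finitely many leading exponents maps $f$ into $\O^{3,q+3}$ with all $\Vc$ derivatives. Applying the same product to $\phi$ gives a function solving $\Box_\eta(\cdot)=$ (faster decaying source), with radiation field equal to the corresponding product applied to $\psi^\scri$, and it keeps conformal smoothness. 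Step~2 applied to this function shows it lies in $\A{b}^{1,q+1-}$. Iterating produces the expansion at $\ip$; at each stage we enlarge the product by the new exponents of $\psi^\scri$, which are those of $r\phi|_\scri$.

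\emph{Step 4 (expansion at $\scri$, and the main obstacle).} Conformal smoothness, $\Vc^k\phi\in\A{b}$ for all $k$, means that $r\phi$ is smooth in $r^{-1}$ at fixed $u$, up to the $\ip$ weights. So Taylor expansion in $\rho_\scri$ gives an integer index set with no logarithms at $\scri$, and $r\phi|_\scri=\psi^\scri$ is then polyhomogeneous, i.e.\ $\psi^\scri\in\O^{q-1}(\scri)$.

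The delicate point is the corner $\scri\cap\ip$. There, the solution was constructed in the proof of \cref{an:prop:i+_smooth_conormal} through the integration $\mathcal T_\downarrow^{-1}$ of \cref{map:lemma:T-1}, and we must check that this integration respects joint expansions. My first approach is to avoid this issue: Step~3 applies only $S$, never $\mathcal T^{-1}$, and we take the conormal joint regularity from Step~2 as given. Then the corner expansion follows by combining the $S$-iteration at $\ip$ with the $\rho_\scri$-Taylor expansion, in the manner of the standard product structure for polyhomogeneous functions near a corner. If that fails, the fallback is to verify directly that $\mathcal T_\downarrow^{-1}$ maps $\O$ to $\O$, at worst raising the log power by one. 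This is plausible from the explicit form \cref{map:eq:conformal} of $\mathcal T$ near $\scri\cap\ip$ as a b-vector field.
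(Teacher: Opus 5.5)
There is a genuine gap: Steps 3 and 4 are circular.

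\textbf{Step 3.} To conclude that $P(S)\phi$ gains a power of $\rho_+$, you cannot invoke \cref{an:prop:i+_smooth_conormal}. It is a pure existence statement, and conformally smooth solutions with a given source are not unique (see the remark after it). So it says nothing about the decay of a \emph{given} function. The result that does certify decay of a given solution is the uniqueness statement \cref{an:prop:i+_conormal}. It requires the radiation field of $P(S)\phi$ to lie in $\A{b}^{q}(\scri)$. Since $rS\phi|_\scri=(u\partial_u-1)\psi^\scri$, that radiation field is $P(u\partial_u-1)\psi^\scri$. For it to decay one order faster, $\psi^\scri$ must already have an expansion as $u\to\infty$ whose exponents are annihilated by $P$. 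This is why you feed ``the new exponents of $\psi^\scri$'' into the product. (Minor point: the product acting on $\phi$ must be shifted by $2$ relative to the one acting on $f$, since $\Box_\eta S=(S+2)\Box_\eta$.)

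\textbf{Step 4.} But $\psi^\scri$ comes out of Step 2 merely conormal, and Step 4 does not supply its expansion. Smoothness in $\rho_\scri$ at fixed $u$ only gives $r\phi\sim\sum_j\rho_\scri^j\psi_j(u,\omega)$, with coefficients $\psi_j$ that are merely conormal in $u$. The statement $\psi^\scri\in\O^{q-1}(\scri)$ is an expansion at the corner $\scri\cap\ip$, which is precisely the $\ip$-expansion you are trying to prove.

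\textbf{The missing idea.} The paper never iterates on the scattering solution $\phi$, whose radiation data is an unknown output. It runs the $S$-iteration on the auxiliary function $\Phi$ from the proof of \cref{an:prop:i+_smooth_conormal}. This $\Phi$ solves the \emph{forward} problem $\Box_\eta\Phi=F=\mathcal{T}^m(u^2v^2rf)/(u^2v^2r)$ with vanishing Cauchy data on $\{t=0\}$.

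Commuting with $S$ and $\mathcal{T}$ (via \cref{an:eq:conformal_commutation}) preserves the vanishing data. Hence $r^{-1}\mathcal{T}r(S-1)^k\Phi$ is again the unique forward solution, with source $\mathcal{T}(u^2v^2r(S+1)^kF)/(u^2v^2r)$ in the same class as $F$. The forward estimate then shows that $(S-1)^k\Phi$ gains an order at $\ip$, with no input about radiation fields. Iterating gives $\Phi\in\O^{1,-1}(\MMinkempty)$ together with all $\Vc$-derivatives.

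Only after this are $\phi$ and $\psi^\scri=r\phi|_\scri$ recovered from $\Phi$, by the $\mathcal{T}_\downarrow^{-1}$ integration (cf.\ \cref{map:lemma:T-1}). So the polyhomogeneity of $\psi^\scri$ is an output of the argument, not an input. In particular, what you list as a fallback, that $\mathcal{T}_\downarrow^{-1}$ preserves polyhomogeneity, is a necessary step applied to $\Phi$, not an optional check at the corner.
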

	\begin{proof}
		As for \cref{app:prop:i+_conormal} it suffices to consider the case $f\in\O^{3,q+2}(\MMinkempty)$, and $\supp f\subset \{t>1\}$.
		
		We follow the proof of \cref{an:prop:i+_smooth_conormal} and consider 
		\begin{equation}
			\Box \Phi=F=\frac{\mathcal{T}^{q+1}(u^2v^2rf)}{u^2v^2r}\in\mathcal{O}^{3,1}(\MMinkempty),\quad \Phi|_{t=0}=T\Phi|_{t=0}=0.
		\end{equation}
		From the proof of \cref{an:prop:i+_smooth_conormal} it already holds that $\Vc^k\Phi\in \A{b}^{1,-1-}(\MMinkempty)$ for all $k$.
		
		We also note, that $(S+1)^kF\in \mathcal{O}^{3,2}(\MMinkempty)$ for some $k\in\N$.
		Using the commutation \cref{an:eq:conformal_commutation} we get that
		\begin{equation}
			\Box(r^{-1}\mathcal{T}(S-1)^k\Phi)=\frac{\mathcal{T}u^2v^2r(S+1)^kF}{u^2v^2r}\in\O^{3,1}(\MMinkempty).
		\end{equation}
		The initial data for $r^{-1}\mathcal{T}r(S-1)^k\Phi$ also vanishes at $\{t=0\}$, and using \cref{an:prop:i+_smooth_conormal} it follows that $\Vc^kr^{-1}\mathcal{T}r(S-1)^k\Phi\in\A{b}^{1,-1-}(\MMinkempty)$ for all $k$.
		This already $\Phi\in \mathcal{O}^{1,-1}(\MMinkempty)+\A{b}^{1,0-}(\MMinkempty)$.
		By induction we can make the error term to have arbitrary fast decay and thus conclude that $\Phi\in\O^{1,-1}(\MMinkempty)$
	\end{proof}

	From this, we immediately get
	
	\begin{cor}\label{poly:corr:i+_smooth}
		Let $U_c=\{r<cM\}\cap\Mcomp$. 
		Fix a function with $\supp f\in U_{2}$ and $f\in\A{phg}^{p_\F,p_\F-1,\mindex{0}}(U_2)$.
		Then, there exists $\phi_\hor\in\O^{p_\F}(\hor)$ together with $\phi\in\A{phg}^{p_\F,p_\F-1,\mindex{0}}(U_3)$ such that
		\begin{equation}
			\Box_{\ern}\phi-f\in\A{phg}^{p_\F,p_\F,\mindex{0}}(U_3),\qquad \phi|_{\hor}=\phi_\hor.
		\end{equation}
	\end{cor}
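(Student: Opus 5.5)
The plan is to transport the problem to Minkowski space with the Couch--Torrence map, solve it there with \cref{app:prop:i+_smooth_conormal}, and transport back. First I replace $\Box_{\ern}$ by the near-horizon model operator $\Normal{\F}=\Box_{g_{\mathrm{near}}}$. By \cref{app:lemma:map} the difference $\Box_{\ern}-\Normal{\F}$ gains a factor $\rho_\F$ on $\A{phg}^{p_\F,p_\F-1,\mindex{0}}(U_2)$, so this replacement produces an error already in the target space $\A{phg}^{p_\F,p_\F,\mindex{0}}(U_3)$. It therefore suffices to find $\phi$ in the stated class with $\Box_{g_{\mathrm{near}}}\phi-f\in\A{phg}^{p_\F,p_\F,\mindex{0}}$. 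As in the proof of \cref{an:prop:i+_conormal}, I first remove the $\K$-expansion of $f$ order by order using \cref{poly:cor:K}. This lets me assume that $f$ is rapidly vanishing at $\K$.

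Next I apply \cref{map:rem:isometry}, which gives $\Normal{\F}=(r-M)^{-3}\Box_{\eta,y}(r-M)$ with $y=(r-M)^{-1}x/\abs{x}$. Solving $\Box_{g_{\mathrm{near}}}\phi=f$ is then equivalent to solving $\Box_{\eta,y}\tilde\phi=\tilde f$, where $\tilde\phi=(r-M)\phi$ and $\tilde f=(r-M)^3f$. Under this map the faces correspond as $\F\leftrightarrow i^+$, $\hor\leftrightarrow\scri$ and $\K\leftrightarrow\K$. The weights shift by the powers of $\rho_\F\sim\rho_+$ (equivalently $\rho_\hor\sim\rho_\scri$) carried by $(r-M)^3$. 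The key structural point is that smoothness of $f$ across $\hor$ (index set $\mindex{0}$ at $\hor$) becomes regularity of $\tilde f$ under $\Vc=\{1,t_*^{-1}r^2\partial_r|_u\}$ near $\scri$. Indeed $\partial_r$ near $\hor$ is conjugated to $r^2\partial_r|_u$, and the index set $\mindex{0}$ at $\hor$ gives expansions in integer powers of $\rho_\scri$ with no logarithms.

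With this dictionary I verify that $\Vc^k\tilde f\in\O^{3,q+2,q-1}(\MMink)$ for the value of $q$ that matches $p_\F$. I then invoke \cref{app:prop:i+_smooth_conormal} to obtain data $\psi^\scri\in\O^{q-1}(\scri)$ and a solution $\tilde\phi$ with $\Vc^k\tilde\phi\in\O^{1,q,q-1}$ for all $k$. Pulling back, $\phi=(r-M)^{-1}\Phi^*\tilde\phi$ is polyhomogeneous with index set $\mindex{0}$ at $\hor$, and its restriction $\phi_\hor=\phi|_\hor$ lies in $\O^{p_\F}(\hor)$, since $r\tilde\phi|_\scri=\psi^\scri$ corresponds to $\phi|_\hor$. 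Finally I localise with a cutoff in $r$ supported in $U_3$. The cutoff error lives near $r\sim 2M$–$3M$, away from $\F$, and carries the decay of $\phi$ at $\K$. I handle it with \cref{poly:cor:K}, or simply absorb it, since it only has to be measured in the target space.

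The main obstacle is the bookkeeping in the middle step. I need the weight shifts to come out exactly, so that $\tilde f$ meets the hypotheses of \cref{app:prop:i+_smooth_conormal}, and I need the Minkowski conclusion, which in $y$-coordinates lives on $\MMink$, to translate back into the index sets $(p_\F,p_\F-1,\mindex{0})$ on $U_3$. In particular I must check that no logarithmic terms appear at $\hor$, meaning the pullback of the $\scri$ expansion has index set $\mindex{0}$ rather than a genuinely polyhomogeneous one. If logarithms did appear, I would fall back on the direct construction of \cref{an:prop:F_interior}, commuting with $\mathcal{T}_{\mathrm{near}}$ and the scaling field $S$, to peel the expansion off term by term.
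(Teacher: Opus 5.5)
Your route is the paper's: its proof is the one-line appeal to \cref{app:lemma:map} (to pass from $\Box_{\ern}$ to $\Normal{\F}$), to the Couch--Torrence conjugation \cref{not:eq:conformal_wave} in the form \cref{map:rem:isometry} (which turns $\hor,\F,\K$ into $\scri,i^+,\K$ of Minkowski space, and smoothness across $\hor$ into conformal smoothness at $\scri$), and to \cref{app:prop:i+_smooth_conormal} with $q=p_\F$, followed by pulling back. Your preliminary removal of the $\K$-expansion via \cref{poly:cor:K} is superfluous and does not actually apply here, since that corollary needs the $\F$-weight of $f$ to be at least its $\K$-weight, while yours is $p_\F-1<p_\F$; after conjugation $\tilde f$ extends by zero to all of Minkowski space and the $\K$ face is treated inside \cref{app:prop:i+_smooth_conormal}. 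One point to watch, which the paper also leaves implicit: absorbing the cutoff and $\Box_{\ern}-\Normal{\F}$ errors into $\K$-weight $p_\F$ needs $\tilde\phi$ to inherit the $\K$-weight $p_\F$ of $\tilde f$, not just the weight $q-1$ recorded in the statement of that proposition.
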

	\begin{proof}
		This follows from \cref{app:prop:i+_smooth_conormal}  using the isometry \cref{not:eq:conformal_wave} and \cref{app:lemma:map}.
	\end{proof}

	\pagebreak
	\printbibliography
	
\end{document}